\documentclass[12pt]{article}
\usepackage{graphicx}
\usepackage{amsmath,amsthm,amssymb,enumerate}
\usepackage{euscript,mathrsfs}
\usepackage{color}
\usepackage{dsfont}
\usepackage{multirow}
\usepackage{multicol}
\usepackage{url}
\usepackage[citecolor=blue,colorlinks=true]{hyperref}
\usepackage[left=2cm,right=2cm,top=3.5cm,bottom=3.5cm]{geometry}
\usepackage{color}
\usepackage[framemethod=tikz]{mdframed}
\allowdisplaybreaks

\usepackage{soul}

\catcode`\@=11 \@addtoreset{equation}{section}

\catcode`\@=12

\usepackage{subcaption}

\newtheorem{Theorem}{Theorem}[section]
\newtheorem{Proposition}[Theorem]{Proposition}
\newtheorem{Lemma}[Theorem]{Lemma}
\newtheorem{Corollary}[Theorem]{Corollary}

\theoremstyle{definition}
\newtheorem{Definition}[Theorem]{Definition}

\newtheorem{Remark}[Theorem]{Remark}

\newcommand{\bTheorem}[1]{
	\begin{Theorem} \label{T#1} }
	\newcommand{\eT}{\end{Theorem}}

\newcommand{\bProposition}[1]{
	\begin{Proposition} \label{P#1}}
	\newcommand{\eP}{\end{Proposition}}

\newcommand{\bLemma}[1]{
	\begin{Lemma} \label{L#1} }
	\newcommand{\eL}{\end{Lemma}}

\newcommand{\bCorollary}[1]{
	\begin{Corollary} \label{C#1} }
	\newcommand{\eC}{\end{Corollary}}

\newcommand{\bRemark}[1]{
	\begin{Remark} \label{R#1} }
	\newcommand{\eR}{\end{Remark}}

\newcommand{\bDefinition}[1]{
	\begin{Definition} \label{D#1} }
	\newcommand{\eD}{\end{Definition}}

\newcommand{\Pim}{\Pi_{\mathcal{T}}}

\newcommand{\Ds}{\mathbb{D}_x}

\newcommand{\tvU}{\widetilde{\vU}}

\newcommand{\jump}[1]{\left[ \left[ #1 \right] \right]}

\newcommand{\vrh}{\vr_h}

\newcommand{\vmh}{\vm_h}

\newcommand{\ds}{\,\mathrm{d}S_x}

\newcommand{\bFormula}[1]{
	\begin{equation} \label{#1}}
	\newcommand{\eF}{\end{equation}}

\newcommand{\mesh}{\mathcal{T}_h}

\newcommand{\facesK}{\mathcal{E}(K)}
\newcommand{\vuh}{\vu_h}

\newcommand{\TS}{\Delta t}

\newcommand{\Ov}[1]{\overline{#1}}

\newcommand{\aleq}{\stackrel{<}{\sim}}

\newcommand{\Un}[1]{\underline{#1}}
\newcommand{\vr}{\varrho}

\newcommand{\vt}{\vartheta}
\newcommand{\vu}{\vc{u}}
\newcommand{\vm}{\vc{m}}

\newcommand{\vn}{\vc{n}}

\newcommand{\vc}[1]{{\bf #1}}

\newcommand{\Div}{{\rm div}_x}
\newcommand{\Grad}{\nabla_x}

\newcommand{\dx}{\,{\rm d} {x}}

\newcommand{\dt}{\,{\rm d} t }

\newcommand{\vU}{\vc{U}}

\newcommand{\dxdt}{\dx  \dt}
\newcommand{\intO}[1]{\int_{\Omega} #1 \ \dx}

\newcommand{\intQ}[1]{\int_{{\Omega}} #1 \ \dx}

\newcommand{\intOB}[1]{\int_{\Omega} \left( #1 \right) \ \dx}

\newcommand{\vv}{\vc{v}}

\newcommand{\D}{{\rm d}}
\newcommand{\bD}{\mathbb{D}}
\newcommand{\ep}{\varepsilon}

\newcommand{\R}{\mathbb{R}}

\newcommand{\I}{\mathbb{I}}
\newcommand{\bS}{\mathbb{S}}

\newcommand{\br}{ \nonumber \\ }

\def\softd{{\leavevmode\setbox1=\hbox{d}%
		\hbox to 1.05\wd1{d\kern-0.4ex{\char039}\hss}}}
\definecolor{Cgrey}{rgb}{0.85,0.85,0.85}
\definecolor{Cblue}{rgb}{0.50,0.85,0.85}
\definecolor{Cred}{rgb}{1,0,0}
\definecolor{fancy}{rgb}{0.10,0.85,0.10}
\definecolor{amaranth}{rgb}{0.9, 0.17, 0.31}

\newcommand\Cbox[2]{%
	\newbox\contentbox%
	\newbox\bkgdbox%
	\setbox\contentbox\hbox to \hsize{%
		\vtop{
			\kern\columnsep
			\hbox to \hsize{%
				\kern\columnsep%
				\advance\hsize by -2\columnsep%
				\setlength{\textwidth}{\hsize}%
				\vbox{
					\parskip=\baselineskip
					\parindent=0bp
					#2
				}%
				\kern\columnsep%
			}%
			\kern\columnsep%
		}%
	}%
	\setbox\bkgdbox\vbox{
		\color{#1}
		\hrule width  \wd\contentbox %
		height \ht\contentbox %
		depth  \dp\contentbox
		\color{black}
	}%
	\wd\bkgdbox=0bp%
	\vbox{\hbox to \hsize{\box\bkgdbox\box\contentbox}}%
	\vskip\baselineskip%
}

\mdfdefinestyle{MyFrame}{%
	linecolor=black,
	outerlinewidth=1pt,
	roundcorner=5pt,
	innertopmargin=\baselineskip,
	innerbottommargin=\baselineskip,
	innerrightmargin=10pt,
	innerleftmargin=10pt,
	backgroundcolor=white!20!white}

\newcommand{\pd}{\partial}
\newcommand{\Divh}{{\rm div}_h}
\newcommand{\Gradh}{\nabla_h}

\newcommand{\Laph}{\Delta_h}

\newcommand{\Gradd}{\nabla_{\faces}}
\newcommand{\faces}{\mathcal{E}}

\newcommand{\Divmesh}{{\rm div}_{\mathcal{T}}}

\newcommand{\bI}{\mathbb{I}}
\newcommand{\abs}[1]{{\left| #1 \right|}}
\newcommand{\vth}{\vt_h}
\newcommand{\norm}[1]{\left\lVert#1\right\rVert}
\newcommand{\avs}[1]{ \left\{\hspace{-3pt}\left\{ #1 \right\}\hspace{-3pt} \right\} }
\newcommand{\myangle}[1]{\langle #1\rangle}
\newcommand{\Eb}[1]{E_b\left( #1 \right)}

\newcommand{\facesint}{\faces_{\rm int}}
\newcommand{\facesext}{\faces_{\rm ext}}
\newcommand{\intfacesint}[1]{\int_{\facesint}{ #1 \ds}}

\newcommand{\intfacesext}[1]{\int_{\facesext}{ #1 \ds}}

\newcommand{\intTauTauQB}[1]{\int_{-\tau}^{\tau}\int_{\Omega} \left( #1 \right) \dxdt }
\newcommand{\intTauTauQ}[1]{\int_{-\tau}^{\tau}\int_{\Omega} #1  \dxdt }
\newcommand{\intRQB}[1]{\int_{\R}\int_{\Omega} \left( #1 \right) \dxdt }
\newcommand{\intRQ}[1]{\int_{\R}\int_{\Omega}  #1  \dxdt }

\newcommand{\bbT}{ \mathbb{T}}

\newcommand{\Dhuh}{\mathbb{D}_h(\vuh)}
\newcommand{\vthB}{\vt_{B,h}}
\newcommand{\vtB}{\vt_{B}}
\newcommand{\hvt}{\Theta}
\newcommand{\hvth}{\Theta_h}
\newcommand{\difuh}{\bS_h:\Gradh \vuh }

\newcommand{\vw}{\vc{w}}

\newcommand{\PiQ}{\Pi_Q}

\newcommand{\Up}{{\rm Up}}
\newcommand{\Fup}{F_h^{\alpha}}
\newcommand{\bfphi}{\boldsymbol{\varphi}}
\newcommand{\muh}{h^\alpha}

\allowdisplaybreaks

\newcommand{\sumS}{ \!\! \sum_{\sigma \in \faces} \!\!}
\newcommand{\sumK}{ \!\! \sum_{K \in \mesh} \!\!}
\newcommand{\vWh}{ {\bf W}_h}
\newcommand{\facei}{ {\faces}_i}

\begin{document}


\title{\bf TEMPERATURE-DRIVEN TURBULENCE IN COMPRESSIBLE
FLUID FLOWS}

\author{Eduard Feireisl\thanks{
		The work of E.F.\ was partially supported by the
		Czech Sciences Foundation (GA\v CR), Grant Agreement
		24--11034S. The Institute of Mathematics of the Academy of Sciences of
		the Czech Republic is supported by RVO:67985840.
		E.F.\ is a member of the Ne\v cas Center for Mathematical Modelling.} 
	\and M\' aria Luk\'a\v{c}ov\'a-Medvi\softd ov\'a\thanks{The work of  M.L.-M. was supported by the Gutenberg Research College and by
		the Deutsche Forschungsgemeinschaft (DFG, German Research Foundation) -- project number 233630050 -- TRR 146 and
		project number 525853336 -- SPP 2410 ``Hyperbolic Balance Laws: Complexity, Scales and Randomness".
		She is also grateful to  the  Mainz Institute of Multiscale Modelling  for supporting her research.}
	\and Bangwei She\thanks{ The work of B.S. was supported by National Natural Science Foundation of China under grant No.\ 12571433.} 
	\and Yuhuan Yuan\thanks{ The work of Y.Y. was supported by National Natural Science Foundation of China under grant No.\ 12401527 and 12571433, and Natural Science Foundation of Jiangsu Province under grant No. BK20241364.}
}

\date{}

\maketitle
{\small
\vspace{-0.75cm}
\centerline{$^*$Institute of Mathematics of the Academy of Sciences of the Czech Republic}
\centerline{\v Zitn\' a 25, CZ-115 67 Praha 1, Czech Republic}
\centerline{feireisl@math.cas.cz}

\smallskip
\centerline{$^\dag$Institute of Mathematics, Johannes Gutenberg-University Mainz}
\centerline{Staudingerweg 9, 55 128 Mainz, Germany}
\centerline{RMU Co-Affiliate Technical University Darmstadt, Germany}
\centerline{lukacova@uni-mainz.de}

\smallskip
\centerline{$^\ddag$Academy for Multidisciplinary studies, Capital Normal University}
\centerline{ West 3rd Ring North Road 105, 100048 Beijing, P. R. China}
\centerline{bangweishe@cnu.edu.cn}

\smallskip
\centerline{$^\S$School of Mathematics, Nanjing University of Aeronautics and Astronautics}
\centerline{Jiangjun Avenue No. 29, 211106 Nanjing, P. R. China}
\centerline{yuhuanyuan@nuaa.edu.cn}
}

\vspace{-0.1cm}
\begin{abstract}
We study the long-time behaviour of the temperature-driven compressible flows. We show that numerical solutions of a structure-preserving finite volume method generate a discrete attractor that consists of entire discrete trajectories. Further, we prove  the convergence of discrete attractors to their continuous counterparts. Theoretical results are illustrated by extensive numerical simulations of the well-known Rayleigh--B\'enard problem. The numerical results also indicate the validity of the ergodic hypothesis and imply that a non-zero Reynolds stress persist for long time. Finally, we also observe that any invariant measure is of Gaussian type in sharp contrast with the conjecture proposed by [Glimm et al., SN Applied Sciences~2, 2160 (2020)].

\end{abstract}


{\small
	
	\noindent
	{\bf 2020 Mathematics Subject Classification:}{\\ 
		 76F35, 76F50,  76N06, 65M08, 35M12, 37M25, 37L40 (primary); 
		 76F20, 76N10, 35Q79, 36B41 (secondary)}
	
	\medbreak
	\noindent {\bf Keywords:} Rayleigh--B\'enard problem, compressible Navier--Stokes--Fourier system,  turbulence, structure-preserving numerical methods, attractors, ergodic hypothesis, invariant measure

	
}

\section{Introduction}
\label{i}

The Rayleigh--B\' enard convection problem is an iconic example of a \emph{turbulent 
behaviour} in fluid dynamics. A layer of fluid is heated from below and subject to a uniform temperature difference as well as the gravitational force acting in the vertical direction. In order to see the thermal effect on the motion, the fluid must be \emph{compressible} changing its volume with temperature. There is an incompressible approximation 
of the problem - the Oberbeck--Boussinesq system. The latter can be rigorously justified as a singular limit for vanishing Mach and Froude numbers, see \cite{BelFeiOsch}, on condition that the temperature as well as the density of the fluid are small perturbations of a constant equilibrium state. Here, we consider the problem in full generality, where the compressibility of the fluid is not negligible and must be taken into account.

The state of a viscous, compressible, and heat conducting fluid at a time $t \in \R$ and a spatial position $x \in \Omega$ can be described by 
three phase variables - the mass density $\vr = \vr(t,x)$, the (macroscopic) velocity $\vu = \vu(t,x)$, and the (absolute) temperature $\vt = \vt(t,x)$. 
Their time-evolution is governed by the \emph{Navier-Stokes-Fourier} system 
of partial differential equations:
\begin{align} 
\partial_t \vr + \Div (\vr \vu) &= 0, \label{i1} \\ 
\partial_t(\vr \vu) + \Div (\vr \vu \otimes \vu) + \Grad p &= 
\Div \mathbb{S} + \vr \Grad G, \label{i2} \\
\partial_t (\vr e) + \Div (\vr e \vu)	
+ \Div \vc{q} &= \mathbb{S} : \Grad \vu - p \Div \vu, 
\label{i3}
\end{align}	
where $\mathbb{S}$ is the \emph{viscous stress} given by Newton's law
\begin{equation} \label{i4}
\mathbb{S} = \mu \left( \Grad \vu + \Grad \vu^t - \frac{2}{d} \Div \vu 
\mathbb{I} \right) + \eta \Div \vu \mathbb{I}, 
\end{equation}
and $\vc{q}$ is the \emph{heat flux} given by Fourier's law 
\begin{equation} \label{i5} 
\vc{q} = - \kappa \Grad \vt.
\end{equation}		
The \emph{internal energy} $e = e(\vr, \vt)$ is related to the \emph{pressure} $p = p(\vr, \vt)$ through Gibbs' equation 
\begin{equation} \label{i7}
\vt Ds(\vr, \vt) = D e(\vr, \vt) + p(\vr, \vt) D \left( \frac{ 1}{\vr} \right), 
\end{equation} 
where $s = s(\vr, \vt)$ is the \emph{entropy}.	 

The fluid is confined to a bounded spatial domain 
\[
\Omega = {\mathbb{T}}^{d-1} \times [-H,H],\ 
\mathbb{T}^{d-1} = \left( [-L,L]|_{\{ -L, L\} } \right)^{d-1},\ d = 2,3, 
\]
meaning spatial periodicity is imposed in the horizontal direction. 
In addition, the velocity satisfies the no-slip boundary conditions 
\begin{equation} \label{i8}
\vu|_{\partial \Omega} = 0, 
\end{equation}	
while the temperature $\vt$
\begin{equation} \label{i9}
\vt|_{\partial \Omega} = \vtB, 
\end{equation}
is prescribed on the horizontal boundary $ x_d = -H, H$. The function $G = G(x)$ represents the gravitational potential, typically 
\begin{equation} \label{i9a}
G(x) = - x_d.
\end{equation}

The aim of the present paper is to study the long-time behaviour of the Navier-Stokes-Fourier system for arbitrarily large solutions out of thermodynamic equilibrium.  Although a chaotic motion in certain regimes has been confirmed by many experiments as well as computational results (see e.g. Castillo, Hoover and Hoover \cite{CaHoHo}, John, Schumacher \cite{JohSch}, Tiwari, Sharma and Verma 
\cite{TiShVe}, and the references therein), a rigorous mathematical analysis has been hampered by a total absence of a relevant existence 
theory for the Navier-Stokes-Fourier system with purely Dirichlet boundary conditions. Indeed
a proper concept of weak solutions as well as their  
global existence have been established only recently in 
\cite{ChauFei}, \cite{FeiNovOpen}. Note that the boundary conditions 
\eqref{i8}, \eqref{i9} make the fluid system \emph{energetically open}, 
thus amenable to a chaotic (turbulent) behaviour.

\subsection{Available analytical results}

As shown in \cite[Theorem 3.1]{FeiSwGw}, the Navier-Stokes-Fourier system \eqref{i1}--\eqref{i5} endowed with 
the Dirichlet boundary conditions \eqref{i8}, \eqref{i9} is 
dissipative in the sense of Lewinson, meaning it admits a bounded absorbing set. Specifically, there exists a universal constant 
$\mathcal{E}_\infty$, that can be determined only in terms of  
\[
\mbox{the total mass}\ 
M_0 = \intO{\vr },\ \mbox{and the boundary data}\ \vtB, 
\]
such that 
\begin{equation} \label{i10}
\limsup_{T \to \infty} \intO{ E(\vr,\vu,\vt) (T, \cdot) } \leq \mathcal{E}_\infty,\ \mbox{where}\  E (\vr, \vu, \vt) = \frac{1}{2} \vr |\vu|^2 + \vr e(\vr, \vt),
\end{equation}  
for \emph{any} (weak) solution defined on a time interval $(t_0, \infty)$. The result holds in the framework of the existence theory developed in \cite{FeiNovOpen} under certain physically grounded restrictions imposed on the constitutive equations and transport coefficients.

To apply the available results of the classical dynamical systems theory, we need continuity in time of the phase variables. To achieve this in the weak solution framework used in \cite{FeiSwGw}, 
we replace the standard phase variables $(\vr, \vu, \vt)$ by the 
so-called entropy-conservative  
variables 
\[
(\vr, \vm, S),\ \mbox{where}\ \vm = \vr \vu,\ S = \vr s(\vr,\vt). 
\]

As shown in \cite{FeiSwGw}, the Navier-Stokes-Fourier system admits a global trajectory attractor containing all \emph{entire solutions}, meaning solutions defined for $t \in (-\infty, \infty)$, 
\begin{align}
\mathcal{A} = \Big\{ (\vr, \vm, S) \ \Big| \ &(\vr, \vm, S) \ \mbox{is a (weak) solution of the Navier-Stokes-Fourier system defined for}\ t \in \R,  \br 
&\intO{ E(\vr,\vu,\vt) (t, \cdot) } \leq \mathcal{E}_\infty \ \mbox{for all}\ t \in (- \infty, \infty)
\Big\}. \label{i12}
\end{align}
The attractor $\mathcal{A}$ is non--empty and compact with respect to 
the metric topology 
\[
(\vr, \vm, S) \in 
C_{\rm loc}(\R; W^{-\ell,2}(\Omega; \R^{d+1})) \times 
D_{\rm loc} (\R; W^{-\ell,2}(\Omega; \R)),  
\]	
where $D_{\rm loc}$ denotes the ``weak Skorokhod space'', 
see  \cite[Appendix A.1]{FeiNovOpen} for the definition of the ``weak'' Skorokhod topology 
$D_{\rm loc}$.	
Moreover, $\mathcal{A}$ is obviously time shift invariant, meaning 
\[
(\vr, \vm, S) \in \mathcal{A} \ \Rightarrow \ (\vr, \vm, S) (\cdot + T) 
\in \mathcal{A}  \ \mbox{for any}\ T \in \R.
\]

The attractor reduces to a single point provided 
\[
\intO{ \vr } = \Ov{\vr} |\Omega|,\ \| G \|_{C^1(\Ov{\Omega})} < \ep , \ 
\| \vtB - \Ov{\vt} \|_{C^{2 + \nu}(\Ov{\Omega})} < \ep, 
\]
where $\ep = \ep(\Ov{\vr}, \Ov{\vt}) > 0$ is small enough,  see \cite{FeiLuSun}. Accordingly, \emph{all} global in time 
weak solutions converge to a single equilibrium. 
More precisely, given positive constant $\Ov{\vr}$, $\Ov{\vt}$, there exists $\ep_0 = \ep_0 (\Ov{\vr}, \Ov{\vt}) > 0$ such that
any weak solution of the Navier-Stokes-Fourier system converges to a stationary state 
$(\vr_s, \vm_s, S_s)$:
\begin{align}\label{conv2stat}
\vr(t, \cdot) &\to \vr_s \ \mbox{in}\ L^q(\Omega),\ 1 \leq q < \frac{5}{3}, \br
\vm(t, \cdot) &\to \vm_s \ \mbox{in}\ L^q(\Omega; \R^d),\ 1 \leq q < \frac{5}{4}, \br
S(t, \cdot) &\to S_s \ \mbox{in}\ L^q(\Omega),\ 1 \leq q < \frac{4}{3},	 
\end{align}	
as $t \to \infty$ whenever $0 < \ep < \ep_0$, see \cite{FeiLuSun}.

\subsection{Objectives addressed in the present paper}

Our main goal is to illustrate the above theoretical results by numerical experiments, and to indicate possible 
conjectures in the situations, where the available analytical techniques fail. 
To this end, we first demonstrate the proximity of the numerical discrete solutions and their continuous limits in the long run. Specifically, we show that 
numerical solutions generate a discrete attractor, see Theorem \ref{AT1}, that consists of discrete trajectories defined 
for all $t \in (-\infty, \infty)$. Our main analytical result then asserts that the discrete attractors 
approach their continuous counterpart $\mathcal{A}$ for vanishing discretization parameters, see Theorem \ref{CT1}.
In particular, we show strong convergence of the numerical solutions to the exact solution which is a result of independent interest.

In numerical experiments, we test validity of the so--called \emph{ergodic hypothesis}, namely convergence of the ergodic averages  
\begin{equation} \label{L1a}
	\lim_{T \to \infty} \frac{1}{T}  \int_{0}^T F\Big((\vr, \vm, S)(t, \cdot) \Big) \dt 	\end{equation} 
for any entire solution of the Navier-Stokes-Fourier system and any bounded Borel function $F$ defined on a suitable \emph{phase space}, cf. \cite{FanFeiHof}. The limit, provided it exists, 
generates an invariant measure characterizing the long time behaviour of the 
system. In sharp contrast with the recently proposed conjecture by Glimm et al. \cite{ChenGli12}, \cite{CheGli}, \cite{GlLaCh}, our numerical 
solutions exhibit the normal (Gaussian) distribution of the invariant measure rather than the uniform distribution proposed in the above references. 

\subsection{Organization of the paper}

We start by introducing a time implicit numerical scheme based on finite volume spatial discretization, see Section \ref{n}. 
Motivated by the strategy proposed by Wang \cite{WangX}, we establish the existence of a discrete analogue of the attractor $\mathcal{A}$, see Section \ref{A}. Next, in Section \ref{C}, we state our main analytical results on  
convergence of discrete attractors to $\mathcal{A}$ locally in time for 
vanishing discretization parameters. The proof of this result is then given in Section \ref{P}. Finally, in Section \ref{L}, we recall the results obtained in \cite{FeiSwGw}. 
We illustrate and complement them by numerical experiments, and formulate hypotheses indicated by simulations.  

\section{Numerical method}
\label{n}
In numerical simulations, we impose the standard constitutive relations:
\begin{equation*}
p=  \vr \vt , \ e = c_v \vt, \ s=c_v \log \vt - \log \vr, \ c_v = \frac{1}{\gamma-1}, \ \gamma >1.
\end{equation*}

Next, we recall the weak formulation for the Navier-Stokes-Fourier system, cf. \cite[Definition 2.1]{FeiSwGw}.

\begin{Definition} \label{Dw1} 
	We say that $(\vr, \vt, \vu)$ is a \emph{weak solution} of the Navier--Stokes--Fourier system \eqref{i1}--\eqref{i7} 
	in the time interval $(t_0, \infty)$, $t_0 \geq -\infty$, 
	with the boundary conditions \eqref{i8}, \eqref{i9}, 	
	 if the following holds:
	
	\begin{itemize}
		
		\item The solution belongs to the {\bf regularity class}: 
		\begin{align}
			\vr, \vt &\in L_{\rm loc}^\infty([t_0, \infty); L^p(\Omega)),\ \vr \vu \in L_{\rm loc}^\infty([t_0, \infty); L^p(\Omega, \R^d))  \ \mbox{for some}\ p > 1, \ \vr \geq 0,\ \vt > 0, \br
			\vu &\in L^2_{\rm loc}([t_0, \infty); W^{1,2}_0 (\Omega; \R^d)), \quad
			(\vt - \vtB) \in L_{\rm loc}^2([t_0, \infty); W^{1,2}_0 (\Omega)).
			\label{w6}
		\end{align}
		
		\item The {\bf equation of continuity} \eqref{i1} is satisfied in the sense of distributions 
		\begin{align} 
			\int_{t_0}^{\infty} \intO{ \left[ \vr \partial_t \varphi + \vr \vu \cdot \Grad \varphi \right] } \dt &= 
			0, 
			\label{w3} \\
						\int_{t_0}^\infty \intO{ \left[ b(\vr) \partial_t \varphi + b(\vr) \vu \cdot \Grad \varphi + \Big( 
								b(\vr) - b'(\vr) \vr \Big) \Div \vu \varphi \right] } \dt &=0
						\label{w4}
		\end{align}
		for any $\varphi \in C^\infty_c((t_0 , \infty) \times \Ov{\Omega} )$ and any $b \in C^1(\R)$, 	$b' \in C_c(\R)$.
		\item The {\bf momentum equation} \eqref{i2} is satisfied in the sense of distributions 
		\begin{align}
			&\int_{t_0}^\infty \intO{ \left[ \vr \vu \cdot \partial_t \bfphi + \vr \vu \otimes \vu : \Grad \bfphi + 
				p \Div \bfphi \right] } \dt  = \int_{t_0}^\infty \intO{ \left[ \mathbb{S} : \Grad \bfphi - \vr \Grad G \cdot \bfphi \right] } \dt  
			\label{w5}
		\end{align}	
		for any $\bfphi \in C^\infty_c((t_0, \infty) \times \Omega; \R^d)$.
		
		\item The internal energy equation \eqref{i3} is replaced by the {\bf entropy inequality}
		\begin{align}
			- \int_{t_0}^\infty \intO{ \left[ \vr s \partial_t \varphi + \vr s \vu \cdot \Grad \varphi + \frac{\vc{q}}{\vt} \cdot 
				\Grad \varphi \right] } \dt  &\geq \int_{t_0}^\infty \intO{ \frac{\varphi}{\vt} \left[ \mathbb{S} : \Ds \vu - 
				\frac{\vc{q} \cdot \Grad \vt }{\vt} \right] } \dt 
			\label{w7} 
		\end{align}
		for any $\varphi \in  C^\infty_c((t_0, \infty) \times \Omega)$, $\varphi \geq 0$; and the {\bf ballistic energy inequality} 
		\begin{align}  
			- \int_{t_0}^\infty \partial_t \psi	&\intO{ \left[ \frac{1}{2} \vr |\vu|^2 + \vr e - \hvt \vr s \right] } \dt + \int_{t_0}^\infty \psi
			\intO{ \frac{\hvt}{\vt}	 \left[ \mathbb{S}: \Ds \vu - \frac{\vc{q} \cdot \Grad \vt }{\vt} \right] } \dt  \br
			&\leq 
			\int_{t_0}^\infty \psi \intO{ \left[ \vr \vu \cdot \Grad G - \vr s \partial_t \Theta - \vr s \vu \cdot \Grad \hvt - \frac{\vc{q}}{\vt} \cdot \Grad \hvt \right] } \dt 
			\label{w8}
		\end{align}
		for any $\psi \in  C^\infty_c(t_0, \infty)$, $\psi \geq 0$, and any {$\hvt \in BC^2([t_0, \infty) \times \Ov{\Omega}),\ \hvt > 0,\ \hvt|_{\pd \Omega} = \vtB$}.
	\end{itemize}
	
\end{Definition}

\begin{Remark}
Similarly to \cite{FeiSwGw}, the above definition does not contain any initial data as they are irrelevant in the subsequent analysis. The initial data can be easily accommodated by considering the test functions compactly supported in $[t_0,\infty)$, and adding the corresponding boundary integrals in the definition, see \cite[Chapter 3]{FeiNovOpen}. 
\end{Remark}

\subsection{Notation}
Before formulating the numerical method, we introduce the necessary notation. 
The domain $\Omega$ is divided into uniform cubes (or squares in in the case $d=2$) of size $h\in(0,1)$, denoted $\mesh$. The symbol $Q_h$ denotes the space of piecewise constant functions on the discrete mesh $\mesh$. 
The set of all faces of $\mesh$ is denoted by $\faces$,  $\facesext = \faces \cap \partial \Omega$ and $\facesint = \faces \setminus \facesext$ stand for the set of all exterior and interior faces, respectively.
We denote 
by $\facei$, $i=1,\dots, d$, the set of all faces that are orthogonal to the canonical basis vector ${\bf e}_i$. 
Moreover, we define the $i^{\rm th}$ dual grid ${\cal D}_i$ as set of all cubes of the same size $h$ with mass centers sitting at the same position as $\sigma \in  \facei$. Let $W_h^{(i)}$ be the space of piecewise constants on ${\cal D}_i$ and $\vWh = \{W_h^{(1)},\cdots,W_h^{(d)}\}$. 
For a generic function $v \in Q_h$ we denote 
\begin{equation*}
\begin{aligned}
& v^{\rm in} = \lim_{\delta \rightarrow 0^+} v(x -\delta \vn ), \  \ 
v^{\rm out} = \lim_{\delta \rightarrow 0^+} v(x +\delta \vn ), \  \ 
\jump{v}= v^{\rm out} - v^{\rm in}, \ \  \avs{v}= (v^{\rm out} + v^{\rm in} )/2
\end{aligned}
\end{equation*}
on any face $\sigma \in \faces$. 
Given a velocity field $\vuh$, the upwind flux at $\sigma \in \faces$  for $r_h \in Q_h$ is defined as
\begin{align*}
\Up [r_h, \vuh]|_\sigma   =r_h^{\rm up} \avs{\vuh }_\sigma \cdot \vn_\sigma , \quad 
r_h^{\rm up} =
\begin{cases}
r_h^{\rm in} & \mbox{if } \ \avs{\vuh }_\sigma \cdot \vn_\sigma \geq 0, \\
r_h^{\rm out} & \mbox{if } \ \avs{\vuh }_\sigma \cdot \vn_\sigma < 0.
\end{cases}
\end{align*}
Further, we introduce the following discrete difference operators for $v\in Q_h, \vv\in Q_h^d$ and $\vw \in \vWh$:
\begin{align*}
& \Gradd v(x)  =  \frac{1}{h}\sumS  \mathds{1}_{D_\sigma}(x) \vn \jump{v}  , 
\quad 
\Gradh v(x)  =  \frac{1}{h}\sumK  \mathds{1}_K{(x)}\sum_{\sigma\in \facesK} \vn  \avs{v},
\\& 
\Divmesh \vw (x)= \frac{1}{h}\sumK \mathds{1}_K{(x)}\sum_{\sigma\in \facesK} \vn \cdot \vw,
\quad  
\Divh \vv(x) = \frac{1}{h}\sumK  \mathds{1}_K{(x)} \sum_{\sigma\in \facesK} \vn \cdot \avs{\vv},
\\
& \Laph v (x)= \frac{1}{h^2}\sumK\mathds{1}_K{(x)}  \sum_{\sigma\in \facesK} \jump{v}, 
\quad \bD_h \vv = (\Gradh \vv + \Gradh^T \vv)/2.
\end{align*}

\subsection{Scheme}
The numerical method we use for solving the Navier-Stokes-Fourier system \eqref{i1}--\eqref{i9} is the fully discrete time implicit proposed in \cite{FeLMShYu:2024}. For the sake of simplicity, we set $\Delta t \approx h, \ \Delta t \in (0,1)$ and $\Grad G \cdot \vn|_{\partial \Omega} = \mbox{const}$; a more general situation can be handled in a similar manner. 
Given $k \in \mathbb Z$, we denote 
\[
D_t v^k_h = \frac{v^k_h - v^{k-1}_h} {\Delta t}. 
\]
It is convenient to formulate the method in both weak and strong forms. 
These formulations are equivalent, and will be alternatively used in different contexts.
 
\begin{Definition}[{{\bf Weak form}}]\label{VFV-weak}
Let $\vthB  \in W_h^{(d)}$ be given. 
For $\vU_h^{k-1}\equiv(\vrh^{k-1},\vuh^{k-1}, \vth^{k-1})\in Q_h^{d+2}$,  $\vU_h^k\equiv(\vrh^k,\vuh^k, \vth^k)\in Q_h^{d+2}$  is defined as a solution to the following system of algebraic equations:
\begin{subequations}\label{VFV}
\begin{align}\label{VFV_D}
&\intQ{ D_t \vrh^k  \phi_h } - \intfacesint{  \Fup (\vrh^{k} ,\vuh^{k} )
\jump{\phi_h}   }  = 0 \hspace{4.5cm} \mbox{ for all }\ \phi_h \in Q_h, \\
 \label{VFV_M}
&\intQ{ D_t  (\vrh^k  \vuh^k ) \cdot \bfphi_h } - \intfacesint{ {\bf F}_h^{\alpha}  (\vrh^{k}  \vuh^{k} ,\vuh^{k} ) \cdot \jump{\bfphi_h}   } + \intQ{ (\bS_h^{k} -p_h^{k} \bI ) : \bD_h \bfphi_h }
\br
&\hspace{4cm}
= \intQ{\vrh^{k} \Grad G \cdot  \bfphi_h} \hspace{0.8cm}
\mbox{ for all } \bfphi_h \in Q_h^d, \quad \avs{\bfphi_h}_{\sigma} = 0, \ \sigma \in \facesext,
\\ \label{VFV_E}
&c_v\intQ{ D_t (\vrh^k  \vth^k ) \phi_h } - c_v\intfacesint{  \Fup (\vrh^{k} \vth^{k} ,\vuh^{k} )\jump{\phi_h} }+\intfacesint{  \frac{\kappa}{ h } \jump{\vth^{k}}  \jump{ \phi_h}  }
\br
&\hspace{1.5cm}
+ 2\intfacesext{  \frac{\kappa}{ h } \left( (\vt_h^{k})^{\rm in} - \vthB \right)  \phi_h^{\rm in}  }
= \intQ{ (\bS_h^{k} -p_h^{k} \bI ) : \Gradh \vuh^{k} \phi_h} \quad \mbox{for all}\ \phi_h \in Q_h,
\end{align}
\end{subequations}
where 
$\Fup (r_h,\vuh)$ is the diffusive upwind flux taken as
\begin{equation*}
\Fup (r_h,\vuh)
={\Up}[r_h, \vuh] - \muh \jump{ r_h }, \quad \alpha >-1,
\end{equation*}
and $\bS_h = 2\mu \bD_h \vuh + \lambda \Divh\vuh \bI, \; \Dhuh = (\Gradh \vu_h+\Gradh^t \vu_h)/2, \; \lambda = \eta -  \frac2{d} \mu$  with the  boundary conditions
\[
\avs{\vth}_{\sigma } = \vthB, \quad \avs{\vuh}_{\sigma } = 0, \quad \sigma \in \facesext.
\]
\end{Definition}

Alternatively, we use the strong formulation of the scheme.

\begin{Definition}[{\bf Strong form}]
The FV scheme \eqref{VFV} can be be rewritten in the following strong form: 
\begin{subequations}\label{VFV-1}
\begin{align}\label{VFV_D-1}
&D_t \vrh^{k}   + \Divmesh ( \Fup (\vrh^{k} ,\vuh^{k} ) \cdot \vn) = 0,   \\
 \label{VFV_M-1}
&D_t  (\vrh^{k}  \vuh^{k} )  + \Divmesh ({\bf F}_h^{\alpha}  (\vrh^k  \vuh^k ,\vuh^k ) \cdot \vn )= \Divh (\bS_h^k -p_h^k \bI ) + \vrh^{k} \Grad G ,   \\\label{VFV_IE-1}
&c_v D_t (\vrh^k  \vth^k )  + c_v \Divmesh ({\bf F}_h^{\alpha}  (\vrh^k \vth^k  ,\vuh^k ) \cdot \vn) - \kappa \Laph \vth^k
= (\bS_h^k -p_h^k \bI ) : \Gradh \vuh^k,
\end{align}
\end{subequations}
equipped with the boundary conditions
\begin{align}
 \Fup (r_h , \vuh )|_{\sigma} = 0, \quad \avs{\vth^k}_{\sigma } = \vthB, \quad \avs{\vuh^k}_{\sigma } = 0 \ \mbox{ and }\ \jump{\bS_h^k -p_h^k \bI}_{\sigma} \cdot \vn = 0, \quad \sigma \in \facesext.
\end{align}
\end{Definition}

\subsubsection{Time interpolation}
\label{inter}

Given an initial time  $(t_0, k_0) \in (\R, \mathbb{Z})$, we can identify the finite volume approximation \eqref{VFV} with a time dependent function as follows. 
\begin{itemize}
\item For any $k \geq 0$, we set
\begin{align*}
&  \vU_h(t_0+k\TS) = \vU_h^{k_0+k}.
\end{align*}

\item For each time subinterval $(t_0+{(k-1)}\TS,t_0+k\TS], k \geq 1, $ we construct two time interpolations:
\begin{equation}\label{time_extension}
\begin{aligned}
& \vU_h(t,\cdot) : =\vU_h^{k_0+k} & \mbox{piecewise constant}; \\ 
& \tvU_h(t,\cdot)=\vU_h^{k_0+k-1} + \frac{\vU_h^{k_0+k}- \vU_h^{k_0+k-1}}{\TS}(t-(t_0+{(k-1)}\TS))  & \mbox{piecewise linear}.
\end{aligned}
\end{equation}

\item In addition,  for the piecewise constant interpolation, we set
\[
D_t \vU_h(t) := \frac{\vU_h^{k_0+k}- \vU_h^{k_0+k-1}}{\TS}
 \ \mbox{ for } \ t\in (t_0+{(k-1)}\TS,t_0+k\TS], \ \ k\geq 1.
\]
\end{itemize}

\subsubsection{Hypothesis}
\label{hyp}

We impose a hypothesis of \emph{boundedness}  of numerical solutions:
\begin{equation}\label{HP}
\mbox{\bf (B)} \quad    0< \Un{\vr} \leq \vrh^k \leq \Ov{\vr}, \
0< \Un{\vt} \leq \vth^k \leq \Ov{\vt}, \
\abs{\vuh^k} \leq \Ov{u}, \  \mbox{ uniformly for all } k,  \mbox{ and } h \to 0,  
\end{equation}
for certain positive constants $\underline{\vr}$, $\Ov{\vr}$, $\underline{\vt}$, 
$\Ov{\vt}$, $\Ov{u}$. 
This means that the approximate sequence of numerical 
solutions remains in a physically admissible range. 

Such an assumption frequently imposed in numerical analysis is indispensable in deriving associated stability estimates, consistency and convergence. Throughout the whole text, we tacitly assume 
Hypothesis {\bf (B)} holds.

\section{Discrete attractors}
\label{A}

The trajectory attractor $\mathcal{A}$, cf.\ \eqref{i12}, for the Navier-Stokes-Fourier system identified in \cite{FeiSwGw} consists of entire bounded solutions defined for $t \in (-\infty, \infty)$. 
In this section, we introduce the concept of entire discrete solutions and show their convergence to their continuous counterparts. 

\begin{Definition}[{\bf Discrete solution}]
We say that 
\[
\widetilde{\vc{U}}_h(t,x) = (\widetilde{\vr}, \widetilde{\vu}, \widetilde{\vt})_h(t,x)  
\]
is a discrete solution of the Navier-Stokes-Fourier system on the time interval $[T, \infty)$ 
if 
\[
\widetilde{\vc{U}}_h (T + k \Delta t, x), \ k = 0, 1, \dots, 
\]
 is the linear interpolation of the numerical solution introduced in \eqref{VFV} and \eqref{time_extension}. 
\end{Definition}

\begin{Definition} [{\bf Entire discrete solution}] \label{AD1}
	
We say that 
\[
\widetilde{\vU}_h = (\widetilde{\vr}, \widetilde{\vu}, \widetilde{\vt})_h \in C(\R; Q_h^{d+2}) 
\]
is \emph{entire discrete solution}  of the Navier-Stokes-Fourier system if there exists $\tau \in \R$ such that $\widetilde{\vU}_h$ is a discrete solution on $[\tau - n \Delta t, \infty)$ for any $n=0,1,\dots$.	
	
\end{Definition}

\medskip
	
As shown in \cite{FeiSwGw}, the set $\mathcal{A}$ of all entire solutions is a trajectory attractor for the Navier-Stokes-Fourier system. The following 
result can be seen as a discrete analogue of this statement. 

\begin{Theorem}[\bf Discrete attractor] \label{AT1}
Let $\{\widetilde{\vU}_h^n\}_{n=1}^\infty$ be a sequence of discrete solutions defined on the time 
intervals $[T_n, \infty)$, $T_n \to - \infty$. Suppose that the associated 
numerical solutions $ \widetilde{\vU}_h^n$ satisfy Hypothesis {\bf (B)} (with the same bounds $\underline{\vr}$, $\Ov{\vr}$, $\Ov{u}$, $\underline{\vt}$, $\Ov{\vt}$) uniformly for $t \in \R$, $n \to \infty$.

\medskip
Then there exists a subsequence $\{\widetilde{\vU}_h^{n_m} \}_{m=1}^\infty$ such that 
\begin{equation} \label{A1}
\widetilde{\vU}_h^{ n_m} \to \widetilde{\vU}_h^\infty \ \mbox{in}\ C_{\rm loc}(\R; Q_h^{d+2})\ \mbox{as}\ m \to \infty,
\end{equation}
where $\widetilde{\vU}_h^\infty$ is an entire discrete solution.	
\end{Theorem}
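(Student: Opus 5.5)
The argument will be a compactness argument in finite dimensions, with $h$ and $\Delta t$ held fixed. Then $Q_h^{d+2}$ is a finite-dimensional space, and Hypothesis {\bf (B)} confines every nodal value $\vU_h^{n,k}$ to the compact set
\[
\mathcal{K} = [\underline{\vr},\Ov{\vr}]^{\#\mesh} \times \{ |\vu| \leq \Ov{u}\}^{\#\mesh} \times [\underline{\vt},\Ov{\vt}]^{\#\mesh}.
\]
Each $\widetilde{\vU}_h^n$ is determined by its nodal values at the grid times $T_n + k\Delta t$. Since the time grids of different $n$ need not be aligned, I first record the phase $\theta_n \in [0,\Delta t)$ with $T_n \equiv \theta_n \pmod{\Delta t}$. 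I then pass to a subsequence with $\theta_n \to \theta$. Replacing $T_n$ by a grid point, I can index the nodes as $t^n_j = \theta_n + j\Delta t$, $j\in\mathbb{Z}$, $j \geq j_n$, where $j_n \to -\infty$.

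Next I extract a convergent subsequence by a Cantor diagonal argument. For each fixed $j\in\mathbb{Z}$ the node $\vU^{n}_h(t^n_j)$ is defined for all $n$ large and lies in $\mathcal{K}$. Diagonalising over $j$ gives a subsequence $n_m$ with $\vU_h^{n_m}(t^{n_m}_j) \to \vU^\infty_j$ for every $j \in \mathbb{Z}$. I then define $\widetilde{\vU}_h^\infty$ as the piecewise linear interpolant of $\{\vU^\infty_j\}$ on the nodes $\theta + j\Delta t$. On a compact time interval only finitely many nodes matter. Piecewise-linear interpolation is jointly continuous in the node values and in the node positions $\theta_n \to \theta$, and all functions involved are uniformly Lipschitz in time with constant $\mathrm{diam}(\mathcal{K})/\Delta t$. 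Together these give the convergence \eqref{A1} in $C_{\rm loc}(\R;Q_h^{d+2})$.

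It remains to check that $\widetilde{\vU}_h^\infty$ is an entire discrete solution, i.e.\ that each consecutive pair $(\vU^\infty_{j-1},\vU^\infty_j)$ satisfies the scheme \eqref{VFV}, with $\vU^\infty_j$ playing the role of $\vU_h^k$; this is the main obstacle. The implicit scheme can be written as $\Phi(\vU^{k-1}_h,\vU^k_h)=0$ for a map $\Phi$ on $Q_h^{d+2}\times Q_h^{d+2}$, and I would pass to the limit in this relation. The map $\Phi$ is not continuous because the upwind selection $r_h^{\rm up}$ jumps when $\avs{\vuh}\cdot\vn$ changes sign. However, on a face where $\avs{\vuh}_\sigma\cdot\vn_\sigma = 0$ the upwind term $r^{\rm up}\avs{\vuh}\cdot\vn$ vanishes whichever value of $r^{\rm up}$ is chosen. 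Hence $\Up[r_h,\vuh]$ is in fact a continuous function of $(r_h,\vuh)$, being a product of a bounded piecewise selection with a factor that vanishes at the switching set. All remaining terms of $\Phi$ are continuous on $\mathcal{K}\times\mathcal{K}$, where densities and temperatures are bounded away from zero. These terms are polynomial in the unknowns and the fixed data $\vthB$, $\Grad G$; the pressure is $p_h=\vrh\vth$.

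Passing to the limit in $\Phi(\vU^{n_m}_{j-1},\vU^{n_m}_j)=0$ therefore yields $\Phi(\vU^\infty_{j-1},\vU^\infty_j)=0$ for all $j\in\mathbb{Z}$. Consequently $\widetilde{\vU}^\infty_h$ is a discrete solution on $[\theta + j\Delta t,\infty)$ for every $j$. This is Definition \ref{AD1} with $\tau=\theta$, which completes the proof.
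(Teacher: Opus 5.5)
Your proof is correct and follows the same strategy as the paper. Both use compactness in the finite-dimensional space $Q_h^{d+2}$: the paper applies Arzel\`a--Ascoli to the uniformly Lipschitz interpolants, whereas you diagonalize over the nodal values. Both also use a phase shift $\tau_n \to \tau$ to align the time grids. You additionally make explicit the step the paper dismisses as ``certainly true'', namely that the limiting nodal values satisfy the implicit scheme. That step does require your observation that $\Up[r_h,\vuh] = r_h^{\rm in}(\avs{\vuh}\cdot\vn)^+ - r_h^{\rm out}(\avs{\vuh}\cdot\vn)^-$ is continuous despite the upwind switch.
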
	

\begin{proof}
As $h$ and $\Delta t \approx h$ are fixed, and Hypothesis {\bf (B)} is satisfied, the discrete solutions are uniformly globally Lipschitz in 
$[T_n, \infty)$. Thus the convergence claimed in \eqref{A1} follows from Arzel\` a--Ascoli theorem. 

It remains to observe that the limit $\widetilde{\vU}_h^\infty$ is an entire discrete solutions. This is certainly true if $T_n = - a_n \Delta t$, 
where $a(n) \to \infty$ is a sequence of integers. In the general case, we perform the time shift replacing 
\[
\widetilde{\vU}^n_h (t, \cdot) \ \mbox{by}\ 
\widetilde{\vU}^n_h (t + \tau_n, \cdot) ,\ \tau_n \in [0, \Delta t), 
\] 
thus for the new sequence it holds  $T_n = - a_n \Delta t$. In addition, passing to a subsequence as the case may be, we have 
\[
\tau_n \to \tau \in [0, \Delta t]. 
\]
Finally, we observe that 
\[
\widetilde{\vU}^n_h(\cdot + \tau_n, \cdot)  \to \widetilde{\vU}^\infty_h (\cdot + \tau, \cdot) 
\]
whenever 
\[	
\widetilde{\vU}^n_h \to \widetilde{\vU}^\infty_h
\]	
as all functions are globally Lipschitz.	 
\end{proof} 

\section{Convergence of discrete attractors}
\label{C}

Our main analytical result shows proximity of the approximate and exact entire solutions -- trajectory attractors -- for vanishing discretization parameter $(\Delta t,h)$, 
$\Delta t \approx h$.

The \emph{entire weak solutions} to the Navier-Stokes-Fourier system are defined by setting $t_0=-\infty$ in Definition \ref{Dw1}. 
Accordingly, all test functions are supposed to be compactly supported in the time variable. 

\begin{Theorem} [\bf Attractor convergence] \label{CT1}
Let $\{\widetilde{\vU}_{h} = (\widetilde{\vr_h}, \widetilde{\vu_h}, \widetilde{\vt_h})\}_{h > 0}$ be a family of entire discrete solutions satisfying Hypothesis {\bf (B)} (with the same bounds $\underline{\vr}$, $\Ov{\vr}$, $\Ov{u}$, $\underline{\vt}$, $\Ov{\vt}$) uniformly for $t \in \R$ and $h \to 0$. 

Then there exists a sequence $h_n \searrow 0$ such that 
\begin{equation} \label{A2}
\widetilde{\vU}_{h_n} \to \vU \ \mbox{in} \ L^q_{\rm loc}(\R; L^q(\Omega; \R^{d+2})) \ \mbox{for any}\ 1 \leq q  < \infty, 	
\end{equation} 
where $\vU = (\vr, \vu, \vt)$ is an entire weak solution of the Navier-Stokes-Fourier system \eqref{i1}--\eqref{i3}, with the boundary conditions \eqref{i8}, \eqref{i9} in the 
sense of Definition~\ref{Dw1}.	
\end{Theorem}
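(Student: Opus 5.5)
The plan follows the standard ``consistency + stability + compactness'' route for structure-preserving schemes developed in \cite{FeLMShYu:2024}, carried out locally in time on arbitrary windows $[-T,T]$ with a diagonal argument at the end. Since the entire discrete solutions obey Hypothesis {\bf (B)} uniformly on $\R$, all phase variables are bounded in $L^\infty$ uniformly in $h$ and $t$. We do not need initial data, which is essential because entire solutions have none.

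The first step is uniform dissipation bounds on a fixed window. I would test the discrete internal energy/entropy balance of \eqref{VFV} with the discrete ballistic energy, i.e. total energy minus $\Theta_h \vr_h s_h$, where $\Theta$ is a smooth extension of $\vtB$. This yields a discrete ballistic energy inequality on any interval $[t_1,t_2]$. Because the energy is bounded by {\bf (B)} at both ends of the window, we obtain $h$-uniform bounds on $\int_{-T}^{T} \|\Gradh \vuh\|_{L^2}^2\,\dt$, on the discrete temperature gradient $\int_{-T}^{T} \|\Gradd \vth\|_{L^2}^2\,\dt$, and on the numerical diffusion terms (upwind jumps and the $h^\alpha$ terms). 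From the scheme we also get that $D_t$ of the conservative variables is bounded in $L^1(-T,T; W^{-\ell,2})$. So $\widetilde{\vU}_h$ and $\vU_h$ coincide in the limit, since their difference is $O(\Delta t)$ in the same negative norm.

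The second step is consistency. For test functions compactly supported in $(-T,T)\times\Omega$, I would show that $(\vrh,\vuh,\vth)$ satisfy \eqref{w3}, \eqref{w5}, the renormalized version \eqref{w4}, the entropy inequality \eqref{w7} and the ballistic inequality \eqref{w8} up to errors $e_h \to 0$. The error bounds use the {\bf (B)} bounds together with the dissipation bounds of the first step, exactly as in the consistency lemmas of \cite{FeLMShYu:2024}, with $-1<\alpha$ ensuring that the artificial diffusion vanishes. Passing to weak-$*$ limits in $L^\infty$ then gives a dissipative measure-valued (Young measure) limit on each window. A diagonal subsequence $h_n$ makes this consistent on all of $\R$.

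The third step, and the main obstacle, is upgrading to strong convergence in $L^q_{\rm loc}$ and identifying the limit as a weak solution. Here I would use the weak--strong approach. From the uniform $L^2$ bounds on the discrete gradients, the limits $\vu$ and $\vt$ belong to $L^2_{\rm loc}W^{1,2}$. The key point is that the velocity and temperature have gradient bounds, so they converge strongly by a discrete Aubin--Lions argument (compactness of $Q_h$ functions with bounded discrete $H^1$ norm, combined with time control via the equations). The density has no gradient bound, so it needs the standard effective viscous flux/oscillation defect argument, transferred to the scheme:
\begin{enumerate}[(i)]
\item test the discrete momentum equation with $\nabla_h\Delta_h^{-1}\vrh$ to obtain the discrete effective viscous flux identity;
\item compare the renormalized equation for $\vr\log\vr$ in the limit with its discrete counterpart; the upwind and $h^\alpha$ terms give the correct sign;
\item conclude $\overline{\vr\log\vr}=\vr\log\vr$, hence $\vrh\to\vr$ a.e.
\end{enumerate}
Since $p=\vr\vt$ with $\vt$ converging strongly, the pressure is treated as linear in $\vr$ for this argument. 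Once all three variables converge a.e., the bounds of {\bf (B)} give $L^q$ convergence for every $q<\infty$. The nonlinear terms then pass to the limit directly. The dissipation terms in \eqref{w7} and \eqref{w8} are handled by weak lower semicontinuity, which produces the required inequalities. Because all estimates are uniform on every window and the test functions are compactly supported in time, the limit $\vU$ is an entire weak solution in the sense of Definition~\ref{Dw1} with $t_0=-\infty$.
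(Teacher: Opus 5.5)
Your overall architecture matches the paper's: dissipation bounds and consistency, then strong convergence of velocity and temperature, then the density via a discrete effective viscous flux identity and renormalization with $b(\varrho)=\varrho\log\varrho$. However, step (iii) has a genuine gap, and it sits exactly where entire solutions differ from the Cauchy problem.

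Comparing the discrete and limit renormalized equations (with $\phi_h\equiv 1$), and inserting the effective viscous flux identity with $p=\varrho\vartheta$ and $\vartheta_h\to\vartheta$ strongly, gives only the following for the defect $D=\overline{\varrho\log\varrho}-\varrho\log\varrho\geq 0$:
\[
\frac{{\rm d}}{{\rm d}t}\int_\Omega D\,{\rm d}x\leq -\frac{1}{2\mu+\lambda}\int_\Omega \vartheta\left(\overline{\varrho^2}-\varrho^2\right){\rm d}x \leq 0 \quad\mbox{in } \mathcal{D}'(\mathbb{R}).
\]
That is, the defect is merely non-increasing. The usual conclusion $D\equiv 0$ rests on $D=0$ at the initial time, which comes from strongly convergent initial data. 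You stress that no initial data are needed, but you give no replacement. Nothing in your argument rules out a defect that is positive, even constant, on all of $\mathbb{R}$.

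The paper closes this by making the decay quantitative and sending the initial time to $-\infty$. Since $\varrho\mapsto\varrho\log\varrho-\varrho^2/(2\underline{\varrho})$ is concave on $[\underline{\varrho},\infty)$, one has $0\leq D\leq (2\underline{\varrho})^{-1}(\overline{\varrho^2}-\varrho^2)$. Together with $\vartheta\geq\underline{\vartheta}$, this yields $\frac{{\rm d}}{{\rm d}t}\int_\Omega D\leq -C\int_\Omega D$, whence
\[
\int_\Omega D(T)\,{\rm d}x\leq e^{-C(T-\tau)}\int_\Omega D(\tau)\,{\rm d}x\quad\mbox{for a.a. } \tau<T .
\]
Since Hypothesis {\bf (B)} holds uniformly for $t\in\mathbb{R}$, $\int_\Omega D(\tau)\,{\rm d}x$ and the constant $C$ are independent of $\tau$. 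Letting $\tau\to-\infty$ then gives $D\equiv 0$. This backward-in-time limit is where uniformity of {\bf (B)} on the whole real line is essential, and your proposal is missing it.

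A smaller point concerns your ``discrete Aubin--Lions'' step. The scheme controls time derivatives only of $\varrho_h\vu_h$ and $\varrho_h\vartheta_h$, not of $\vu_h$ and $\vartheta_h$ themselves. Moreover, the central gradient $\nabla_h$ annihilates checkerboard modes, so a bound on $\|\nabla_h\vu_h\|_{L^2}$ alone gives no spatial compactness of $\vu_h$. What works is a compensated-compactness product argument, which the paper carries out via Proposition~\ref{col-div-curl} and the compatibility estimates of Lemma~\ref{compatibility-vel}; the latter is where the $h^\alpha$ jump bounds and $\alpha<1$ enter. One first obtains $\widetilde{\varrho_h}\vu_h\rightharpoonup\varrho\vu$ and $\widetilde{\varrho_h\vu_h}\cdot\vu_h\rightharpoonup\varrho|\vu|^2$. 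Then $\int\varrho_h|\vu_h-\vu|^2\to 0$ locally, and $\varrho_h\geq\underline{\varrho}$ gives strong convergence. The same works for $\vartheta_h$, using $\widetilde{\varrho_h\vartheta_h}$ and the internal energy equation.
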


The proof of Theorem \ref{CT1} will be presented in the following two sections. 
The fundamental ingredients of the convergence analysis are the stability and consistency of the finite volume method \eqref{VFV} discussed in Section~\ref{stacon}.  Having established these results we complete the proof of Theorem~\ref{CT1} in Section~\ref{P}.

\section{Stability and consistency}
\label{stacon}

Performing a simple time shift, we can assume, without loss of generality, that $k \in \mathbb{Z}$, and $\vU_h(0) = \vU_h^0$ for $h \to 0$. 
The following stability and compatibility results 
can be obtained in the same way as in \cite{FeLMShYu:2024} and \cite{FeLMShYu:2025I}. 
\begin{Lemma}[{{\bf Uniform bounds \cite[Lemma A.4]{FeLMShYu:2024}}}]\label{lm_ub}
Let $\vU_h \equiv (\vrh ,\vuh, \vth)$ be an entire numerical solution obtained by the FV method \eqref{VFV} with $(\TS, h) \in (0,1)^2$, $\TS \approx h$, and $\alpha \in (-1,1)$.
Let Hypothesis {\bf (B)} hold. 

Then we have
\begin{subequations}\label{ap}
\begin{align}\label{ap1}
&\norm{\Gradd \vth}_{L^2((T,T+1) \times \Omega; \R^{d})} + \norm{\Gradh \vuh}_{L^2((T,T+1) \times \Omega; \R^{d\times d})} \leq C ,
\\ \label{ap2}
&(\TS)^{1/2} \norm{D_t \vU_h }_{L^2((T,T+1) \times \Omega; \R^{d+2})}  \leq C ,
\\  \label{ap3}
&\int_{T}^{T+1}\intfacesint{ \left( h^\alpha + \abs{ \avs{\vuh}\cdot \vc{n} } \right) \, \abs{\jump{ \vU_h }}^2  }\dt \leq C.
\end{align}
\end{subequations}
The constant $C$ depends on $\|\vtB\|_{W^{2,\infty}(\Omega)}$ and $\Un{\vr}, \Ov{\vr}, \Un{\vt}, \Ov{\vt}, \Ov{u}$, but it is independent of $T \in \R$ and discretization parameters $(h, \TS)$.
\end{Lemma}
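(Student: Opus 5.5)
The plan is to derive a discrete \emph{ballistic energy balance} for the scheme \eqref{VFV} on a window of time steps. We then exploit Hypothesis {\bf (B)} to turn every term into a uniform bound that does not depend on the window position $T$. The key point is that the estimates must be local in time: an entire solution has no initial datum to start from, so we cannot accumulate a bound from $t_0$. Instead, the bounded-ness of $E$ and of $\vr s$ along the whole trajectory, guaranteed by {\bf (B)}, replaces initial-data control.

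\textbf{Step 1: the ballistic energy identity.} We take a discrete extension $\Theta_h$ of $\vtB$, a $Q_h$-projection of a fixed smooth $\Theta\in W^{2,\infty}$ with $\Theta|_{\partial\Omega}=\vtB$ and $\Theta>0$. We test the momentum equation \eqref{VFV_M} with $\vuh^k$, the continuity equation \eqref{VFV_D} with $-\tfrac12|\vuh^k|^2$, and the internal energy equation \eqref{VFV_E} with $1-\Theta_h/\vth^k$. Equivalently, we combine the total energy with $-\Theta_h\times$ the discrete entropy inequality established in \cite{FeLMShYu:2024}. This gives, for each $k$,
\[
D_t\intO{\Big(\tfrac12\vrh|\vuh|^2+c_v\vrh\vth-\Theta_h\vrh s_h\Big)^k}
+\intO{\frac{\Theta_h}{\vth^k}\Big(\bS_h^k:\Gradh\vuh^k+\kappa\frac{|\Gradd\vth^k|^2}{\vth^k}\Big)}+\mathcal N^k\le R^k .
\]
Here $\mathcal N^k\ge0$ collects the numerical dissipation. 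It contains the upwind and $h^\alpha$ jump terms, of the form $(h^\alpha+|\avs{\vuh}\cdot\vn|)\jump{\cdot}^2$ weighted by convexity moduli, and the time-step terms $\TS\,|D_t\vU_h|^2$ weighted similarly; these arise from the convexity of $\vr|\vu|^2$ and of $-\vr s$ in the conservative variables, via Taylor expansion with implicit time stepping. The term $R^k$ collects $\vrh\vuh\cdot\Grad G$, $\vrh s_h\vuh\cdot\Gradh\Theta_h$, and the heat-flux-times-$\Grad\Theta$ term. Handling the boundary face contribution $2\kappa h^{-1}((\vth)^{\rm in}-\vthB)$ is where $\Theta_h$ matching $\vthB$ on $\facesext$ matters. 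All of this is \cite[Lemma A.4]{FeLMShYu:2024}, so I would follow that computation, checking only that no initial-time term enters.

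\textbf{Step 2: summing over a window.} We sum over the steps with $k\TS\in(T,T+1]$. The telescoped time-derivative term is bounded by twice the supremum of the ballistic energy, which by {\bf (B)} is at most $C(\Un\vr,\Ov\vr,\Un\vt,\Ov\vt,\Ov u,\|\vtB\|_{W^{1,\infty}})$. The term $R^k$ is bounded using {\bf (B)}, except the heat-flux part $\kappa\,\Gradd\vth\cdot\Gradd\Theta_h/\vth$. That part we absorb by Young's inequality into half of the $\kappa|\Gradd\vth|^2\Theta_h/\vth^2$ dissipation, since $\Theta_h/\vth^2\ge\min\Theta/\Ov\vt^2>0$.

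\textbf{Step 3: reading off the estimates.} The positive-definite viscous term with the lower bound $\Theta_h/\vth\ge c>0$ gives the $\Gradh\vuh$ part of \eqref{ap1}; this requires a discrete Korn-type inequality, or simply $\mu,\eta$ positivity if the trace-free symmetric structure is handled as in \cite{FeLMShYu:2024}. The heat term gives the $\Gradd\vth$ part. The numerical dissipation $\mathcal N$ gives \eqref{ap2} and \eqref{ap3}, because under {\bf (B)} the Hessian of the convex ballistic energy in $(\vr,\vm,\vr\vt)$ is uniformly positive definite on the bounded admissible set. This last point is the main obstacle: we must verify that the coefficients multiplying the jumps and time differences are uniformly bounded below in \emph{all} components $(\vr,\vu,\vt)$, including the density jump. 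For the density I would use the $h^\alpha$ diffusion and the strict convexity of $\vr\mapsto\vr\log\vr$ weighted by $\Theta_h$. The constants depend only on the listed quantities, hence are independent of $T$ and $(h,\TS)$.
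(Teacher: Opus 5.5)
Your proposal is correct and follows essentially the argument the paper relies on. The paper gives no separate proof but defers to \cite[Lemma A.4]{FeLMShYu:2024}, which rests on the discrete ballistic energy balance \eqref{BalBal} with a fixed, time-independent $\hvth$; your adaptation---summing over the steps covering $(T,T+1)$ and bounding the telescoped ballistic energy at both ends by Hypothesis {\bf (B)} instead of by initial data---is exactly what makes $C$ independent of $T$. One imprecision: to generate the entropy part, the continuity equation must be tested with $-\frac12|\vuh^k|^2-\hvth\,(s_h^k-c_v-1)$, not only $-\frac12|\vuh^k|^2$, and the residuals $R_B-R_s$ in \eqref{BalBal} must be absorbed into the numerical dissipation; your equivalent formulation via $-\hvth$ times the discrete entropy inequality already covers this.
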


\begin{Lemma}[{\bf Compatibility of discrete gradients \cite{FeLMShYu:2025I}}]\label{compatibility-vel}
Under Hypothesis {\bf (B)}, let $\vU_h \equiv (\vrh ,\vuh, \vth)$ be an entire numerical solution obtained by the FV method \eqref{VFV} with $(\TS, h) \in (0,1)^2$, $\TS \approx h$, and $\alpha \in \left(-1,1 \right)$. 
Denote
\begin{subequations}\label{cf}
\begin{align} \label{cf1}
& \left< e_{\Grad\vu}; \bbT \right> \equiv \int_{T}^{T+1} \intOB{ \vu_{h} \cdot \Div \bbT + \Gradh \vuh : \bbT  } \dt,
\\ \label{cf2}
& \left< e_{\Grad{(|\vu|^2})}; \bfphi  \right> \equiv \int_{T}^{T+1} \intOB{ |\vuh|^2  \Div \bfphi  + \Gradh (|\vuh|^2) \cdot \bfphi   }\dt, \\ \label{cf3}
& \left< e_{\Grad\vt}; \bfphi \right> \equiv 
\int_{T}^{T+1} \intO{ \bigg( \vth \Div \bfphi+  \Gradd\vth \cdot \bfphi  \bigg)}\dt -  \int_{T}^{T+1} \int_{\pd \Omega}{ \vtB \, \bfphi \cdot \vn} \ds\dt,\\ \label{cf4}
& \left< e_{\Grad(\vt^2)}; \bfphi  \right> \equiv 
\int_{T}^{T+1} \intO{ \bigg(  \vth^2 \Div \bfphi +\Gradd(\vth^2) \cdot  \bfphi   \bigg)} \dt -  \int_{T}^{T+1} \int_{\pd \Omega}{ \vtB^2 \, \bfphi  \cdot \vn} \ds\dt,
\end{align}
for $\bbT \in C^1([T,T+1]\times \Ov{\Omega}; \R^{d \times d}_{\rm sym}))$ and
$\bfphi \in C^1([T,T+1]\times \Ov{\Omega}; \R^{d}))$.

Then we have the following compatibility error estimates
\begin{align}\label{cf-e1}
& \abs{\left< e_{\Grad\vu}; \bbT \right>  } \leq C  h^{(1-\alpha)/2},
\quad \abs{ \left< e_{\Grad{(|\vu|^2})}; \bfphi \right> }  \leq C h^{(1-\alpha)/2}, \\
\label{cf-e2}
& \abs{ \left< e_{\Grad\vt}; \bfphi \right> } \leq C h,  \
\quad
\hspace{0.6cm} \abs{ \left< e_{\Grad(\vt^2)}; \bfphi \right> } \leq C h,
\end{align}	
\end{subequations}
where the constant $C$ depends on 
$\norm{\bbT}_{L^2(T,T+1; W^{1,2}(\Omega; \R^{d \times d}_{\rm sym}))}$, $\norm{\bfphi}_{L^2(T,T+1; W^{1,2}(\Omega; \R^{d}))}$, 
$\|\vtB\|_{W^{2,\infty}(\Omega)}$ and $\Un{\vr}, \Ov{\vr}, \Un{\vt}, \Ov{\vt}, \Ov{u}$, but  it is independent of $T \in \R$ and discretization parameters $(h, \TS)$.

Further, it holds 
\begin{align}\label{regularity-sq}
\Gradd (\vth^2) \in L^2((T,T+1)\times \Omega;\R^d), \  \Gradh (\abs{\vuh}^2) \in L^2((T,T+1)\times \Omega;\R^d).
\end{align}
\end{Lemma}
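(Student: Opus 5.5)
We prove the four estimates by the same two-step procedure: a discrete integration by parts turns each compatibility functional into a sum of face contributions, and these are then controlled by the stability bounds of Lemma~\ref{lm_ub}. Time plays no role: we work for a.e.\ $t\in(T,T+1)$ and integrate at the end, using that the constants in Lemma~\ref{lm_ub} do not depend on $T$. So fix $t$ and a test function $\bbT$ (or $\bfphi$). Split $\intO{\vu_h \cdot \Div \bbT}$ cell by cell and integrate by parts on each $K$. This gives a sum over faces of $\int_\sigma \jump{\vu_h}\cdot \bbT\vn \,{\rm d}S_x$, with the boundary faces treated via $\avs{\vuh}_\sigma=0$. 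In the same way $\intO{\Gradh \vuh : \bbT}$ is a sum over faces of $\avs{\vuh}$ tested against the cell averages of $\bbT$, which after a discrete summation by parts again involves jumps. The difference of the two expressions reduces to
\[
\sum_{\sigma}\int_\sigma \jump{\vuh}\cdot\big(\bbT - \Pi_{\mathcal T}\bbT\big)\vn \,{\rm d}S_x ,
\]
up to averaging of the projection over the two neighbouring cells. Since $\bbT\in W^{1,2}$, the projection error on a face is $O(h)\,|\Grad\bbT|$ in an averaged $L^2$ sense, measured via the trace inequality $h\|v\|^2_{L^2(\sigma)}\lesssim \|v\|^2_{L^2(K)}+h^2\|\Grad v\|^2_{L^2(K)}$.

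For the velocity terms we apply Cauchy--Schwarz with the weight $h^\alpha$:
\[
\Big|\sum_\sigma \int_\sigma \jump{\vuh}\, h\,\Grad\bbT\Big|
\le \Big(\int \sum_\sigma\int_\sigma h^\alpha|\jump{\vuh}|^2\Big)^{1/2}\Big(h^{2-\alpha}\sum_\sigma\int_\sigma|\Grad \bbT|^2\Big)^{1/2}.
\]
The first factor is bounded by \eqref{ap3}, and the second is $\lesssim (h^{1-\alpha})^{1/2}\|\bbT\|_{L^2W^{1,2}}$ by the trace inequality. This yields the rate $h^{(1-\alpha)/2}$ in \eqref{cf-e1}. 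For $|\vuh|^2$ we use the identity $\jump{|\vuh|^2}=2\avs{\vuh}\cdot\jump{\vuh}$. Together with $|\vuh|\le \Ov{u}$ from Hypothesis {\bf (B)}, this reduces the second estimate in \eqref{cf-e1} to the first; it also gives \eqref{regularity-sq} for $\Gradh(|\vuh|^2)$, since $|\Gradh |\vuh|^2|\lesssim \Ov{u}\,|{\rm jumps}|/h$, which is square integrable by \eqref{ap1} and the local jump bound.

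For the temperature the structure is more favourable, because $\Gradd$ lives on the dual grid and is exactly the face difference quotient. Integrating $\intO{\vth\Div\bfphi}$ by parts cellwise produces $\sum_\sigma\int_\sigma \jump{\vth}\bfphi\cdot\vn$ and, on the exterior faces, the term $(\vth^{\rm in}-\vtB)$ type boundary contributions; the latter are matched with $\int_{\pd\Omega}\vtB\bfphi\cdot\vn$ using $\avs{\vth}_\sigma=\vthB$. We compare these with $\int \Gradd\vth\cdot\bfphi = \sum_\sigma \frac1h\int_{D_\sigma}\jump{\vth}\bfphi\cdot\vn$. The error is then $\sum_\sigma \jump{\vth}\cdot(\text{face mean of }\bfphi - \text{dual-cell mean of }\bfphi)$, which is $O(h)\,|\Grad\bfphi|$ in the averaged sense. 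By Cauchy--Schwarz and \eqref{ap1} we have $\|\Gradd\vth\|_{L^2}\le C$, and this gives the rate $Ch$ in \eqref{cf-e2}. The estimate for $\vth^2$ follows from $\jump{\vth^2}=2\avs{\vth}\jump{\vth}$ together with $\vth\le\Ov{\vt}$, and the boundary term uses $\vthB^2$ in place of $\vtB^2$, up to $O(h)$ by $\|\vtB\|_{W^{2,\infty}}$.

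We expect the main obstacle to be the careful bookkeeping at the exterior faces. The ghost values implied by $\avs{\vth}_\sigma=\vthB$ and $\avs{\vuh}_\sigma=0$ must be matched with the continuous boundary integrals, and the replacement of $\vtB$ by its dual-grid projection $\vthB$ has to be controlled to order $h$, requiring the trace inequality on the boundary dual cells. The interior face computations are standard.
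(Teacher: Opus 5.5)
The paper gives no proof of this lemma; it imports it from \cite{FeLMShYu:2025I}. Your interior-face argument is the standard one and gives the stated rates: cellwise integration by parts, reduction to jumps times projection errors, then weighted Cauchy--Schwarz against \eqref{ap3} or against $\|\Gradd\vth\|_{L^2}$ from \eqref{ap1}. One correction there: apply the trace inequality to $\bbT-\Pim\bbT$, giving $\|\bbT-\Pim\bbT\|^2_{L^2(\sigma)}\lesssim h\|\Grad\bbT\|^2_{L^2(K)}$. $\Grad\bbT$ itself has no trace when only $\bbT\in W^{1,2}$ is used.

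The temperature part is also fine at $\partial\Omega$. Since $\Gradd$ includes the exterior faces, \eqref{ap1} gives $\sum_{\sigma\in\facesext}h^{d-1}|\jump{\vth}|^2\le 2h\|\Gradd\vth\|^2_{L^2}$. This also absorbs the remainder $(\vth^{\rm in}-\vthB)^2$ coming from $\avs{\vth^2}\neq\vthB^2$ in \eqref{cf4}.

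The genuine gaps are on the velocity side, exactly where you locate the ``main obstacle''. First, \eqref{ap3} sums over $\facesint$ only, so the ghost jumps $\jump{\vuh}=-2\vuh^{\rm in}$ on $\facesext$ in your Cauchy--Schwarz are not covered. For \eqref{cf1} this is repairable. If $K$ touches $\partial\Omega$ and $L$ is its neighbour across the opposite face $\sigma'$, the normal difference quotient in $\Gradh\vuh$ on $K$ is $(\vu_K+\vu_L)/(2h)$. Hence $|\vu_K|\le h|\Gradh\vuh|+\frac12|\jump{\vuh}_{\sigma'}|$ on $K$, and $h^\alpha\sum_{\facesext}h^{d-1}|\vuh^{\rm in}|^2\lesssim h^{1+\alpha}\|\Gradh\vuh\|^2_{L^2}+h^\alpha\sum_{\facesint}h^{d-1}|\jump{\vuh}|^2$.

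Second, for \eqref{cf2} the reduction to the first estimate is false at $\partial\Omega$. With the ghost values for which $\jump{|\vuh|^2}=2\avs{\vuh}\cdot\jump{\vuh}$ holds on $\facesext$, one has $\avs{|\vuh|^2}_\sigma=|\vuh^{\rm in}|^2\neq0=|\avs{\vuh}_\sigma|^2$. So the cellwise integration by parts leaves $\int_T^{T+1}\int_{\partial\Omega}|\vuh^{\rm in}|^2\,\bfphi\cdot\vn\,\ds\dt$, which is not a jump times a projection error. The one-jump bound above yields only $O(h+h^{-\alpha})$ for $\int_T^{T+1}\int_{\partial\Omega}|\vuh^{\rm in}|^2$, which is not even small for $\alpha\ge0$. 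You need one of the following:
\begin{itemize}
\item a genuine discrete trace estimate: pass from the boundary cell to one of the first $m\approx h^{-(1+\alpha)/2}$ layers with a small jump, using that alternating sums of face averages are controlled by $\Gradh\vuh$; this gives $O(h^{(1-\alpha)/2})$;
\item or restrict \eqref{cf2} to $\bfphi$ vanishing on $\partial\Omega$, which is all Proposition~\ref{col-div-curl} needs.
\end{itemize}
Prescribing $\avs{|\vuh|^2}_\sigma=0$ on $\facesext$ instead would give $|\jump{|\vuh|^2}|\le\Ov{u}|\jump{\vuh}|$ there and rescue your reduction. But then the normal component of $\Gradh(|\vuh|^2)$ on boundary cells is $(|\vu_K|^2+|\vu_L|^2)/(2h)$, which the available estimates do not bound in $L^2$.

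Your justification of the velocity part of \eqref{regularity-sq} also fails as written. The statement is only meaningful as a bound uniform in $h$, which is how it enters Proposition~\ref{col-div-curl}. But $\Ov{u}\,|\jump{\vuh}|/h$ is essentially $\Ov{u}\,|\Gradd\vuh|$, which is not uniformly square integrable. \eqref{ap1} controls only the central gradient $\Gradh\vuh$, while \eqref{ap3} gives merely $\int_T^{T+1}\|\jump{\vuh}/h\|^2_{L^2}\dt\lesssim h^{-1-\alpha}$.

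Use the central-difference structure instead. The $i$-th component of $\Gradh(|\vuh|^2)$ on $K$ equals $\frac{1}{2h}(|\vc{a}|^2-|\vc{b}|^2)=(\vc{a}+\vc{b})\cdot\frac{\vc{a}-\vc{b}}{2h}$. Here $\vc{a},\vc{b}$ are the values of $\vuh$ in the two neighbours of $K$ in direction $\vc{e}_i$, with ghost values at $\partial\Omega$. So $|\Gradh(|\vuh|^2)|\le2\Ov{u}\,|\Gradh\vuh|$ and \eqref{ap1} applies. For $\Gradd(\vth^2)=2\avs{\vth}\Gradd\vth$ your argument is correct, precisely because \eqref{ap1} does control $\Gradd\vth$.
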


\medskip
We proceed to show the consistency formulations, which is motivated 
by the concept of weak solution introduced in Definition \ref{Dw1}.

\begin{Lemma}[{\bf Consistency}]\label{lem_C}
Under Hypothesis {\bf (B)}, let $\vU_h \equiv (\vrh ,\vuh, \vth)$ be an entire numerical solution obtained by the FV method \eqref{VFV} with $(\TS, h) \in (0,1)^2$, $\TS \approx h$, and $\alpha \in \left(-1,1 \right)$. 
Denote
\begin{subequations}\label{cons}
\begin{align}\label{cons-1}
&\myangle{ e_{\vr}, \phi}  = 
\intRQB{ \vrh \partial_t \phi + \vrh \vuh \cdot \Grad \phi }, 
 \\& \label{cons-2}
 \myangle{ e_{\vm}, \bfphi}  =  
  \intRQB{ \vrh \vuh \cdot \partial_t \bfphi + \vrh \vuh \otimes \vuh : \Grad \bfphi } \br
&\hspace{4.5cm} - \intRQ{ ( \bS_h - p_h \I) : \Grad \bfphi } - \intRQ{\vrh \Grad G \cdot \bfphi} , 
\\ & \nonumber
\myangle{ e_{\vr s}, \varphi}  = 
\intRQB{ \vrh s_h (\pd_t\varphi + \vuh \cdot \Grad \varphi ) -  \frac{\kappa}{\vth} \Gradd \vth \cdot \Grad \varphi }
\\ & \label{cons-3}
\hspace{4.5cm} + \intRQ{ \frac{\varphi}{\vth} \left(\kappa \frac{ \chi_h}{ \vth} \abs{\Gradd \vth }^2 + \difuh  \right)}, 
\\ & \label{cons-4}
\myangle{ e_{B}, \hvt, \psi}  =    \intRQ{ \pd_t \psi \left(\frac{1}{2} \vrh |\vuh |^2 + c_v \vrh \vth - \vrh s_h \hvt \right)} 
+ \intTauTauQ{\psi \vrh \Grad G \cdot \vuh} 
\br&  
\hspace{1.8cm}- \intRQ{ \psi \bigg( \frac{ \kappa\hvt \chi_h}{\vth^2} \; \abs{\Gradd \vth}^2 + \frac{\hvt}{\vth }\difuh \bigg)}
\br &
\hspace{1.8cm}- \intRQ{\psi \bigg( \vrh s_h \partial_t \hvt + 
\vrh s_h \vuh \cdot \Grad \hvt - \frac{\kappa}{\vth} \; \Gradd \vth \cdot \Grad \hvt \bigg)} 
\end{align}
for  $\phi \in C^2_c((-\tau,\tau) \times \Ov{\Omega})$, $\bfphi \in C_c^2((-\tau,\tau) \times \Omega;\mathbb R^d)$, and for 
\begin{align*}
& \varphi \in  C_c^2((-\tau,\tau) \times \Omega;\mathbb R^d) \quad \varphi \geq 0; \\
& \psi \in  C^2_c(-\tau,\tau), \ \psi \geq 0; \quad \Theta \in BC^2(\R \times \Ov{\Omega}),\ \Theta > 0,\ \Theta|_{\partial \Omega} = \vtB. 
\end{align*}   
 
Then the consistency errors can be estimated as follows
\begin{equation}\label{con-e}
\begin{cases}
\ \abs{\myangle{e_{\vr}, \phi} } + \abs{ \myangle{e_{\vm}, \bfphi}  } \leq C\left( \TS +h+h^{(1-\alpha)/2} + h^{(1+\alpha)/2} \right),
\\
\ \myangle{e_{\vr s}, \varphi} = \myangle{e_{\vr s}^1, \varphi}  + \myangle{e_{\vr s}^2, \varphi} ,\quad 
\ \myangle{e_{B}, \hvt, \psi} = \myangle{e_{B}^1, \hvt, \psi}  + \myangle{e_{B}^2, \hvt, \psi} , 
\\
\ \abs{ \myangle{e_{\vr s}^1, \varphi}  } \leq C\left( \TS +h+h^{(1-\alpha)/2} + h^{(1+\alpha)/2} \right), \quad \myangle{e_{\vr s}^2, \varphi} \leq 0 \ \mbox{ for any }  \phi\geq 0,
\\
\ \abs{\myangle{e_{B}^1, \hvt, \psi} } \leq C\left( \TS +h + h^{(1-\alpha)/2}\right), \quad \myangle{e_{B}^2, \hvt, \psi} \geq 0 \ \mbox{ for any }  \hvt > 0, \psi \geq 0.
\end{cases}
\end{equation}
\end{subequations}
Here, the generic constant $C$ depends on 
$W^{2,\infty}((-\tau,\tau)\times \Omega )$-norm of the test functions $\phi, \varphi,  \psi, \Theta$, 
$\tau$, $\|\vtB\|_{W^{2,\infty}(\Omega)}$ and $\Un{\vr}, \Ov{\vr}, \Un{\vt}, \Ov{\vt}, \Ov{u}$, but it is independent of the discretization parameters $(h, \TS)$.
\end{Lemma}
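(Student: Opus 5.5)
The plan is to proceed as in \cite{FeLMShYu:2024}. The proof rests on the uniform bounds of Lemma \ref{lm_ub} and the compatibility estimates of Lemma \ref{compatibility-vel}. Since all test functions are compactly supported in $(-\tau,\tau)$ and the bounds of Lemma \ref{lm_ub} are uniform in $T$, we may work on a fixed finite window of time steps. On each step $k$ we test the scheme \eqref{VFV} with the projections of the continuous test functions onto $Q_h$: we use $\Pi_Q\phi$ in \eqref{VFV_D} and $\Pi_Q\bfphi$ in \eqref{VFV_M}. We then sum over $k$ and rewrite the discrete time derivative by discrete summation by parts, so that it becomes $\intRQ{\vrh\partial_t\phi}$ plus an error. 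Using $\Delta t\approx h$ together with \eqref{ap2}, this error is $O(\TS)$. We then split each upwind flux in the standard way,
\[
\Up[r_h,\vuh]\jump{\phi_h} = r_h\avs{\vuh}\cdot\vn\,\jump{\phi_h} + \tfrac12\abs{\avs{\vuh}\cdot\vn}\jump{r_h}\jump{\phi_h},
\]
and compare the central part with $\intO{r_h\vuh\cdot\Grad\phi}$, which produces errors of order $h$. The numerical diffusion $h^\alpha\jump{r_h}\jump{\phi_h}$ is handled by Cauchy--Schwarz and \eqref{ap3}. Since $\jump{\Pi_Q\phi}=O(h)$, we get
\[
h^\alpha\sum_\sigma\int|\jump{r_h}|\,h\,h^{d-1} \le \Big(h^\alpha\int_{\facesint}|\jump{r_h}|^2\Big)^{1/2}\Big(h^{\alpha}h^{2}\,|\facesint|\,h^{d-1}\Big)^{1/2} \lesssim h^{(1+\alpha)/2},
\]
and the upwind jump term is treated in the same way. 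For the viscous and pressure terms in \eqref{VFV_M}, we replace $\bD_h\Pi_Q\bfphi$ by $\Grad\bfphi$ at cost $O(h)$. We then pass $\Gradh\vuh$ to the weak gradient using \eqref{cf-e1}, which gives the $h^{(1-\alpha)/2}$ contribution. This establishes the first line of \eqref{con-e}.

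For the entropy, we first derive a discrete entropy balance. We divide the internal energy equation \eqref{VFV_IE-1} by $\vth^k$ and combine it with the density equation multiplied by $s$-derivatives. By convexity of $(\vr,\vr\vt)\mapsto -\vr s$, the time differences and the upwind differences produce sign-definite remainders. These remainders are nonnegative numerical entropy production terms, and all of them are collected into $e^2_{\vr s}$. The heat conduction term $-\kappa\Laph\vth/\vth$ is summed by parts. This yields the face quantity
\[
\frac{\kappa}{h}\jump{\vth}\jump{\phi/\vth} = \frac{\kappa}{h}\jump{\vth}\Big(\frac{\avs{\phi}}{\cdot}\Big) \cdots,
\]
which we write as $\frac{\kappa}{\vth}\Gradd\vth\cdot\Grad\varphi$ plus $\frac{\kappa\chi_h\varphi}{\vth^2}|\Gradd\vth|^2$, where $\chi_h$ is the appropriate ratio $\vth^2/(\vth^{\rm in}\vth^{\rm out})$ (in mean form). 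The first piece goes into $e^1_{\vr s}$ with errors $O(h)$, controlled via \eqref{ap1}, \eqref{cf-e2} and the boundedness \textbf{(B)}. The second piece is exactly the dissipation term appearing in \eqref{cons-3}, up to the sign convention. The time-derivative and convective errors are estimated as in the density equation, giving the bound on $e^1_{\vr s}$.

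For the ballistic energy \eqref{cons-4}, we add the discrete kinetic energy balance and the internal energy equation, and subtract $\hvt$ times the discrete entropy balance. The kinetic energy balance is obtained by testing \eqref{VFV_M} with $\vuh$, i.e.\ the test function itself rather than a projection. Its convexity remainders are again nonnegative and go into $e^2_B$. The work of the viscous and pressure forces cancels against the right-hand side of \eqref{VFV_E}. The Dirichlet boundary term $2\frac{\kappa}{h}((\vth)^{\rm in}-\vthB)$ is combined with $\hvt|_{\pd\Omega}=\vtB$ and \eqref{cf-e2}, which produces the boundary flux with error $O(h)$. The gravity term is exact up to projection.

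I expect the main obstacle to be the entropy step. The delicate part is the exact algebraic identification of the nonnegative remainders, including the cross terms between the convective upwinding and the $h^\alpha$ diffusion. One must verify that what remains is either signed or controlled by \eqref{ap3}. If needed, I would fall back on the computations of \cite[Lemma A.4]{FeLMShYu:2024}, which handle the same scheme.
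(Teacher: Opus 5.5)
Your plan follows the paper's proof: projected test functions, summation by parts in time using the compact time support of the test functions (so the entire-solution setting produces no initial-data terms), the flux, viscous and pressure errors via \eqref{ap1}--\eqref{ap3}, and the discrete entropy and ballistic energy balances \eqref{entbal}, \eqref{BalBal}, whose nonnegative dissipation terms are collected into $e^2_{\vr s}$ and $e^2_B$. A few details are off, though none affects the argument:
the $O(\TS)$ time error comes from Hypothesis {\bf (B)} and $\partial_t^2 \phi$ after summation by parts, whereas applying \eqref{ap2} directly to $r_h - \widetilde{r_h}$ would only give $\TS^{1/2}$;
the upwind split is $\Up[r_h,\vuh] = \avs{r_h}\avs{\vuh}\cdot\vn - \tfrac12 \abs{\avs{\vuh}\cdot\vn}\jump{r_h}$;
\eqref{cf-e1} is not needed because \eqref{cons-2} keeps $\bS_h$, and the rate $h^{(1-\alpha)/2}$ comes instead from the cross terms $\jump{r_h}\jump{\vuh}$ in the convective flux, controlled by the $h^\alpha$-weighted jumps in \eqref{ap3};
and in the ballistic energy the boundary penalty terms produce no boundary flux but combine into the nonnegative term $\frac{2\kappa}{h}\int_{\facesext} (\vth^{\rm in} - \vthB)^2/\vth^{\rm in} \,\ds$, provided the discrete $\hvt$ is set to $\vthB$ on boundary cells as in \eqref{vt-test-1}.
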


\begin{proof}
The proof is analogous to \cite[Lemma A.7]{FeLMShYu:2024}. Compared with that,  the primary difference lies in two parts: 1) $\vU_h$ is now an entire numerical solution and the test function are compact supported in $(-\tau,\tau)$; and 2) the consistency formulation of ballistic energy inequality is slightly different from \cite[Lemma A.7]{FeLMShYu:2024}.  

Firstly, we show the consistency of continuity and momentum equations, i.e.\ \eqref{cons-1} and \eqref{cons-2}.  
Analogously to \cite[Remark C.2]{LMShYu:2025} or \cite[Lemma A.7]{FeLMShYu:2024}, we rewrite the consistency errors with
\[
\myangle{e_{\vr}, \phi} = \myangle{e_{\vr}, \phi} + \int_{-\tau}^{\tau} \eqref{VFV_D} \dt, \ \phi_h = \PiQ \phi; \quad \myangle{e_{\vm}, \bfphi} = \myangle{e_{\vm}, \bfphi} + \int_{-\tau}^{\tau} \eqref{VFV_M}  \dt, \ \bfphi_h = \PiQ \bfphi,
\]
and then decompose them as follows: 
\begin{align*}
&\myangle{e_{\vr}, \phi}  =E_t(\vrh,\phi) + E_F(\vrh,\phi), \\
&\myangle{e_{\vm}, \bfphi}  =E_t(\vm_h,\bfphi) + E_F(\vm_h,\bfphi) +E_{\vm,\bS}(\bfphi) +E_{\vm,p}(\bfphi),
\end{align*}
with
\begin{align*}
E_t(r_h,\phi)&= 
\intTauTauQ{  r_h \pd_t \phi } + \intTauTauQ{  D_t r_h \phi },\\
E_F(r_h,\phi) &= \intTauTauQ{r_h \vuh \cdot \Grad \phi} - \int_{-\tau}^{\tau}  \intfacesint{ \Fup (r_h,\vuh) \jump{ \phi_h}} \dt,
\br
E_{\vm,\bS}(\bfphi) & = - \intTauTauQ{ \bS_h : \Grad \bfphi} + \intTauTauQ{ \bS_h : \Gradh \bfphi_h }, 
\\ 
E_{\vm,p}(\bfphi) & =  \intTauTauQ{ p_h \Div \bfphi } - \intTauTauQ{ p_h \Divh \bfphi_h}.
\end{align*}
Following the analysis in \cite[Lemma A.7]{FeLMShYu:2024} or \cite[Appendix C]{BLMSY}, we have 
\begin{align*}
\abs{E_F(\vrh,\phi)+E_F(\vm_h,\bfphi) +E_{\vm,\bS}(\bfphi) +E_{\vm,p}(\bfphi)} \aleq h^{(1-\alpha)/2} + h^{(1+\alpha)/2}.
\end{align*}

\noindent Fr $E_t(r_h,\phi)$ it holds
\begin{align*}
E_t(r_h,\phi)&= 
\intTauTauQ{  r_h \pd_t \phi } + \intTauTauQ{  \pd_t \widetilde{r_h} \phi } =  \intTauTauQB{ r_h \pd_t \phi -   \widetilde{r_h} \pd_t \phi } \\
&  =  \intTauTauQ{ \big( r_h -  \widetilde{r_h} \big) \pd_t \phi }.
\end{align*}
Letting $- \tau \in (t_k, t_{k+1}], \tau \in (t_n, t_{n+1}]$,  we decompose $E_t(r_h,\phi)$ into three parts:
\begin{align*}
E_t(r_h,\phi) & = \int_{-\tau}^{t_{k+1}}\intO{ \big( r_h -  \widetilde{r_h} \big) \pd_t \phi } \dt + \int_{t_{k+1}}^{t_{n}}\intO{ \big( r_h -  \widetilde{r_h} \big) \pd_t \phi } \dt +  \int^{\tau}_{t_{n}}\intO{ \big( r_h -  \widetilde{r_h} \big) \pd_t \phi } \dt. 
\end{align*}
Thanks to the interpolations \eqref{time_extension}, applying H\"older inequality we obtain
\begin{align*}
&\abs{\int_{-\tau}^{t_{k+1}}\intO{ \big( r_h -  \widetilde{r_h} \big) \pd_t \phi } \dt}   \aleq \TS \norm{\pd_t^2\phi}_{L^{\infty}((-\tau,t_{k+1})\times \Omega)}\int_{-\tau}^{t_{k+1}}\intO{  (t_{k+1} - t) \abs{D_t r_h} }\dt \\
&\hspace{2cm} \aleq \TS \left( \int_{-\tau}^{t_{k+1}}\intO{   (t_{k+1} - t) ^2 }\dt \right)^{1/2} \left( \int_{-\tau}^{t_{k+1}}\intO{  \abs{D_t r_h} ^2 }\dt \right)^{1/2} \aleq \TS^2,\\
& \abs{\int^{\tau}_{t_{n}}\intO{ \big( r_h -  \widetilde{r_h} \big) \pd_t \phi } \dt} \aleq \TS^2.
\end{align*}
Further, we reformulate 
\begin{align*}
& \int_{t_{k+1}}^{t_{n}} \big( r_h -  \widetilde{r_h} \big) \pd_t \phi  \dt = \sum_{m=k+1}^{n-1}D_t r_h^{m+1} \int_{t_m}^{t_{m+1}}  (t - t_{m+1})    \pd_t \phi  \dt \\
&=\frac1{\TS}\sum_{m=k+1}^{n-1}\left( r_h^{m+1} - r_h^m\right)\int_{0}^{\TS}  (t - \TS)    \pd_t \phi (t+t_m) \dt \\
&=\frac1{\TS}\left( \sum_{m=k+1}^{n-1} r_h^{m+1} \int_{0}^{\TS}  (t - \TS)    \pd_t \phi (t+t_m) \dt - \sum_{m=k+1}^{n-1} r_h^m\int_{0}^{\TS}  (t - \TS)    \pd_t \phi (t+t_m) \dt \right)\\
&=\frac1{\TS}\left( \sum_{m=k+2}^{n} r_h^{m} \int_{0}^{\TS}  (t - \TS)    \pd_t \phi (t+t_{m-1}) \dt - \sum_{m=k+1}^{n-1} r_h^m\int_{0}^{\TS}  (t - \TS)    \pd_t \phi (t+t_m) \dt \right)\\
&=\frac1{\TS}\sum_{m=k+2}^{n-1} r_h^{m} \int_{0}^{\TS}  (t - \TS)    \left[ \pd_t \phi (t+t_{m-1}) - \pd_t \phi (t+t_m)\right] \dt  \\
&\quad +  \frac{r_h^{n}}{\TS}\int_{0}^{\TS}  (t - \TS)    \pd_t \phi (t+t_n) \dt -  \frac{r_h^{k+1}}{\TS}  \int_{0}^{\TS}  (t - \TS)    \pd_t \phi (t+t_{k+1}) \dt. 
\end{align*}
Since
\begin{align*}
&\int_{0}^{\TS}  (t - \TS)   f(t) \dt = \int_{0}^{\TS}  (t - \TS)    \dt  \cdot f(\xi) \quad \mbox{with} \quad \xi \in [0,\TS], \ f \in C(\R)
\end{align*}
we have
\begin{equation}\label{time}
\abs{ \int_{t_{k+1}}^{t_{n}}\intO{ \big( r_h -  \widetilde{r_h} \big) \pd_t \phi } \dt}\aleq \TS
\end{equation}
and finish the proof of the consistency of continuity and momentum equations stated in \eqref{cons-1} and \eqref{cons-2}.

 \medskip
 Secondly, we show the consistency of entropy inequality \eqref{cons-3}. Recalling the entropy balance \cite[(A.7)]{FeLMShYu:2024}, i.e.
 \begin{multline}\label{entbal}
\intQ{ D_t \left(\vrh s_h \right) \varphi_h } -
\intfacesint{ \Up(\vrh s_h , \vuh ) \jump{\varphi_h} }
- \intQ{ \frac{\varphi_h}{\vth } \difuh }  + \intfacesint{ \frac{\kappa}{h} \jump{\vth } \jump{\frac{\varphi_h}{\vth } } }
\\
+ 2 \frac{\kappa}{ h }  \intfacesext{ \frac{ \vt_h^{\rm in} - \vthB }{\vt_h^{\rm in}}  \varphi_h^{\rm in} }
= D_s(\varphi_h) + R_{s}(\varphi_h), \quad 
\mbox{with }D_{s}(\varphi) \geq 0 \mbox{ for any } \varphi \geq 0,
\end{multline}
 we reformulate the consistency error as
 \[
\myangle{e_{\vr s}, \varphi} = \myangle{e_{\vr s}, \varphi} + \int_{-\tau}^{\tau}  \eqref{entbal} \dt =  \myangle{e_{\vr s}^1, \varphi}  + \myangle{e_{\vr s}^2, \varphi} 
\]
with 
 \begin{align*}
&-\myangle{e_{\vr s}^1, \varphi}  =-E_t(\vrh s_h,\varphi) + E_{s,F}(\vrh s_h,\varphi) + E_{s,\Grad\vt}(\varphi)+ E_{s,res}(\varphi) , 
\
-\myangle{e_{\vr s}^2, \varphi}  = \int_{-\tau}^{\tau} D_{s}(\varphi_h) \dt  \geq 0,  
\end{align*}
where
\begin{align*}
E_{s,F}(r_h,\varphi) &= - \intTauTauQ{r_h \vuh \cdot \Grad \varphi} + \int_{-\tau}^{\tau} \intfacesint{ \Up [r_h, \vuh] \jump{ \varphi_h}} \dt,
\\ 
E_{s,\Grad\vt}(\varphi) &= 
-\intTauTauQ{ \frac{\kappa}{\vth} \Gradd \varphi_h  \cdot \Gradd \vth } + \intTauTauQ{  \frac{\kappa}{\vth} \Gradd \vth \cdot \Grad \varphi }
\br
&\quad
 - \intTauTauQ{ \varphi_h \Gradd \vth \cdot \Gradd \left(\frac1{\vth}\right) }
- \intTauTauQ{ \frac{ \kappa \varphi \chi_h}{ \vth^2} \abs{\Gradd \vth }^2},
\\
E_{s,res}(\varphi) & = \int_{-\tau}^{\tau} R_{s}(\varphi_h) \dt,
\end{align*}
with
\begin{align*}
\varphi_h = \begin{cases}
0 & \mbox{if } x \notin \Omega, \\
0 & \mbox{if } x \in K \subset \Omega, K \cap \facesext \neq \emptyset, \\
\PiQ \varphi & \mbox{otherwise}.
\end{cases}\end{align*}
Following the analysis in \cite[Lemma A.7]{FeLMShYu:2024}, we have 
\begin{align*}
\abs{E_{s,F}(\vrh s_h,\varphi) + E_{s,\Grad\vt}(\varphi)+ E_{s,res}(\varphi)} \aleq h^{(1-\alpha)/2} + h^{(1+\alpha)/2}.
\end{align*}
Thanks to
\begin{align*}
&\abs{E_t(\vrh s_h,\varphi)} \aleq \TS, \quad \mbox{see \eqref{time},}
\end{align*}
we finish the proof of the consistency formulation of entropy inequality stated in \eqref{cons-3}.

Finally, we prove the ballistic energy consistency \eqref{cons-4}. Recall the Ballistic energy balance in \cite[(A.9)]{FeLMShYu:2024}, i.e.\
\begin{align}\label{BalBal}
&D_t \intQ{ \left(\frac{1}{2} \vrh |\vuh |^2 + c_v \vrh \vth - \vrh s_h \phi_h \right) }
+ \intQ{ \frac{\phi_h}{\vth }\difuh } - \intfacesint{ \frac{\kappa}{h} \avs{\phi_h} \jump{\vth } \jump{\frac{1}{\vth } } }
\br
& \hspace{1cm}
+2 \frac{\kappa}{ h }  \intfacesext{ \frac{ (\vt_h^{\rm in} - \vthB)^2 }{\vt_h^{\rm in}}   }
- \intQ{\vrh \Grad G \cdot  \vuh}
+ D_s(\phi_h) + D_{\rm E}
\br
&=
-\intQ{ \vrh s_h ( D_t \phi_h + \vuh \cdot \Gradh \phi_h) }
+ \intfacesint{ \frac{\kappa}{h} \jump{\vth } \jump{\phi_h} \avs{ \frac{1}{\vth } } }
+ R_{B} (\phi_h) - R_{s}(\phi_h),
\end{align}
with $D_{s}(\varphi) \geq 0$ for any $ \varphi \geq 0$ and $D_E \geq 0$.
Then we rewrite the consistency error as 
\begin{align*}
\myangle{e_{B}, \hvt, \psi} = \myangle{e_{B}, \hvt, \psi} + \int_{-\tau}^{\tau} \eqref{BalBal} \cdot \psi \dt,
\end{align*}
with the test function in \eqref{BalBal} as 
\begin{equation} \label{vt-test-1}
\phi_h =
\hvth(x) =  \begin{cases}
\vthB & \mbox{if } x \in K \subset Q, \facesK \cap \facesext \neq \emptyset, \\
\Pim \hvt & \mbox{otherwise},  \\
\end{cases}
\ \mbox{ and } \ \avs{\hvth}_{\sigma \in \facesext} = \vthB.
\end{equation}
Analogous to the decomposition in the consistency of entropy inequality, we have
\begin{align*}
\myangle{e_{B}, \hvt, \psi} & = \myangle{e_{B}^1, \hvt, \psi} + \myangle{e_{B}^2, \hvt, \psi},
\\
-\myangle{e_{B}^1, \hvt, \psi}  & = -E_t(E_{B,h}, \psi) + E_{B, \vt} + E_{B, res} + \int_{-\tau}^{\tau}  \psi \big(R_B(\hvth)- R_s(\hvth)\big) \dt,
\br
\myangle{e_{B}^2, \hvt, \psi}   & = \int_{-\tau}^{\tau} \psi\bigg(D_s(\hvth)+D_E\bigg)\, \dt   \geq 0,
\end{align*}
where $E_{B,h}= \frac{1}{2} \vrh |\vuh |^2 + c_v \vrh \vth - \vrh s_h \hvth$ and
\begin{align*}
E_{B, \vt} &= \intTauTauQ{  \vrh s_h (\hvt - \hvth) \pd_t \psi}  + \intTauTauQ{ \psi(\hvt-\hvth)\left[   \frac{ \difuh}{\vth} +  \kappa \frac{\chi_h}{\vth^2}\abs{\Gradd \vth}^2   \right]} ,
\end{align*}
and
\begin{align*}
E_{B, res} & = \intTauTauQ{\psi \left(  \vrh s_h \partial_t \hvt + \vrh s_h \vuh \cdot \Grad \hvt - \kappa \frac1{\vth} \; \Gradd \vth \cdot \Grad \hvt \right)}
\br
&
-\intTauTauQ{ \psi \vrh s_h  \bigg(  D_t \hvth + \vuh \cdot \Gradh \hvth  \bigg)}
+  \int_{-\tau}^{\tau}  \intfacesint{ \psi \frac{\kappa}{h} \jump{\vth } \jump{\hvth} \avs{ \frac{1}{\vth } } } \dt.
\end{align*}

 Following the analysis in \cite[Lemma A.7]{FeLMShYu:2024}, we have 
\begin{align*}
\abs{E_{B, \vt} + E_{B, res} + \int_{-\tau}^{\tau} \psi \big(R_B(\hvth)- R_s(\hvth)\big) \dt } \aleq h^{(1-\alpha)/2} .
\end{align*}
Thanks to
\begin{align*}
&\abs{E_t(E_{B,h},\varphi)} \aleq \TS, \quad \mbox{see \eqref{time},}
\end{align*}
we finish the proof of the consistency formulations.
\end{proof}

\begin{Remark}\label{rmk-b}
By Hypothesis {\bf (B)}  and the uniform bounds \eqref{ap},
we obtain, as in \cite{FeLMShYu:2025I}, that
\begin{align}
&
\intRQB{ \partial_t \widetilde{\vrh}  \phi - \vrh \vuh \cdot \Grad \phi } 
=\myangle{h_{\vr}; \phi},
 \\&
  \intRQB{ \partial_t \widetilde{\vrh \vuh} \cdot  \bfphi - \vrh \vuh \otimes \vuh : \Grad \bfphi } \br
&\hspace{2.5cm} + \intRQ{ ( \bS_h - p_h \I) : \Grad \bfphi } + \intRQ{\vrh \Grad G \cdot \bfphi}  = \myangle{h_{\vm};\bfphi},
\\&
 \intRQB{  c_v \pd_t \widetilde{\vrh \vth}  \psi  -   (c_v \vrh \vth  \vuh - \kappa  \Gradd \vth)\cdot \Grad \psi } \br
 &\hspace{6.cm}
 -\intRQB{ (\bS_h-p_h \I)  : \Gradh \vuh \psi } = \myangle{h_{\vt}; \psi} 
\end{align}
for $\phi \in C_c^1( \R \times \Ov{\Omega}) $, $\bfphi \in C_c^1(\R \times \Ov{\Omega};\R^d) $, $\psi \in C_c^1( \R \times \Ov{\Omega})$, where
%
\begin{align*}
 h_\vr \to 0 \mbox{ in } L^2_{\rm loc}(\R; [W^{1,2}(\Omega)]') \quad \mbox{ as } h \to 0,
\\h_{\vm} \mbox{ bounded in } L^{2}_{\rm loc}(\R;[W^{1,2}(\Omega;\R^d)]') \mbox{ uniformly for } h \to 0,
\\h_{\vt} \mbox{ bounded in } L^{2}_{\rm loc}(\R;[W^{1,2}(\Omega)]')  \mbox{ uniformly for } h \to 0.
\end{align*}
As a byproduct, we also have
\begin{align*}
& \pd_t \widetilde{\vrh} \in L^{2}_{\rm loc}(\R;[W^{1,2}(\Omega)]'), \ 
\pd_t \widetilde{\vrh \vuh} \in L^{2}_{\rm loc}(\R;[W^{1,2}(\Omega;\R^d)]'), \
\pd_t \widetilde{\vrh \vth} \in L^{2}_{\rm loc}(\R;[W^{1,2}(\Omega)]').
\end{align*}
\end{Remark}

\section{Proof of Theorem \ref{CT1}}
\label{P}	

Summarizing the results obtained in Section \ref{stacon}, we have 
\begin{itemize}
\item $\{\vU_{h}\}_{h > 0}$ admits the uniform bounds \eqref{ap}, satisfies the compatibility formulation \eqref{cf}, and the consistency formulation \eqref{cons} for any $\tau \in (0,\infty)$.

\item There is a sequence $\{\vU_{h_n}\}$ such that
\begin{align}\label{weak-con-1}
&\vU_{h_n}  \to \vU \ \mbox{ weakly in } L_{\rm loc}^q(\R\times \Omega; \R^{d+2}) \ \mbox{for any}\ 1 \leq q  < \infty
\end{align}
and 
\begin{align}\label{weak-con-3}
&\Gradh \vu_{h_n}  \to \Grad \vu \ \mbox{ weakly in } L^2_{\rm loc}(\R\times \Omega; \R^{d\times d}), \\\label{weak-con-4}
&\Gradd \vt_{h_n}  \to \Grad \vt \ \mbox{ weakly in } L^2_{\rm loc}(\R\times \Omega; \R^{d}).
\end{align}
Moreover, thanks to the inequalities 
\begin{align*}
\abs{\widetilde{\vU}_{h_n} - \vU_{h_n}} \leq \TS \abs{D_t  \vU_{h_n}}, \quad 
\norm{D_t  \vU_{h_n}}_{L^2((-\tau,\tau)\times \Omega;\R^{d+2})} \aleq \TS^{-1/2} \mbox{ for any } \tau >0,
\end{align*}
we have 
\begin{equation} \label{weak-con-2}
\widetilde{\vU}_{h_n}  \to \vU \ \mbox{ weakly in } L_{\rm loc}^q(\R\times \Omega; \R^{d+2}) \ \mbox{for any}\ 1 \leq q  < \infty.
\end{equation}
\end{itemize}
It remains to show that the above weak convergences \eqref{weak-con-1} and \eqref{weak-con-2} are strong and the limit $\vU = (\vr, \vu, \vt)$ is an entire weak solution. 
In what follows we write $\widetilde{\vU}_{h_n} $ (resp.\ $\vU_{h_n}$) as $\widetilde{\vU}_h$ (resp.\ $\vU_h$), for the sake of simplicity. 
 
\medskip

To begin, we recall a useful proposition in \cite{FeLMShYu:2025I}. 
\begin{Proposition}[{\cite[Proposition A.2]{FeLMShYu:2025I}}]\label{col-div-curl}
Let $\{r_n, v_n\}_{n=1}^{\infty}$ satisfy
\begin{align*}
r_n \to r \text{ weakly in } L^2((0,T)\times \Omega), \quad
v_n \to v \text{ weakly in } L^q((0,T)\times \Omega), 
\end{align*}
where $q>2$, 
and 
\begin{align} 
&\partial_t r_n =  h_n^1 + h_n^2,  \quad
 h_n^1\in_b L^1((0,T)\times \Omega), \quad h_n^2 \in L^2(0,T;W^{-1,2}(\Omega)), \\ 
&\Grad v_n = D_n^1 + D_n^2, \quad   
D_n^1 \in_b L^1((0,T)\times \Omega), \quad
{D_n^2 \to 0 \mbox{ in } W^{-1,2}((0,T) \times \Omega; \R^d)).  } 
\end{align}

Then it holds 
\[
r_h v_h \to r v \text{ weakly in } L^{\frac{2q}{2+q}}((0,T)\times \Omega).
\]
\end{Proposition}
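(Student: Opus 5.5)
The plan is to deduce the claim from the Murat--Tartar div--curl lemma in space--time, $\R^{1+d}$, in its $L^p$--$L^q$ form: if $U_n \to U$ weakly in $L^p$ and $V_n \to V$ weakly in $L^q$ with $\frac1p+\frac1q<1$, and $\mbox{div}_{t,x} U_n$ and $\mbox{curl}_{t,x} V_n$ are precompact in $W^{-1,s}$ for some $s>1$, then $U_n\cdot V_n \to U\cdot V$ weakly in $L^{pq/(p+q)}$. With $p=2$ this gives exactly the exponent $\frac{2q}{2+q}$. All statements are understood on $(0,T)\times\Omega$; localizing with cut-offs in $(t,x)$ is harmless, since multiplying by a smooth cut-off only adds lower-order terms that are bounded in $L^2$ and hence compact in $W^{-1,s}$.

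First I choose the fields. The term $h_n^2 \in L^2(0,T;W^{-1,2}(\Omega))$, bounded after taking a subsequence, is the delicate part of $\partial_t r_n$: it is merely bounded, hence not compact. I therefore absorb it into the field itself. Write $h_n^2 = \Div \vc{g}_n$ with $\vc{g}_n$ bounded in $L^2((0,T)\times\Omega;\R^d)$, for instance by solving the Dirichlet problem for $\Delta_x$ at each fixed time and taking a gradient. Then set
\[
U_n = (r_n, -\vc{g}_n), \qquad V_n = (v_n, 0,\dots,0).
\]
Passing to a further subsequence, $\vc{g}_n \to \vc{g}$ weakly in $L^2$. Then $\mbox{div}_{t,x} U_n = \partial_t r_n - \Div \vc{g}_n = h_n^1$, which is bounded in $L^1$. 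The nonzero entries of $\mbox{curl}_{t,x} V_n$ are $\pm\partial_{x_i} v_n$, that is, the components of $\Grad v_n = D_n^1 + D_n^2$. Finally $U_n\cdot V_n = r_n v_n$ and $U\cdot V = r v$, so the conclusion will follow once the hypotheses of the lemma are checked.

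Next I verify the compactness hypotheses using Murat's lemma: a sequence that is bounded in $L^1$ (or in measures) and bounded in $W^{-1,\bar p}$ for some $\bar p>2$ is precompact in $W^{-1,2}_{\rm loc}$. Alternatively one uses directly that $L^1$ embeds compactly into $W^{-1,s}$ for $s<\frac{d+1}{d}$. For the divergence, $h_n^1 = \mbox{div}_{t,x}U_n$ is bounded in $L^1$, hence precompact in $W^{-1,s}$ for such $s$. For the curl, $D_n^1$ is bounded in $L^1$ and therefore likewise precompact in $W^{-1,s}$, while $D_n^2\to 0$ in $W^{-1,2}$, which embeds into $W^{-1,s}$ locally. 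Consequently both $\mbox{div}_{t,x}U_n$ and $\mbox{curl}_{t,x}V_n$ are precompact in $W^{-1,s}_{\rm loc}$ for some $s\in(1,\frac{d+1}{d})$. The integrability condition also holds: $\frac12+\frac1q<1$ precisely because $q>2$, which is where that hypothesis enters.

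I expect the main obstacle to be the version of the div--curl lemma with compactness only in $W^{-1,s}$ for some $s>1$, rather than in $W^{-1,2}$. The classical lemma assumes $W^{-1,2}$-compactness for $L^2$--$L^2$ pairs; the general statement for conjugate-type exponents with $\frac1p+\frac1q<1$ and compactness in $W^{-1,s}$ is available in the literature (e.g.\ in Feireisl--Novotn\'y's monographs), and I would invoke it. If needed, I would upgrade to $W^{-1,r}$-compactness with suitable $r$ by interpolation. This works because $\mbox{div}_{t,x}U_n$ is bounded in $W^{-1,2}$ (since $U_n$ is bounded in $L^2$) and $\mbox{curl}_{t,x}V_n$ is bounded in $W^{-1,q}$ (since $V_n$ is bounded in $L^q$); interpolating these bounds with the compactness in $W^{-1,s}$ yields compactness in $W^{-1,r}$ for every $r$ strictly between $s$ and $2$ (respectively $q$).

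A second point to check is the boundary: $\Omega$ has a periodic horizontal part and a bottom and top at $x_d=\pm H$. Working with test functions compactly supported in the interior of $(0,T)\times\Omega$ suffices for weak convergence in $L^{\frac{2q}{2+q}}$. This is because the products $r_n v_n$ are bounded in that space, so convergence against a dense class of test functions identifies the weak limit.
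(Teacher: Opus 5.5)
Your argument is correct, and it is the standard route for results of this kind. The paper gives no proof of its own and imports the statement from \cite{FeLMShYu:2025I}. The mechanism you use is the usual one behind such lemmas, for instance \cite[Lemma 8.1]{AbFeNo:2021}, which the paper invokes interchangeably in Step 1. It consists of absorbing the $L^2(0,T;W^{-1,2})$ part of $\partial_t r_n$ into the spatial components of $U_n=(r_n,-\vc{g}_n)$, pairing with $V_n=(v_n,0,\dots,0)$, and applying the $L^p$--$L^q$ div--curl lemma with $W^{-1,s}$-compactness, $1<s<\frac{d+1}{d}$. You are also right to rely on this general version rather than Murat's $W^{-1,2}_{\rm loc}$ criterion. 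That criterion would not apply to $\mathrm{div}_{t,x}U_n$, because $U_n$ is only bounded in $L^2$.

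One step must be repaired. You cannot obtain boundedness of $h_n^2$ ``after taking a subsequence'' from the mere membership $h_n^2\in L^2(0,T;W^{-1,2}(\Omega))$. The hypothesis has to be read as $h_n^2\in_b L^2(0,T;W^{-1,2}(\Omega))$, which is how it is used in Section~\ref{P} (e.g.\ $h_n^2=-\Div(\vrh\vuh)+h_\vr$ for $r_n=\widetilde{\vrh}$). Without it the statement is false. Take $r_n=v_n=\sin(nt)$, $h_n^1=0$, $h_n^2=n\cos(nt)$, $D_n^1=D_n^2=0$: all the other assumptions hold, yet $r_nv_n=\sin^2(nt)\to\frac12\neq 0=rv$. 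Under the bounded reading your proof goes through. To get convergence of the whole sequence, not just the subsequence along which $\vc{g}_n$ converges, add two facts. First, $r_nv_n$ is bounded in $L^{\frac{2q}{2+q}}$ by H\"older. Second, the limit $rv$ does not depend on the subsequence.
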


\subsection{Step 1}
{\bf We show the strong convergence of the velocity 
\begin{align}\label{con-u}
\vuh \to \vu \mbox{ in } \ L^q_{\rm loc}(\R\times\Omega; \R^d) \ \mbox{for any}\ 1 \leq q  < \infty,
\end{align}
together with the continuity equation 
\begin{align}\label{wsol-d}
 \pd_t \vr + \Div (\vr \vu) = 0 
\end{align}
satisfied by the limits $\vr, \vu$ in the sense of Definition \ref{Dw1}.}

On the one hand, there hold the weak convergences of $\vrh$ and $\vuh$, see \eqref{weak-con-1}. 
On the other hand, there holds the consistency equation of continuity (see Remark~\ref{rmk-b}) 
\begin{equation} \label{divcurl-d}
 \pd_t \widetilde{\vrh} + \Div (\vrh \vuh)  = h_\vr \  \mbox{ with } \  h_\vr \to 0 \mbox{ in } L^2_{\rm loc}(\R; [W^{1,2}(\Omega)]'),
\end{equation}
together with the estimate $\norm{\Gradh\vuh}_{L^2((-\tau,\tau)\times \Omega; \R^{d\times d})} \aleq 1$ and the compatibility \eqref{cf1}. 
We apply \cite[Lemma 8.1]{AbFeNo:2021} or Proposition~\ref{col-div-curl} with $r_h = \widetilde{\vrh}, \, v_n = \vuh^{(j)}$ to conclude
\begin{align}\label{68}
\widetilde{\vrh} \vuh  \to \vr \vu  \ \mbox{ weakly in } L_{\rm loc}^q(\R\times \Omega; \R^d) \ \mbox{for any}\ 1 \leq q  < \infty.
\end{align}
Further, combining \eqref{68} with
\begin{align*}
\abs{\widetilde{\vrh} \vuh - \vrh \vuh} \aleq \TS \abs{D_t  \vrh}, \quad 
\norm{D_t  \vrh}_{L^2((-\tau,\tau)\times \Omega)} \aleq \TS^{-1/2},\\
\abs{\widetilde{\vrh \vuh} - \vrh \vuh} \aleq \TS (\abs{D_t  \vrh}+\abs{D_t  \vuh}), \quad 
\norm{D_t  \vuh}_{L^2((-\tau,\tau)\times \Omega;\R^{d})} \aleq \TS^{-1/2},
\end{align*}
we obtain
\begin{align}\label{con-m}
\vrh \vuh  \to \vr \vu  \ \mbox{ weakly in } L^q_{\rm loc}(\R\times \Omega; \R^d) \ \mbox{for any}\ 1 \leq q  < \infty,\\ \label{con-m-1}
\widetilde{\vrh \vuh}  \to \vr \vu  \ \mbox{ weakly in } L^q_{\rm loc}(\R\times \Omega; \R^d) \ \mbox{for any}\ 1 \leq q  < \infty.
\end{align}
Passing to the limit $h \to 0$ in the consistency equation of continuity \eqref{cons-1}, we conclude that the limits $\vr, \vu$ satisfy the continuity equation \eqref{wsol-d}.  
Moreover, as $\vr$ and $\vu$ are bounded, we can apply the DiPerna-Lions theory~\cite{DiPena-Lions} to deduce the renormalized formulation \eqref{w4}.

\medskip
Next, as stated in Remark~\ref{rmk-b},  $\widetilde{\vrh \vuh}$ satisfies  
\begin{align*}
& \pd_t \widetilde{\vrh \vuh} + \Div (\vrh \vuh \otimes \vuh - \bS_h + p_h \bI)  = \vrh \Grad G + h_{\vm}, \\
& \mbox{ with } \vrh \Grad G \mbox{ bounded in } L^\infty(\R\times\Omega), \quad
\abs{\myangle{ h_{\vm},\bfphi}} \aleq \norm{\bfphi}_{L^2(-\tau,\tau;W^{1,2}(\Omega;\R^d))}.
\end{align*}
Analogously to the proof of \eqref{con-m}, 
we apply Proposition~\ref{col-div-curl} with $r_h = \widetilde{\vrh \vuh^{(i)}}, \, v_h = \vuh^{(j)}$ to obtain 
\begin{align}
&\widetilde{\vrh \vuh} \cdot \vuh \to \vr \abs{\vu}^2  \ \mbox{ weakly in } L^q_{\rm loc}(\R\times \Omega) \ \mbox{for any}\ 1 \leq q  < \infty,\\ \label{con-mmd}
& \widetilde{\vrh \vuh} \otimes \vuh \to \vr \vu \otimes \vu  \ \mbox{ weakly in } L^q_{\rm loc}(\R\times \Omega; \R^{d\times d}) \ \mbox{for any}\ 1 \leq q  < \infty,
\end{align}
which gives
\begin{align}\label{con-mmd}
&\vrh \vuh \cdot \vuh \to \vr \abs{\vu}^2  \ \mbox{ weakly in } L^q_{\rm loc}(\R\times \Omega) \ \mbox{for any}\ 1 \leq q  < \infty,\\ 
& \vrh \vuh \otimes \vuh \to \vr \vu \otimes \vu  \ \mbox{ weakly in } L^q_{\rm loc}(\R\times \Omega; \R^{d\times d}) \ \mbox{for any}\ 1 \leq q  < \infty.
\end{align}

Further, we know from Lemma \ref{compatibility-vel} that $\Gradh (\abs{\vuh}^2)$ satisfies the compatibility equation \eqref{cf2} and belongs to the regularity class  $L^2_{\rm loc}(\R\times \Omega; \R^d)$.
Hence, we apply Proposition~\ref{col-div-curl} again with $r_n = \vr - \widetilde{\vrh}, \, v_n = \abs{\vuh}^2$ and obtain 
\begin{align}
(\vr - \widetilde{\vrh}) \abs{\vuh}^2 \to 0 \mbox{ weakly in } \ L^q_{\rm loc}(\R\times\Omega) \ \mbox{for any}\ 1 \leq q  < \infty,\\ \label{con-mmd-1}
(\vr - \vrh) \abs{\vuh}^2 \to 0 \mbox{ weakly in } \ L^q_{\rm loc}(\R\times\Omega) \ \mbox{for any}\ 1 \leq q  < \infty. 
\end{align}

Consequently, we conclude from \eqref{con-mmd} and \eqref{con-mmd-1} that 
\begin{align*}
\vr  \abs{\vuh}^2 \to \vr  \abs{\vu}^2 \mbox{ weakly in } \ L^q_{\rm loc}(\R\times\Omega) \ \mbox{for any}\ 1 \leq q  < \infty. 
\end{align*}
As $\vr$ is bounded and strictly positive, and $\vuh$ converges weakly to $\vu$, we establish the strong convergence of the velocity stated in \eqref{con-u}.

\subsection{Step 2}
{\bf We show the strong convergence of the temperature 
\begin{align}\label{con-T}
\vth \to \vt \mbox{ in } \ L^q_{\rm loc}(\R\times\Omega) \ \mbox{for any}\ 1 \leq q  < \infty
\end{align}
together with the momentum equation
\begin{align} \label{wsol-m}
 \pd_t(\vr \vu) + \Div(\vr\vu \otimes \vu) + \Grad p = \Div \bS(\Grad \vu) + \vr \Grad G 
\end{align}
satisfied by the  limits $\vr, \vu,\vt$ in the sense of Definition \ref{Dw1}.
}


Thanks to \eqref{weak-con-1}, \eqref{divcurl-d}, \eqref{ap1} and \eqref{cf3}, we apply Proposition~\ref{col-div-curl}  with $r_n = \widetilde{\vrh}$ and $v_n = \vth$ to conclude 
\begin{align*}
\widetilde{\vrh} \vth  \to \vr \vt \ \mbox{ weakly in } L^q_{\rm loc}(\R\times \Omega) \ \mbox{for any}\ 1 \leq q  < \infty.	
\end{align*}
Together with
\begin{align*}
\abs{\widetilde{\vrh} \vth - \vrh \vth} \aleq \TS \abs{D_t  \vrh}, \quad 
\norm{D_t  \vrh}_{L^2((-\tau,\tau)\times \Omega)} \aleq \TS^{-1/2}
\end{align*}
we have
\begin{align}\label{con-p}
\vrh \vth  \to \vr \vt \ \mbox{ weakly in } L^q_{\rm loc}(\R\times \Omega) \ \mbox{for any}\ 1 \leq q  < \infty.	
\end{align}
Passing to the limit $h\to 0$ in the consistency equation of momentum \eqref{cons-2}, we obtain from \eqref{con-m}, \eqref{con-mmd} and \eqref{con-p} that the limits $\vr, \vu, \vt$ satisfy the momentum equation \eqref{wsol-m}.

\medskip
Next, we know from Remark \ref{rmk-b} that 
\begin{align*}
&  \pd_t \widetilde{\vrh \vth} + \Div (c_v \vrh \vth  \vuh - \kappa  \Gradd \vth)  = (\bS_h-p_h \I)  : \Gradh \vuh  + h_{\vt},
\end{align*}
with
\begin{align*}
(\bS_h-p_h \I)  : \Gradh \vuh  \mbox{ bounded in } L^1_{\rm loc}(\R\times\Omega),\quad
\abs{\myangle{ h_{\vt},\phi}} \aleq \norm{\phi}_{L^2((-\tau,\tau);W^{1,2}(\Omega))}.
\end{align*}
Applying Proposition~\ref{col-div-curl}  with $r_n = \widetilde{\vrh \vth}$ and $v_n = \vth$, we obtain
\begin{align*}
&\widetilde{\vrh \vth} \cdot \vth \to \vr \vt^2  \ \mbox{ weakly in } L^q_{\rm loc}(\R\times \Omega) \ \mbox{for any}\ 1 \leq q  < \infty, \\
&\vrh \vth \cdot \vth \to \vr \vt^2 \ \mbox{ weakly in } L^q_{\rm loc}(\R\times \Omega) \ \mbox{for any}\ 1 \leq q  < \infty.
\end{align*}

Further, thanks to the compatibility results  \eqref{regularity-sq} and \eqref{cf4},  
we apply Proposition~\ref{col-div-curl} once again with $r_n = \vr - \widetilde{\vrh}, \, v_n = \vth^2$ and obtain 
\begin{align*}
&(\vr - \widetilde{\vrh}) \vth^2 \to 0 \mbox{ weakly in } \ L^q_{\rm loc}(\R\times\Omega) \ \mbox{for any}\ 1 \leq q  < \infty, \\
&(\vr - \vrh) \vth^2 \to 0 \mbox{ weakly in } \ L^q_{\rm loc}(\R\times\Omega) \ \mbox{for any}\ 1 \leq q  < \infty.
\end{align*}

Therefore, 
\begin{align*}
\vr  \vth^2 \to \vr  \vt^2 \mbox{ weakly in } \ L^q_{\rm loc}(\R\times\Omega)\ \mbox{for any}\ 1 \leq q  < \infty.
\end{align*}
Similarly to Step 1, we obtain the strong convergence claimed in \eqref{con-T}. 

\subsection{Step 3}
{\bf We show the strong convergence of the density 
\begin{align}\label{con-d}
\vrh \to \vr \mbox{ in } \ L^q_{\rm loc}(\R\times\Omega) \ \mbox{for any}\ 1 \leq q  < \infty
\end{align}
together with  the entropy and ballistic energy inequalities
\begin{align}\label{wsol-S}
& \pd_t (\vr s) +  \Grad (\vr s \vu ) - \Div \left(\frac{\kappa \Grad \vt}{\vt} \right)
\geq \frac{1}{\vt} \left(\bS: \Grad \vu + \kappa \frac{|\Grad \vt|^2}{\vt}  \right), 
\\ \label{wsol-BE}
&  \pd_t \intOB{\frac{1}{2} \vr |\vu|^2 + \vr e-\vr s \hvt}+ \intO{  \frac{1}{\vt} \left(\bS: \Grad \vu + \kappa \frac{ \hvt |\Grad \vt|^2}{\vt}  \right)}  - \intO{\vr \vu \Grad G }\br
& \hspace{1cm} +  \intOB{ \vr s  \pd_t \hvt + \vr s \vu \Grad \hvt- \frac{\kappa}{\vt} \Grad \vt \cdot \Grad \hvt}\leq 0  
\end{align}
satisfied by the  limits $\vr, \vu,\vt$ in the sense of Definition \ref{Dw1}.
}

The strong convergence claimed in \eqref{con-d} is the most delicate part of the proof. 
The key ingredient is the discrete version of Lions identity stated below.

\begin{Lemma}\label{lem}
Let $\vU_h = (\vrh ,\vuh, \vth)$ be a numerical solution obtained by the FV method \eqref{VFV} with $\alpha \in (-1,1)$.
Let Hypothesis {\bf (B)} hold (with the same bounds $\underline{\vr}$, $\Ov{\vr}$, $\Ov{u}$, $\underline{\vt}$, $\Ov{\vt}$) uniformly for $h \to 0$.
Let $\vU = (\vr ,\vu, \vt)$ be the limit obtained in \eqref{weak-con-1}. 
 
Then it holds 
\begin{align}\label{lions-tool}
\lim_{h\to 0} \int_{-\infty}^{\infty}\intO{\phi \psi \vrh \Big( (2\mu+\lambda) \Divh \vuh - \vrh \vth\Big)} \dt =  \int_{-\infty}^{\infty}\intO{\phi \psi \vr \Big( (2\mu+\lambda)  \Div \vu - \vr  \vt\Big)} \dt
\end{align}
for any $ \psi \in C_c^{\infty}(\R\times \Omega)$ and $ \phi \in C_c^{\infty}(\Omega)$. 
\end{Lemma}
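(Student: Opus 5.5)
The plan is to adapt the standard Lions effective viscous flux argument to the discrete setting. We test the discrete momentum equation with a discrete analogue of $\bfphi = \psi\,\phi\,\Grad\Delta^{-1}[\vrh]$. In the limit we test the continuous momentum equation \eqref{wsol-m}, proved in Step 2, with $\psi\,\phi\,\Grad\Delta^{-1}[\vr]$, and then compare the two. Here $\Delta^{-1}$ denotes the inverse Laplacian, localized by the cut-off $\phi\in C_c^\infty(\Omega)$. Concretely, I would use $\mathcal{A}=\Grad\Delta^{-1}$ acting on $\phi\vrh$ extended by zero, on the periodic-in-the-horizontal domain or on $\R^d$.

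\textbf{Step 1: testing the momentum equations.} I insert $\bfphi=\psi\,\mathcal{A}[\phi\vrh]$ into the consistency identity of Remark~\ref{rmk-b}. The bounds \eqref{ap1}--\eqref{ap2} and Hypothesis {\bf (B)} show that $\mathcal{A}[\phi\vrh]$ is bounded in $L^\infty(W^{1,q})$ for all finite $q$, which justifies the test and controls the remainder $h_{\vm}$. Its contribution then vanishes because $\langle h_{\vm},\bfphi\rangle$ reduces to consistency errors of order $h^{(1\pm\alpha)/2}+\TS$, as in Lemma~\ref{lem_C}.

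The pressure and viscous terms produce $\int\psi\phi\vrh\big((2\mu+\lambda)\Divh\vuh-\vrh\vth\big)$ plus commutator terms involving $\Grad\phi$. These commutators converge by the compact embedding, since $\mathcal{A}[\phi\vrh]\to\mathcal{A}[\phi\vr]$ strongly in $L^q$. Here I use $\Divh$ versus $\Div$ and the identity $\Div\mathcal{A}=\mathrm{id}$, up to the compatibility error \eqref{cf1}.

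The time derivative term is handled using the discrete continuity equation \eqref{divcurl-d}. It converts $\partial_t\mathcal{A}[\phi\widetilde{\vrh}]$ into $-\mathcal{A}[\phi\Div(\vrh\vuh)]$ plus lower order terms. The implicit time stepping leaves a shift between $\widetilde{\vrh\vuh}$ and $\vrh\vuh$, which is controlled by the $\TS^{1/2}$ bound \eqref{ap2}.

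\textbf{Step 2: the convective commutator.} The convective term, together with the term coming from the time derivative, becomes
\[
\int \psi\,\vuh\cdot\Big(\vrh\vuh\cdot\Grad\mathcal{A}[\phi\vrh]-\mathcal{A}[\Div(\phi\vrh\vuh)]\,\vrh\Big)+\dots
\]
This is exactly the commutator handled by the div--curl lemma or the Coifman--Meyer type commutator lemma. Since $\vuh\to\vu$ strongly by Step 1, it suffices to know that $\vrh\,\mathcal{R}_{ij}[\phi\vrh u^j_h]-\vrh u_h^j\,\mathcal{R}_{ij}[\phi\vrh]$ converges weakly to its expected limit, where $\mathcal{R}_{ij}$ are the Riesz operators. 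This follows from the div--curl structure, for which Proposition~\ref{col-div-curl} is the discrete tool, combined with \eqref{con-m}.

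\textbf{Step 3: comparison with the limit.} Repeating the computation for the limit equation \eqref{wsol-m} with the renormalized continuity equation \eqref{w4} gives the same expression with limit quantities. Subtracting the two identities yields \eqref{lions-tool}.

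\textbf{Main obstacle.} The hardest part is the discrete time derivative and upwind numerical diffusion. In the discrete continuity equation, the flux $\Fup$ contains the artificial diffusion $h^\alpha\jump{\vrh}$ and the upwind correction. Tested against $\mathcal{A}[\phi\,\cdot\,]$, these produce terms like $\int h^\alpha\jump{\vrh}\jump{\Pi\,\mathcal{A}\cdots}$. I would bound them by \eqref{ap3} and Cauchy--Schwarz, which gives $h^{(1+\alpha)/2}$ decay. The key point is that $\mathcal{A}$ gains a full derivative, so only jumps of a $W^{1,q}$ function appear.

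A second concern is the product $D_t(\vrh\vuh)\cdot\mathcal{A}[\phi\vrh^{k}]$. Its discrete summation by parts leaves a term $\TS\,D_t(\vrh\vuh)\cdot\mathcal{A}[\phi D_t\vrh]$. I would control it by $\TS\|D_t\vU_h\|_{L^2}^2\lesssim 1$ multiplied by a gained factor. If no such factor is available, I would use the smoothing of $\mathcal{A}$ to bound $\|\mathcal{A}D_t\vrh\|$ by $\|\vrh\vuh\|$ through the continuity equation, which gives $O(\TS)$.
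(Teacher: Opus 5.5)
There is a genuine gap, and it lies in the part the paper singles out as non-trivial. The paper does not prove this lemma itself; it refers to earlier work and stresses that Karper's argument does not carry over to this finite volume scheme.

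In your Step 1 you make two claims. First, that $\langle h_{\vm},\bfphi\rangle\to 0$ for $\bfphi=\psi\,\mathcal{A}[\phi\vrh]$ ``as in Lemma~\ref{lem_C}''. Second, that $\int\bS_h:\Grad\bfphi$ reduces to $(2\mu+\lambda)\int\psi\phi\vrh\Divh\vuh$ ``up to the compatibility error \eqref{cf1}''. Neither estimate applies to this test function. The constant in Lemma~\ref{lem_C} depends on the $W^{2,\infty}$-norm of the test function, and the one in \eqref{cf1} on $\|\bbT\|_{L^2(W^{1,2})}$. But $\Grad\bfphi=\psi\,\mathcal{R}[\phi\vrh]$ is a Riesz transform of a piecewise constant function. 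Its distributional gradient contains the jumps of $\vrh$, so it is bounded in $L^q$ but not in $W^{1,2}$, and it converges only weakly.

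Remark~\ref{rmk-b} says only that $h_{\vm}$ is bounded in $L^2_{\rm loc}(\R;[W^{1,2}]')$, which gives $\langle h_{\vm},\bfphi\rangle=O(1)$ for your $\bfphi$. Since $\bfphi$ depends on $h$ and $\Grad\bfphi$ is not strongly compact, no density argument upgrades this to $o(1)$.

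The issue is substantive. The pressure term leaves $\int p_h(\Divh\PiQ\bfphi-\Div\bfphi)$. For the central differences $\partial_j^h$ forming $\Gradh$, the identity $\sum_j\mathcal{R}_{ij}\partial_j=\partial_i$ behind your viscous computation has no exact analogue. Testing \eqref{VFV_M} with $\Phi=\PiQ\bfphi$ gives (summation over $i,j$)
\[
\int\bS_h:\Gradh\Phi=(2\mu+\lambda)\int\Divh\vuh\,\Divh\Phi+\mu\int\partial_j^h u_h^i\big(\partial_j^h\Phi^i-\partial_i^h\Phi^j\big).
\]
Both the discrete curl of $\Phi$ and $\Divh\Phi-\PiQ(\psi\phi\vrh)$ are bounded here, but not obviously small.

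The missing ingredient is the grid-scale regularity of the density that the numerical-diffusion bound \eqref{ap3} provides. For $|z|\le h$,
\[
\int_{-\tau}^{\tau}\|\vrh(\cdot+z)-\vrh\|_{L^2(\mathrm{supp}\,\phi)}^2\,\dt\aleq h\int_{-\tau}^{\tau}\int_{\facesint}\jump{\vrh}^2\ds\dt\aleq h^{1-\alpha}.
\]
Riesz transforms commute with translations and are bounded on $L^2$, so the same bound holds for $\Grad\bfphi$, up to $O(h)$ from $\phi$ and $\psi$.

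The discrepancy $\Gradh\PiQ\bfphi-\PiQ\Grad\bfphi$ is a difference of averages of $\Grad\bfphi$ over $O(h)$-shifted cells, hence $O(h^{(1-\alpha)/2})$ in $L^2_{\rm loc}$. Its trace and antisymmetric part then control $\Divh\PiQ\bfphi$ and the discrete curl. The continuous curl of $\bfphi$ involves only $\Grad\psi$ and $\mathcal{A}[\phi\vrh]$, which is strongly compact. This is what lets the effective viscous flux pass to the limit. The convective and numerical-diffusion errors need only $\|\Grad\bfphi\|_{L^2}$ and \eqref{ap3}, as you observed. With this added, your outline can be completed.

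Two smaller points. Proposition~\ref{col-div-curl} is not the right tool for the Riesz commutator, since neither $\mathcal{R}[\phi\vrh\vuh]$ nor $\mathcal{R}[\vrh]$ has a controlled gradient. Instead, use the standard spatial commutator lemma for a.e.\ $t$. The required pointwise-in-time weak convergence comes from the time equicontinuity of $\widetilde{\vrh}$ and $\widetilde{\vrh\vuh}$ (Remark~\ref{rmk-b}) together with $\|\vrh-\widetilde{\vrh}\|_{L^2_{\rm loc}(L^2)}\aleq\TS^{1/2}$. Finally, the time-shift term you discuss is $O(\TS^{1/2})$, not $O(\TS)$, which still suffices.
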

In the context of numerical analysis, this result was first proved by Karper \cite{Karper} for a mixed finite volume -- finite element method applied to the barotropic Navier--Stokes system. The proof for the present finite volume scheme requires a different technique, and was carried out in  \cite{FeLMShYu:2025I}.

The second tool is a discrete version of the renormalized continuity equation proved in \cite[Lemma 8.3]{FeLMMiSh} or \cite[Lemma A.1]{LMShYu:2025}. 
\begin{Lemma}[Renormalized continuity equation
]\label{lem_r2}
Let $(\vrh ,\vuh)$ satisfy \eqref{VFV_D}. Then for any $\phi_h \in Q_h$ and any function $b\in C^1(\R)$
we have
\begin{align} \label{renormalized_density}
& \intO{ {\rm D}_t b(\vrh) \phi_h } - \intfacesint{ \Up[b(\vrh), \vuh] \cdot \jump{\phi_h} } + \intO{ \phi_h \; (\vrh b'(\vrh) - b(\vrh) ) \; \Divh \vuh }
\nonumber \\
& = - \frac{1}{\Delta t}\intO{\phi_h \Eb{\vrh^\triangleleft |\vrh} } - h^\alpha \intfacesint{ \jump{\vrh} \jump{b'(\vrh) \phi_h}}
-\intfacesint{ |\avs{\vuh } \cdot \vn | \phi_h^{\rm down} \Eb{\vrh^{\rm up}|\vrh^{\rm down}} },
\end{align}
where $E_f(v_1|v_2) = f(v_1) - f'(v_2)(v_1-v_2) - f(v_2),  f \in C^1(\R)$.
\end{Lemma}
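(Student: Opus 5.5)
The plan is to test the discrete continuity equation \eqref{VFV_D} at level $k$ with $b'(\vrh^k)\phi_h \in Q_h$. Every term in \eqref{renormalized_density} should then come from one of three exact algebraic identities, one for each ingredient of the scheme: the time difference, the numerical diffusion part of $\Fup$, and the upwind part. No estimates are needed, since the statement is an identity. Write $\vrh = \vrh^k$ and $\vrh^\triangleleft = \vrh^{k-1}$.

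\emph{Time term.} A Taylor-type rearrangement gives $b(\vrh^\triangleleft) = b(\vrh) + b'(\vrh)(\vrh^\triangleleft - \vrh) + \Eb{\vrh^\triangleleft|\vrh}$. Hence
\[
D_t\vrh\, b'(\vrh) = D_t b(\vrh) + \frac{1}{\Delta t}\Eb{\vrh^\triangleleft|\vrh}.
\]
Integrating against $\phi_h$ yields $\intO{D_t b(\vrh)\phi_h}$ plus the first remainder, which is moved to the right-hand side with a minus sign.

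\emph{Diffusive term.} Since $\Fup(\vrh,\vuh) = \Up[\vrh,\vuh] - \muh\jump{\vrh}$, the diffusive part contributes $+\muh\intfacesint{\jump{\vrh}\jump{b'(\vrh)\phi_h}}$ on the left. Moved to the right, it is exactly the second remainder.

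\emph{Upwind term.} This is the main work. We must show
\[
-\intfacesint{\Up[\vrh,\vuh]\jump{b'(\vrh)\phi_h}} = -\intfacesint{\Up[b(\vrh),\vuh]\jump{\phi_h}} + \intO{\phi_h(\vrh b'(\vrh)-b(\vrh))\Divh\vuh} + \intfacesint{|\avs{\vuh}\cdot\vn|\,\phi_h^{\rm down}\Eb{\vrh^{\rm up}|\vrh^{\rm down}}}.
\]

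1. Convert the volume term to a face sum. By definition of $\Divh$, on each cell $\int_K \Divh\vuh = h^{d-1}\sum_{\sigma\in\facesK}\avs{\vuh}\cdot\vn$. Since $\phi_h(\vrh b'-b)$ is constant on $K$, regrouping by faces gives $\sum_{\sigma}\int_\sigma v_\sigma\bigl(g^{\rm in}-g^{\rm out}\bigr)$, where $g = \phi_h(\vrh b'(\vrh)-b(\vrh))$ and $v_\sigma = \avs{\vuh}_\sigma\cdot\vn$. Exterior faces drop out because $\avs{\vuh}_\sigma = 0$ there.

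2. Reduce to a single face. On each interior face, with $v_\sigma$ fixed, relabel cells as up and down according to the sign of $v_\sigma$. All terms are then multiples of $|v_\sigma|$ times functions of $\vrh^{\rm up},\vrh^{\rm down},\phi_h^{\rm up},\phi_h^{\rm down}$.

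3. Verify the pointwise identity. Substitute
\[
b(\vrh^{\rm up}) = b(\vrh^{\rm down}) + b'(\vrh^{\rm down})(\vrh^{\rm up}-\vrh^{\rm down}) + \Eb{\vrh^{\rm up}|\vrh^{\rm down}}.
\]
Then check that the $\phi_h^{\rm up}$ terms cancel identically and that the $\phi_h^{\rm down}$ terms leave exactly $|v_\sigma|\phi_h^{\rm down}\Eb{\vrh^{\rm up}|\vrh^{\rm down}}$.

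The main obstacle is sign and orientation bookkeeping in this last step: the convention $\jump{\cdot} = (\cdot)^{\rm out}-(\cdot)^{\rm in}$ relative to $\vn$, the switch between in/out and up/down, and the normalization $h^{d-1}$ versus $\frac1h$ in $\Divh$. I will first check the identity on a single face in one dimension for both signs of $v_\sigma$, and then sum. Collecting the three identities and the right-hand side $0$ of \eqref{VFV_D} then gives \eqref{renormalized_density}.
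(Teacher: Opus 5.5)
Your proposal is correct and is essentially the standard argument behind the results the paper cites, \cite[Lemma 8.3]{FeLMMiSh} and \cite[Lemma A.1]{LMShYu:2025}; the paper itself does not reproduce a proof. That argument tests \eqref{VFV_D} with $b'(\vrh)\phi_h$, uses the Taylor rearrangement for the time difference, and applies a face-by-face upwind identity. This identity does hold exactly for both signs of $\avs{\vuh}\cdot\vn$: the $\phi_h^{\rm up}$ terms cancel and the $\phi_h^{\rm down}$ terms leave precisely $|\avs{\vuh}\cdot\vn|\,\phi_h^{\rm down}\Eb{\vrh^{\rm up}|\vrh^{\rm down}}$.
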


\medskip
We are now ready to show the strong convergence of density. 
Taking $b = \vr \log(\vr)$ and $\phi_h \equiv 1$ in \eqref{renormalized_density} we obtain 
\begin{align*}
& 
-\intRQB{\widetilde{\vrh \log( \vrh)} \pd_t \psi - \vrh \Divh \vuh \psi} 
\\&=  \intRQB{\pd_t \widetilde{\vrh \log( \vrh)}  \psi + \vrh \Divh \vuh \psi} 
\\& 
= 
\intRQ{ \left( {\rm D}_t (\vrh \log(\vrh))   + \vrh \Divh \vuh  \right) \psi } \leq 0
\end{align*}
for any $\psi = \psi(t) \in C_c^\infty(\R),  \psi \geq 0$.

Passing to the limit for $h\to 0$ we obtain
\begin{align*}
& 
-\intRQB{\Ov{\vr \log( \vr)} \pd_t \psi - \Ov{\vr \Div \vu} \psi} 
\leq 0 \mbox{ for any } \psi \in C_c^\infty(\R),  \psi \geq 0,
\end{align*}
where the weak limit  $\Ov{\vr \log( \vr)}$ of $\vrh \log(\vrh)$ coincides with the weak limit of $\widetilde{\vrh \log(\vrh)}$ because of
\begin{align*}
\abs{\widetilde{\vrh \log(\vrh)} - \vrh \log(\vrh)} \aleq \TS \abs{D_t  \vrh}, \quad 
\norm{D_t  \vrh}_{L^2((-\tau,\tau)\times \Omega)} \aleq \TS^{-1/2}.
\end{align*}
On the other hand, as the limit satisfy the renormalized equation, we have 
\begin{align*}
& 
-\intRQB{ {\vr \log( \vr)} \pd_t \psi - {\vr \Div \vu} \psi} 
= 0 \mbox{ for any } \psi \in C_c^\infty(\R),
\end{align*}
whence, 
\begin{align*}
& 
\intRQB{\left(\vr \log( \vr)-\Ov{\vr \log( \vr) } \right) \pd_t \psi - \left(\vr \Div \vu - \Ov{\vr \Div \vu}\right) \psi} 
\leq 0 \mbox{ for any } \psi \in C_c^\infty(\R),  \psi \geq 0.
\end{align*}

Next, using the discrete Lions identity \eqref{lions-tool}, we obtain
\begin{align}
\vr \Div \vu - \Ov{\vr \Div \vu} = \frac{ \vt  }{2\mu+\lambda} \left(\vr^2  - \Ov{\vr^2} \right).
\end{align}
Thanks to Hypothesis {\bf (B)} and strict convexity of $\vr \log(\vr)$ we get
\[
0 \leq \Ov{\vr \log(\vr)} - \vr \log\vr \aleq \Ov{\vr^2}  - \vr^2.
\]
Consequently, there is a constant $C>0$ such that
\begin{align*}
& 
\intRQ{-\left(\Ov{\vr \log( \vr) }-\vr \log( \vr)  \right) \pd_t \psi + C \left(  \Ov{\vr \log(\vr)} - \vr \log\vr\right) \psi} 
\leq 0 \mbox{ for any } \psi \in C_c^\infty(\R),  \psi \geq 0.
\end{align*}
This implies 
\begin{align}
0 \leq \intO{ (\Ov{\vr \log(\vr)} - \vr \log\vr) (T)} \leq \exp(C(\tau-T)) \intO{ (\Ov{\vr \log(\vr)} - \vr \log\vr) (\tau)} 
\end{align}
for a.a.\ $\tau, T$ with $\tau < T$.
Using Hypothesis {\bf (B)} and letting $\tau \to -\infty$, we conclude 
\[
\intO{ (\Ov{\vr \log(\vr)} - \vr \log\vr) (T)} \equiv 0 \quad \mbox{for a.a.} \ T \in \R.
\]
This yields the strong convergence of density claimed in \eqref{con-d}.

Finally, combining with the strong convergence of velocity \eqref{con-u} and temperature \eqref{con-T}, we derive from the consistency formulation of entropy \eqref{cons-3} and ballistic energy \eqref{cons-4} that $\vU$ satisfy the entropy inequality \eqref{wsol-S} and ballistic energy inequality \eqref{wsol-BE}, which finishes the proof of Theorem \ref{CT1}.

\section{Long--time behavior}
\label{L}

Our final objective is to illustrate the available analytical results and to suggest possible conjectures in the situations where analysis has failed.

\subsection{Long--time behavior: Analytical results}

The central issue is validity of the so--called \emph{ergodic hypothesis}:
\begin{equation} \label{L1}
\lim_{T \to \infty} \frac{1}{T} \int_0^T  F(\vU(t, \cdot)) \dt 	\ \mbox{exists}
\end{equation} 
for any entire solution $\vU$
of the Navier-Stokes-Fourier system and any bounded continuous function $F$ defined on a suitable \emph{phase space}, cf. \cite{FanFeiHof}. 
As the quantities $\vr$, $\vm$ are only weakly continuous in time 
with respect to the $L^q-$topology for some $q > 1$, while 
$S$ is weakly c\` agl\` ad (see \cite{FeiSwGw}) in $L^q$, it is convenient to consider the phase space 
\[
X = W^{-\ell,2}(\Omega) \times W^{-\ell,2}(\Omega; \R^d) \times 
W^{-\ell,2}(\Omega),\ \ell > d/2
\] 
with the associated Hilbert topology.

For any time shift invariant set  $\mathcal{U} \subset \mathcal{A}$, where $\mathcal{A}$ is the global attractor consisting of entire solutions, see   \eqref{i12}, there exists a \emph{stationary statistical solution} supported by $\Ov{\mathcal{U}}$. Specifically, there exists a Borel probability measure 
$\mathcal{V}$, 
\begin{align} 
	{\rm supp}[\mathcal{V}] &\subset \Ov{\mathcal{U}}, \quad
	\mathcal{V}[\mathfrak{B}] = \mathcal{V}[\mathfrak{B}(\cdot + T)] 
	\ \mbox{for any}\ T \in \R,
	\nonumber	
\end{align}	
for any Borel set $\mathfrak{B} \subset \mathcal{A}$, see \cite[Theorem 5.3]{FeiSwGw}.	
In addition, there exists an \emph{ergodic stationary statistical solution} $\mathcal{V}$ enjoying the property 
\[
\mathfrak{B} \ \mbox{a Borel time shift invariant set}\ 
\Rightarrow\ \ \mbox{either} \ \mathcal{V}[\mathfrak{B}] = 1 
\ \mbox{or} \ \mathcal{V}[\mathfrak{B}] = 0, 
\]
cf.\ \cite[Theorem 7.3]{FanFeiHof}.

Finally, 
for any ergodic stationary statistical solution $\mathcal{V}$ and any Borel measurable function $F:X \to R$ such that 
\[
\int_X F \Big(\vr(0, \cdot), \vm(0, \cdot), S(0, \cdot) \Big) \D \mathcal{V} < \infty,
\]
the ergodic limit 
\[
 \frac{1}{T} \int_0^T  F \Big(\vr (t, \cdot), \vm (t, \cdot), S (t ,\cdot) \Big) \dt \to 
\int_X F \Big(\vr(0, \cdot), \vm(0, \cdot), S(0, \cdot) \Big) \D \mathcal{V} \ \mbox{as}\ T \to \infty
\]
exists $\mathcal{V}-$a.s.\ in $\mathcal{A}$, see \cite[Theorem 5.4]{FeiSwGw} and \cite[Theorem 7.2]{FanFeiHof}.

\subsection{Long time behavior: Numerics}
In order to illustrate the theoretical results, we present numerical  Rayleigh--B\'enard simulations for two regions: i) a stable region which admits a stationary state; ii) a weak turbulent region with Rayleigh number $Ra \approx 8\cdot 10^4$. 
Further, we dig into the simulations and generate several conjectures. 

\medskip

Let us consider $\Omega = [-2,2]|_{\{-2,2\}} \times [-1,1]$ with following initial and boundary data 
\begin{align*}
& \vr_D(x) = 1.2 +   \sin \left( \frac{\pi x_2}2\right), \quad \vu_D(x) = (0, \,c \sin(2\pi x_2))^t \quad \mbox{implying} \quad  \vu_B|_{\partial \Omega} = 0,\\
& \vt_D(x) = \vt_M + S_\vt  x_2 +  c P(x_1)\sin(\pi x_2) +   \widetilde{P}(x_1)\sin\left(\frac{\pi (x_2+1)}{4}\right) + \widehat{P}(x_2), \\
&\vt_M =\frac{\vt_L +\vt_H}2, \quad S_\vt =\frac{\vt_L -\vt_H}2, \quad  \widehat{P}(x_2)|_{x=(\cdot,\pm1)} = 0, \\
& \mbox{implying} \quad
\vt_B|_{x=(\cdot,-1)} =  \vt_H,\quad \vt_B|_{x=(\cdot,1)} =  \vt_L + \widetilde{P}(x_1),
\end{align*}
where
\begin{align*}
&
P(x_1) = \sum_{j=1}^{10} a_j \cos( b_j + 2j\pi x_1), \quad  c = 0.01
\end{align*}
and $a_j \in [0,1], b_j \in [-\pi, \pi], j = 1, \dots, 10$ are arbitrary fixed numbers. The coefficients $a_j$ have been normalized so that $\sum_{j=1}^{10} a_j = 1$ to guarantee that the perturbation is small.
The parameters appearing in the Navier--Stokes--Fourier system are taken as
\begin{align*}
\mu =  \lambda = 0.1, \quad  \kappa = 0.01,\quad \gamma = 1.4.
\end{align*}
We point out that $\widetilde{P}$ is used to make perturbation upon the bottom boundary temperature, whereas $\widehat{P}$ is used to produce more general initial data with larger energy.   

\medskip
This is the basic setting. Due to the Rayleigh number $Ra$ given by
\begin{align*}
Ra = \frac{g \beta L^d \Delta \vt}{\kappa \nu},
\end{align*}
we adjust $\vt_H, \vt_L, \widetilde{P}(x_1)$ as well as $\Grad G=(0,g)$ so that the solutions live in different regions and generate different structures.  
Note that, $L = 2,\, d= 2$, $\beta =1/\vt_M$ is the thermal expansion coefficient, $\nu = \mu/\vr_M$ is the kinematic viscosity with $\vr_M= \int_{\Omega} \vr \dx /|\Omega|=1.2$.  

\subsubsection{Experiment 1: Stable region} 
\label{sr}
This section is devoted to verify the theoretical results \eqref{conv2stat} that the attractor reduces to be a single point. 
Additionally, we investigate here experimentally how $Ra$ influences the flow structure.  
Specifically, we take 
\[
\widetilde{P}(x_1) = \widehat{P}(x_2) \equiv 0, \quad \vt_L \equiv 1, \quad g=S_\vt, \quad S_\vt \in\{ -100, \ -10, \ -2, \ -1.1, \ -1\}. 
\] 
Figure \ref{fig:GEN} presents temperature $\vt_h$ and streamline $\vu_h$  obtained on the uniform mesh with $h = 2/80$ for different values of  $S_\vt$. 
Note that, the simulations for $\vth, \vuh$ with $S_\vt = -10$ are only shown at $T=50$. This is before chaotic behaviour starts to develop. Further details will be discussed in Section~\ref{tr}.
These numerical simulations indicate that $Ra$ does influent the flow structure:  the solution becomes more chaotic with an increasing $Ra$. 

\begin{figure}[htbp]
	\setlength{\abovecaptionskip}{0.cm}
\begin{minipage}[b]{.9\linewidth}
		\centering
	\includegraphics[width=0.45\textwidth]{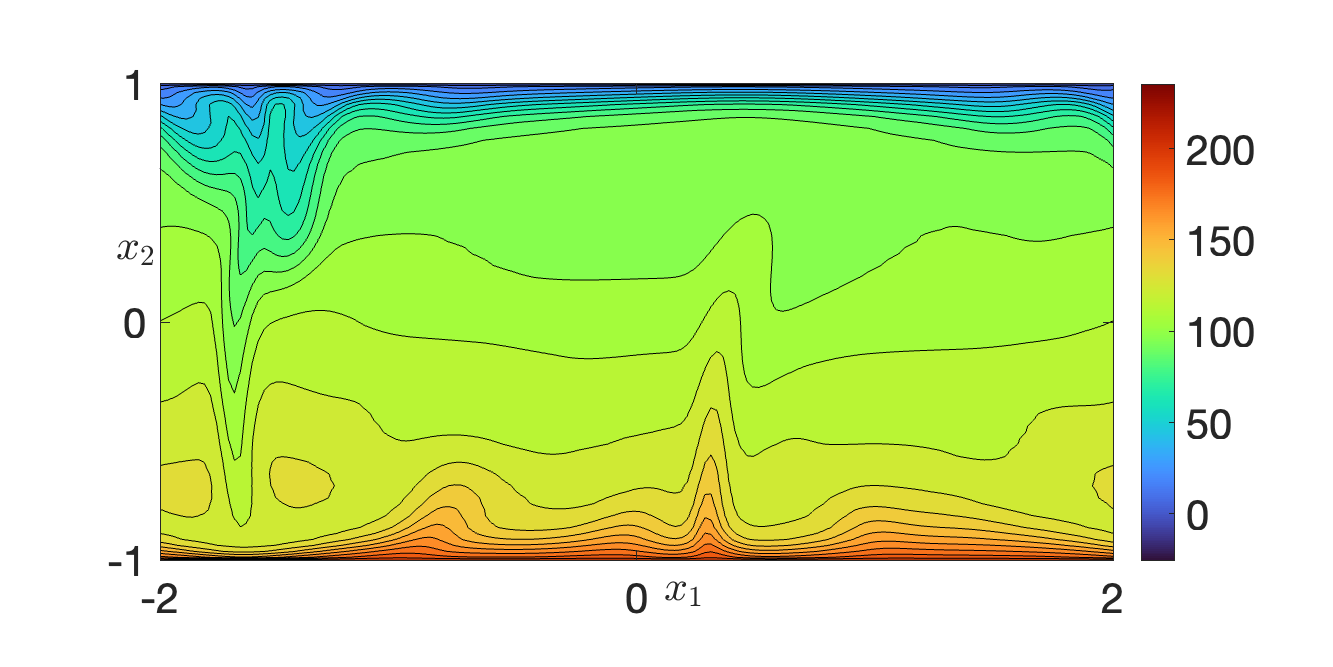}
	\includegraphics[width=0.45\textwidth]{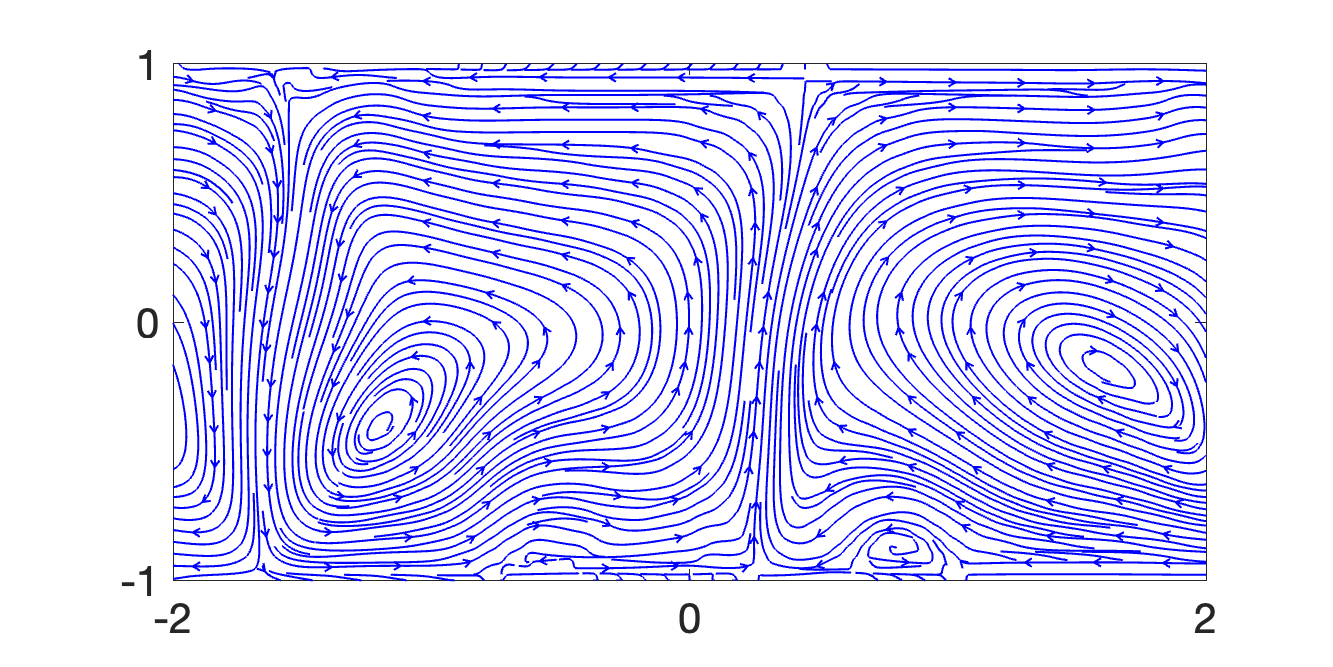}
	\caption*{ $S_\vt = -100,\  T=250,\ Ra =  9.5\cdot 10^5$ }
	\end{minipage}\vspace{0.1cm}
\begin{minipage}[b]{.9\linewidth}
		\centering
	\includegraphics[width=0.45\textwidth]{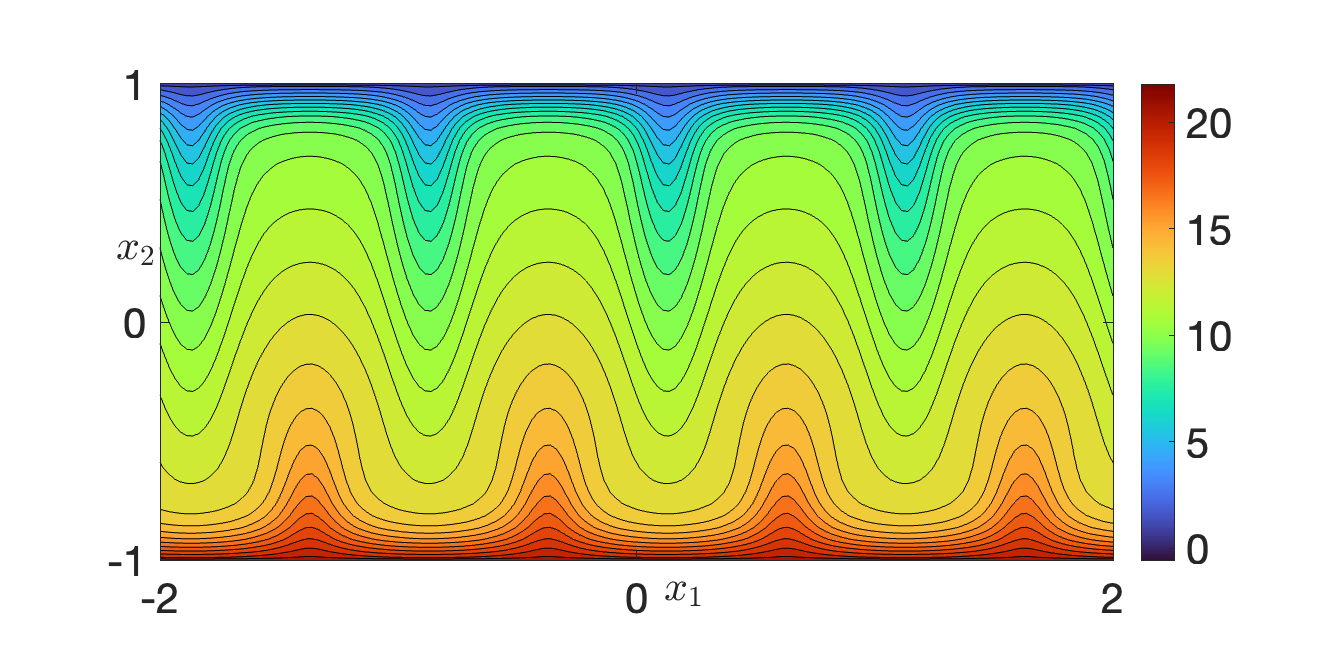}
	\includegraphics[width=0.45\textwidth]{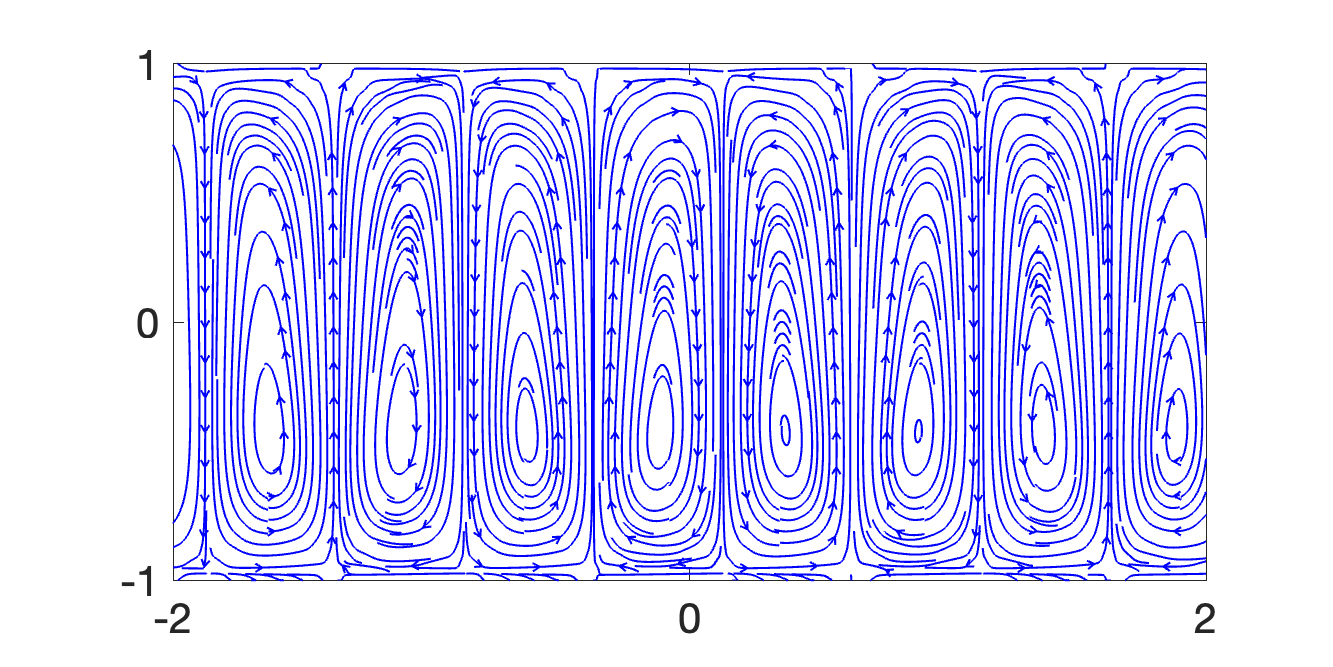}
	\caption*{ $S_\vt = -10,\  T=50,\ Ra =  8.7\cdot 10^4$ }
	\end{minipage}\vspace{0.1cm}
\begin{minipage}[b]{.9\linewidth}
		\centering
	\includegraphics[width=0.45\textwidth]{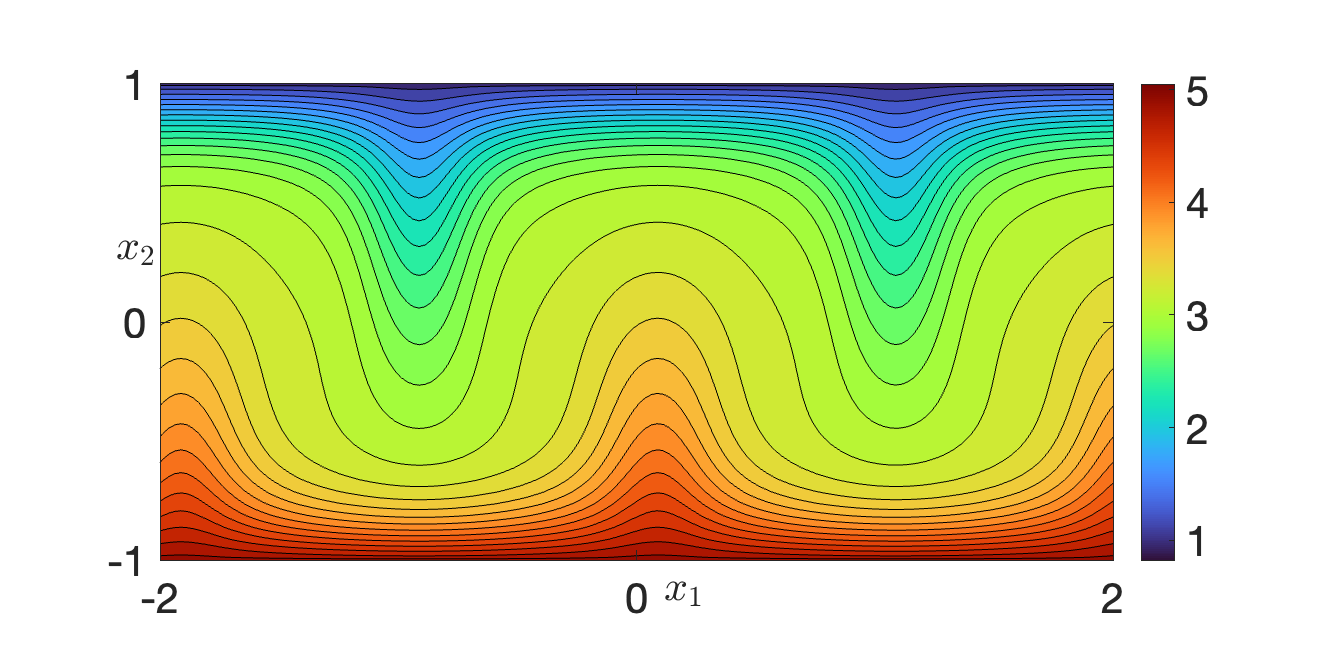}
	\includegraphics[width=0.45\textwidth]{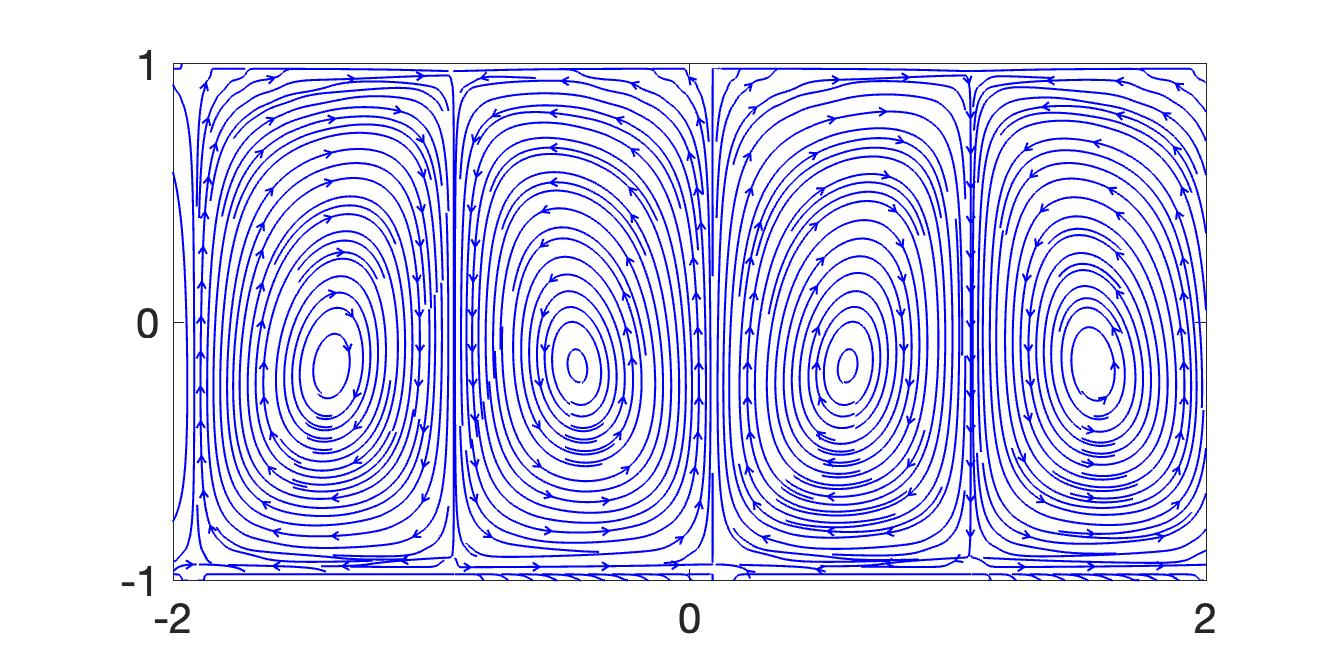}
	\caption*{ $S_\vt = -2,\  T=250,\ Ra =  1.2\cdot 10^4$ }
	\end{minipage}\vspace{0.1cm}
\begin{minipage}[b]{.9\linewidth}
		\centering
	\includegraphics[width=0.45\textwidth]{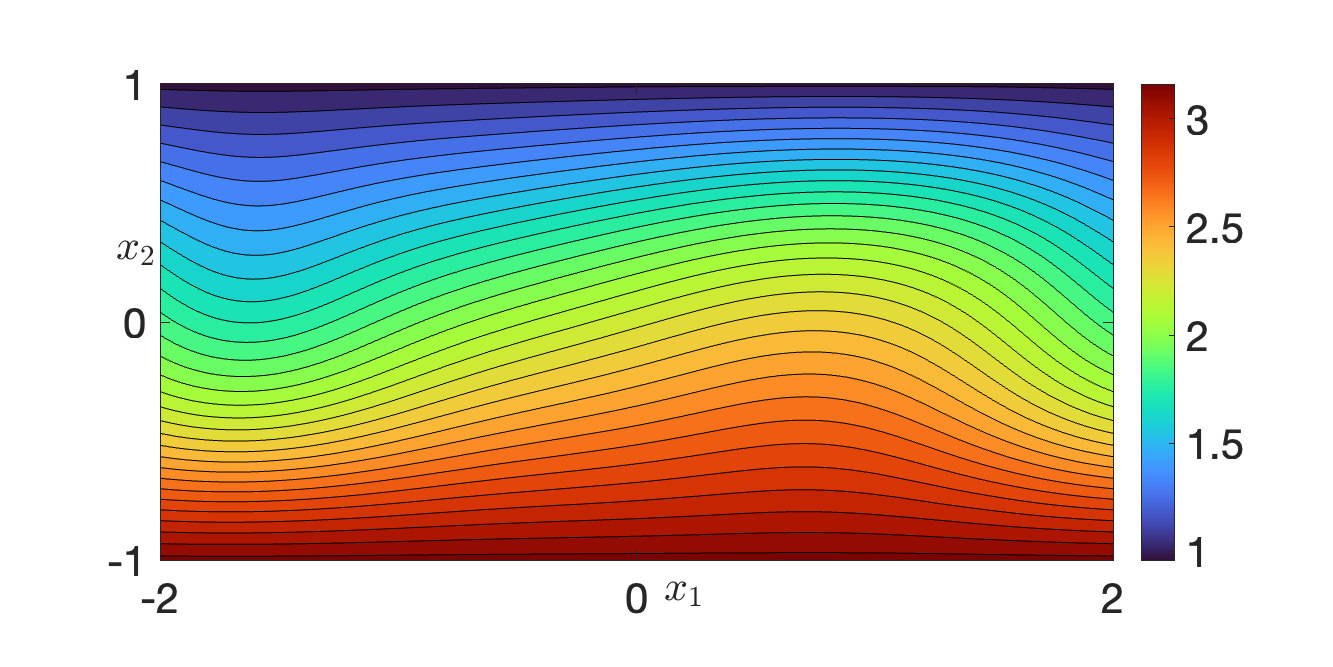}
	\includegraphics[width=0.45\textwidth]{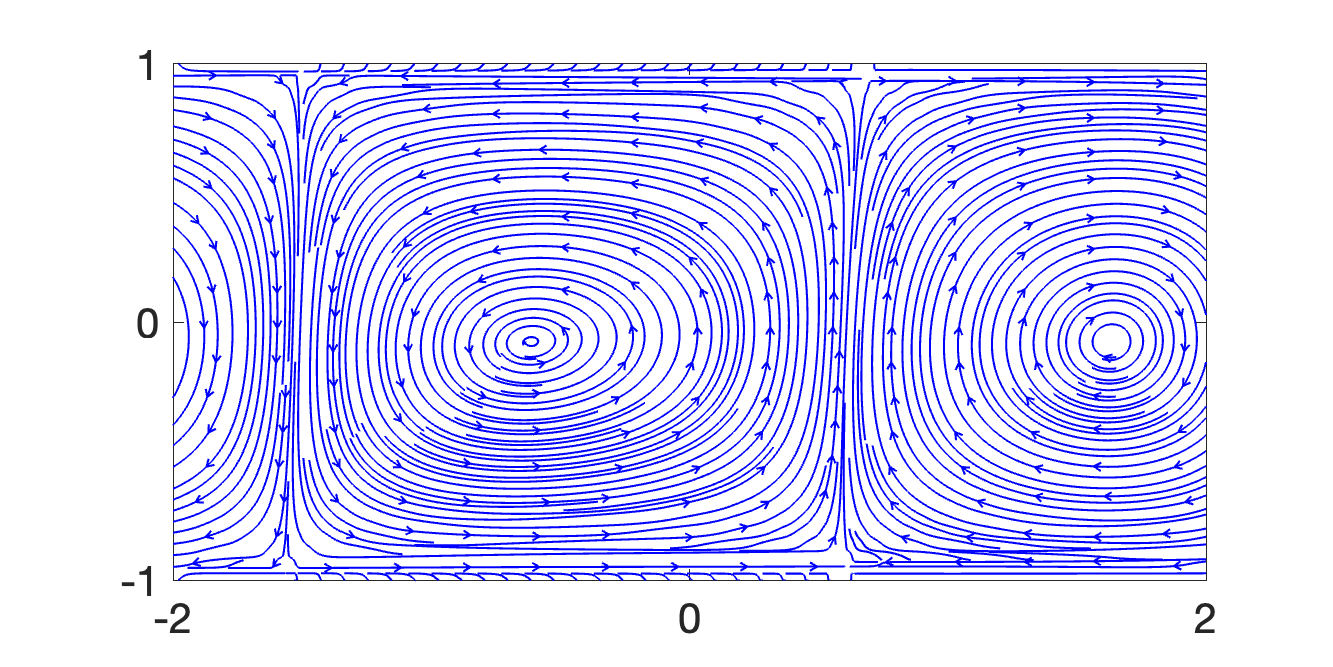}
	\caption*{ $S_\vt = -1.1,\  T=250,\ Ra =  5.5\cdot 10^3$ }
	\end{minipage}\vspace{0.1cm}
\begin{minipage}[b]{.9\linewidth}
		\centering
	\includegraphics[width=0.45\textwidth]{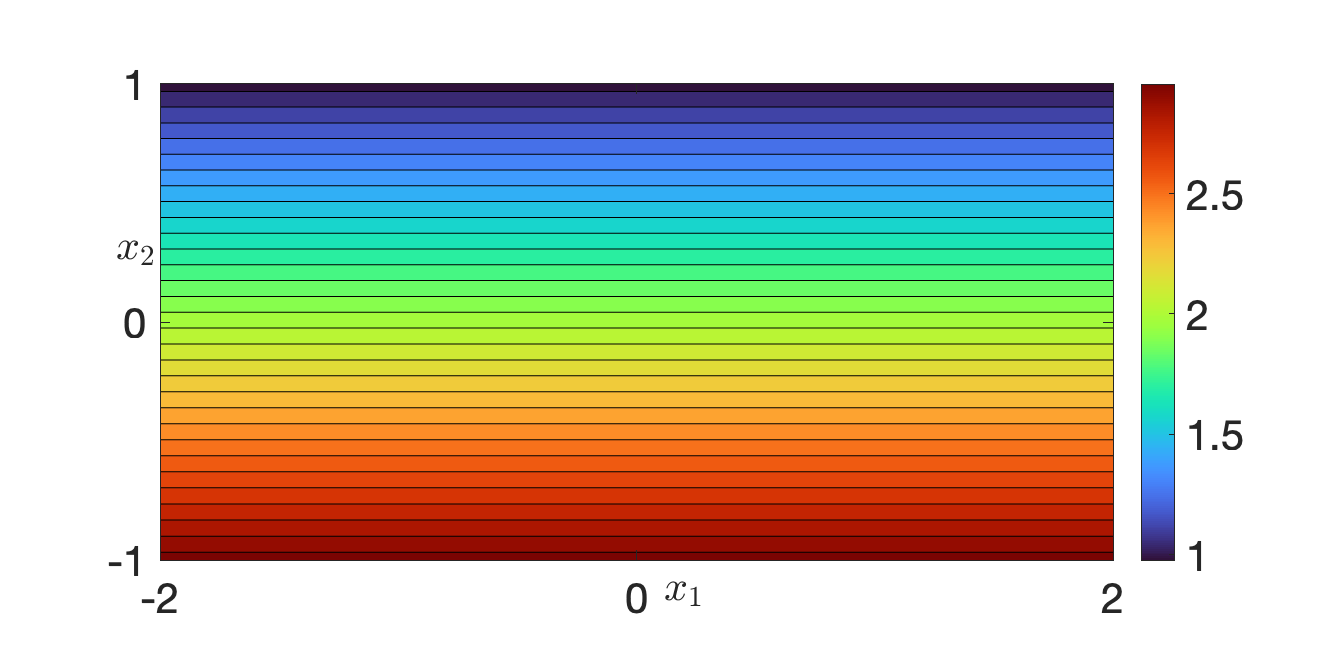}
	\includegraphics[width=0.45\textwidth]{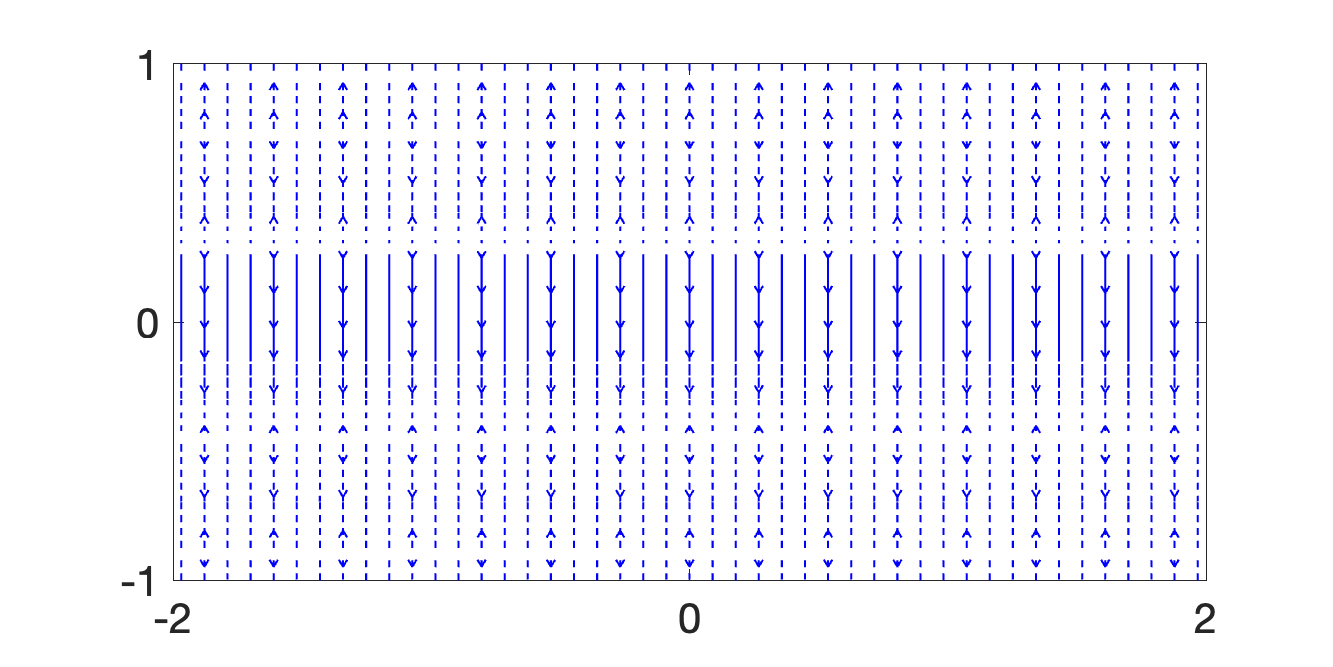}
	\caption*{ $S_\vt = -1,\  T=250,\ Ra = 4.8 \cdot 10^3$ }
	\end{minipage}\vspace{0.1cm}
\caption{Rayleigh--B\' enard Experiment 1: Temperature $\vt$ (left)  and streamlines $\vu$ (right) obtained with different $S_\vt$ and $T$. }\label{fig:GEN}
\end{figure}

\medskip
Now let us focus on the case that the attractor might reduce to be a single point.  The parameters are taken as 
\[
\widetilde{P}(x_1) = 0, \quad \widehat{P}(x_2) =  \begin{cases}
100 \cos^2(\pi x_2), & \mbox{ if } x_2 \in [-1/2, \ 1/2],\\
0, & \mbox{ otherwise},
\end{cases} \quad \vt_L \equiv 1, \quad g=S_\vt=-0.3, 
\] 
which gives $Ra \approx 554$. 
Figure \ref{fig-Evo-Ex1} presents the evolutions of  $L^1$-norms of the solution $\norm{U_h(T_M,\cdot)}_{L^1(\Omega)}$ as well as  means(-in-time) of norms 
\[
\Ov{\norm{U_h(T_M,\cdot)}_{L^1(\Omega)}} := \frac{1}{M}\sum_{m=1}^{M}\norm{U_h(T_m,\cdot)}_{L^1(\Omega)}
\]
obtained on the uniform mesh with $h = 2/320$. Here, $U\in \{m_1,m_2, E, \rho e\}, T_M = 2 M$ and $M = 1, \dots, 400$. 
Numerical results hint that this specific problem might admit a stationary solution 
\begin{align*}
\vr_{s} = 1.2, \quad \vu_s = \vc{0}, \quad \vt_s = \vt_M + S_\vt x_2.
\end{align*}

Numerical solutions $(\vrh, \vuh, \vt_h)$ at $T=800$ are shown in Figure~\ref{fig-Ex1-sol}.  
Furthermore, Figure~\ref{fig-Err-ex1} presents errors between a single solution (resp.\ its temporal-average) and the exact solution $\vU_s=(\vr_s, \vu_s, \vt_s)$, defined by
\begin{align*}
&\widetilde{E_1}(U_h, T_M) = \norm{ U_h(T_M,\cdot ) - U_s(\cdot )}_{L^1(\Omega)}, \\
&\widetilde{E_2}(U_h, T_M) = \norm{\Ov{U_h}(T_{M},\cdot) - U_s(\cdot)}_{L^1(\Omega)},\quad  \Ov{U_h}(T_{M},\cdot) =\frac{1}{M}\sum_{m=1}^{M} U_h(T_{M},\cdot) 
\end{align*}
with  $U\in \{ \vr, \vm, \vr s, \vu, \vt, E, BE\}$. Numerical results show that the solution as well as its temporal-average do converge. 

\begin{figure}[htbp]
	\setlength{\abovecaptionskip}{0.cm}
	\setlength{\belowcaptionskip}{-0.cm}
	\centering
	\includegraphics[width=\textwidth]{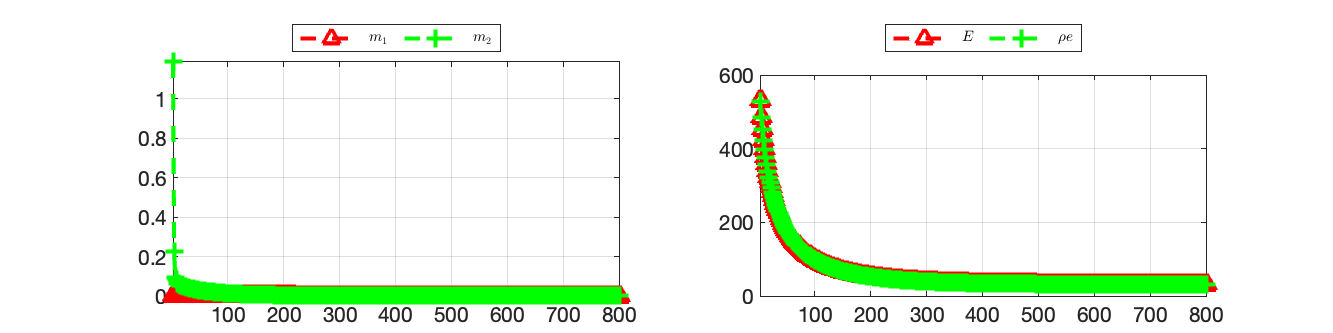}
	\includegraphics[width=\textwidth]{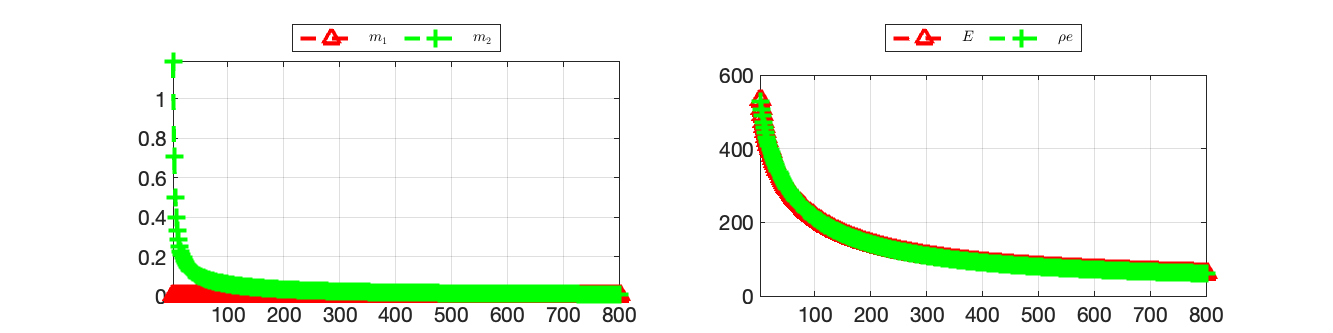}
	\caption{  \small{Rayleigh--B\' enard Experiment 1: $\norm{U_h(T_M,\cdot)}_{L^1(\Omega)}$ (top) and $\Ov{\norm{U_h(T_M,\cdot)}_{L^1(\Omega)}}$ (bottom).}}\label{fig-Evo-Ex1}
\end{figure}

\begin{figure}[htbp]
	\setlength{\abovecaptionskip}{0.cm}
	\setlength{\belowcaptionskip}{-0.cm}
	\centering
	\includegraphics[width=0.48\textwidth]{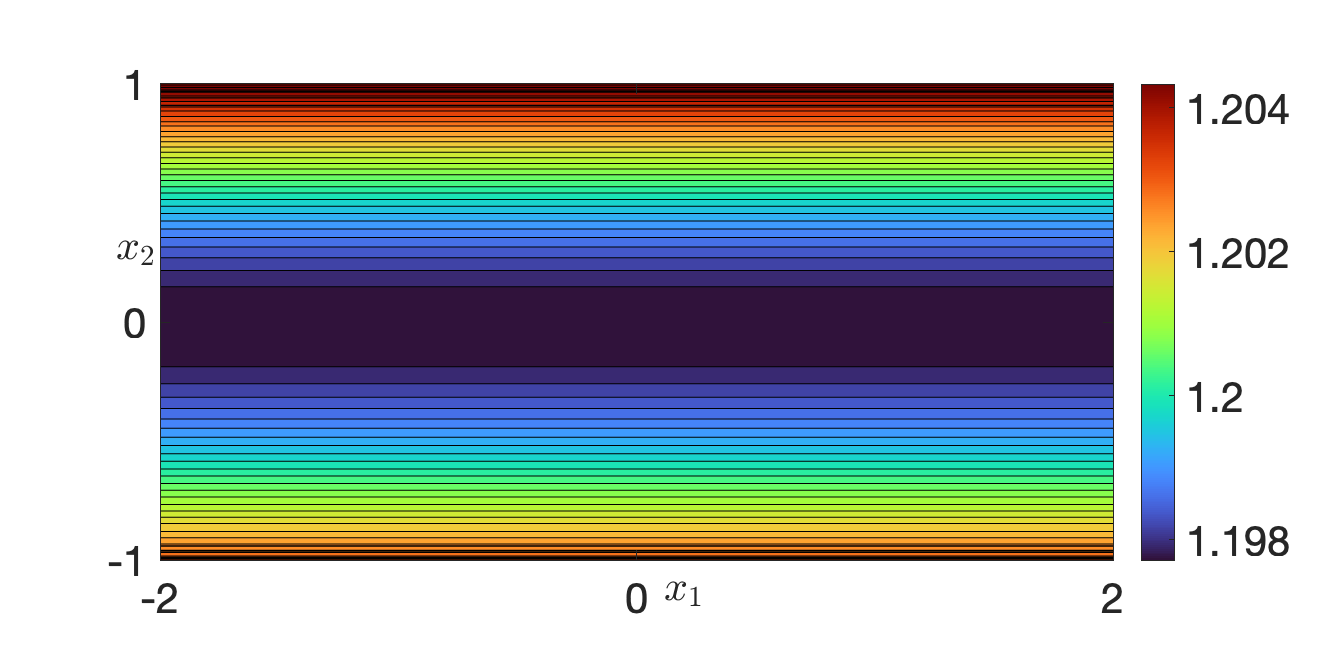}
	\includegraphics[width=0.48\textwidth]{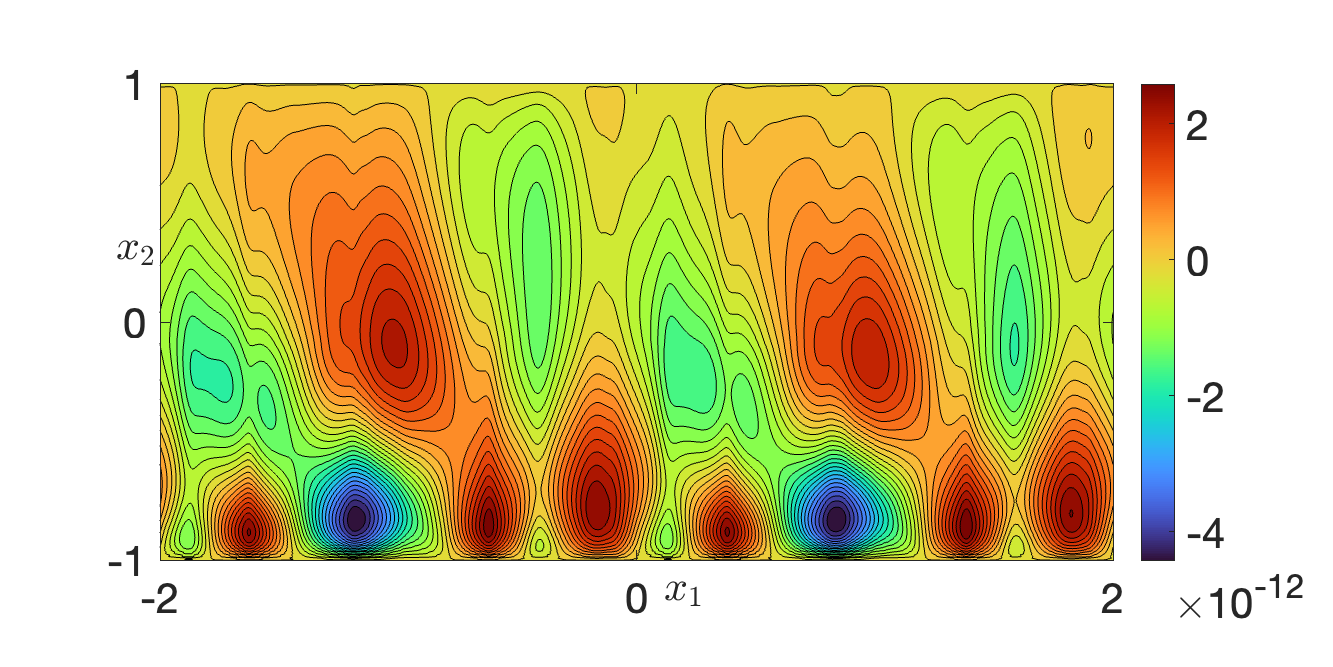}\\
	\includegraphics[width=0.48\textwidth]{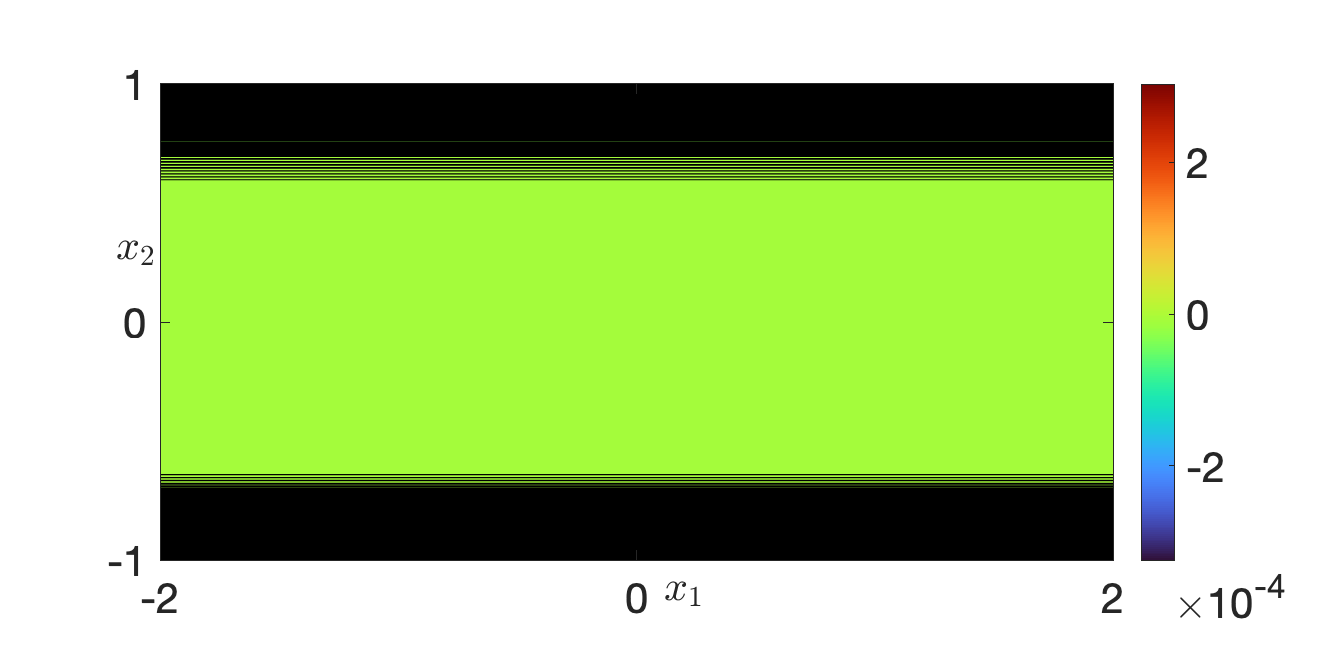}
	\includegraphics[width=0.48\textwidth]{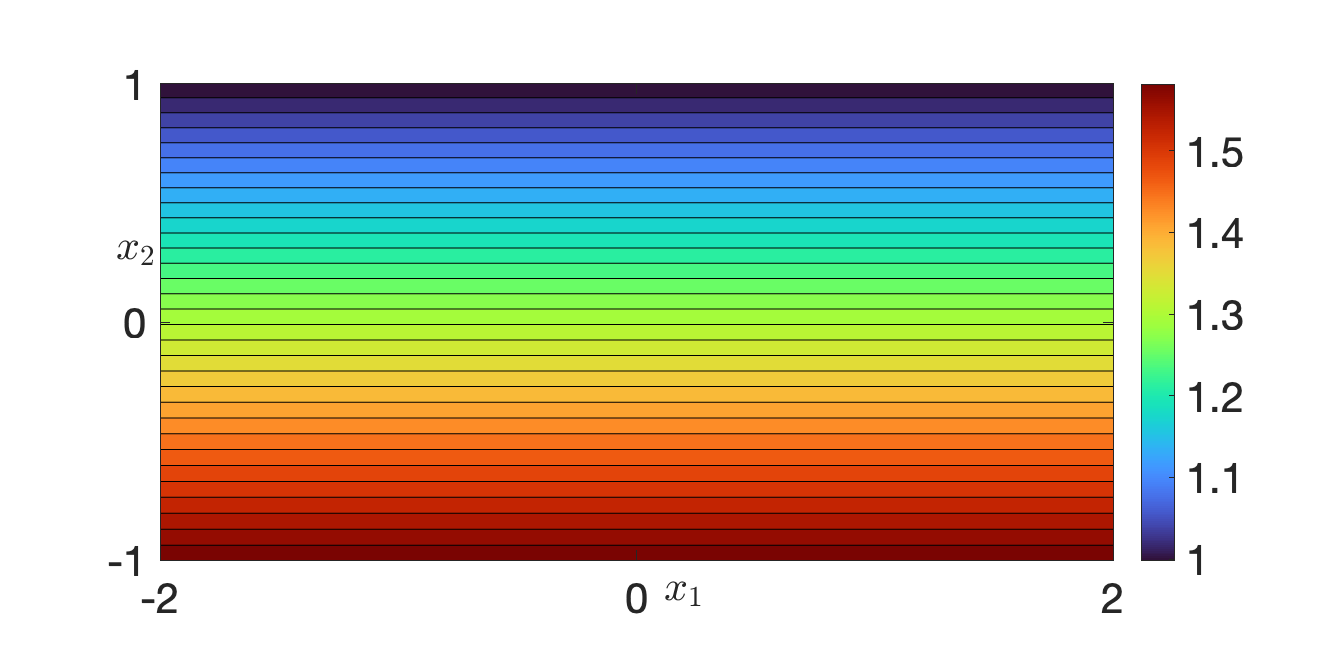}
	\caption{  \small{Rayleigh--B\' enard Experiment 1: Numerical solutions $(\vr_h, u_{1,h}, u_{2,h}, \vt_h)$ (from left to right, from top to bottom) at $T = 800$.}}\label{fig-Ex1-sol}
\end{figure}

\begin{figure}[htbp]
	\setlength{\abovecaptionskip}{0.cm}
	\setlength{\belowcaptionskip}{-0.cm}
	\centering
	\includegraphics[width=0.45\textwidth]{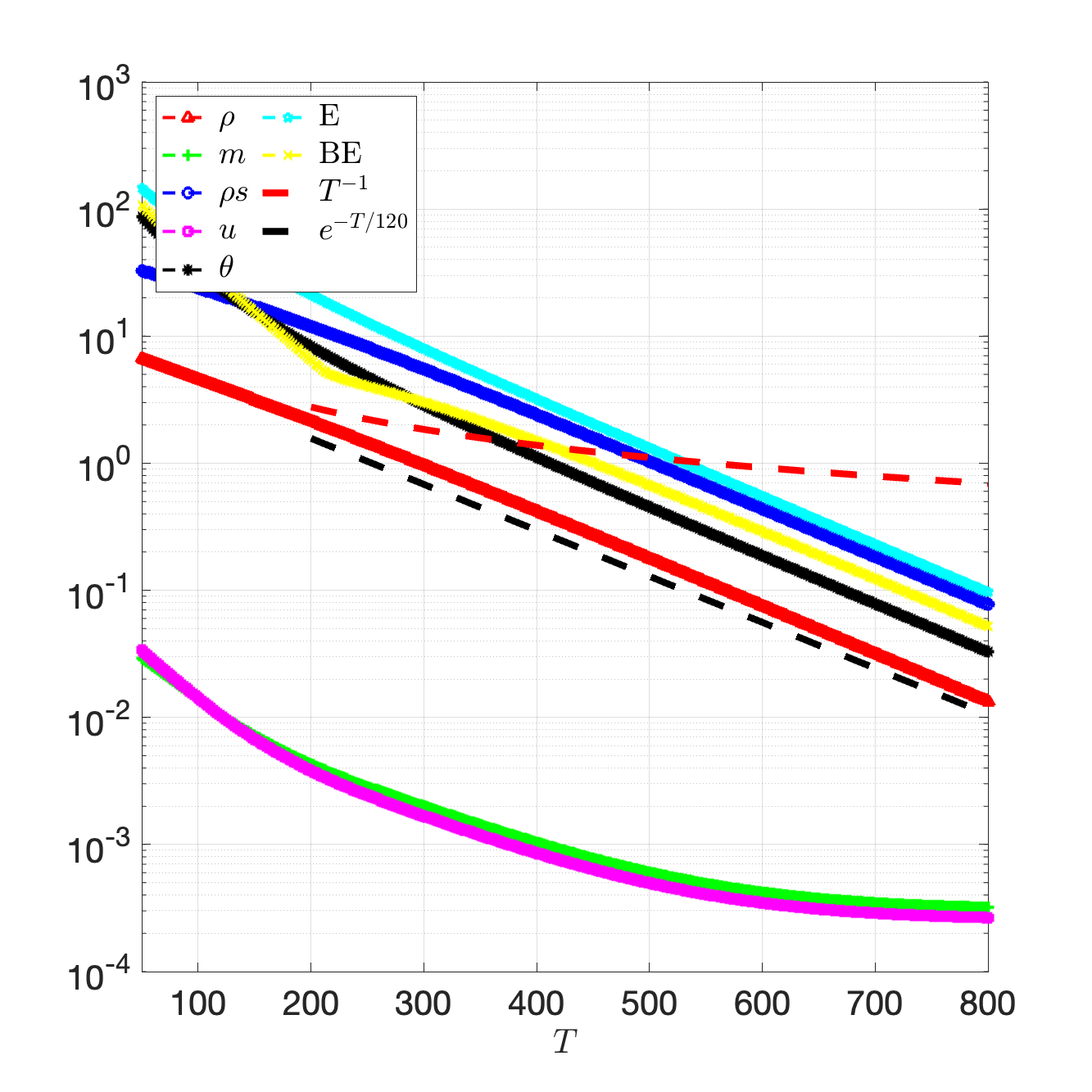} 
	\includegraphics[width=0.45\textwidth]{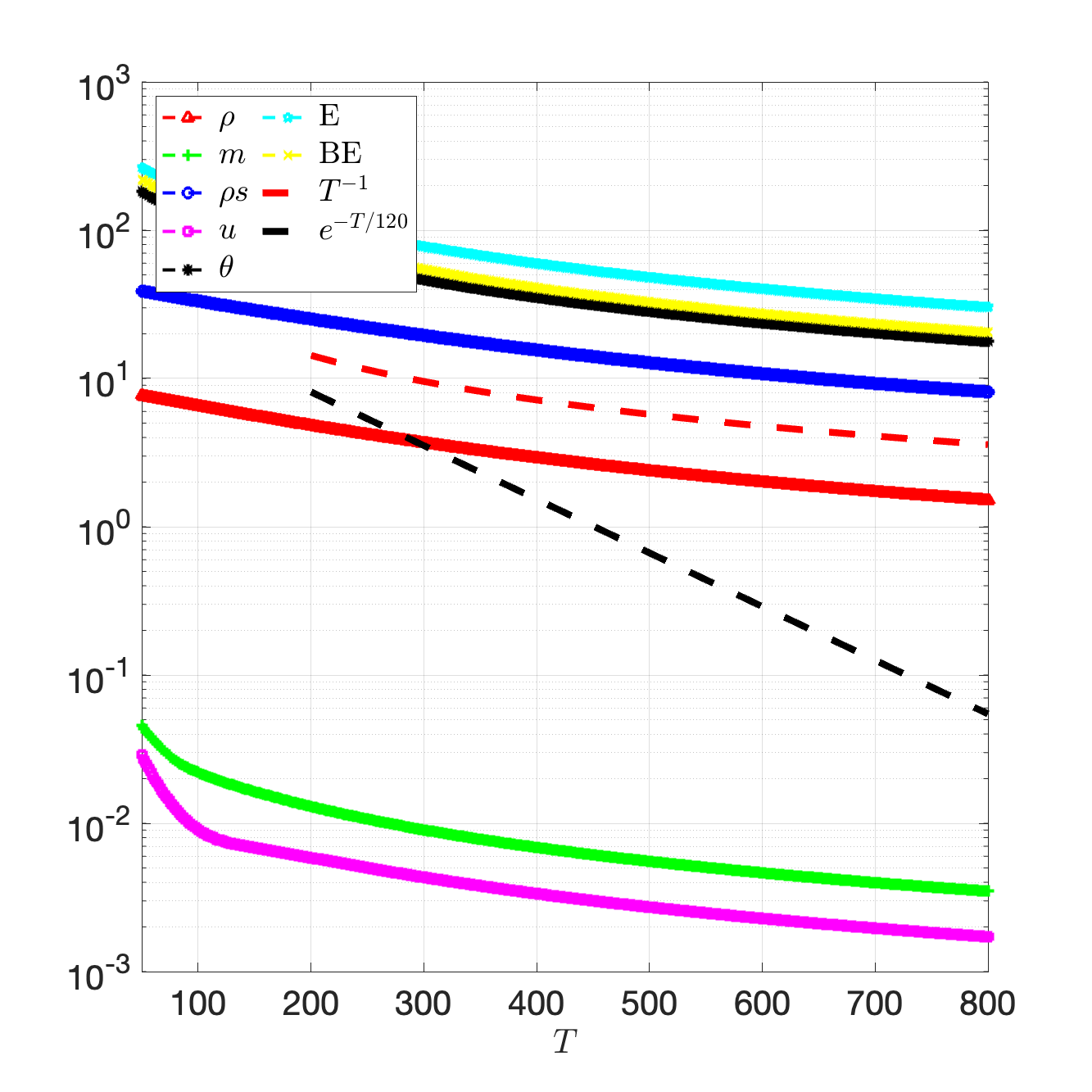}
	\caption{  \small{Rayleigh--B\' enard Experiment 1: Errors $\widetilde{E_1}, \widetilde{E_2}$ over $[0,800]$.}}\label{fig-Err-ex1}
\end{figure}


\newpage
\subsubsection{Experiment 2: Turbulent region}
\label{tr}
In this section, we verify our main theoretical results as well as the theoretical results \eqref{i12} within weak turbulent regions. 

To this end, we take 
\[
\widetilde{P}(x_1) = \widehat{P}(x_2) \equiv 0, \quad \vt_L \equiv 1, \quad \vt_H \equiv 15, \quad g\equiv-10,
\] 
which yields $Ra \approx 8\cdot 10^4$. 
This is the experiment first created in our previous work \cite{FeLMShYu:2024}, where some interesting phenomena about random effects are demonstrated.

\medskip

We simulate this experiment till $T=1600$ on a fixed fine mesh with $h = 2/320$.
Here we calculate
\begin{itemize}
\item[(1)] {\bf Evolution}.  
Figure \ref{fig-Evo-1} presents the evolutions of temperature $\vt_h(T_M,\cdot)$ at different $T_M$ with $M=20, 160, 195, 415, 575, 615$. As shown in Figure \ref{fig-Evo-1} we can see the convection structures evolving over time.

Figure \ref{fig-Evo} presents the evolutions of  $L^1$-norms $\norm{U_h(T_M,\cdot)}_{L^1(\Omega)}$ as well as  means(-in-time) of norms 
\[
\Ov{\norm{U_h(T_M,\cdot)}_{L^1(\Omega)}} := \frac{1}{M}\sum_{m=1}^{M}\norm{U_h(T_m)}_{L^1(\Omega)}
\]
with $U\in \{m_1,m_2, E, \rho e\}, T_M = 2 M$ and $M = 1, \dots, 800$.

Figure \ref{fig-Evo} indicates that the solution reaches the turbulent region only after a considerable duration. Therefore, in what follows we present the temporal-averages starting from $T_{M_0} = 400$ with $M_0 = 200$.

\begin{figure}[htbp]
	\setlength{\abovecaptionskip}{0.cm}
	\setlength{\belowcaptionskip}{-0.cm}
	\centering
	\includegraphics[width=0.3\textwidth]{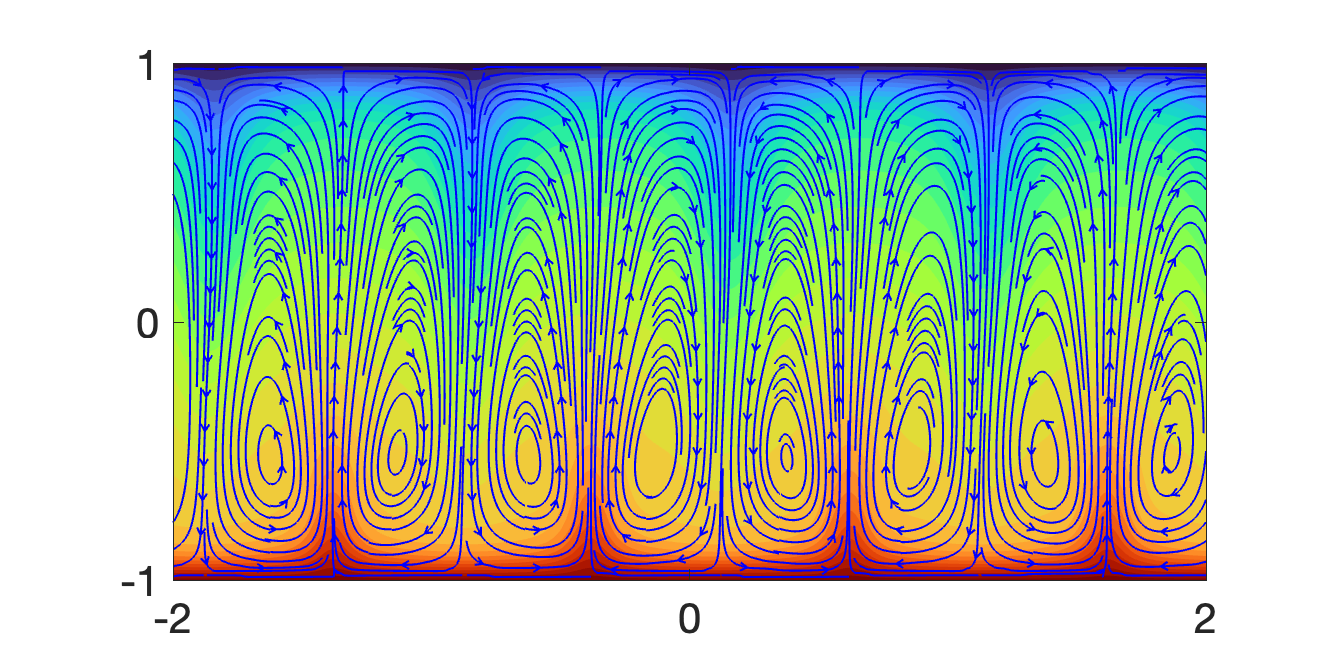}
	\includegraphics[width=0.3\textwidth]{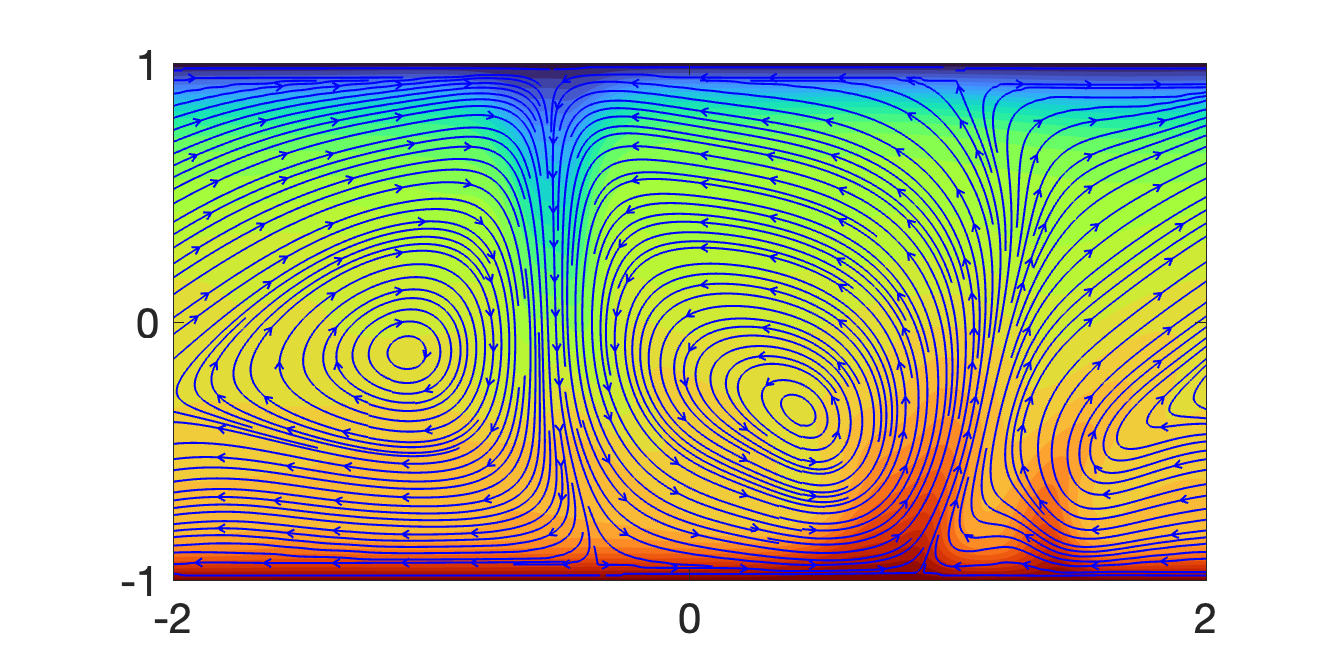}
	\includegraphics[width=0.3\textwidth]{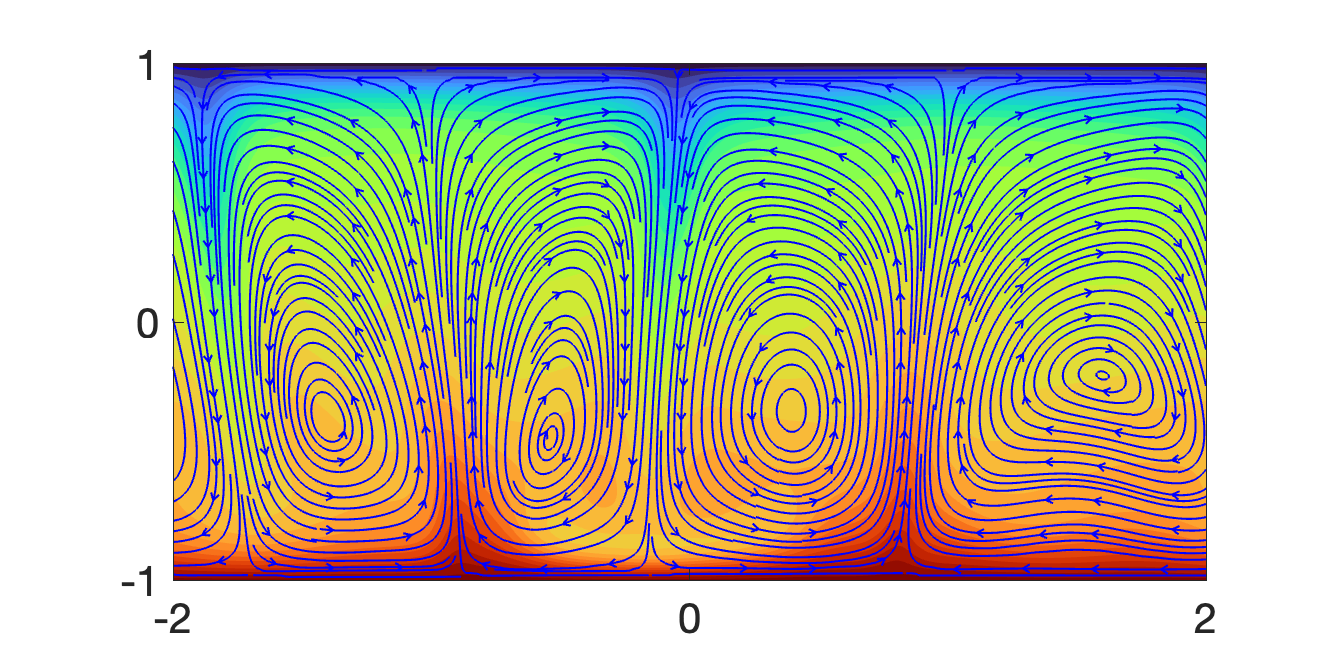}\\
	\includegraphics[width=0.3\textwidth]{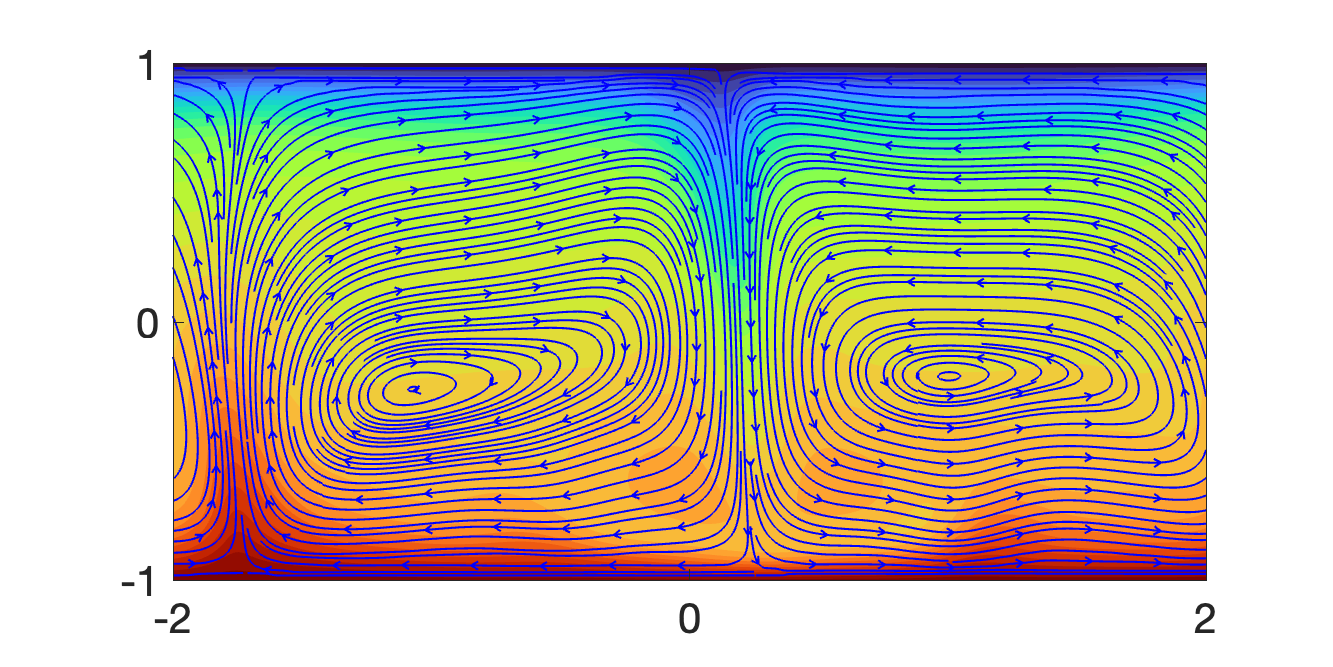}
	\includegraphics[width=0.3\textwidth]{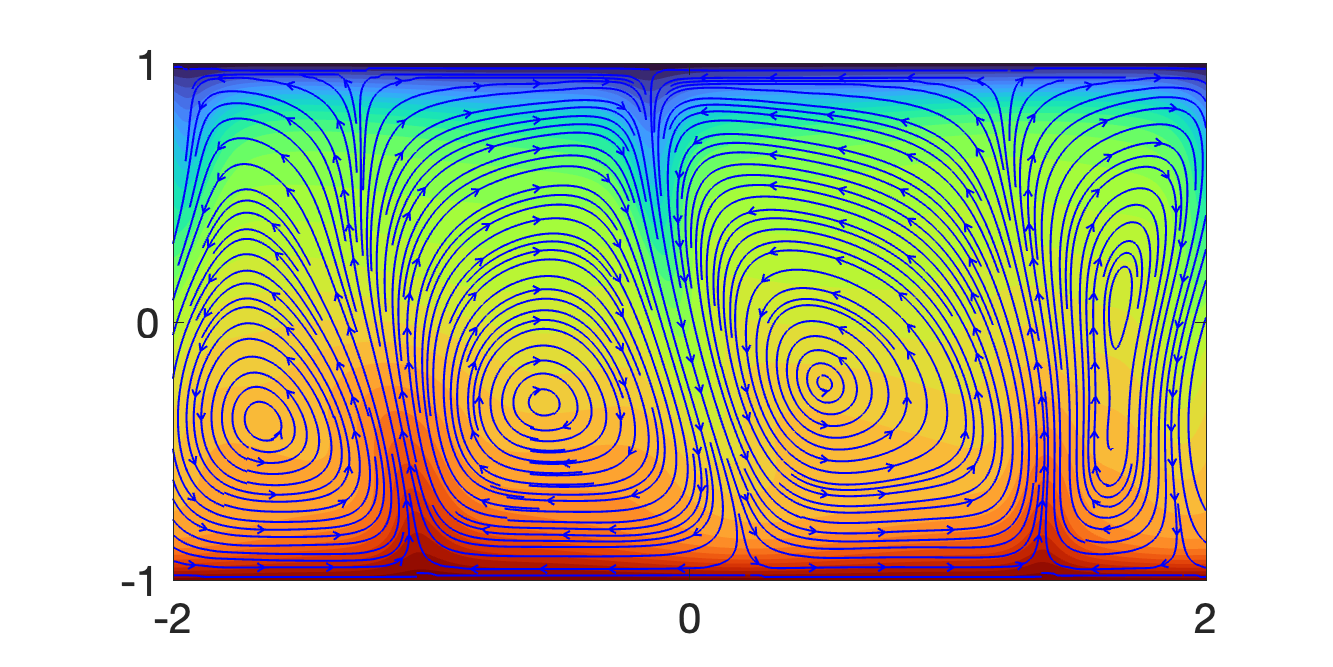}
	\includegraphics[width=0.3\textwidth]{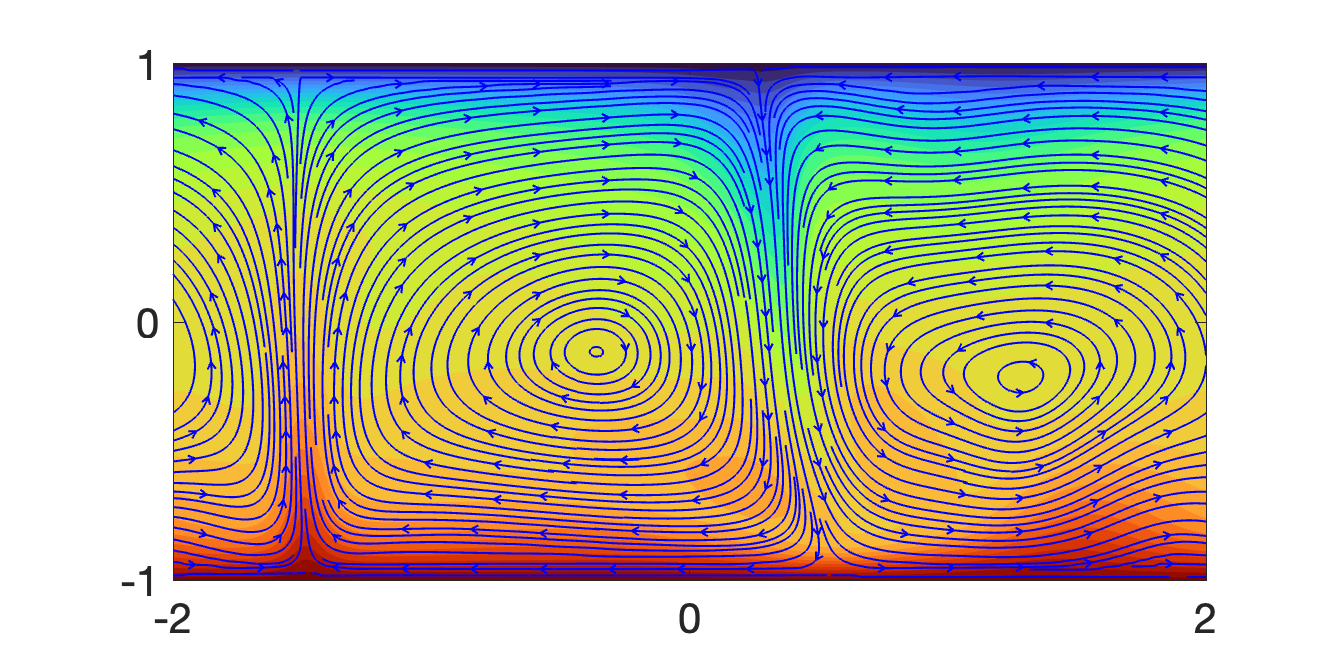}
	\caption{  \small{Rayleigh--B\' enard Experiment 2: $\vt_h(T_M,\cdot)$ together with the streamline $\vuh$ at different $T_M$ with $M=20, 160, 195, 415, 575$, $615$ (from left to right, from top to bottom).}}\label{fig-Evo-1}
\end{figure}

\begin{figure}[htbp]
	\setlength{\abovecaptionskip}{0.cm}
	\setlength{\belowcaptionskip}{-0.cm}
	\centering
	\includegraphics[width=\textwidth]{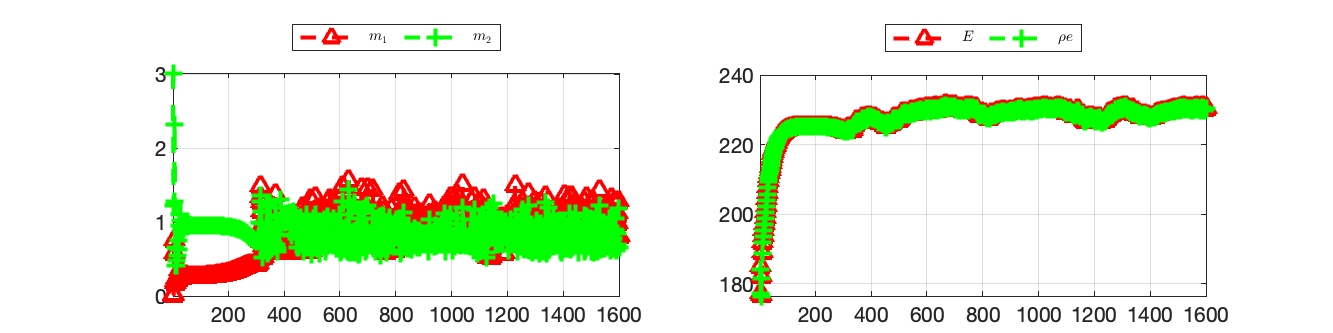}
	\includegraphics[width=\textwidth]{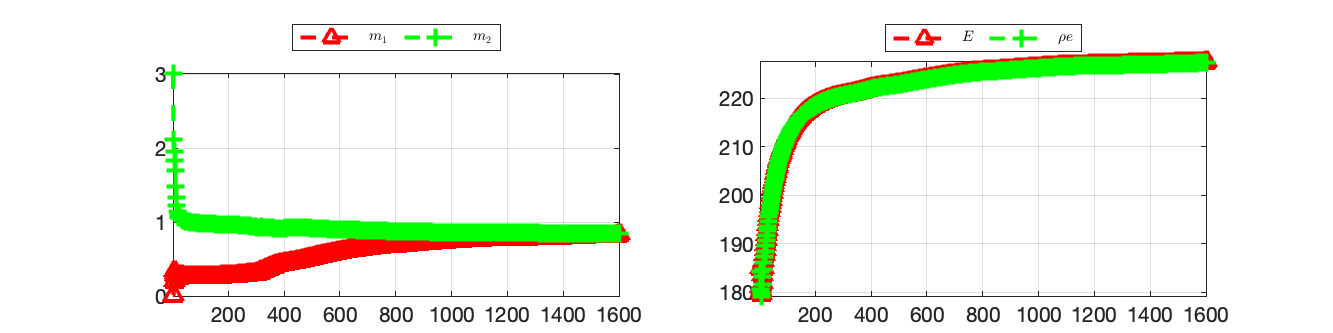}
	\caption{  \small{Rayleigh--B\' enard Experiment 2: $\norm{U_h(T_M,\cdot)}_{L^1(\Omega)}$ (top) and $\Ov{\norm{U_h(T_M,\cdot)}_{L^1(\Omega)}}$ (bottom).}}\label{fig-Evo}
\end{figure}

\item[(2)] {\bf Mean and deviation}. Let us introduce the notations for mean(-in-time) and deviation(-in-time)
\begin{align}
& \Ov{U_h}(T_{M_{ref}}) = \frac1{M_{ref}-M_0}\sum_{m=M_0+1}^{M_{ref}} U_h(T_m,\cdot ), \ T_m = 2 m, \ M_0 = 200, \ M_{ref} = 800, \br
& \mbox{Dev}(U_h,T_{M_{ref}}) = \frac1{M_{ref}-M_0}\sum_{m=M_0+1}^{M_{ref}} \abs{  U_h(T_m,\cdot )  - \Ov{U_h}(T_{M_{ref}})},
\end{align}
approximating  
\begin{align*}
& \frac{1}{T-T_{M_0}} \int_{T_{M_0}}^{T}  U(t,\cdot ) \dt, \quad 
\frac{1}{T-T_{M_0}} \int_{T_{M_0}}^{T} \left| U(t,\cdot ) - \frac{1}{T-T_{M_0}} \int_{T_{M_0}}^{T}  U(t,\cdot ) \dt \right| \dt. 
\end{align*}

Figure~\ref{fig-Mean} shows the means $\Ov{U_h}(T_{M_{ref}})$  with  $\ U \in \{ \vr, \vt, E \}$ and its streamline $\Ov{\vuh}$.
The details of the mean and derivation of numerical temperature $\vt_h$ are shown in Figure~\ref{fig-Mean-Var-vt}.

Numerical simulations reveal that the temporal-averaged convection structure exhibits a well-defined form despite the instability of a single solution. The observed single convection pair structure may be intrinsically linked to the length of the fluid domain.

\begin{figure}[htbp]
	\setlength{\abovecaptionskip}{0.cm}
		\centering
	\includegraphics[width=0.49\textwidth]{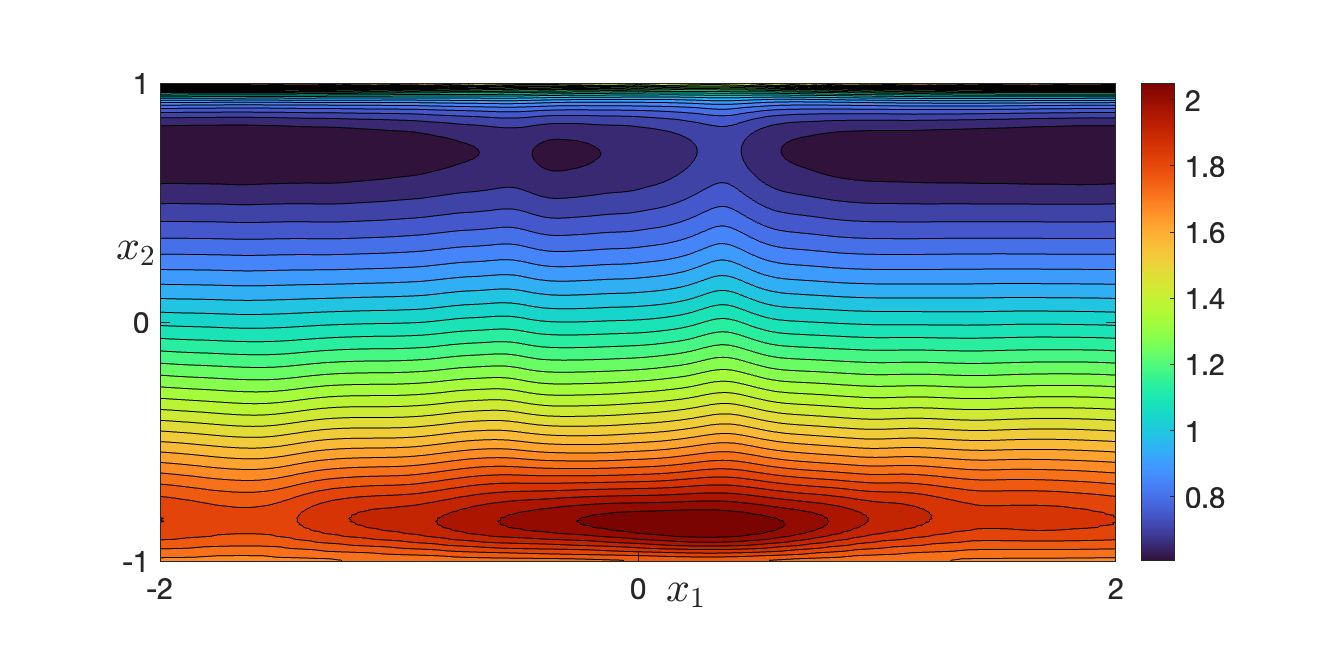}
	\includegraphics[width=0.49\textwidth]{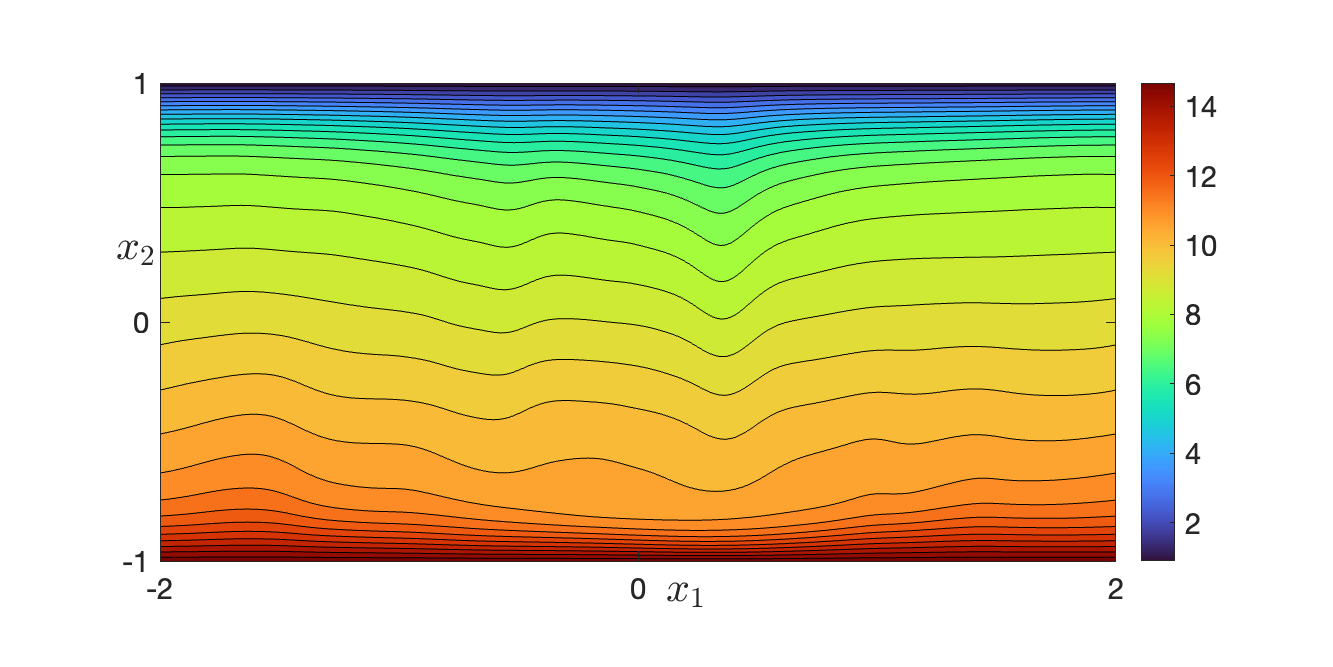}\\
	\includegraphics[width=0.49\textwidth]{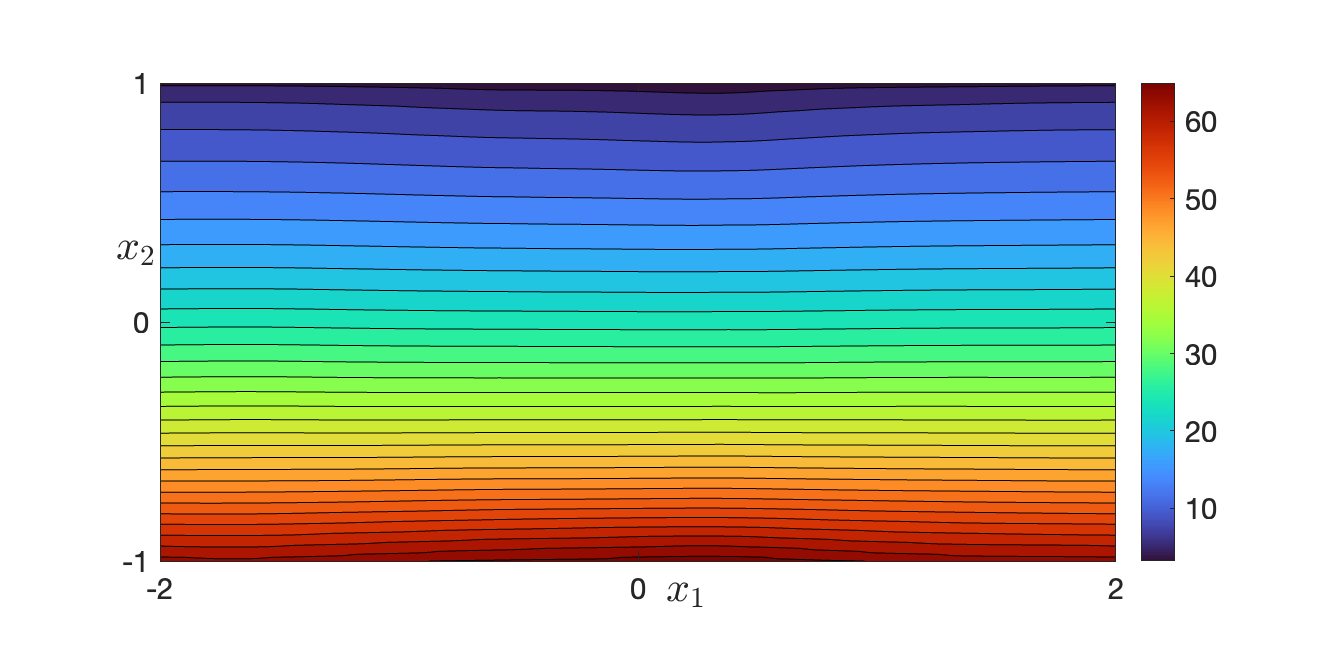}
	\includegraphics[width=0.49\textwidth]{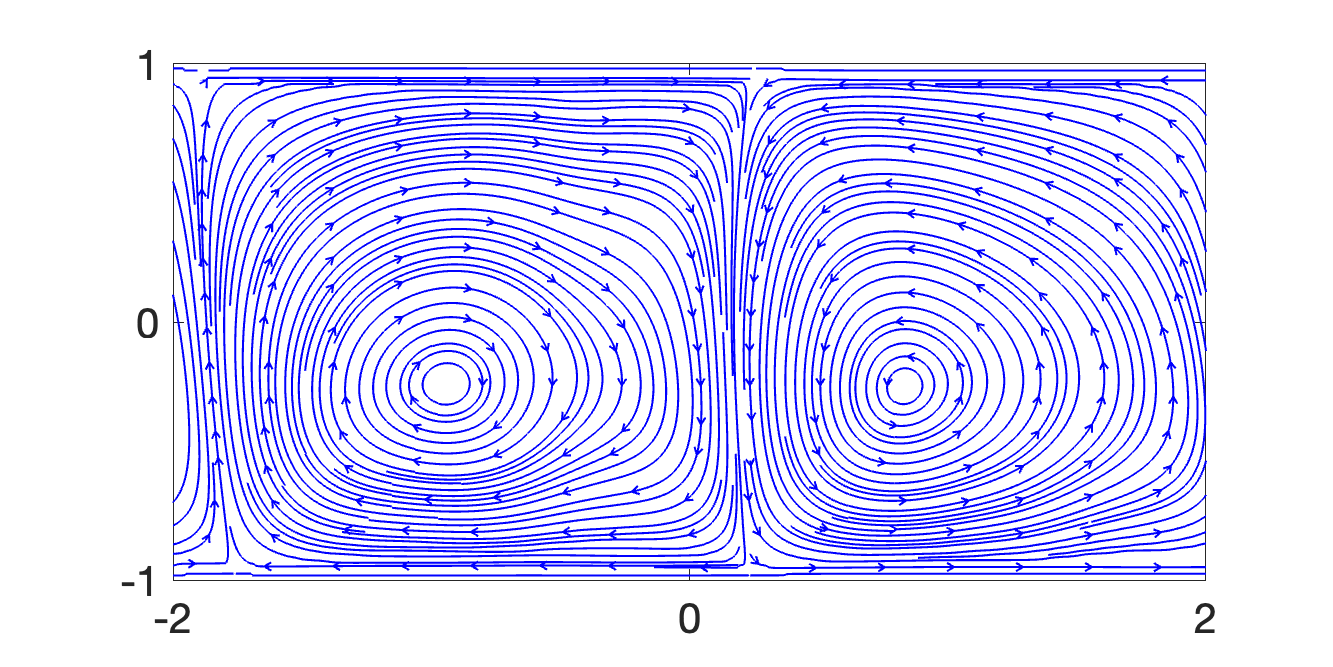}
	\caption{  \small{Rayleigh--B\' enard Experiment 2: $\Ov{U_h}(T_{M_{ref}})$. From left to right, from top to bottom: $\Ov{\vrh}, \, \Ov{\vt_h}$,  $\Ov{E_h}$ and  streamlines $\Ov{\vuh}$.}   }\label{fig-Mean}
\end{figure}

\begin{figure}[htbp]
	\setlength{\abovecaptionskip}{0.cm}
	\setlength{\belowcaptionskip}{-0.cm}
	\centering
	\includegraphics[width=0.49\textwidth]{./gif/case0/temperaturePRONUM52MX640MY320IS201IE800_Mean.png}
	\includegraphics[width=0.49\textwidth]{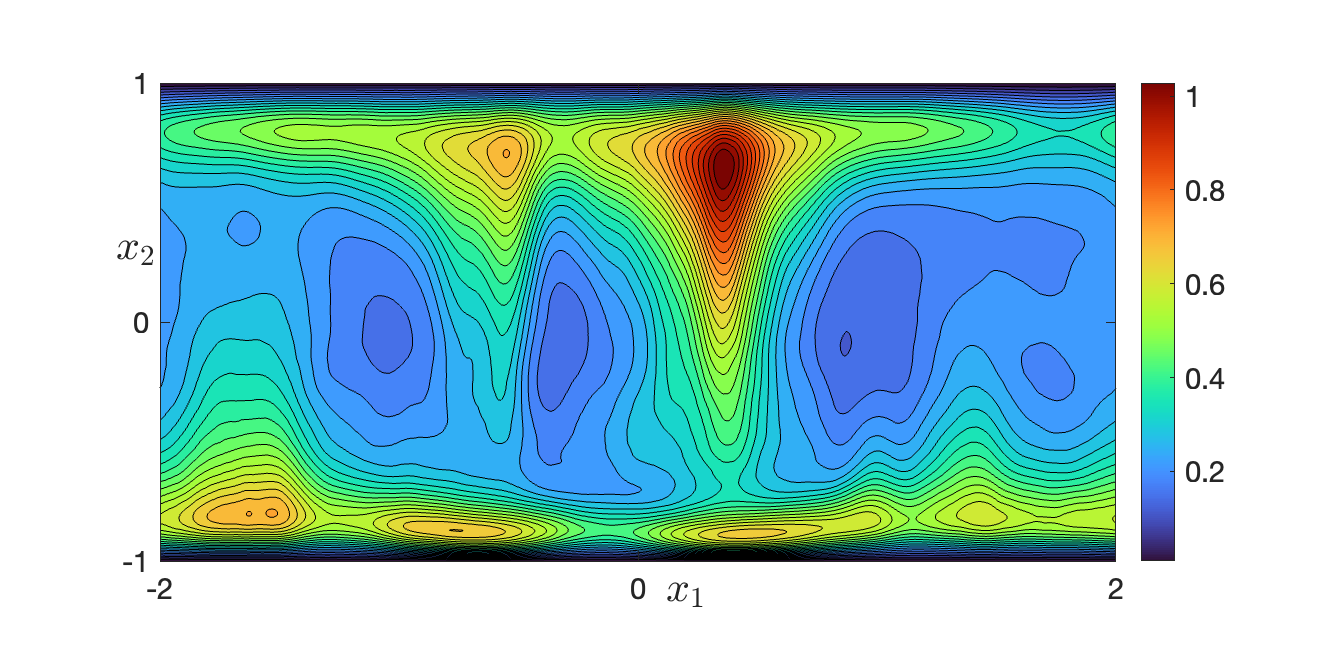}\\
	\includegraphics[width=0.3\textwidth]{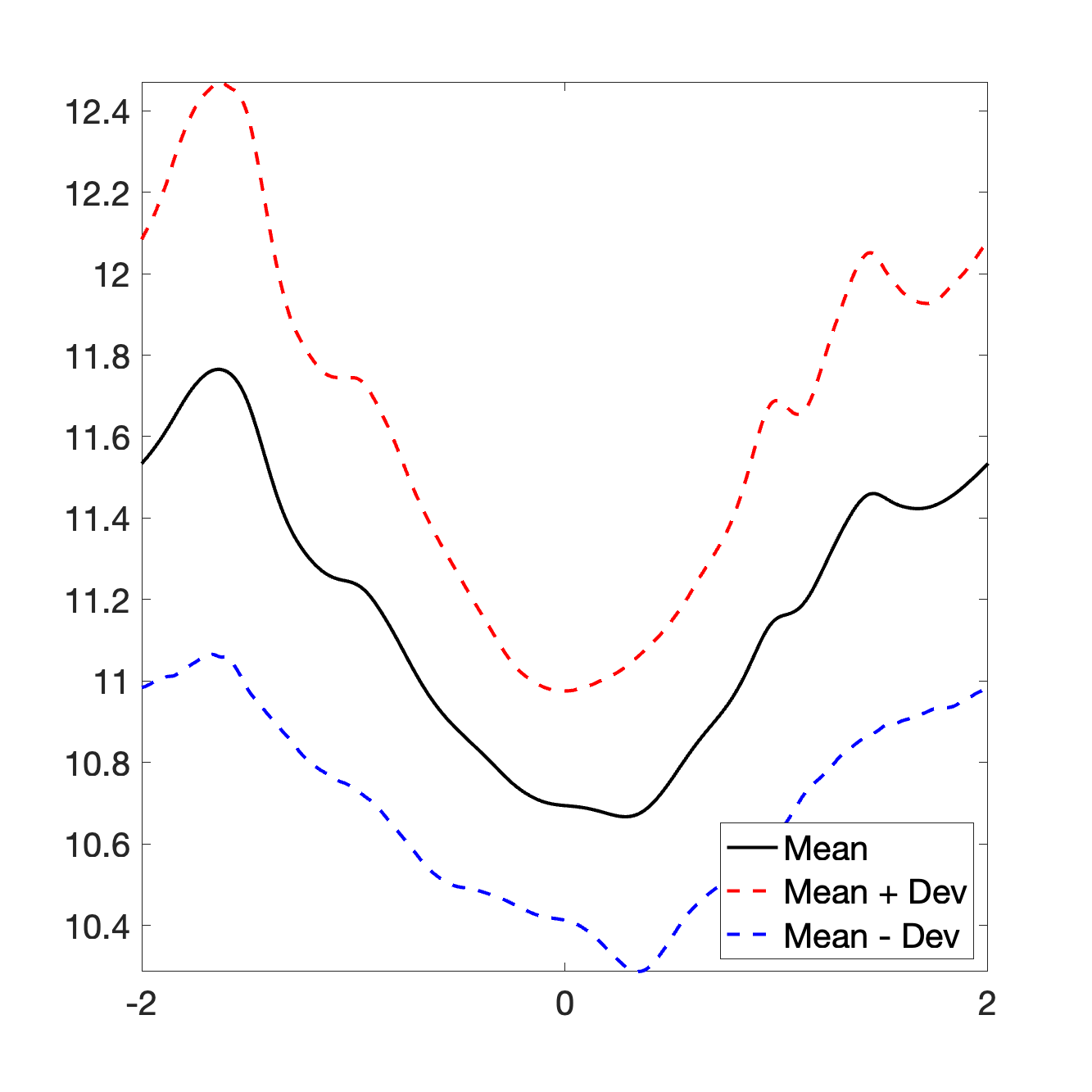}
	\includegraphics[width=0.3\textwidth]{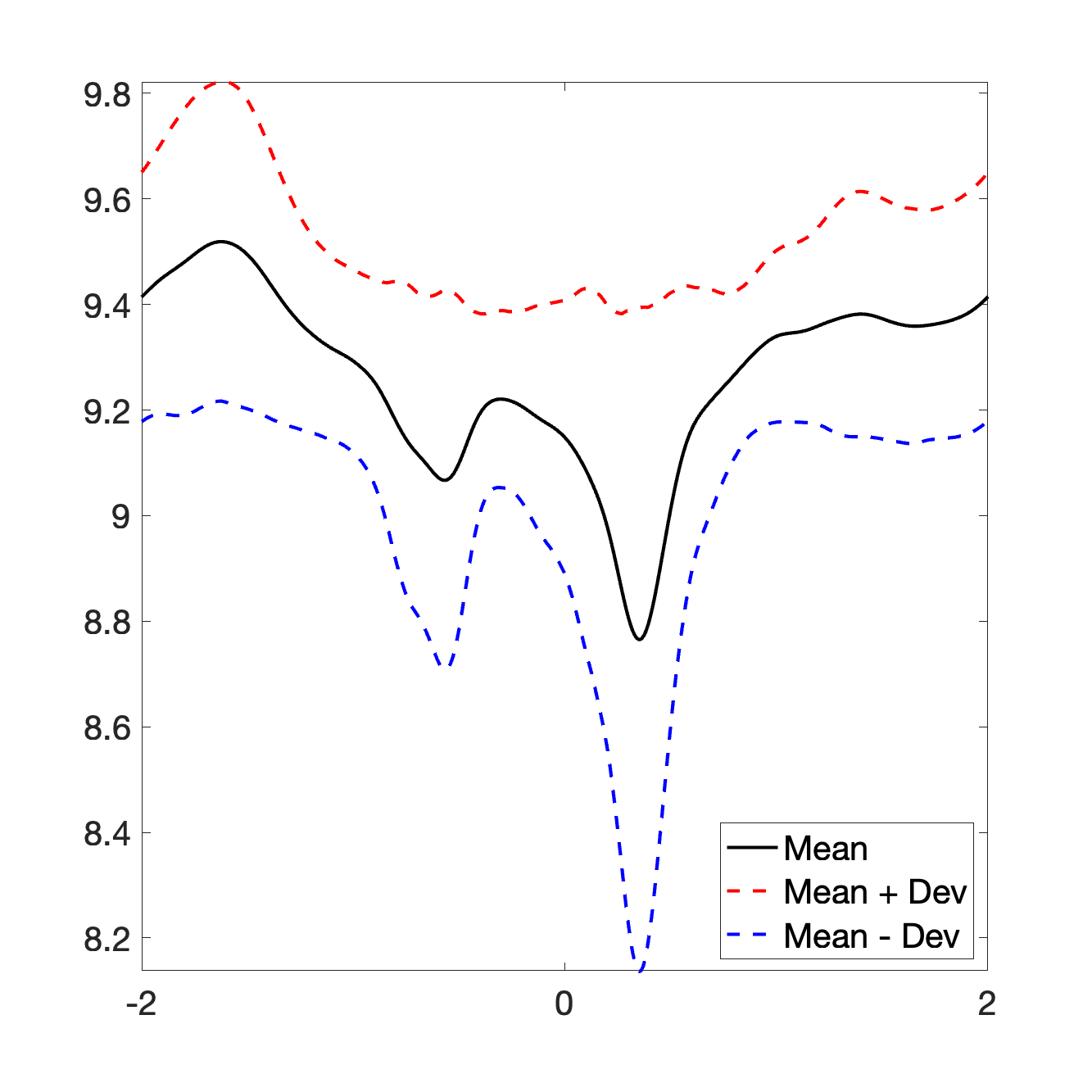}
	\includegraphics[width=0.3\textwidth]{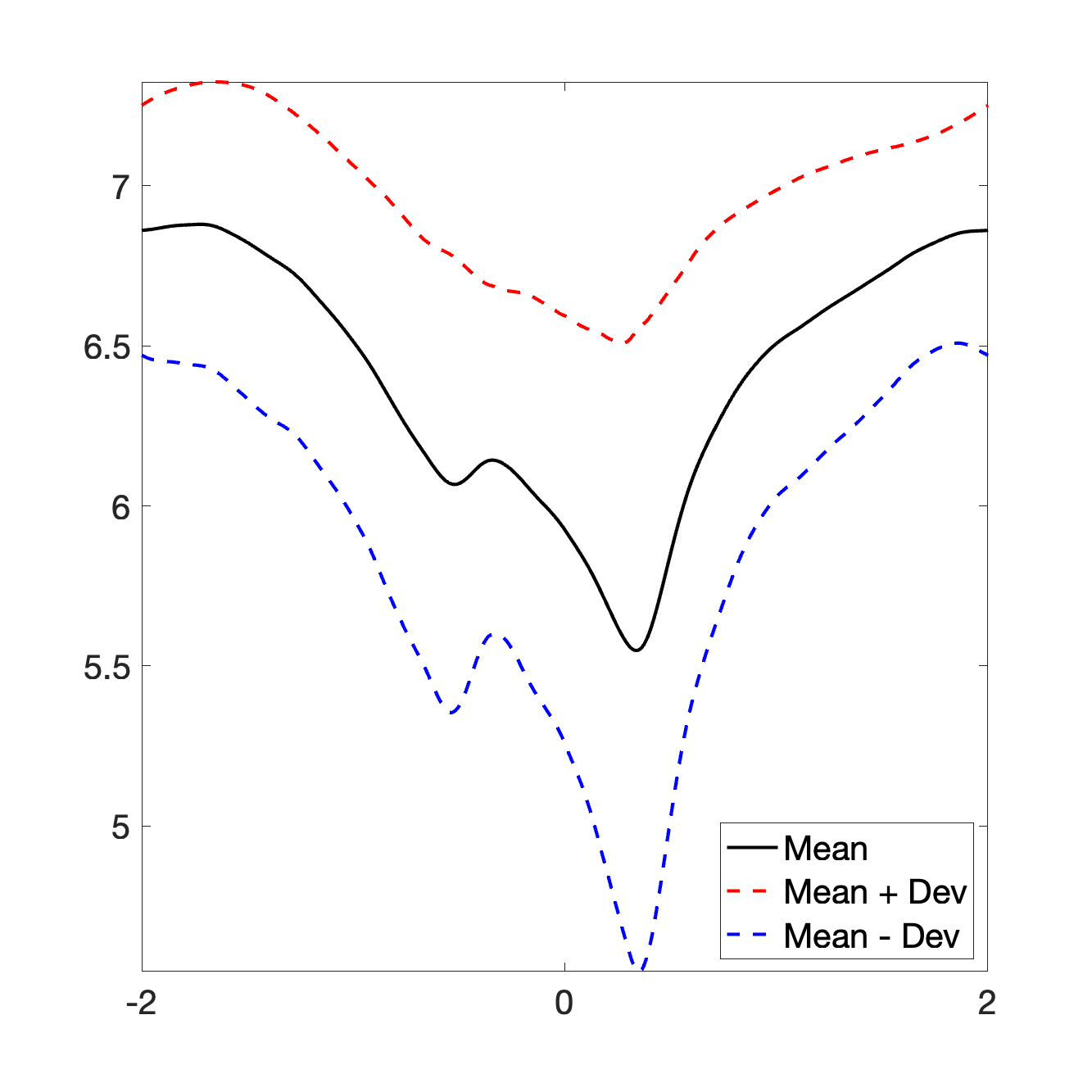}
	\caption{  \small{Rayleigh--B\' enard Experiment 2: $\Ov{\vt_h}(T_{M_{ref}})$ and $\mbox{Dev}(\vt_h,T_{M_{ref}})$. Top: mean (left), deviation (right); Bottom: mean and deviation along lines $y=-3/4$ (left), $0$ (middle), $3/4$ (right).}}\label{fig-Mean-Var-vt}
\end{figure}

\item[(4)] {\bf Errors}. Let us introduce the following error definitions for a single solution and its corresponding mean(-in-time) and deviation(-in-time)
\begin{align*}
&E_1(U, T_M) = \norm{ U_h(T_M,\cdot ) - U_h(T_{M_{ref}},\cdot )}_{L^1(\Omega)}, \\
&E_2(U, T_M) = \norm{\Ov{U_h}(T_{M}) - \Ov{U_h}(T_{M_{ref}})}_{L^1(\Omega)}, \\
&E_3(U, T_M) = \norm{\mbox{Dev}(U_h,T_{M})  - \mbox{Dev}(U_h,T_{M_{ref}}) }_{L^1(\Omega)}. 
\end{align*}
Figure~\ref{fig-Err} presents errors $E_i(U, T_M), i =1,2,3$ with  $U\in \{ \vr, \vm, \vr s, \vu, \vt, E, BE\}$. The numerical results show that the solution does not converge with time increasing, while its corresponding mean(-in-time) and deviation(-in-time) do converge. 

\begin{figure}[htbp]
	\setlength{\abovecaptionskip}{0.cm}
	\setlength{\belowcaptionskip}{-0.cm}
	\centering
	\includegraphics[width=0.32\textwidth]{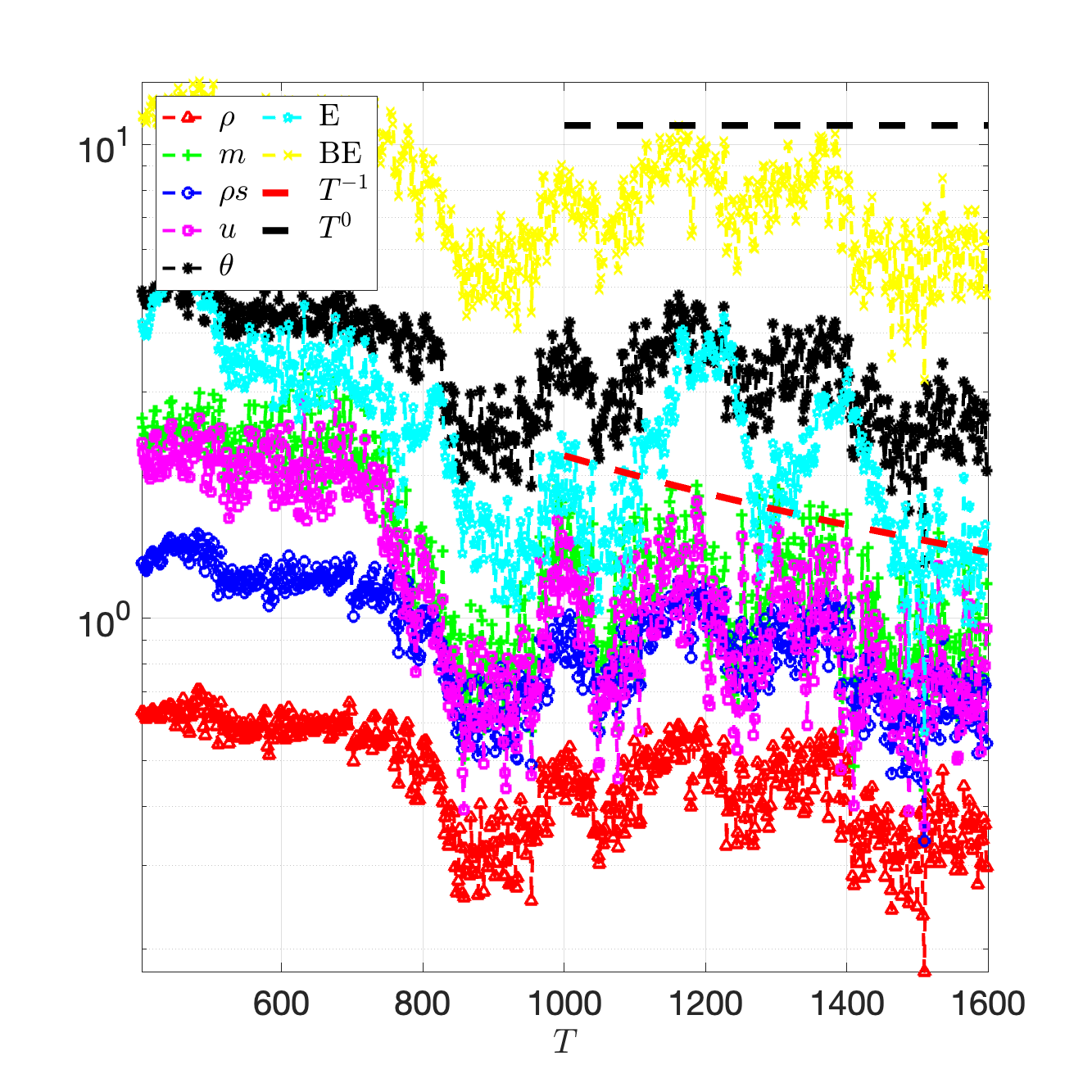} 
	\includegraphics[width=0.32\textwidth]{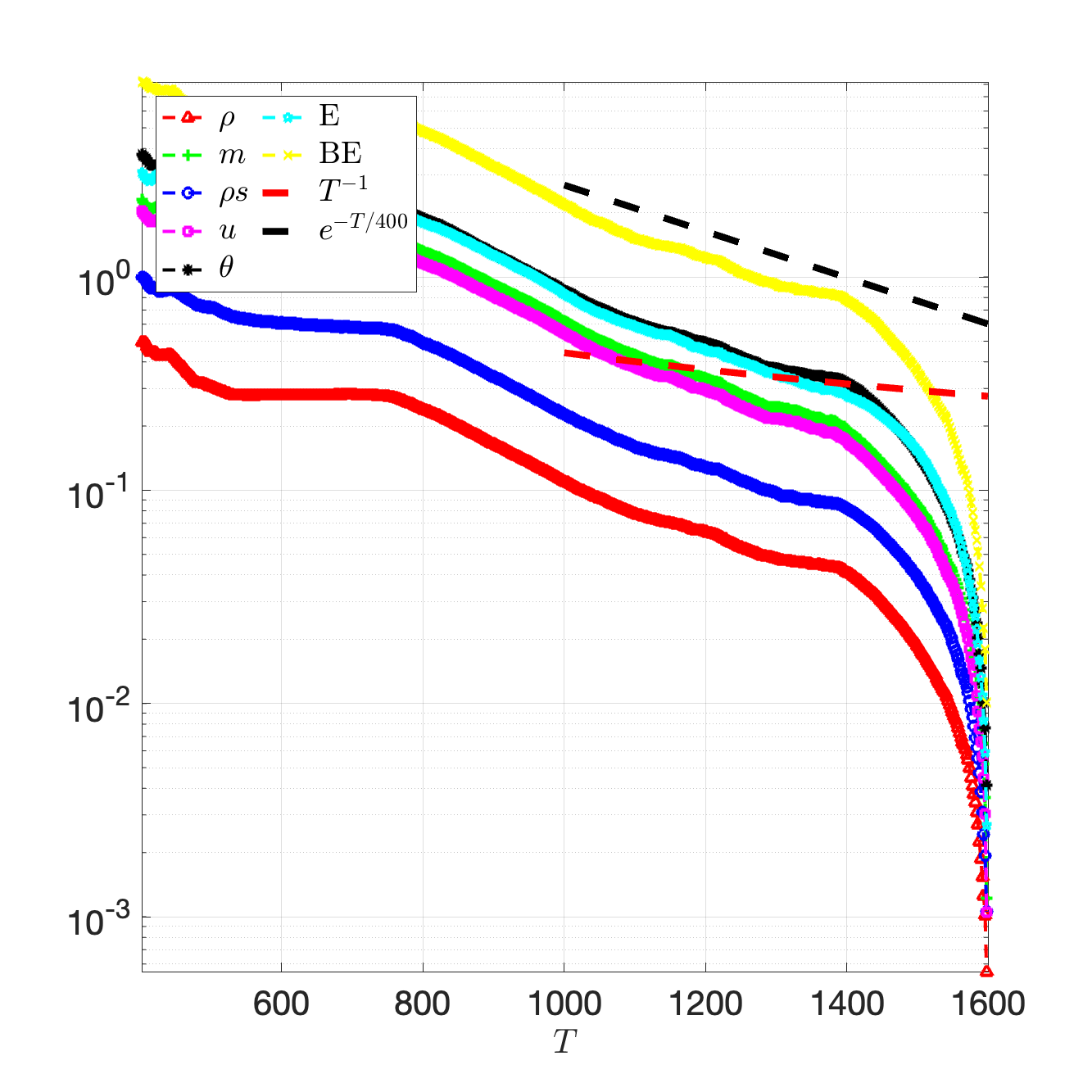}
	\includegraphics[width=0.32\textwidth]{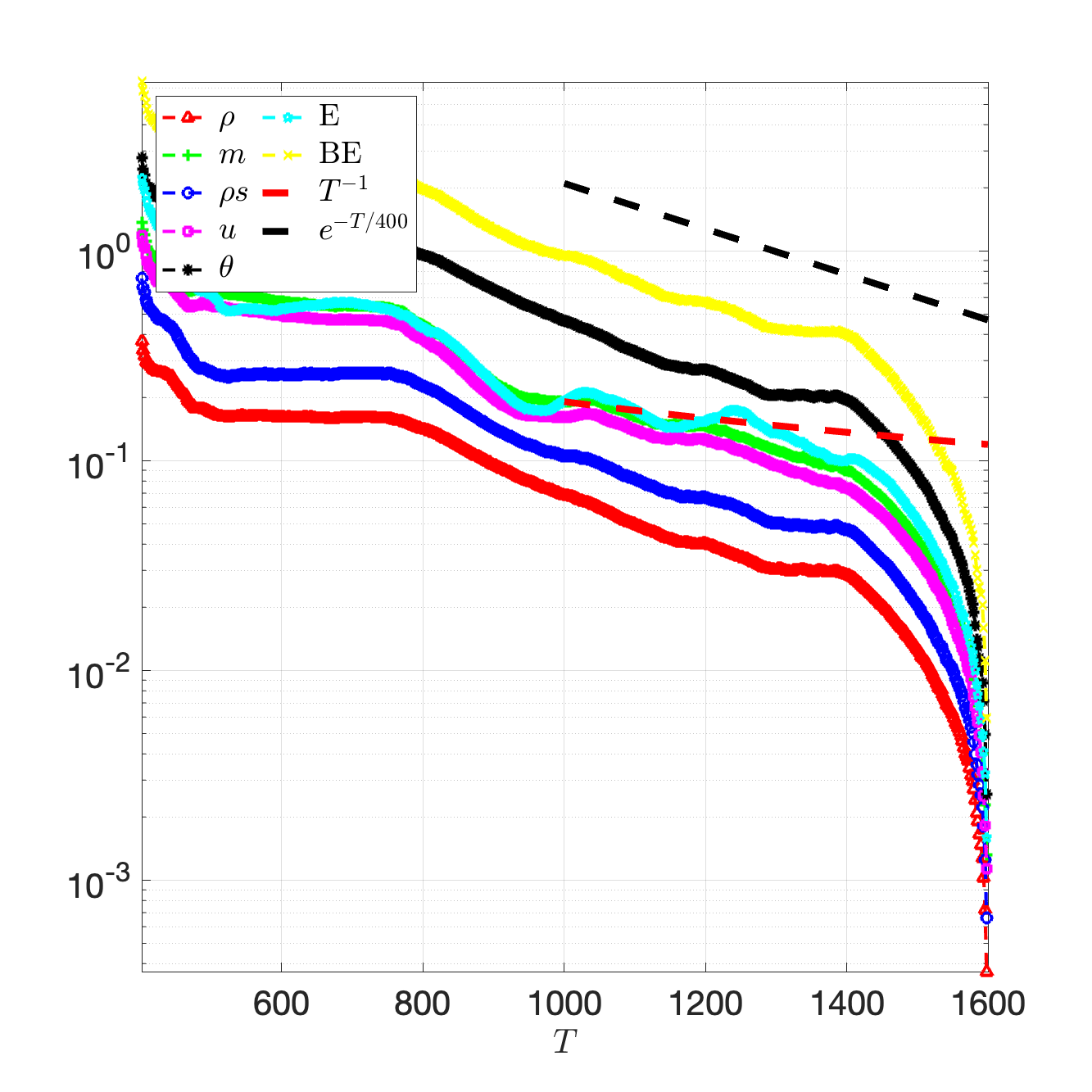}
	\caption{  \small{Rayleigh--B\' enard Experiment 2: errors $E_1, E_2$ and $E_3$ over $[400,1600]$.}}\label{fig-Err}
\end{figure}

\item[(5)] {\bf Reynolds stress and energy fluctuation}. Let us introduce the Reynolds stress and energy fluctuation as follows
\begin{align*}
&\mathfrak{R} \Big(T_{M} \Big) = \Ov{\frac{\vmh \otimes \vmh}{\vrh} + p_h \mathbb{I}}\Big(T_{M} \Big) - \left( \frac{\Ov{\vmh} \otimes \Ov{\vmh}}{\Ov{\vrh}} + p(\Ov{\vrh}, \Ov{S_h}) \mathbb{I} \right) \Big(T_{M} \Big), \\
&\mathfrak{E} \Big(T_{M} \Big) = \Ov{E_h}\Big(T_{M} \Big) - E\Big(\Ov{\vrh}(T_{M}),\, \Ov{\vmh}(T_{M}),\, \Ov{S_h}(T_{M}) \Big).
\end{align*}
Figure~\ref{fig-Defect-1} shows the evolution of $L^1$-norm and $L^{\infty}$-norm of Reynolds stress and energy fluctuation $\mathfrak{R}_{11}, \mathfrak{R}_{12}$, $\mathfrak{R}_{22}$, $\mathfrak{E}$, $\mbox{tr}(\mathfrak{R})$, $\lambda_1(\mathfrak{R})$, $\lambda_2(\mathfrak{R})$.
The details in a long run, i.e.\ $\mathfrak{R} (T_{M_{ref}}), \mathfrak{E} (T_{M_{ref}} )$, are demonstrated in Figure~\ref{fig-Defect}. 

Figure~\ref{fig-Defect-2} shows the $L^1$- and $L^{\infty}$-errors of Reynolds stress and energy fluctuation, defined by
\[
E_4(D, T_M) = \norm{D_h(T_{M}) - D_h(T_{M_{ref}})}_{L^p(\Omega)}, \ 
D\in\{ \mathfrak{R}_{11}, \mathfrak{R}_{12}, \mathfrak{R}_{22}, \mathfrak{E},\mbox{tr}(\mathfrak{R}),\lambda_1(\mathfrak{R}), \lambda_2(\mathfrak{R}) \}.
\]
Numerical results show that Reynolds stress and energy fluctuation do converge with time increasing. 
 
\begin{figure}[htbp]
	\setlength{\abovecaptionskip}{0.cm}
	\setlength{\belowcaptionskip}{-0.cm}
	\centering
	\includegraphics[width=0.49\textwidth]{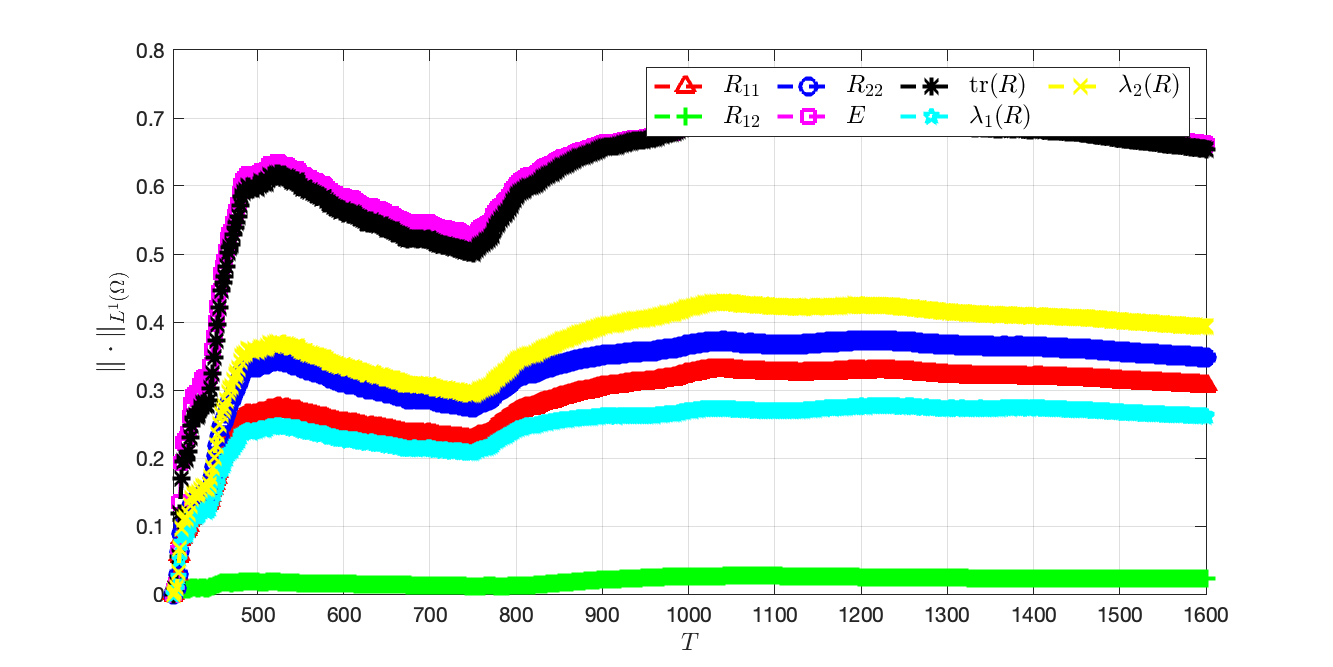} 
	\includegraphics[width=0.49\textwidth]{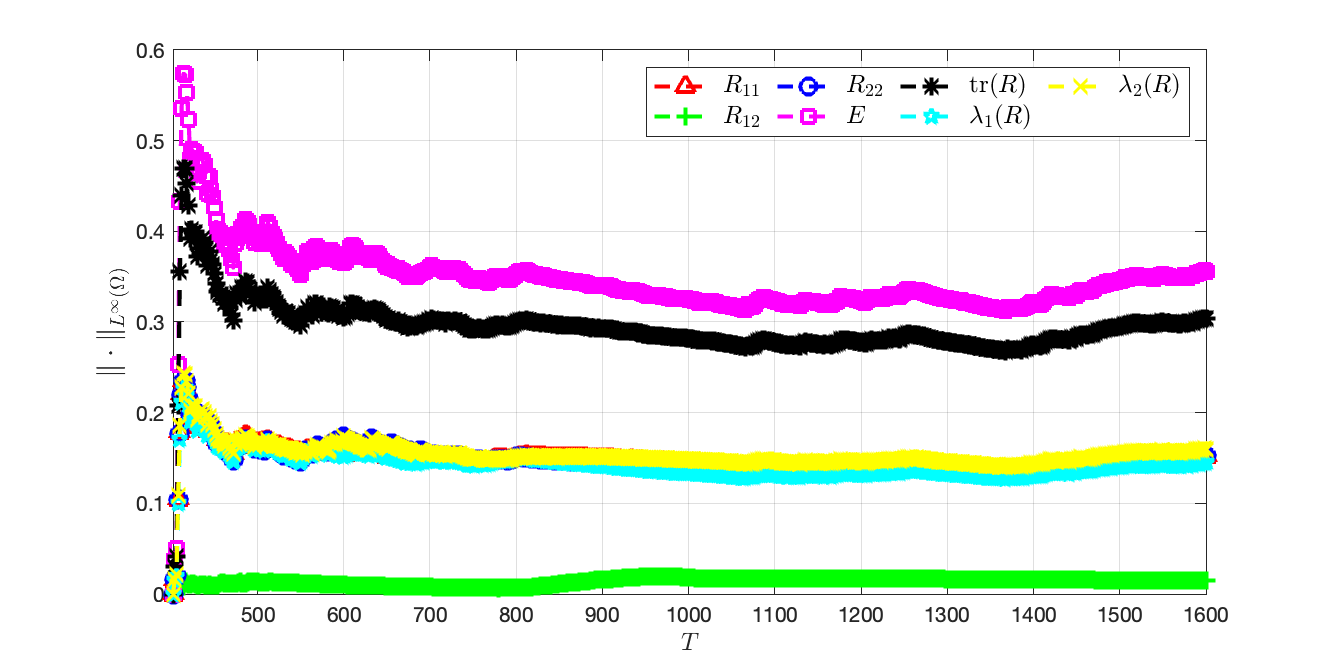}
	\caption{  \small{Rayleigh--B\' enard Experiment 2: Evolution of $L^1$-norm (left) and $L^{\infty}$-norm (right) of Reynolds stress and energy fluctuation.}}\label{fig-Defect-1}
\end{figure}

\begin{figure}[htbp]
	\setlength{\abovecaptionskip}{0.cm}
	\setlength{\belowcaptionskip}{-0.cm}
	\centering
	\includegraphics[width=0.49\textwidth]{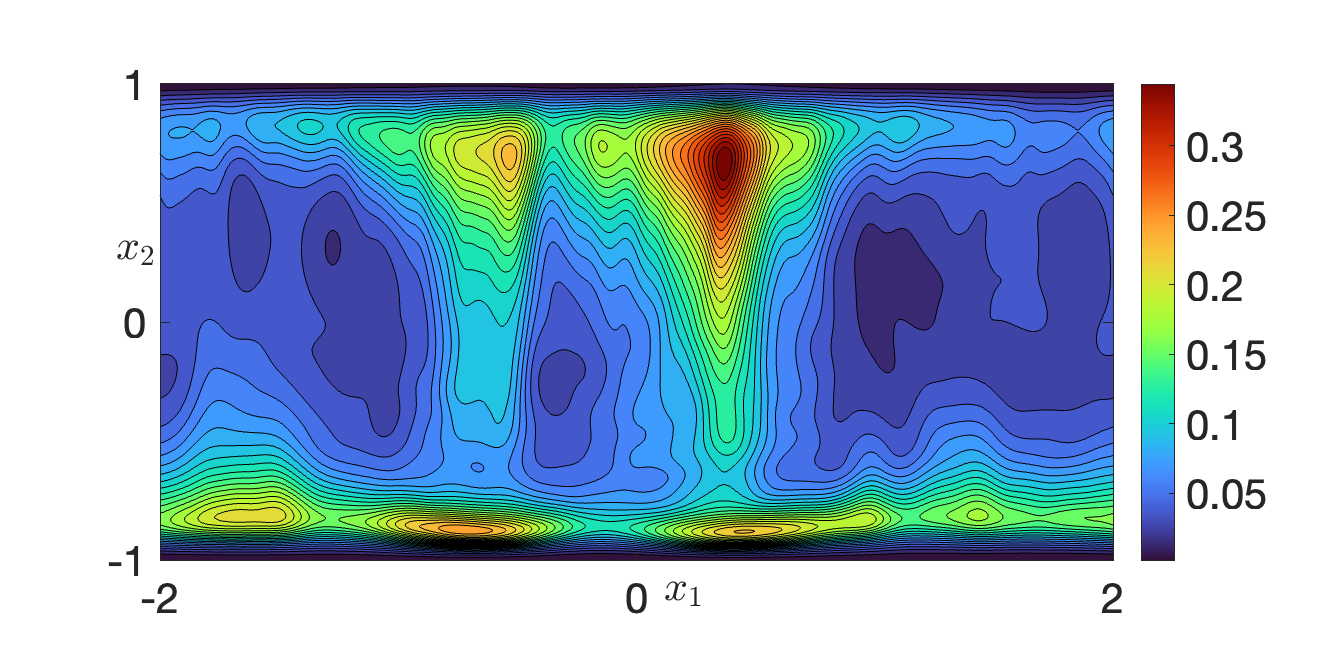} 
	\includegraphics[width=0.49\textwidth]{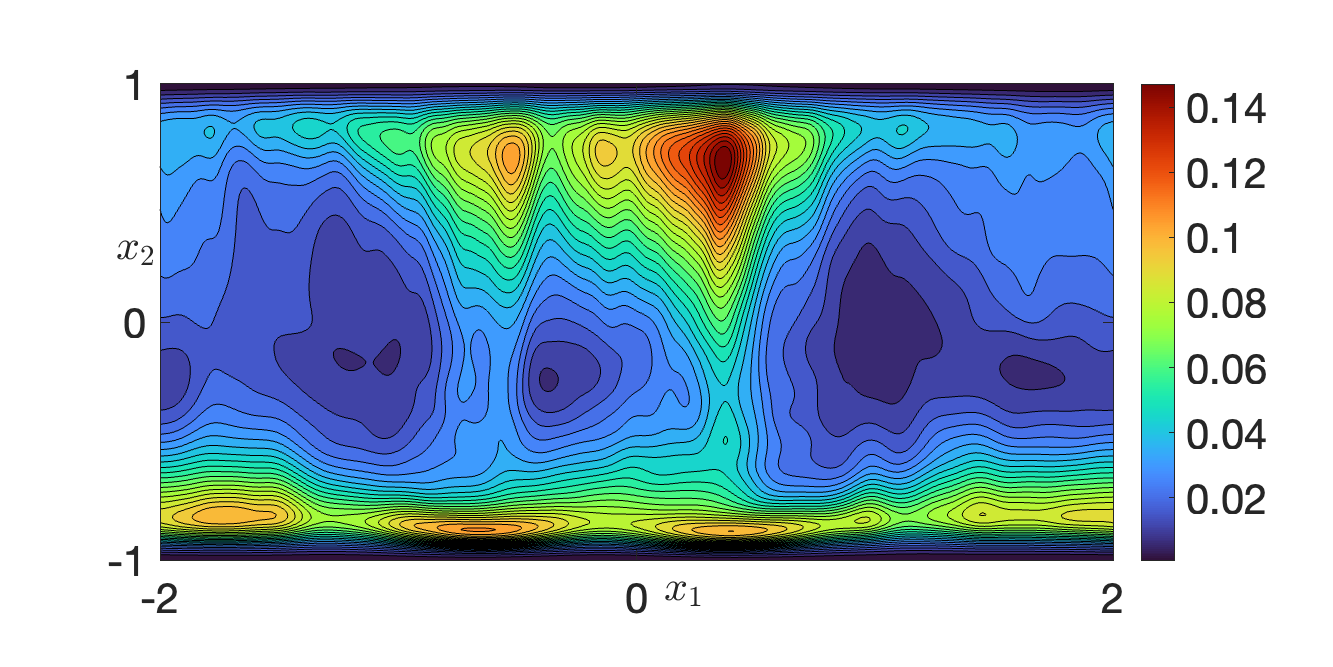}\\
	\includegraphics[width=0.49\textwidth]{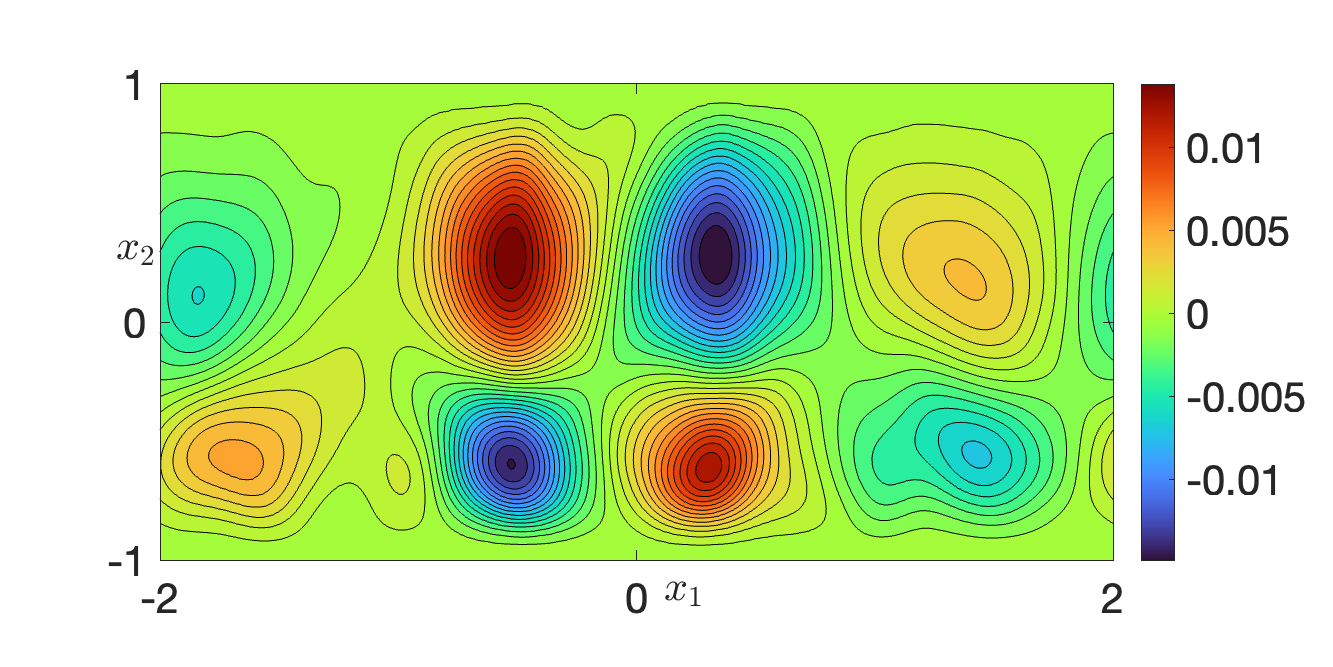}
	\includegraphics[width=0.49\textwidth]{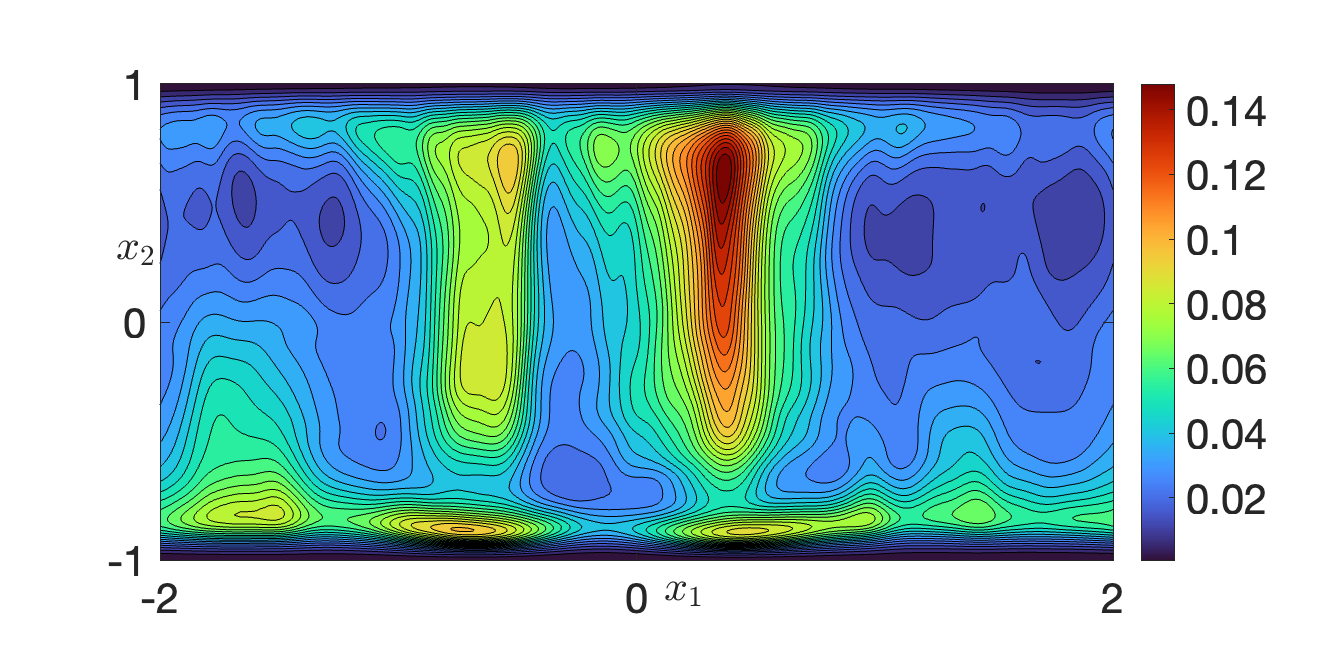}\\
	\includegraphics[width=0.49\textwidth]{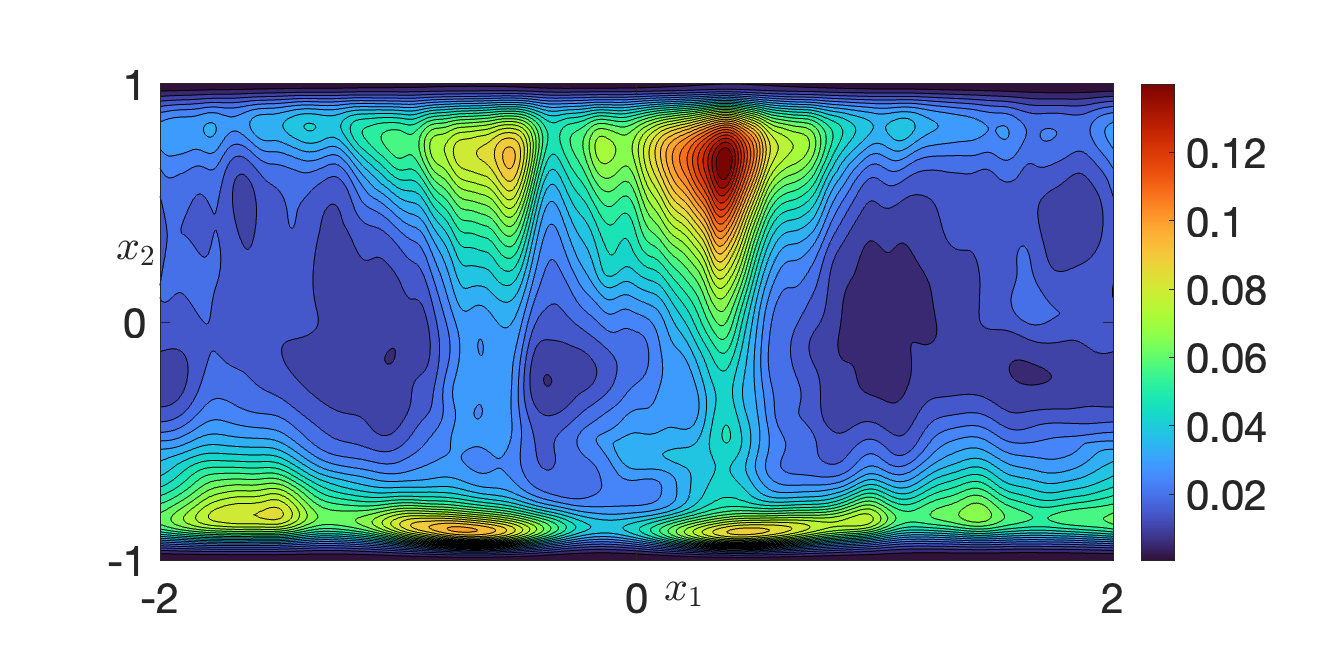}
	\includegraphics[width=0.49\textwidth]{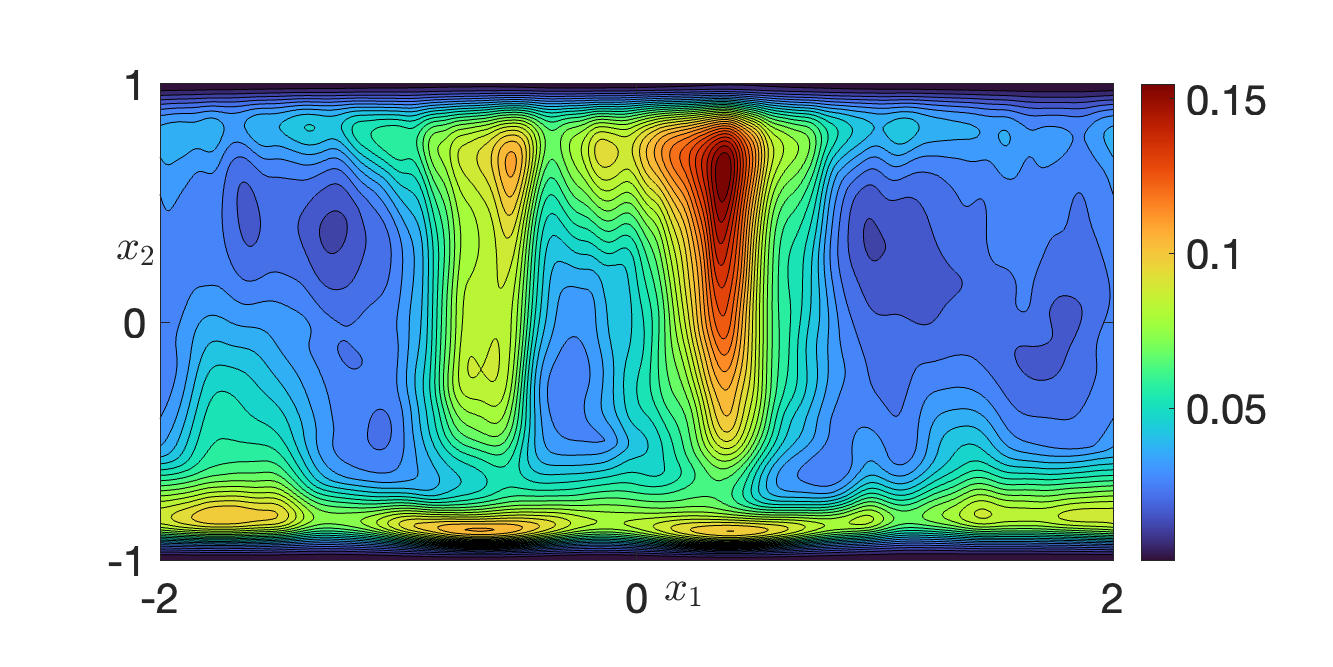}
	\caption{  \small{Rayleigh--B\' enard Experiment 2: Reynolds stress and energy fluctuation. From left to right, from top to bottom: $\mathfrak{E}(T_{M_{ref}}), \mathfrak{R}_{11}(T_{M_{ref}}), \mathfrak{R}_{12}(T_{M_{ref}}), \mathfrak{R}_{22}(T_{M_{ref}})$ and the eigenvalues  $\lambda_1, \lambda_2, (\lambda_1 \leq \lambda_2)$ of Reynolds stress $\mathfrak{R}(T_{M_{ref}})$.}}\label{fig-Defect}
\end{figure}

\begin{figure}[htbp]
	\setlength{\abovecaptionskip}{0.cm}
	\setlength{\belowcaptionskip}{-0.cm}
	\centering
	\includegraphics[width=0.45\textwidth]{./gif/case0/PRONUM52MX640MY320IS201IE800_Defects_Err_norm1} 
	\includegraphics[width=0.45\textwidth]{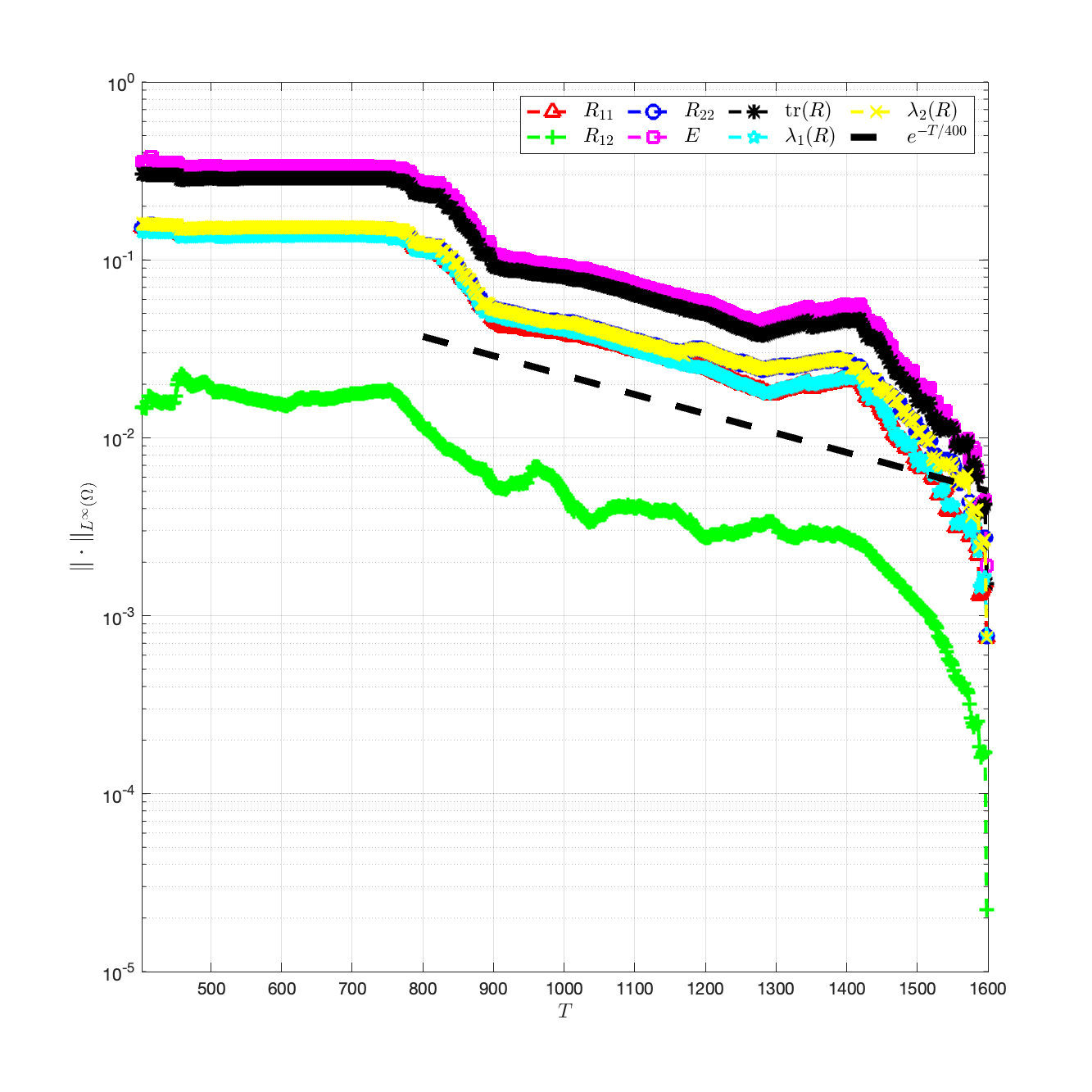}
	\caption{  \small{Rayleigh--B\' enard Experiment 2:  $L^1$- (left) and $L^{\infty}$-errors (right) of Reynolds stress and energy fluctuation.}}\label{fig-Defect-2}
\end{figure}

\newpage
\item[(6)] {\bf Measures}. Let us introduce the following definitions of measures (with respect to time)
\begin{align*}
& \mathcal{M}\left(F(U)\right) = \frac{\#\{ m \, | \, F(U_h(T_m,\cdot)) \in [a,b], \  m = M_0+1,\dots, M_{ref} \}}{M_{ref} - M_0} \quad   
\end{align*}
with three choices of $F$
\begin{align}
F_1(U_h(t,\cdot)) = \norm{U_h(t,\cdot)}_{L^1(\Omega)}, \quad 
F_2(U_h(t,\cdot)) = \int_{\Omega} U_h(t,\cdot) \dx, \quad 
F_3(U) = U_h(t,x).
\end{align}
Figures~\ref{fig-Measure-Ex2-1} and \ref{fig-Measure-Ex2-2} show measures of $L^1$-norms $\mathcal{M}\left(\norm{U_h(t,\cdot)}_{L^1(\Omega)} \right)$ and spatial-averages $\mathcal{M}\left(\int_{\Omega} U_h(t,\cdot) \dx \right)$ of solutions $(\vrh,\vuh,\vt_h)$. 
Further, the measures at six fixed ``spatial-points" 
\begin{align*}
& P_1 = (-1.4, -0.8),  \quad   P_2 =  (-1.4,  0), \quad P_3 =  (-1.4,  0.8),\\ 
& P_4 = (-0.8, -0.8),  \quad   P_5 =  (-0.8,  0), \quad P_6 =  (-0.8,  0.8)
\end{align*}
are shown in Figures~\ref{fig-Measure-Ex2-3}. 
Here we view the point $P = (x_1,x_2)$ to be a square $[x_1-h,x_1+h]\times [x_2-h,x_2+h]$ that contains 4 cells.

\begin{figure}[htbp]
	\setlength{\abovecaptionskip}{0.cm}
	\setlength{\belowcaptionskip}{-0.cm}
	\centering
	\includegraphics[width=0.48\textwidth]{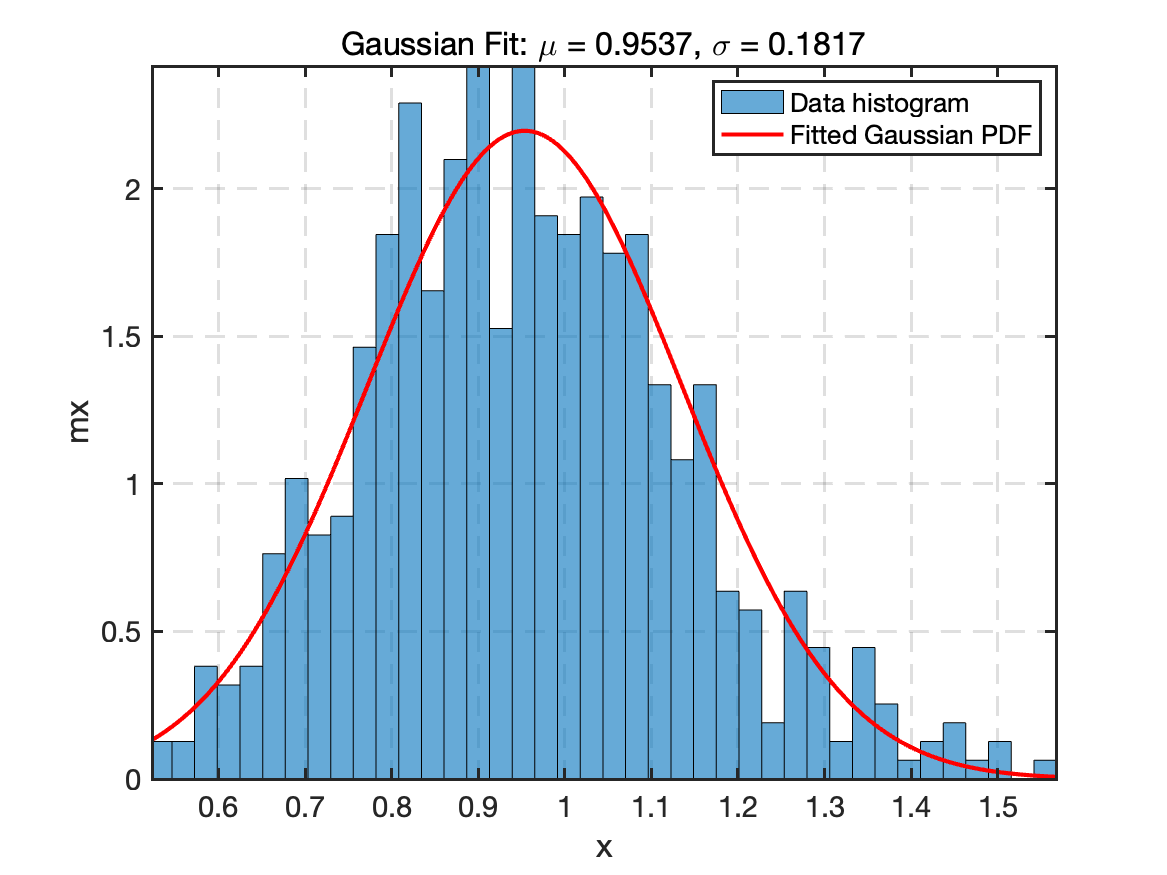}
	\includegraphics[width=0.48\textwidth]{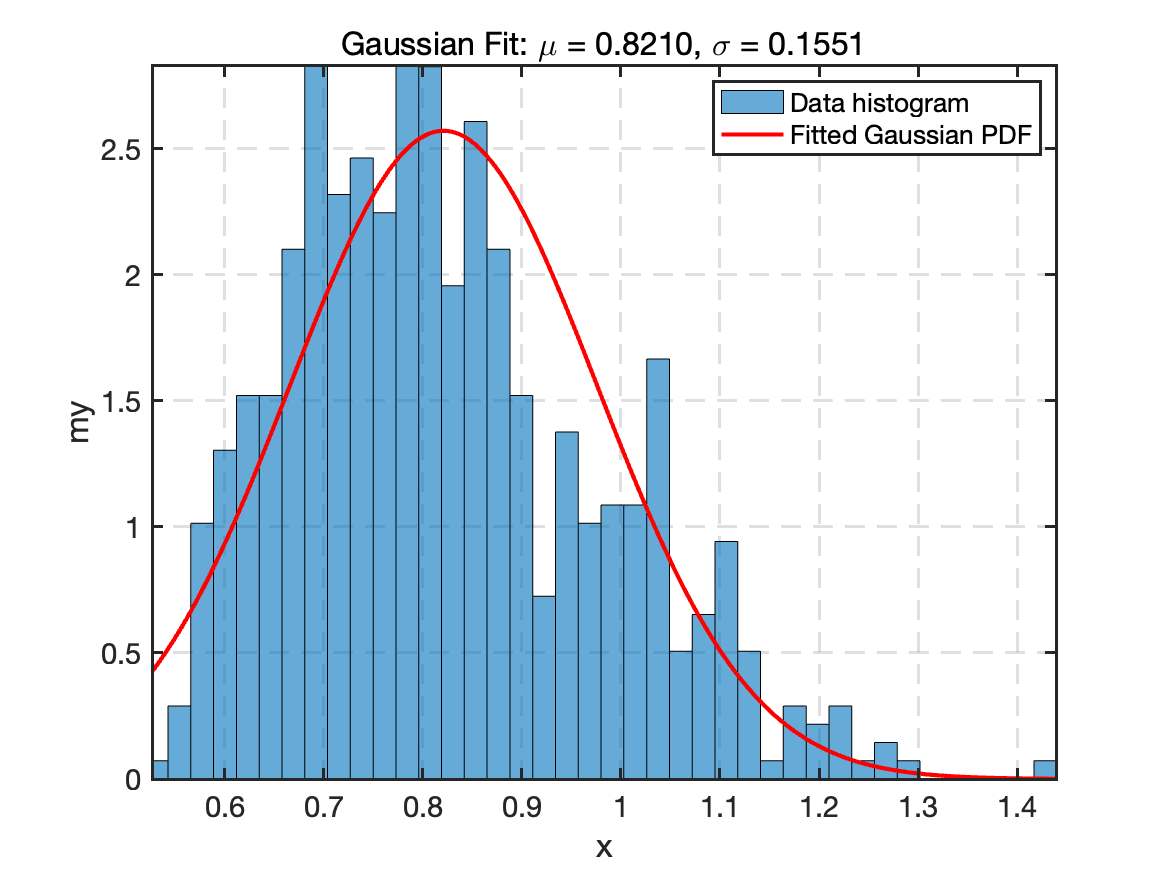} 
	\includegraphics[width=0.48\textwidth]{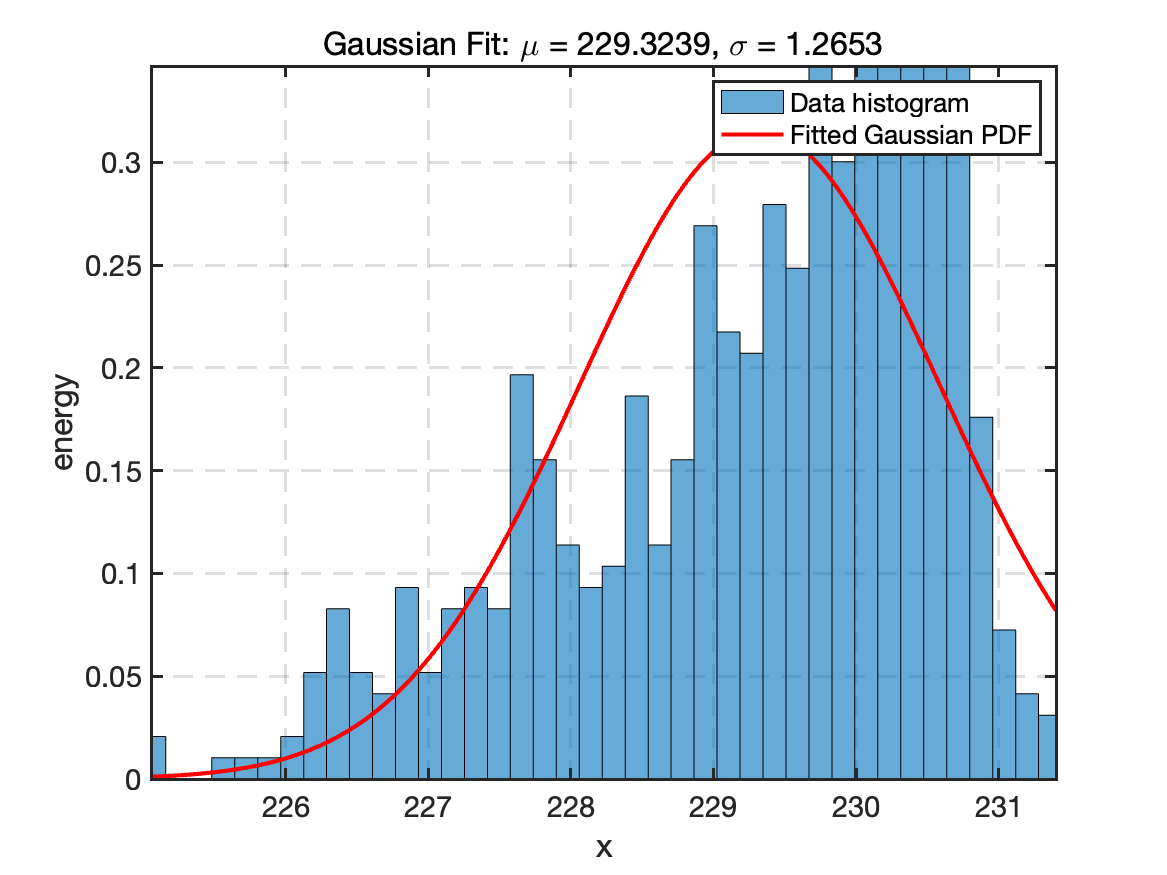}	
	\includegraphics[width=0.48\textwidth]{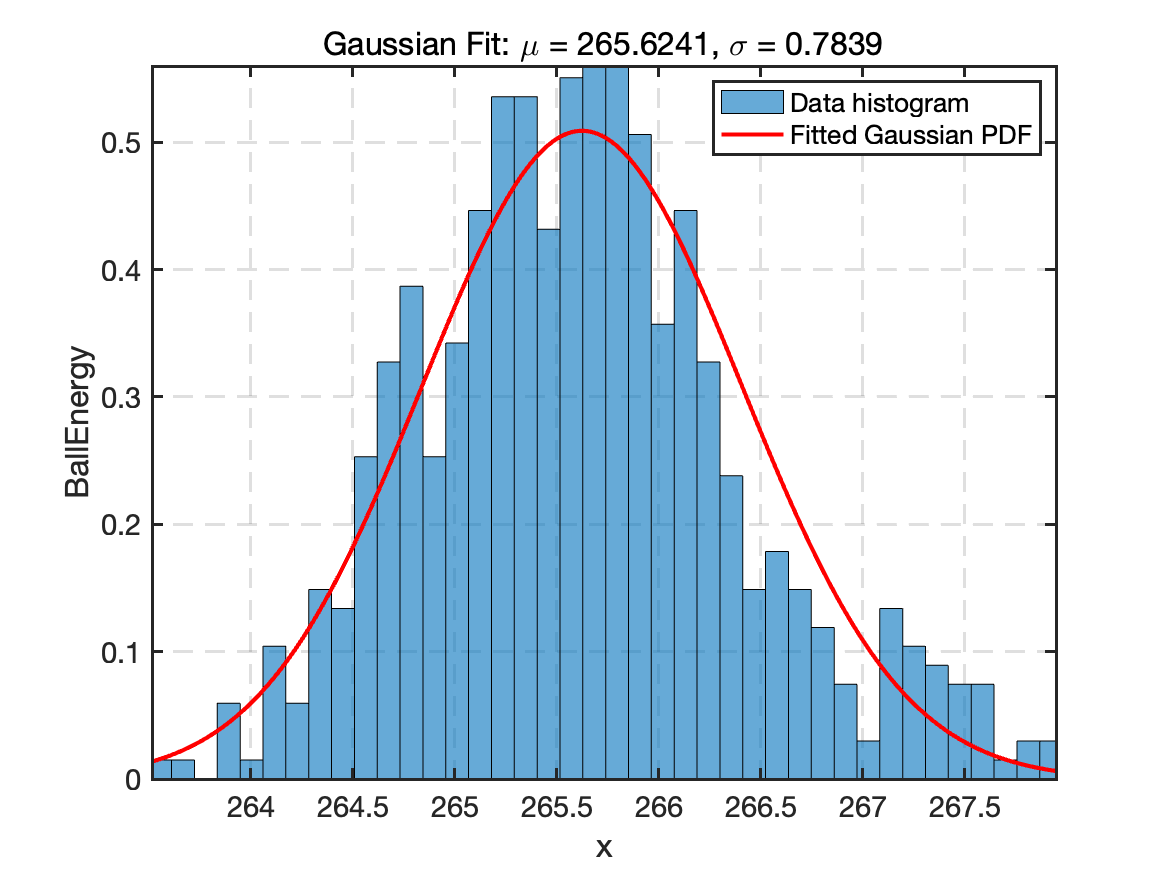}
	\includegraphics[width=0.48\textwidth]{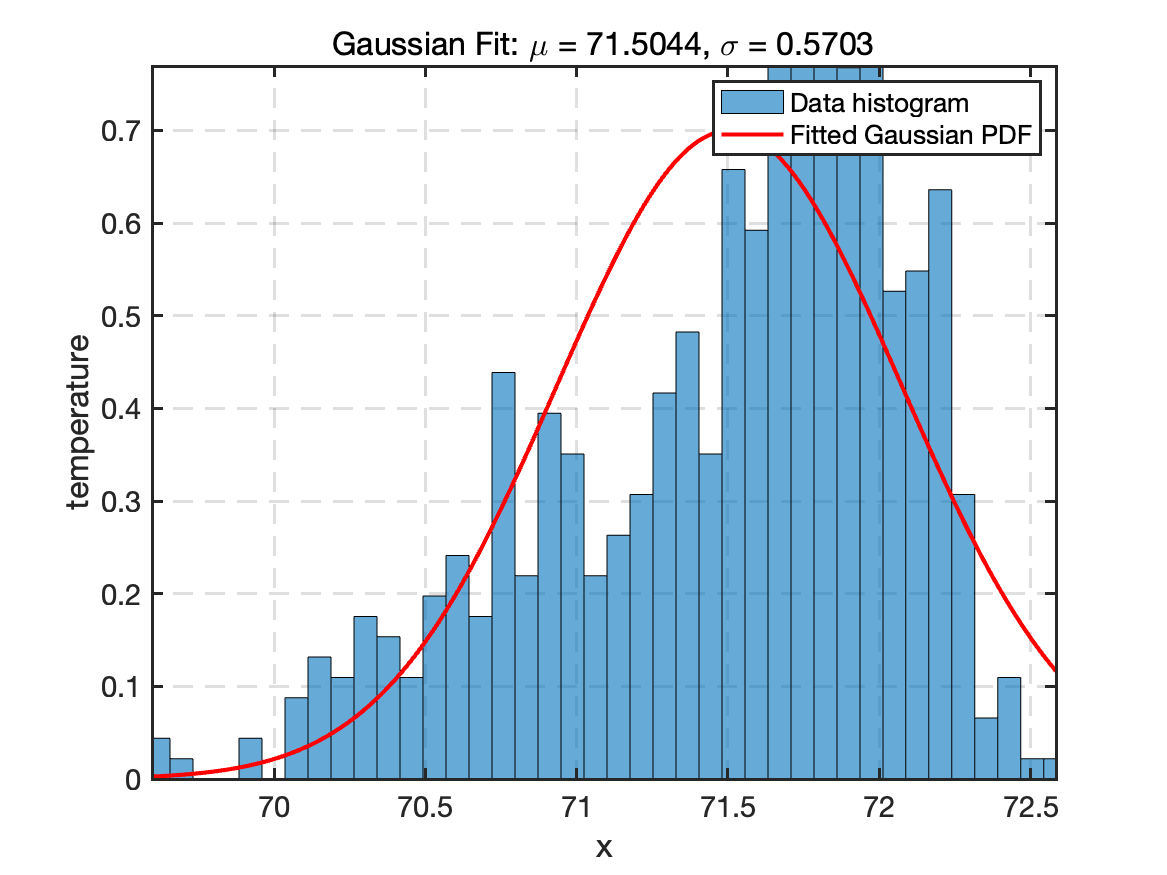}
	\includegraphics[width=0.48\textwidth]{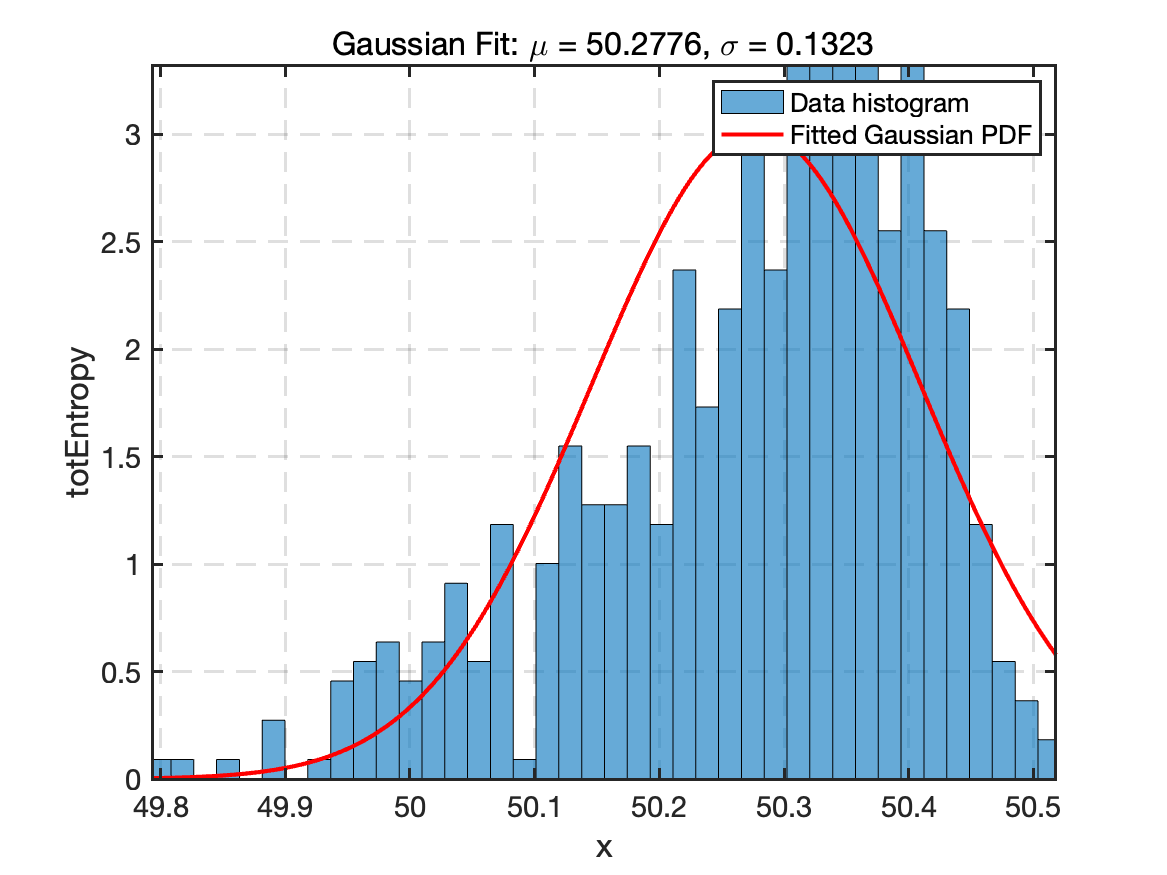} 
	\caption{ \small{Rayleigh--B\' enard Experiment 2: Measures $\mathcal{M}\left(\norm{U_h(t,\cdot)}_{L^1(\Omega)} \right)$ with $U \in \{m_x,m_y,E,BE,\vt, S\}$.}}\label{fig-Measure-Ex2-1}
\end{figure}

\begin{figure}[htbp]
	\setlength{\abovecaptionskip}{0.cm}
	\setlength{\belowcaptionskip}{-0.cm}
	\centering
	\includegraphics[width=0.48\textwidth]{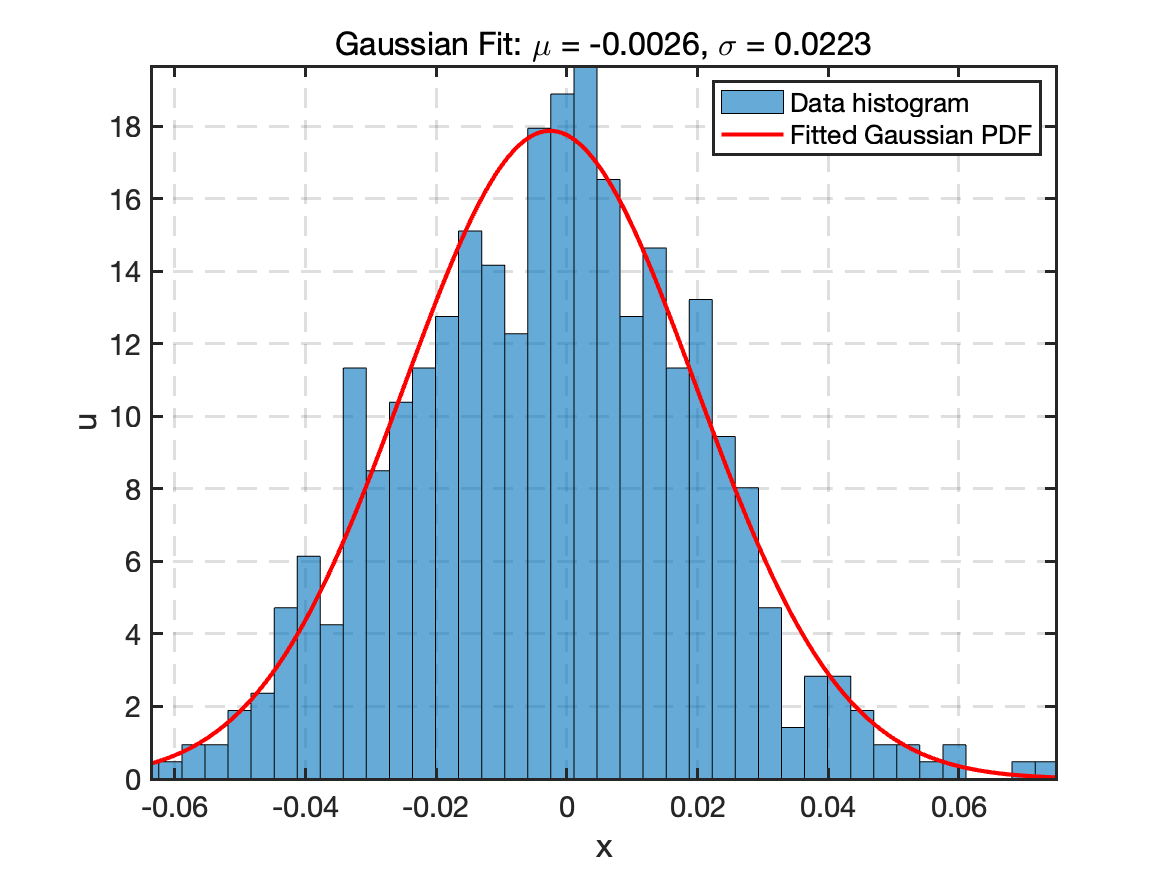}
	\includegraphics[width=0.48\textwidth]{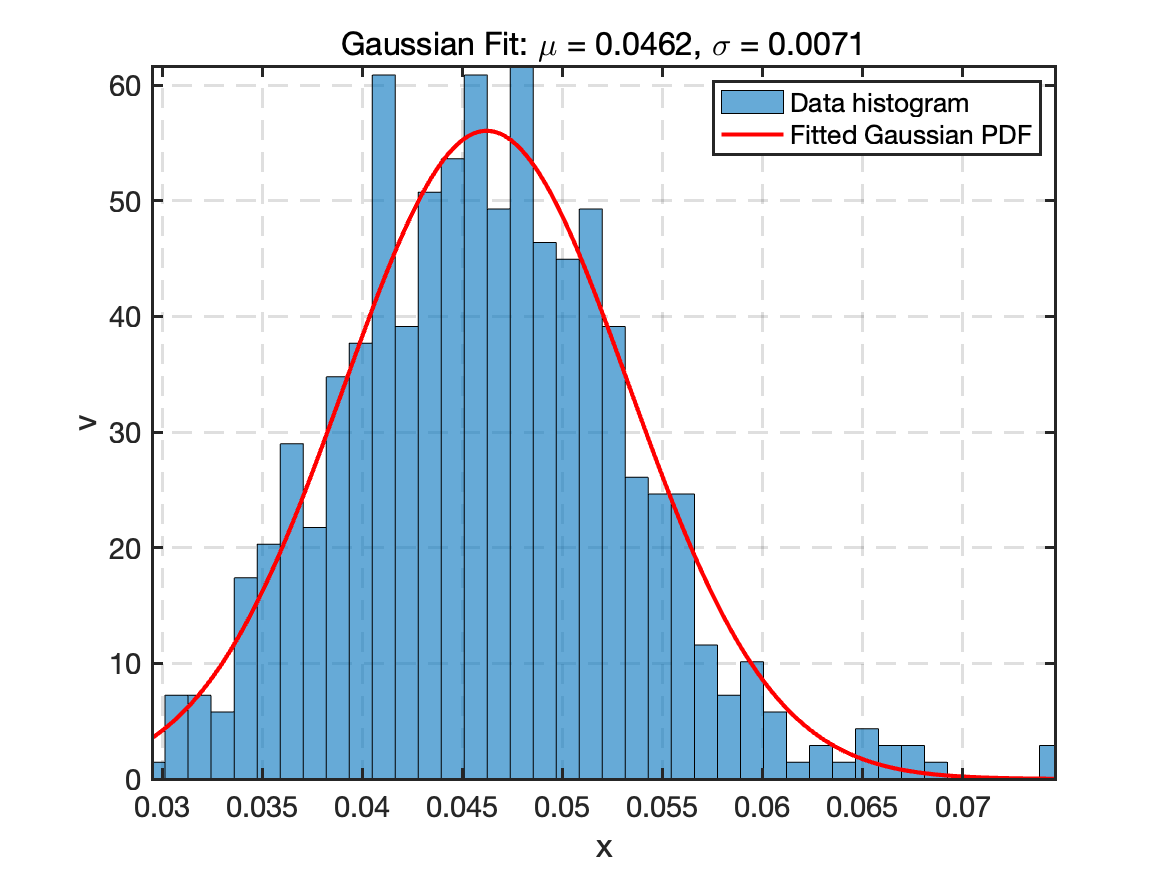} 	
	\includegraphics[width=0.48\textwidth]{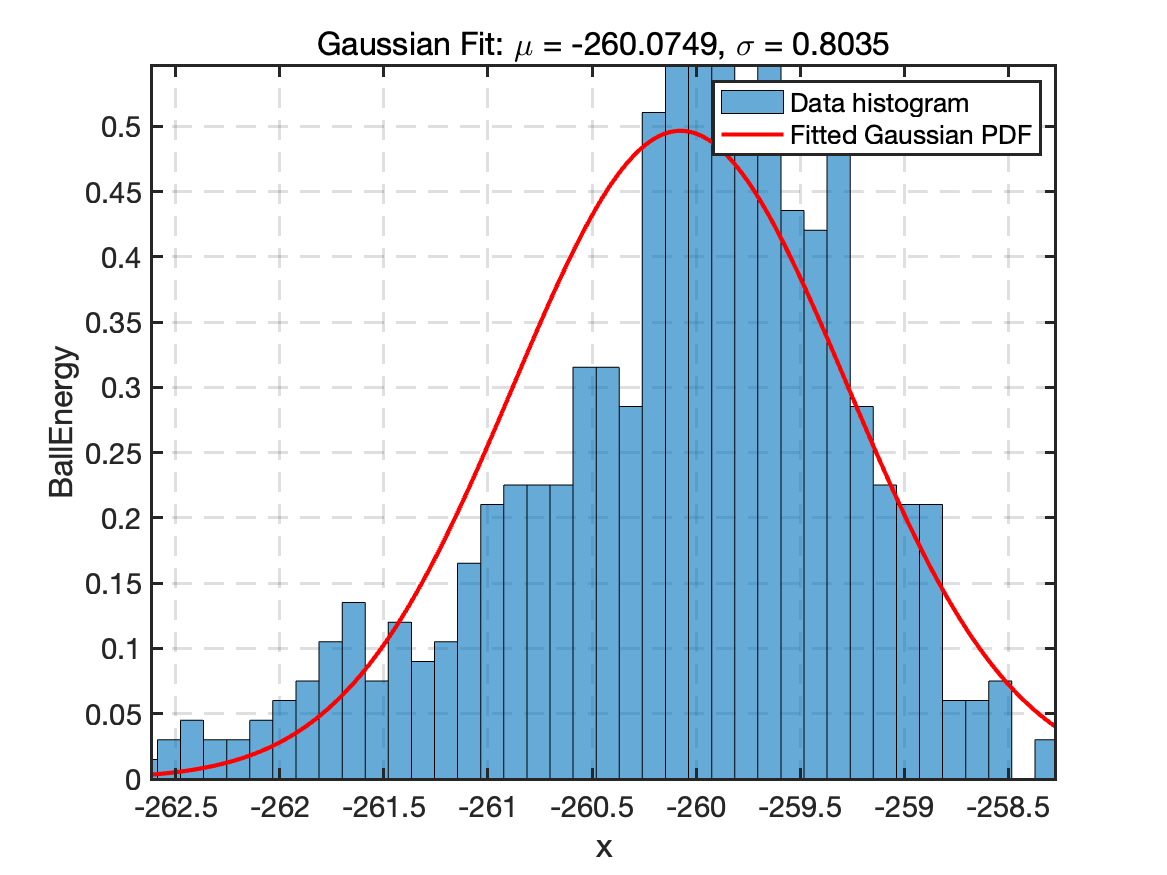}
	\includegraphics[width=0.48\textwidth]{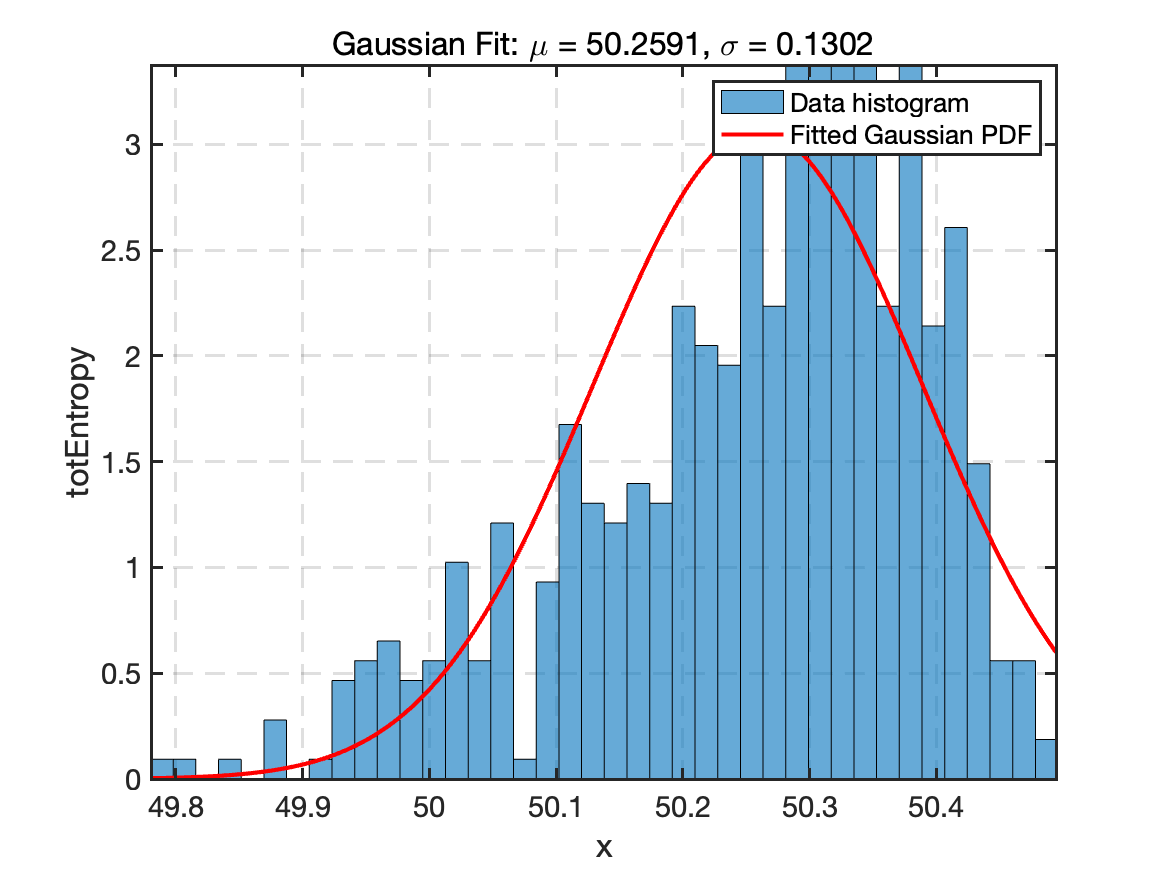} 
	\caption{ \small{Rayleigh--B\' enard Experiment 2: Measures $\mathcal{M}\left(\int_{\Omega} U_h(t,\cdot) \dx \right)$ with $U \in \{u_1,u_2,BE,S\}$.}}\label{fig-Measure-Ex2-2}
\end{figure}

\begin{figure}[htbp]
	\setlength{\abovecaptionskip}{0.cm}
	\setlength{\belowcaptionskip}{-0.cm}
	\centering
	\includegraphics[width=0.24\textwidth]{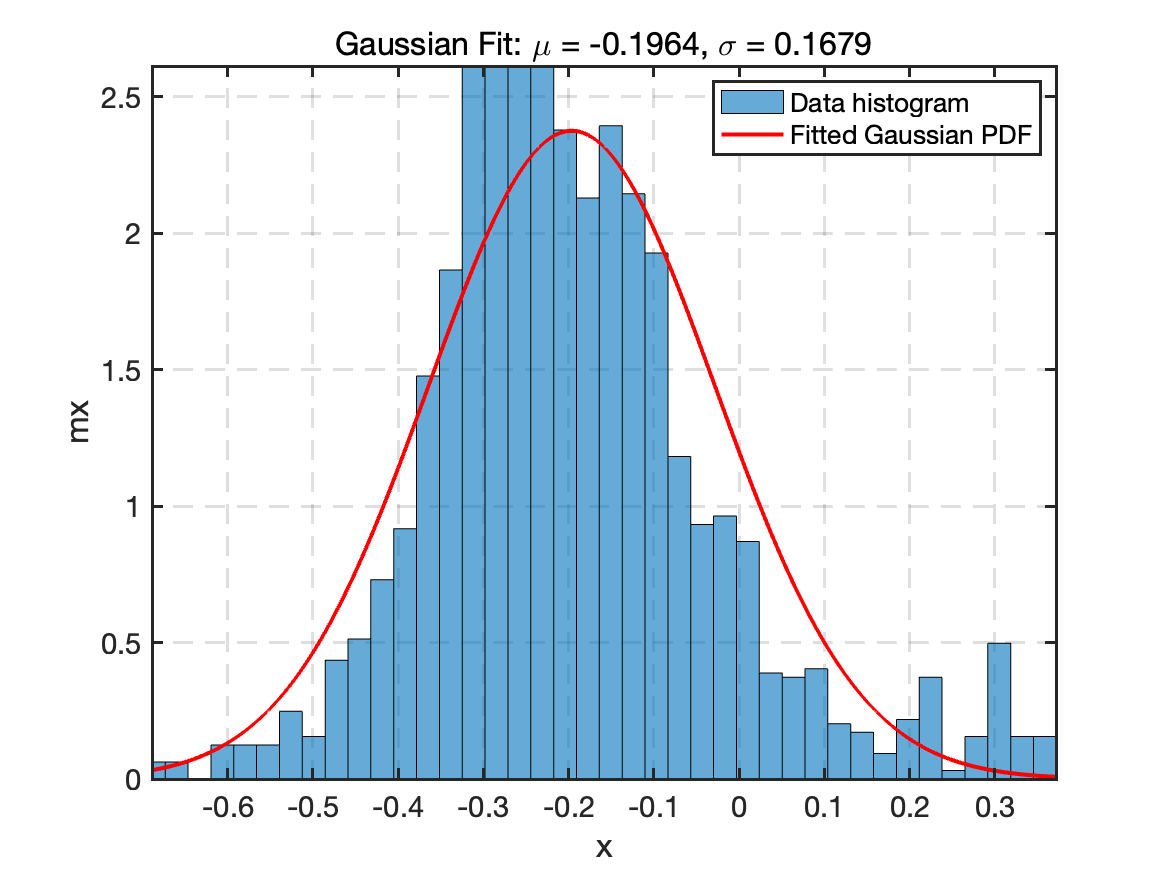}
	\includegraphics[width=0.24\textwidth]{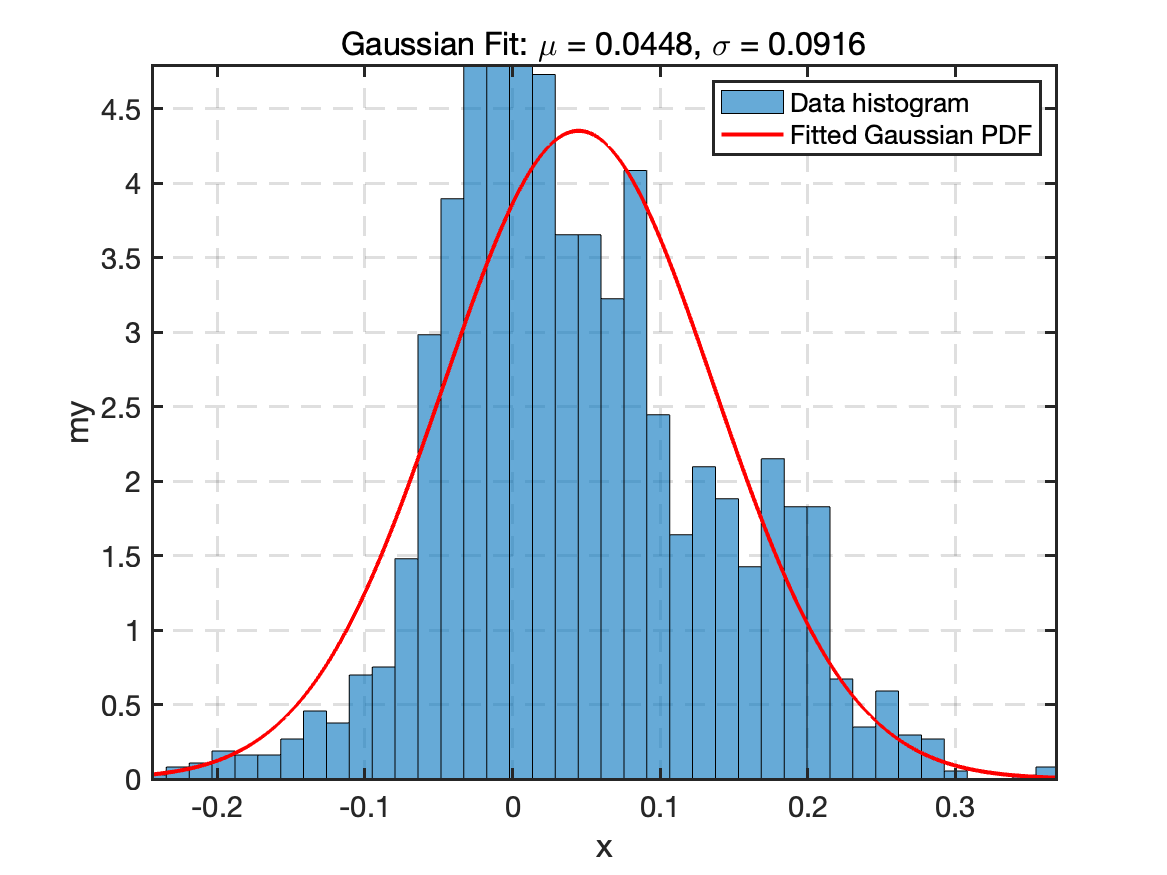} 	
	\includegraphics[width=0.24\textwidth]{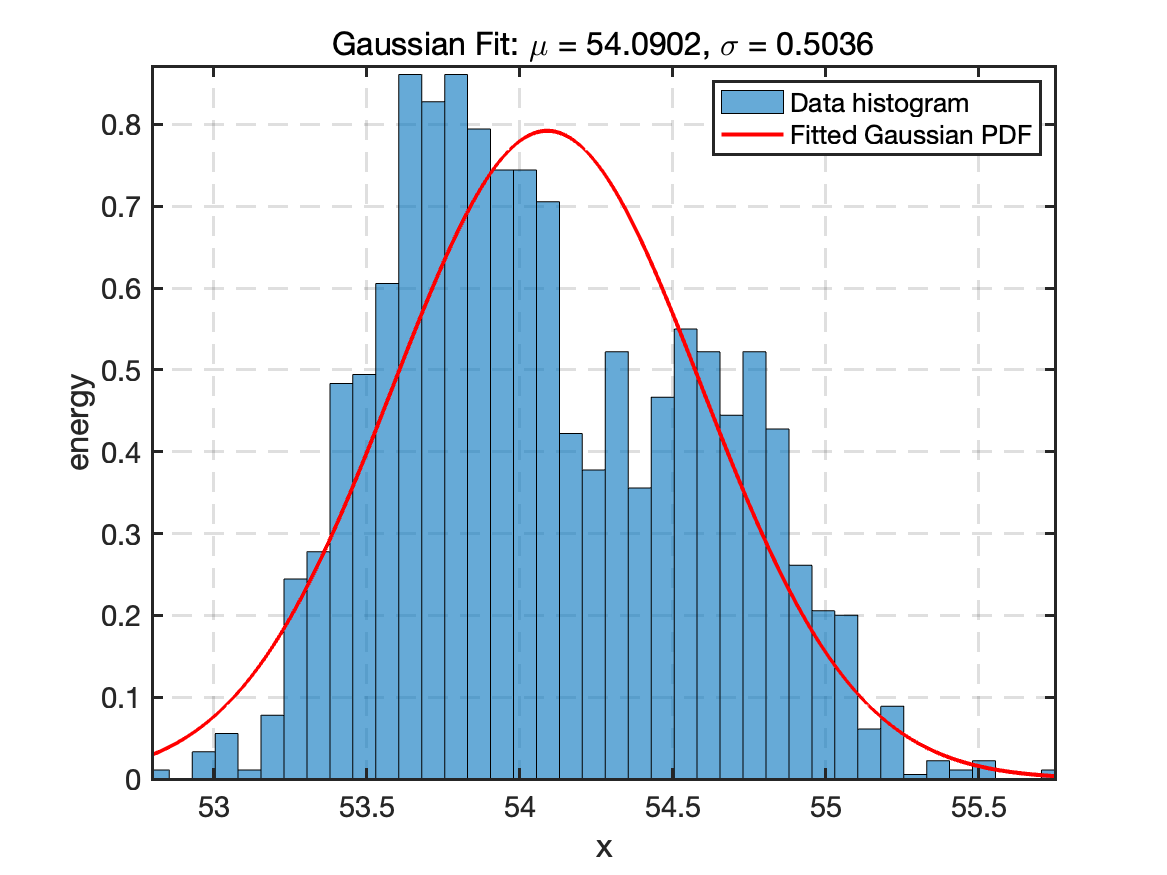}
	\includegraphics[width=0.24\textwidth]{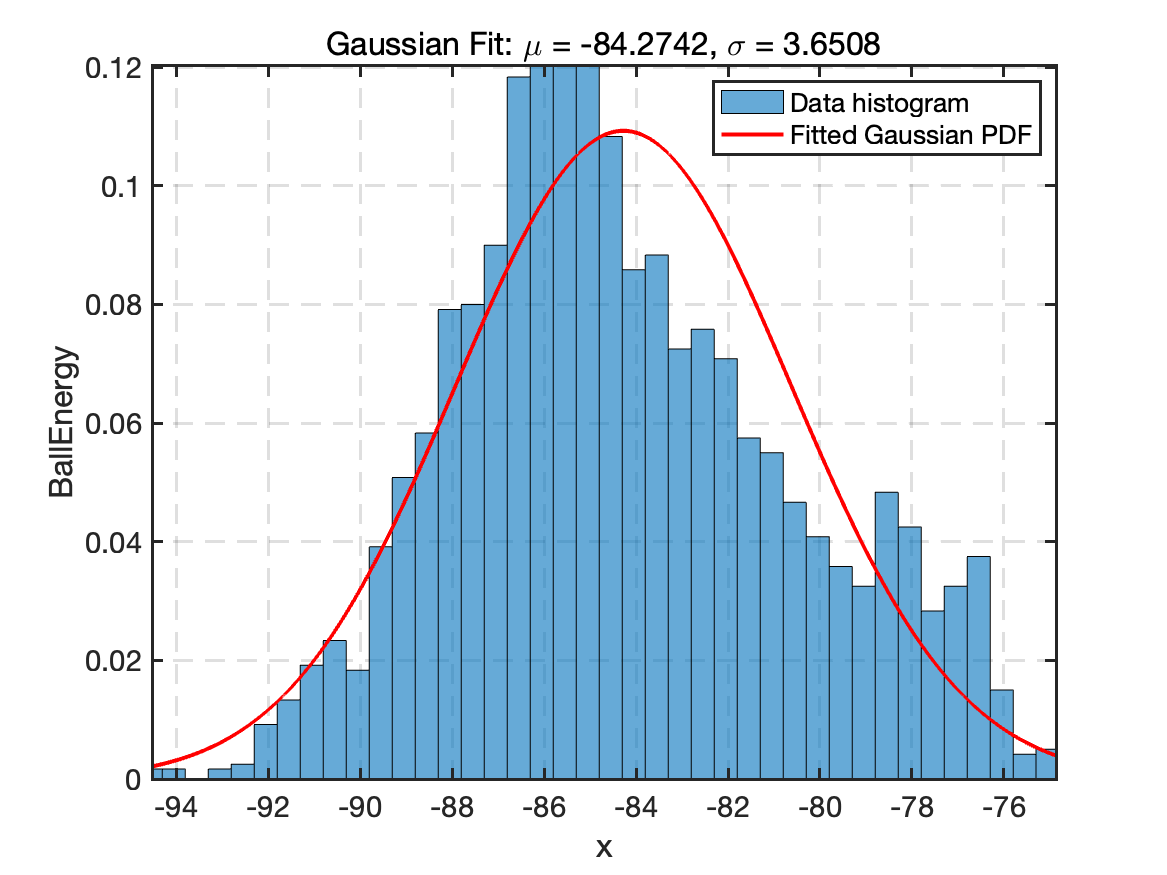} 
	\\
	\includegraphics[width=0.24\textwidth]{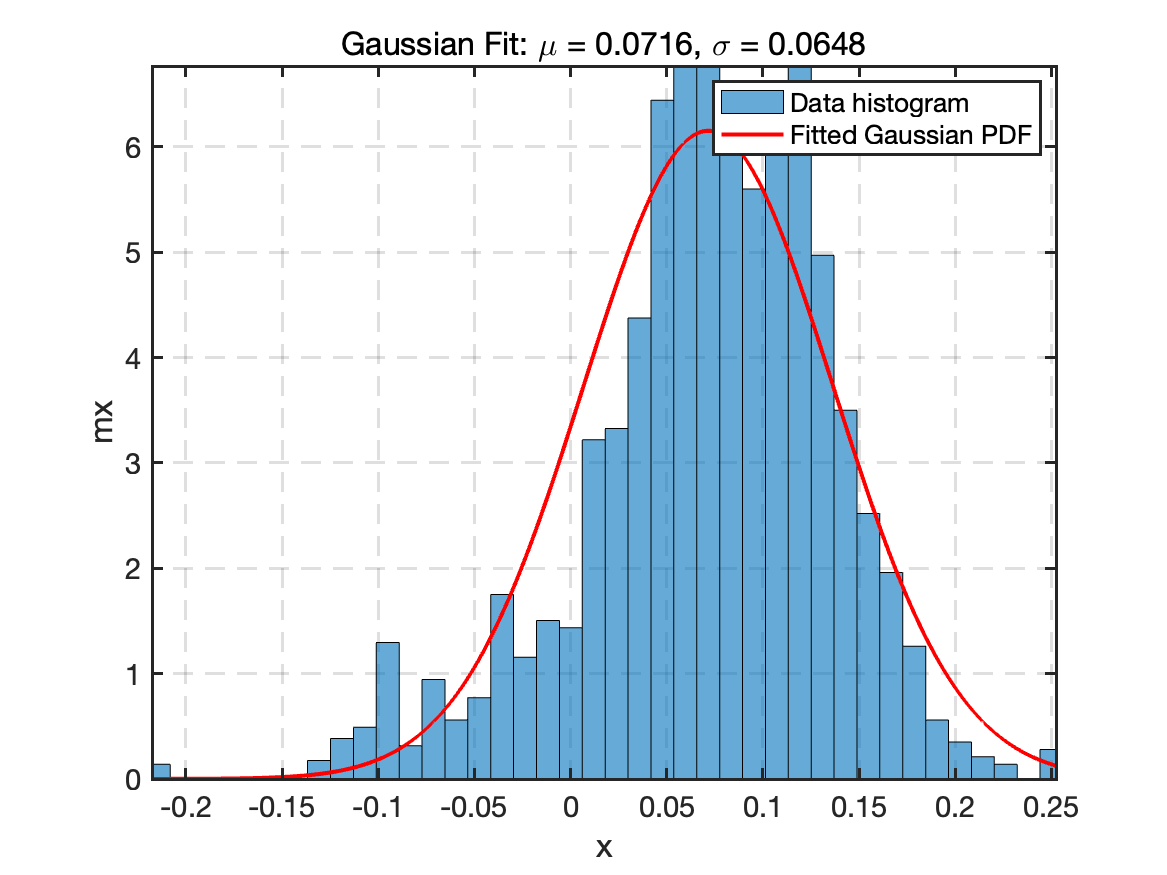}
	\includegraphics[width=0.24\textwidth]{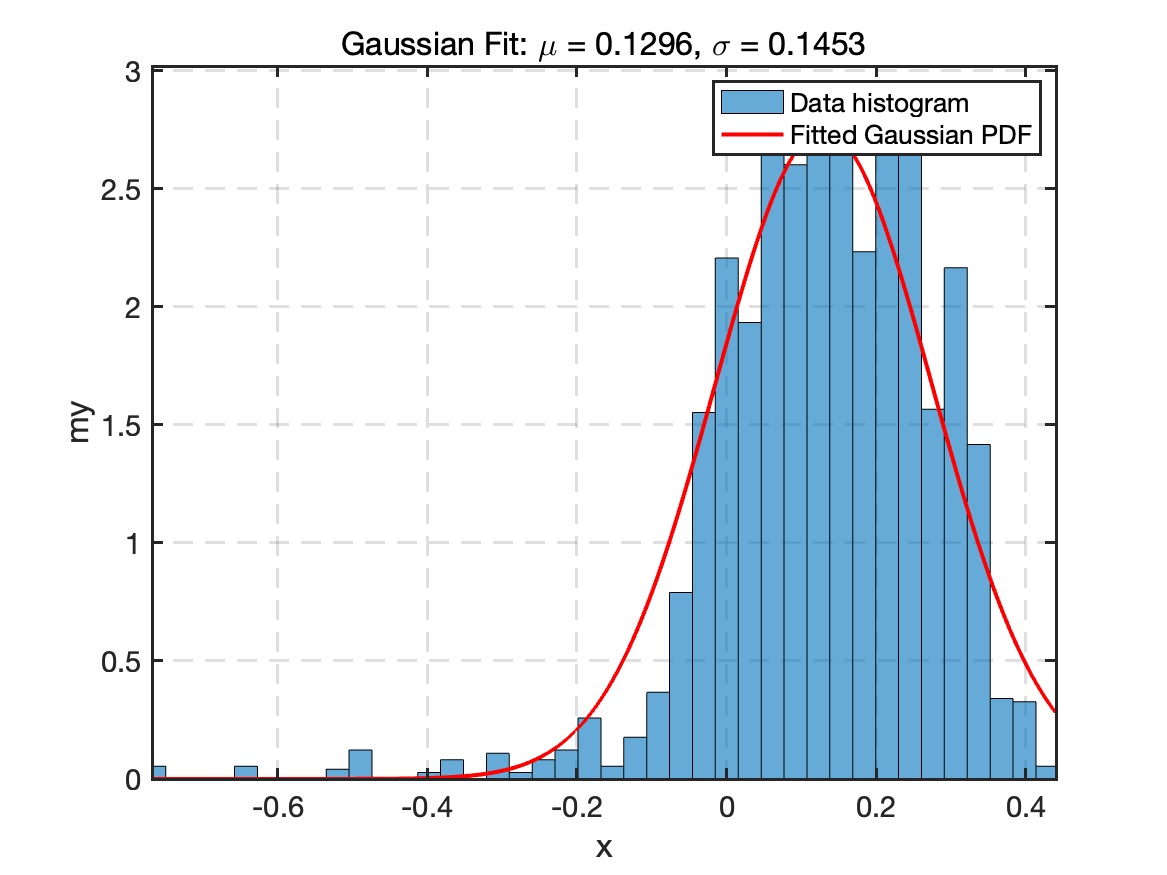} 	
	\includegraphics[width=0.24\textwidth]{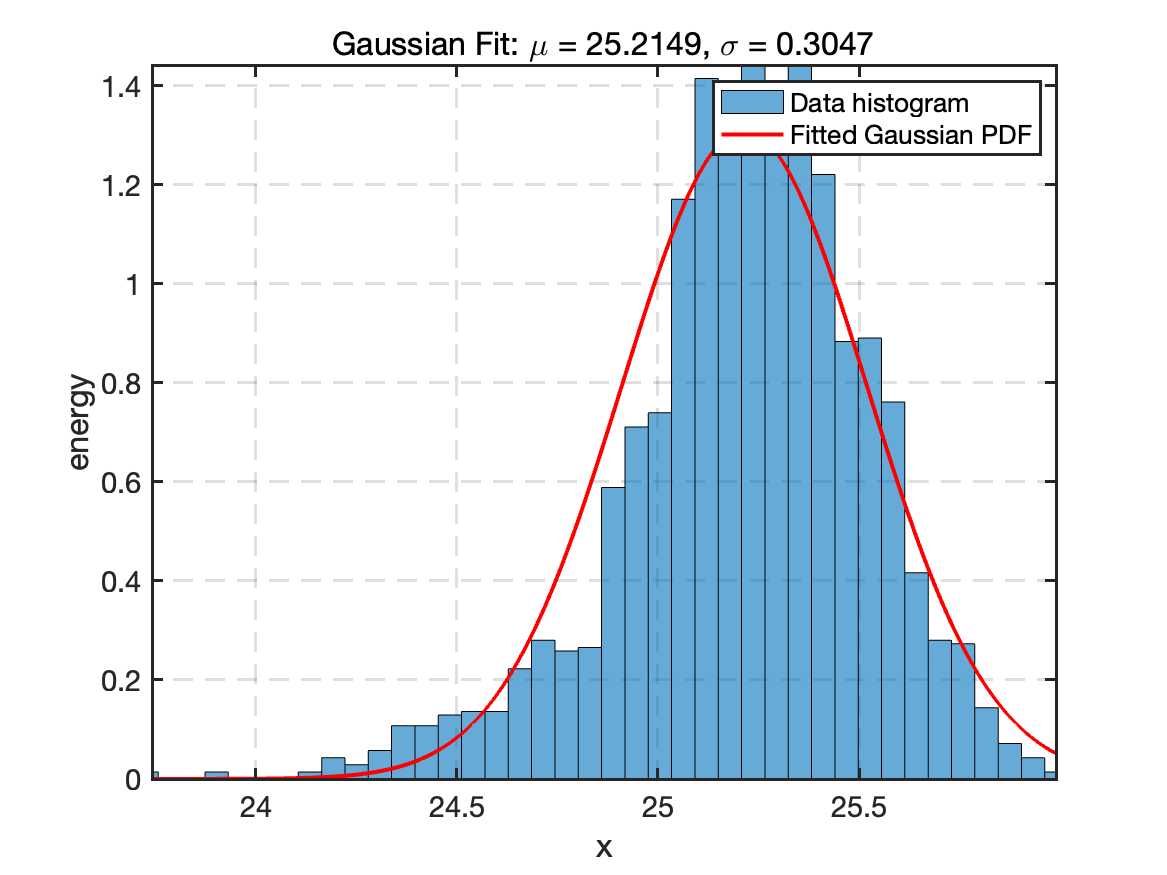}
	\includegraphics[width=0.24\textwidth]{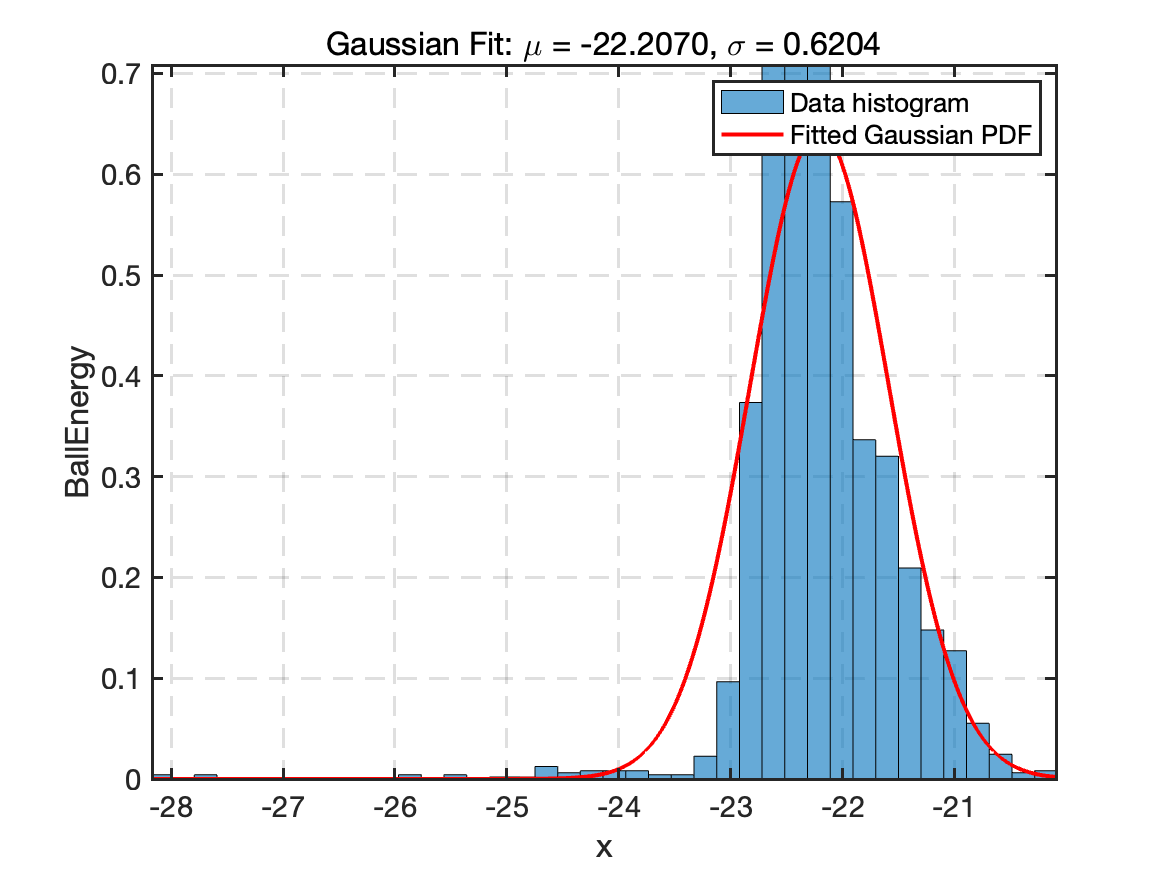} 
	\\
	\includegraphics[width=0.24\textwidth]{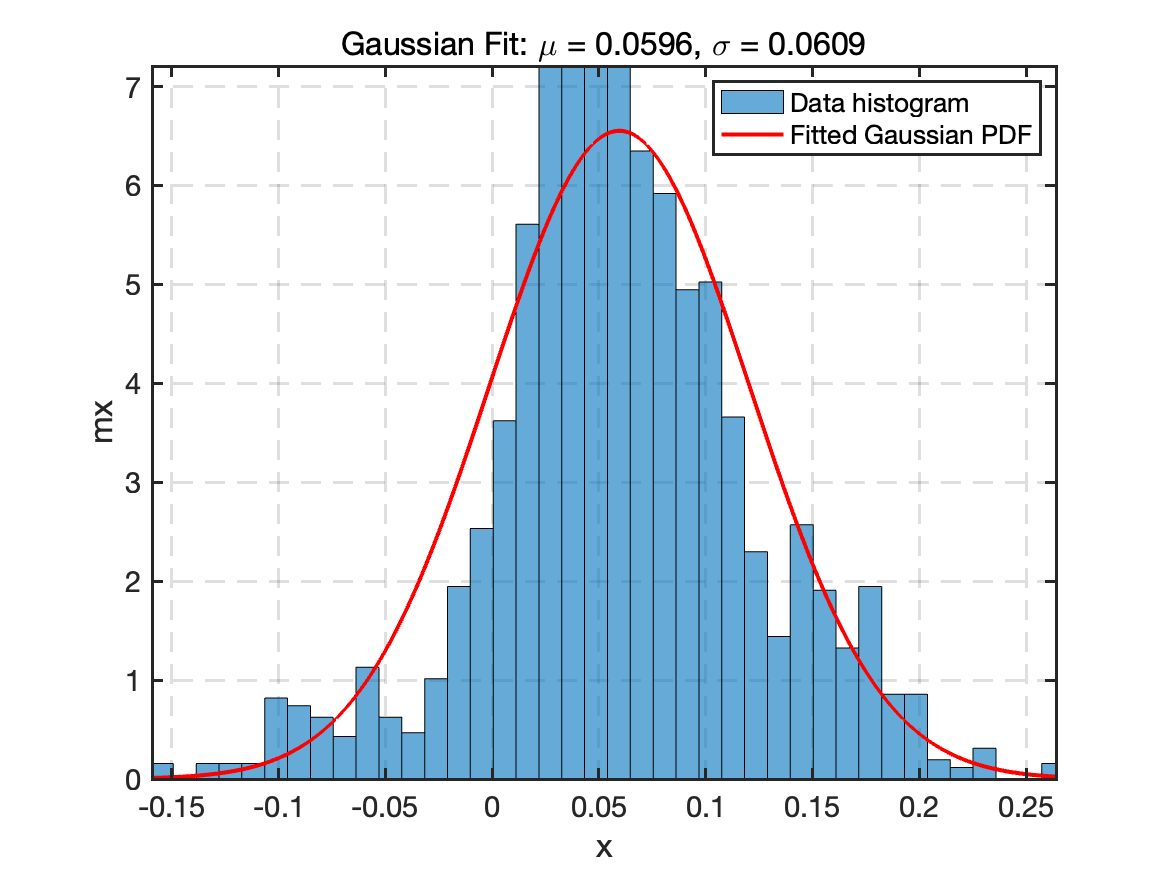}
	\includegraphics[width=0.24\textwidth]{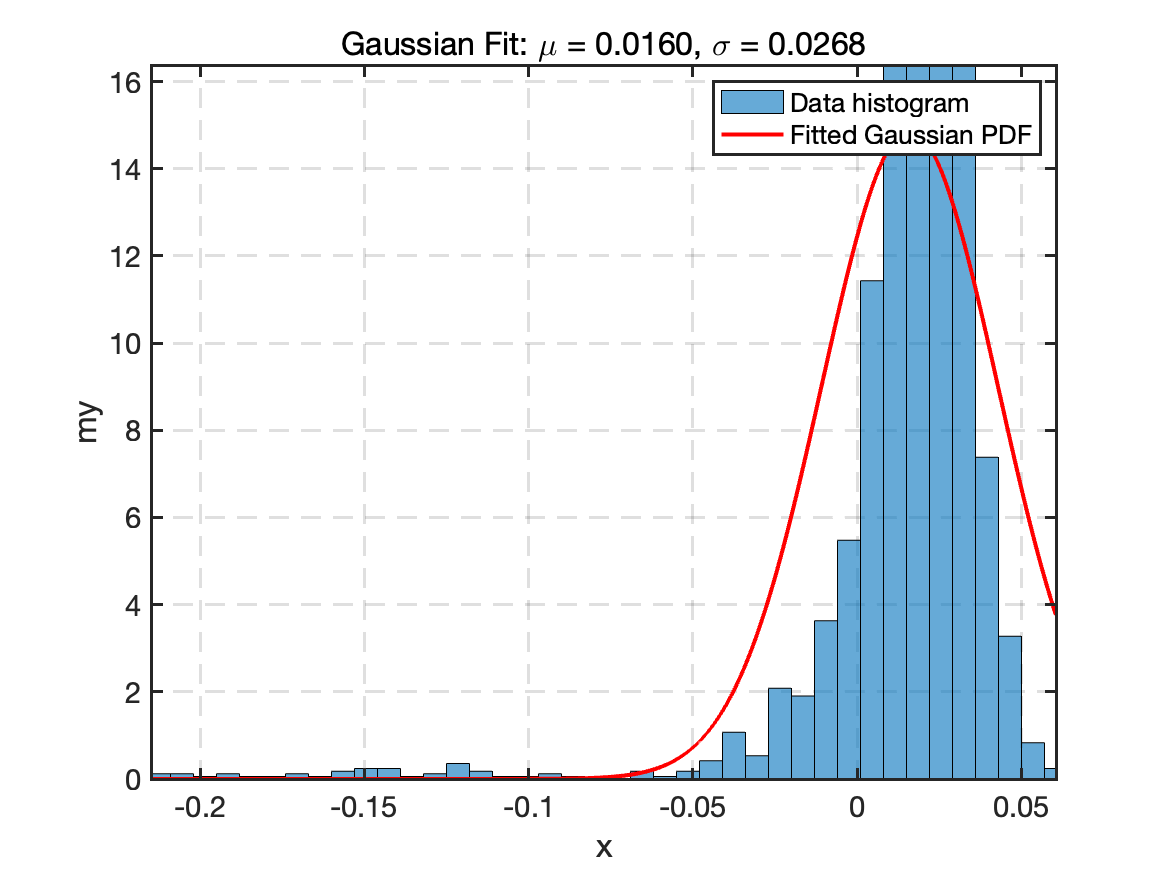} 	
	\includegraphics[width=0.24\textwidth]{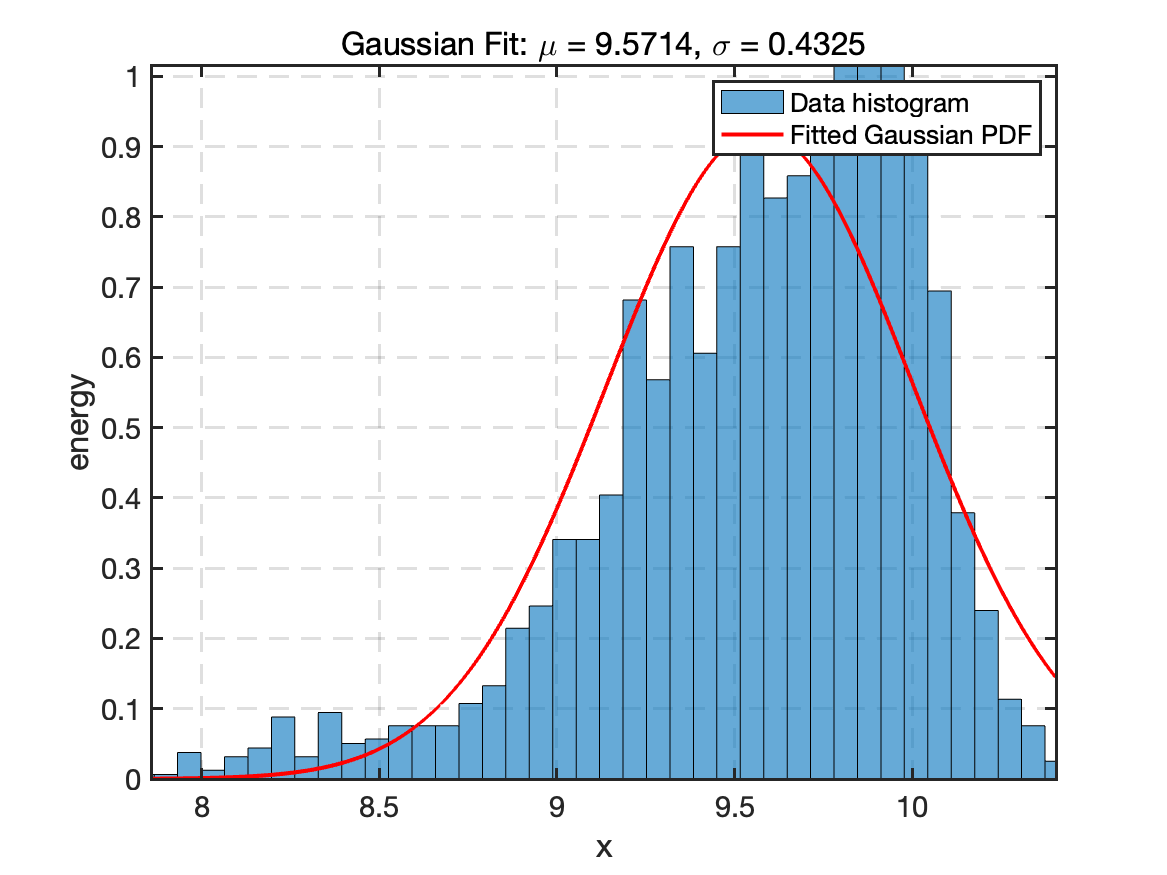}
	\includegraphics[width=0.24\textwidth]{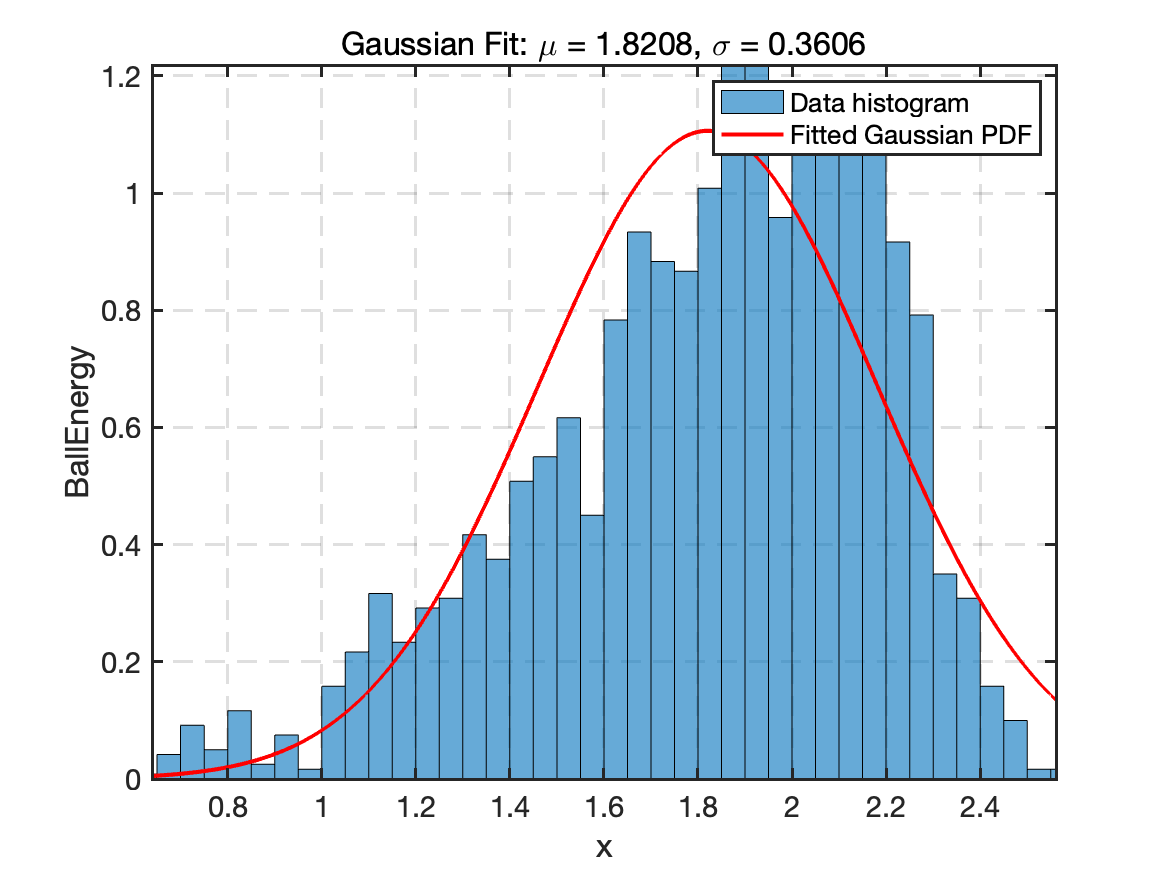} 
	\\
	\includegraphics[width=0.24\textwidth]{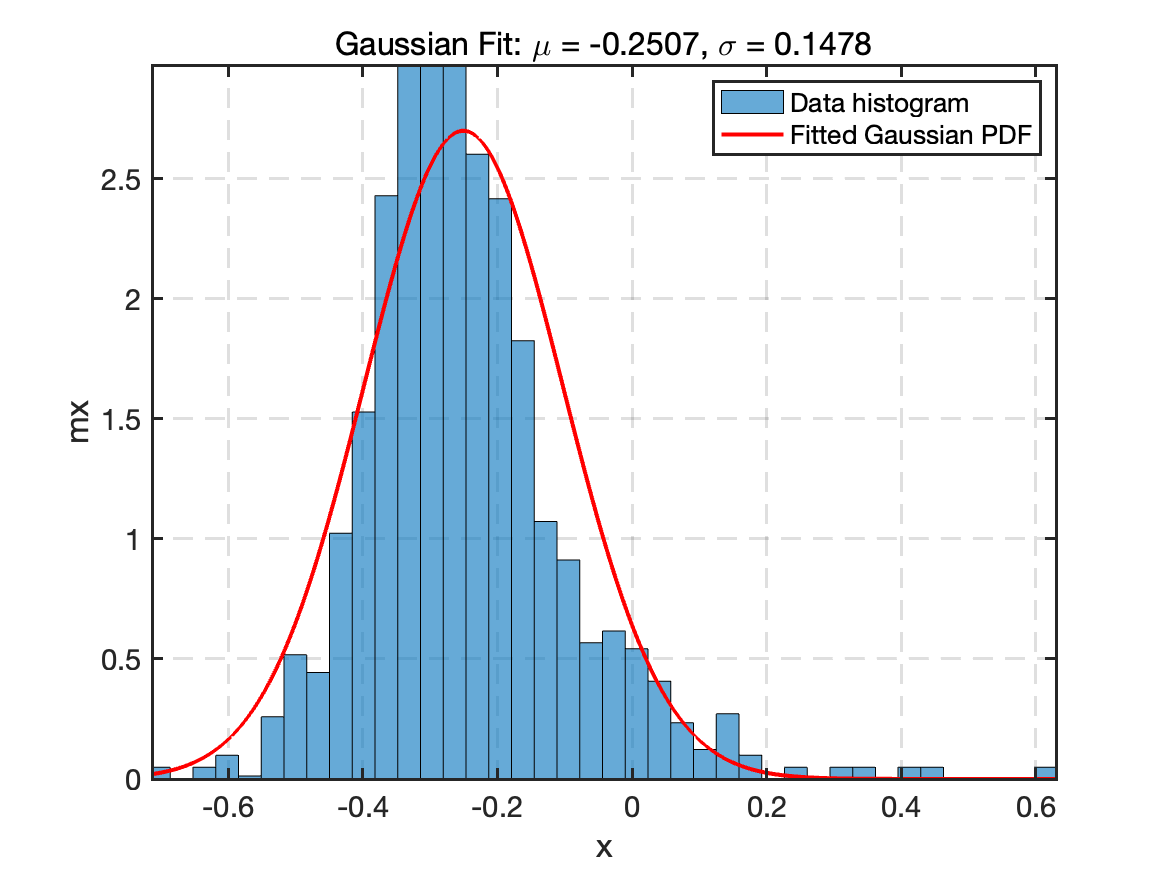}
	\includegraphics[width=0.24\textwidth]{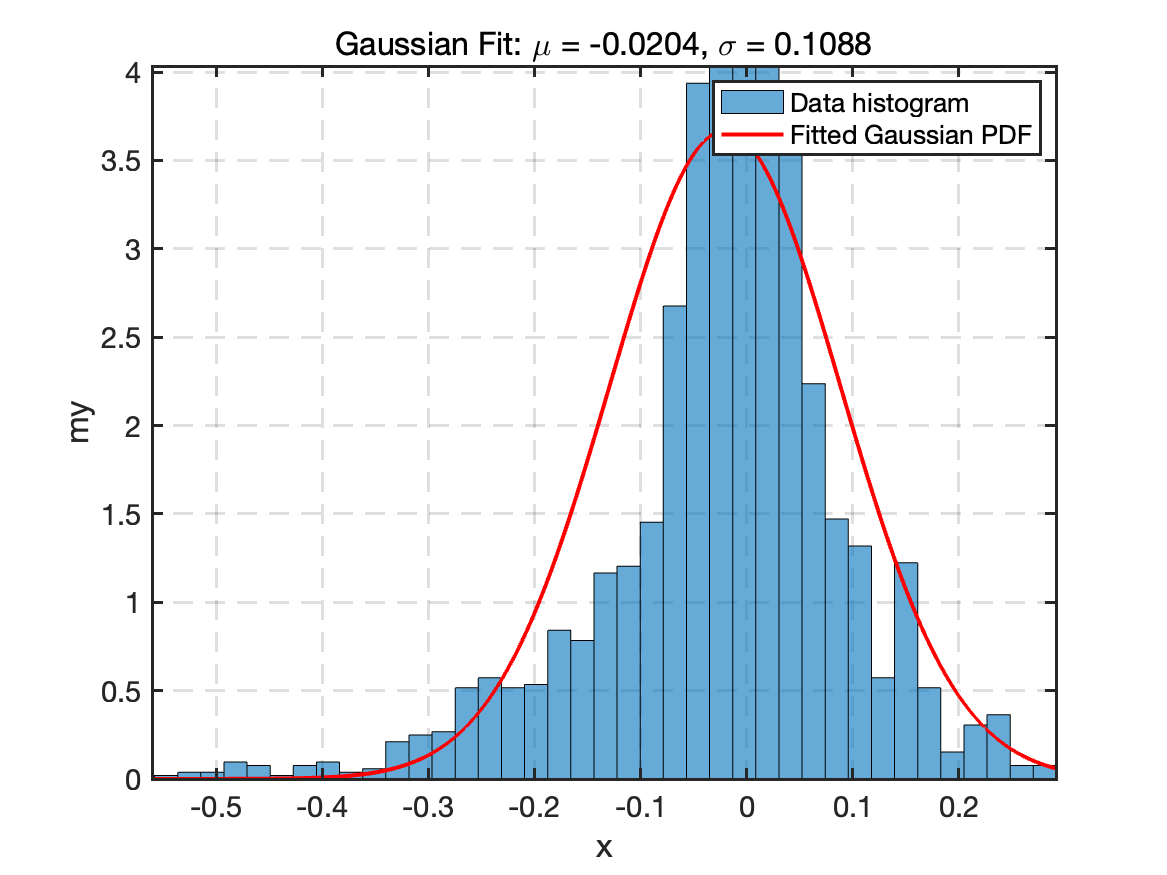} 	
	\includegraphics[width=0.24\textwidth]{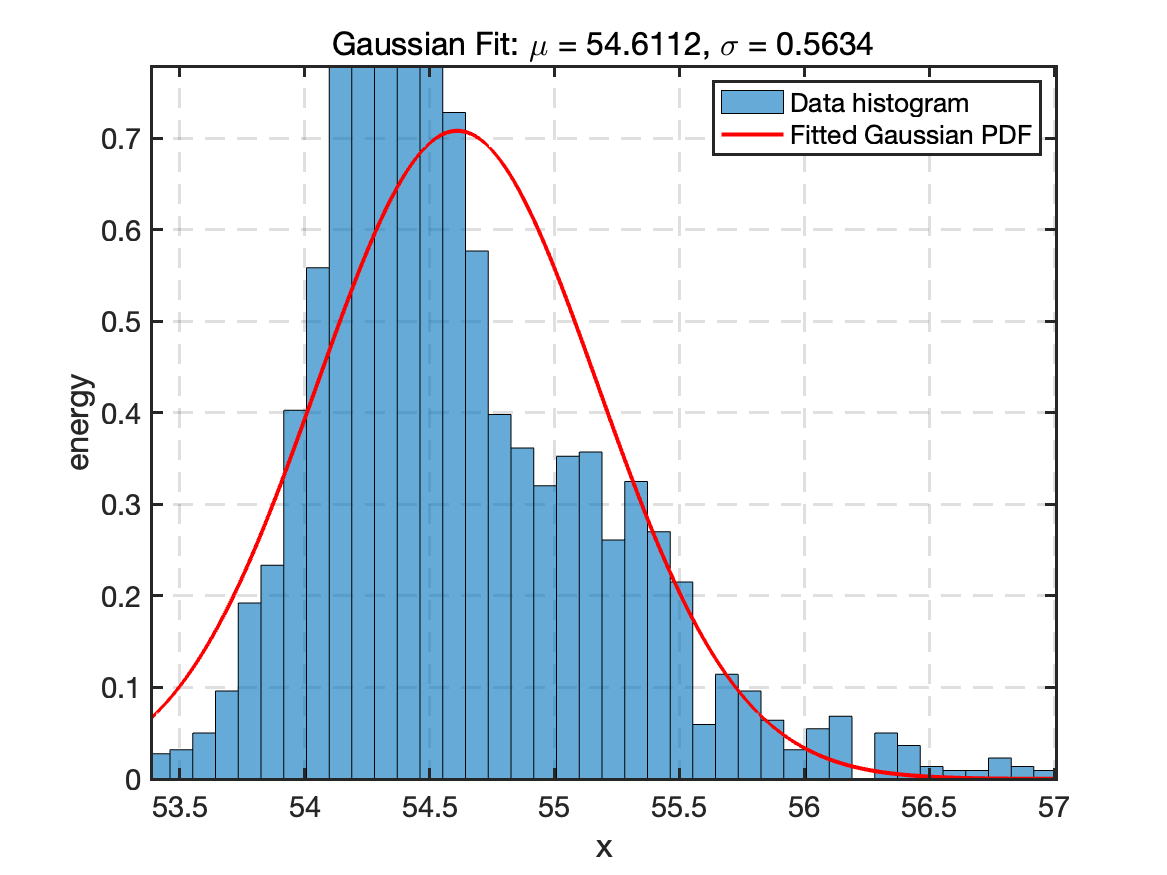}
	\includegraphics[width=0.24\textwidth]{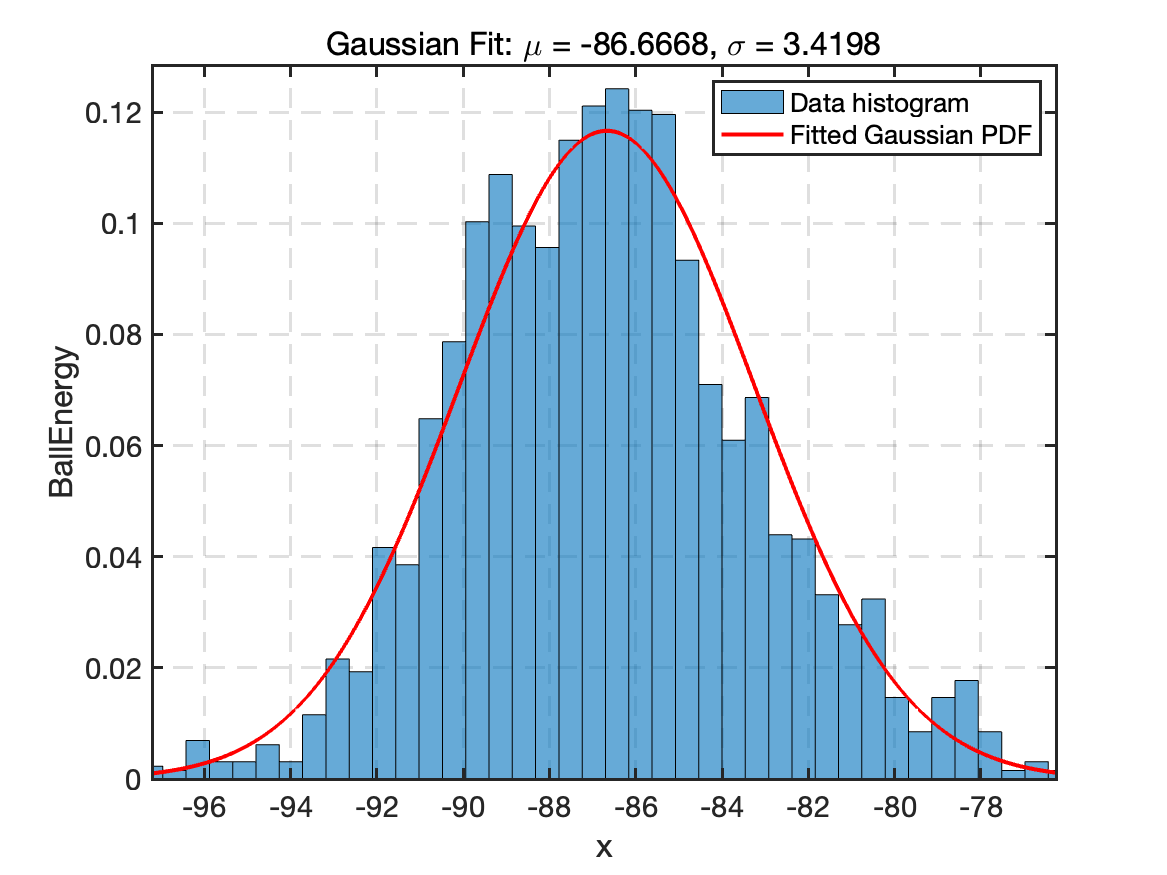} 
	\\
	\includegraphics[width=0.24\textwidth]{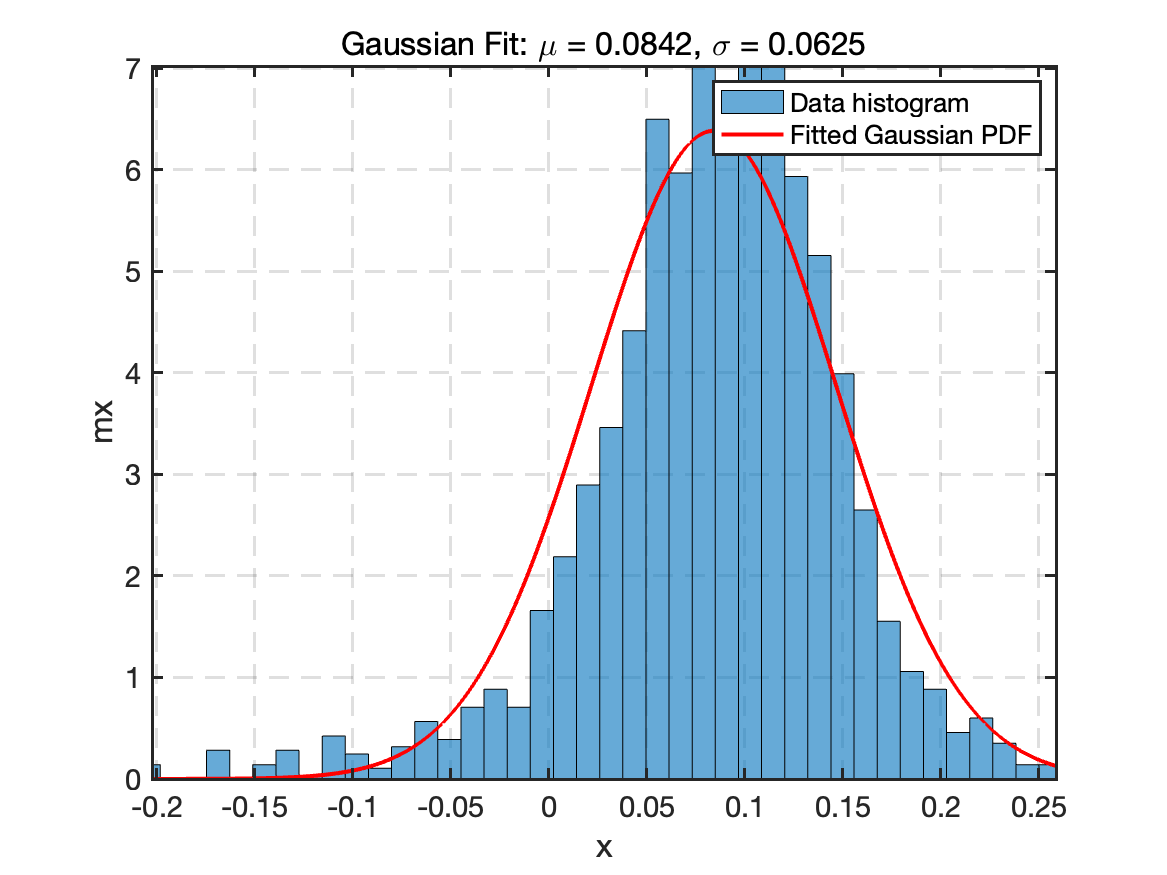}
	\includegraphics[width=0.24\textwidth]{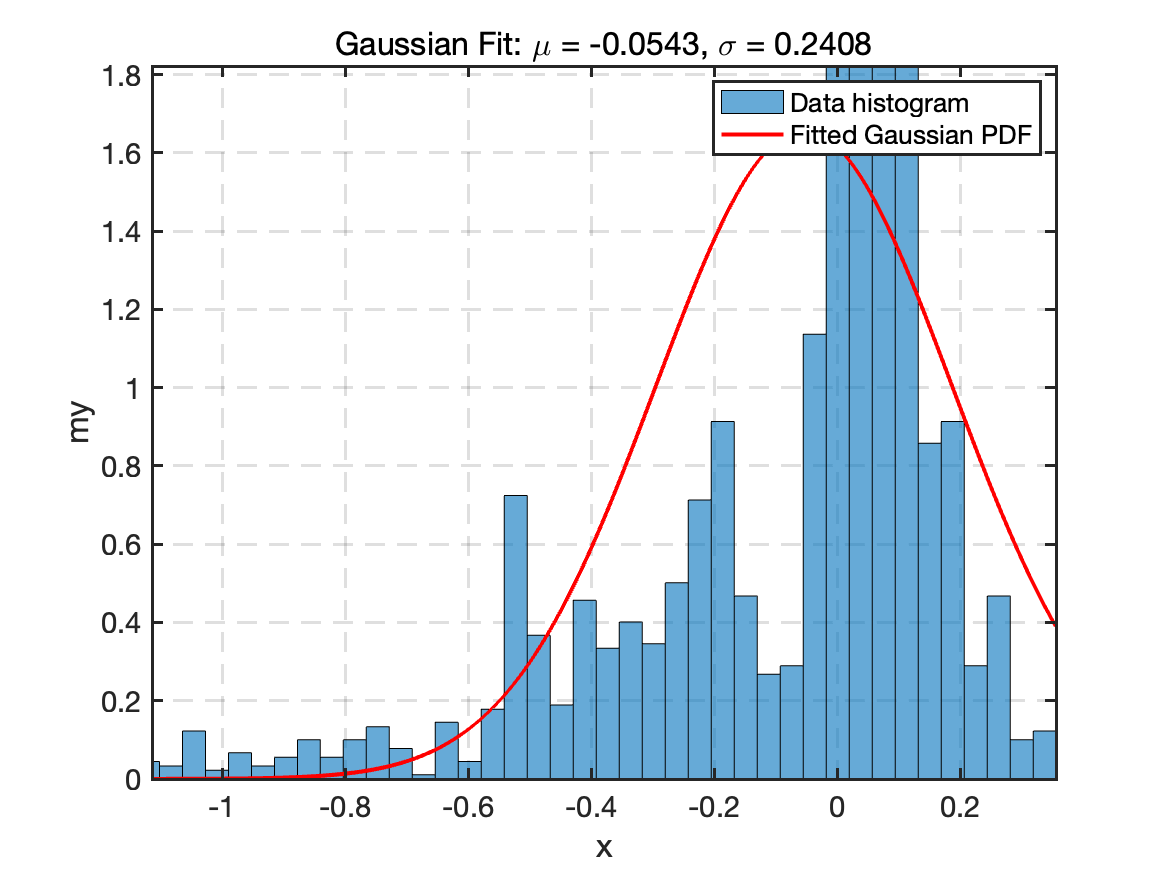} 	
	\includegraphics[width=0.24\textwidth]{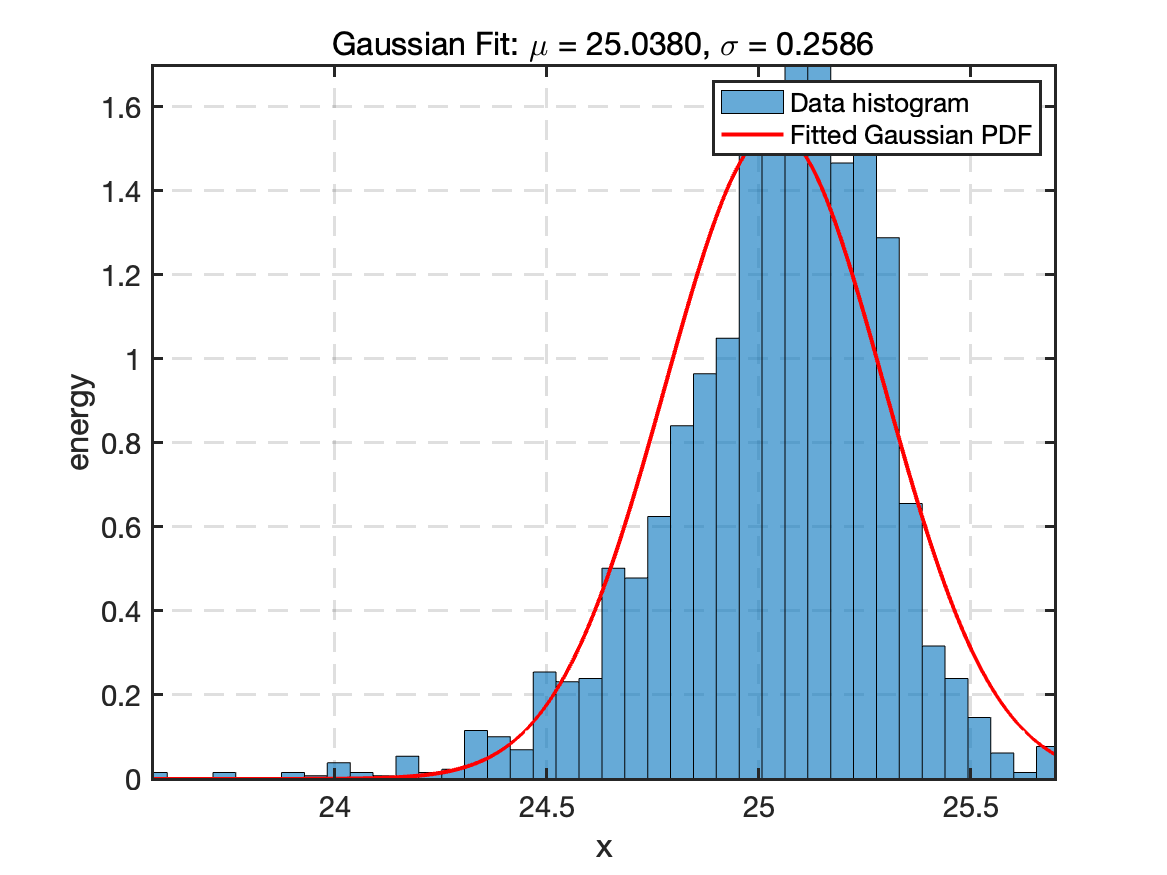}
	\includegraphics[width=0.24\textwidth]{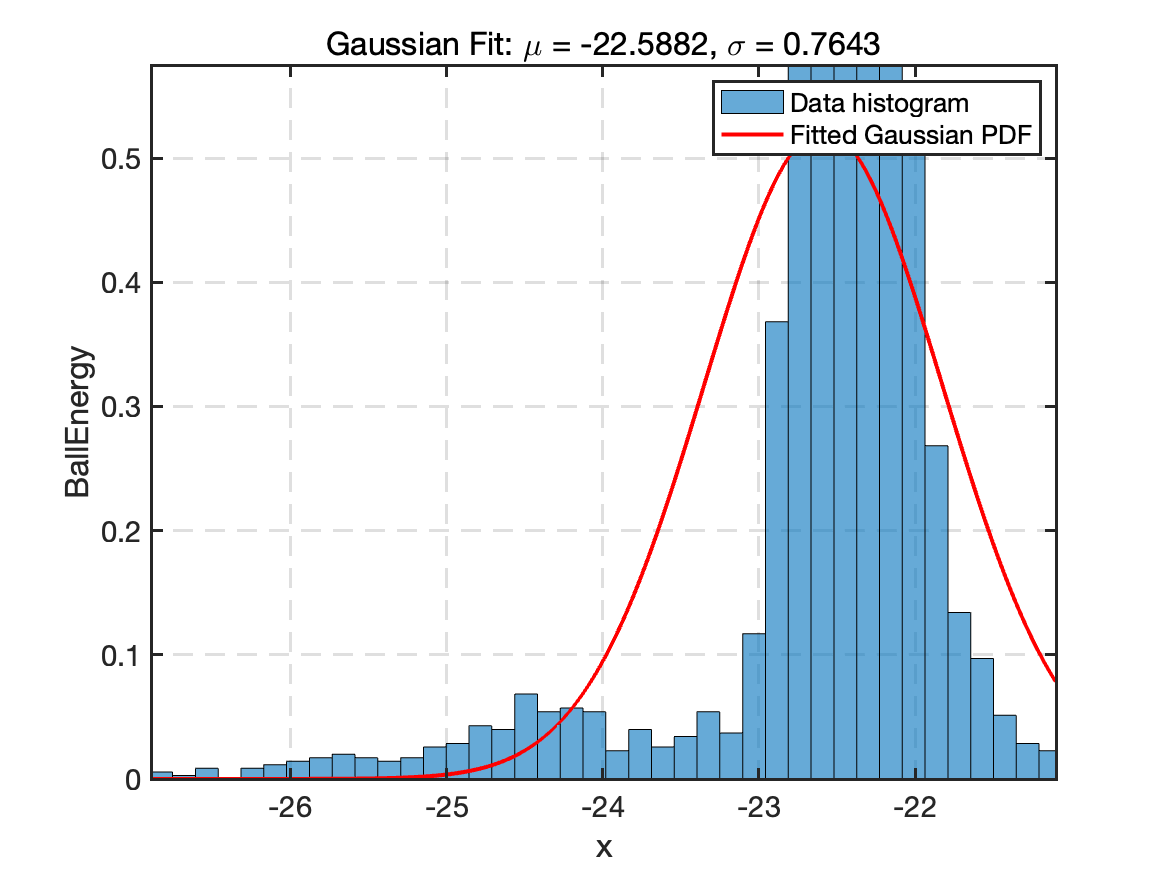} 
	\\
	\includegraphics[width=0.24\textwidth]{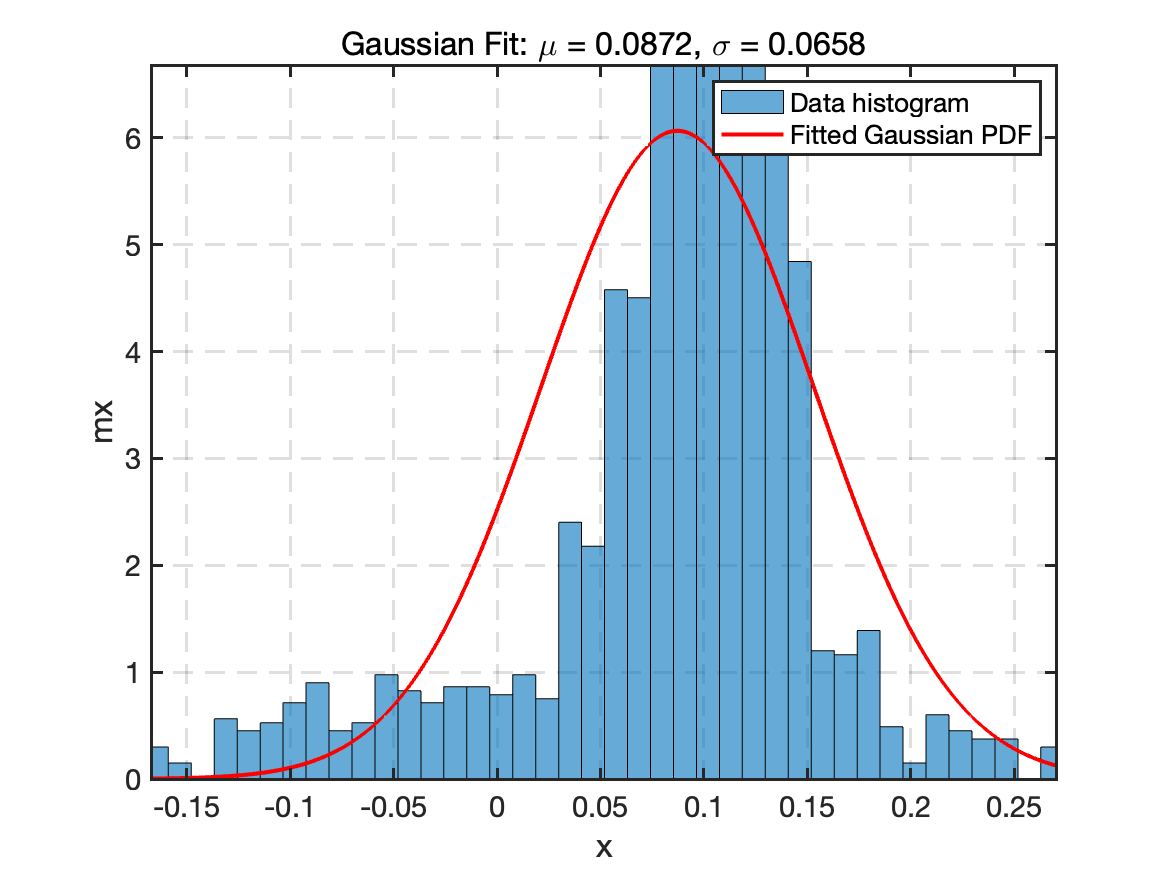}
	\includegraphics[width=0.24\textwidth]{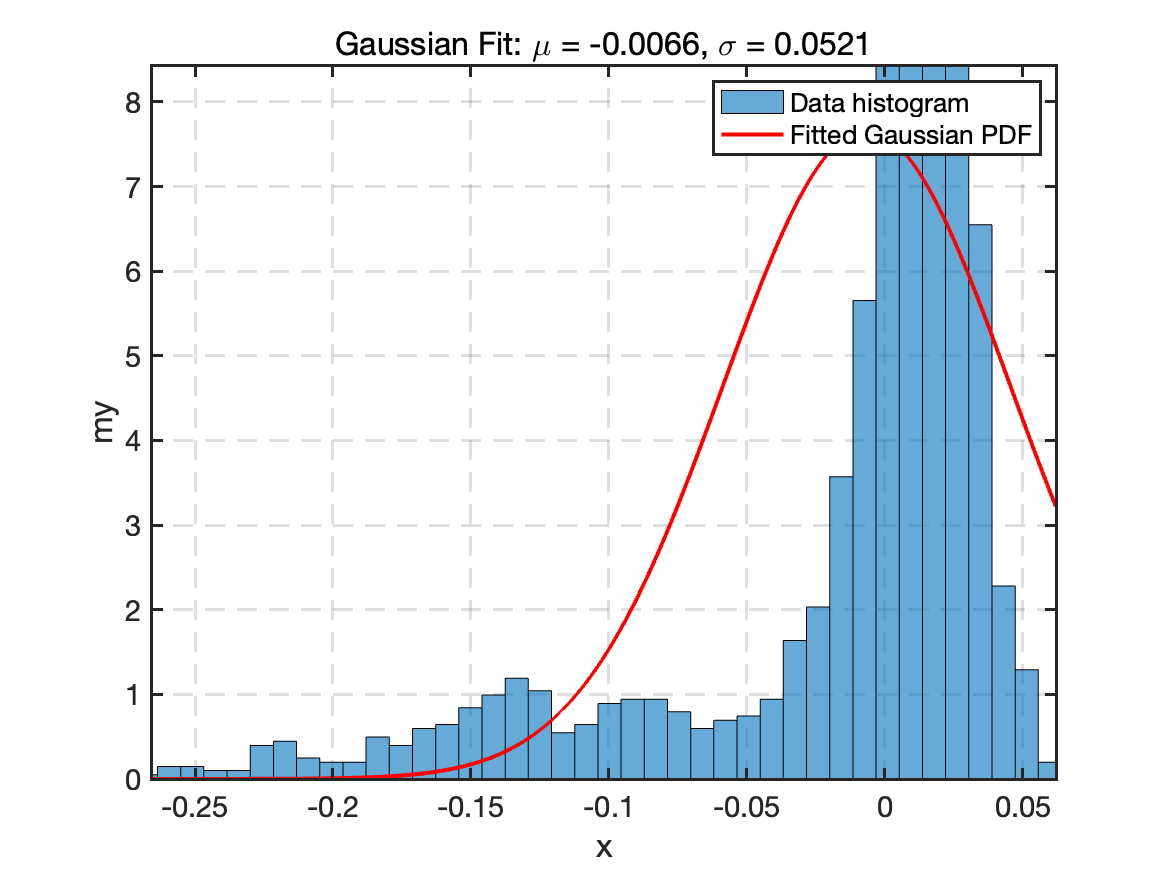} 	
	\includegraphics[width=0.24\textwidth]{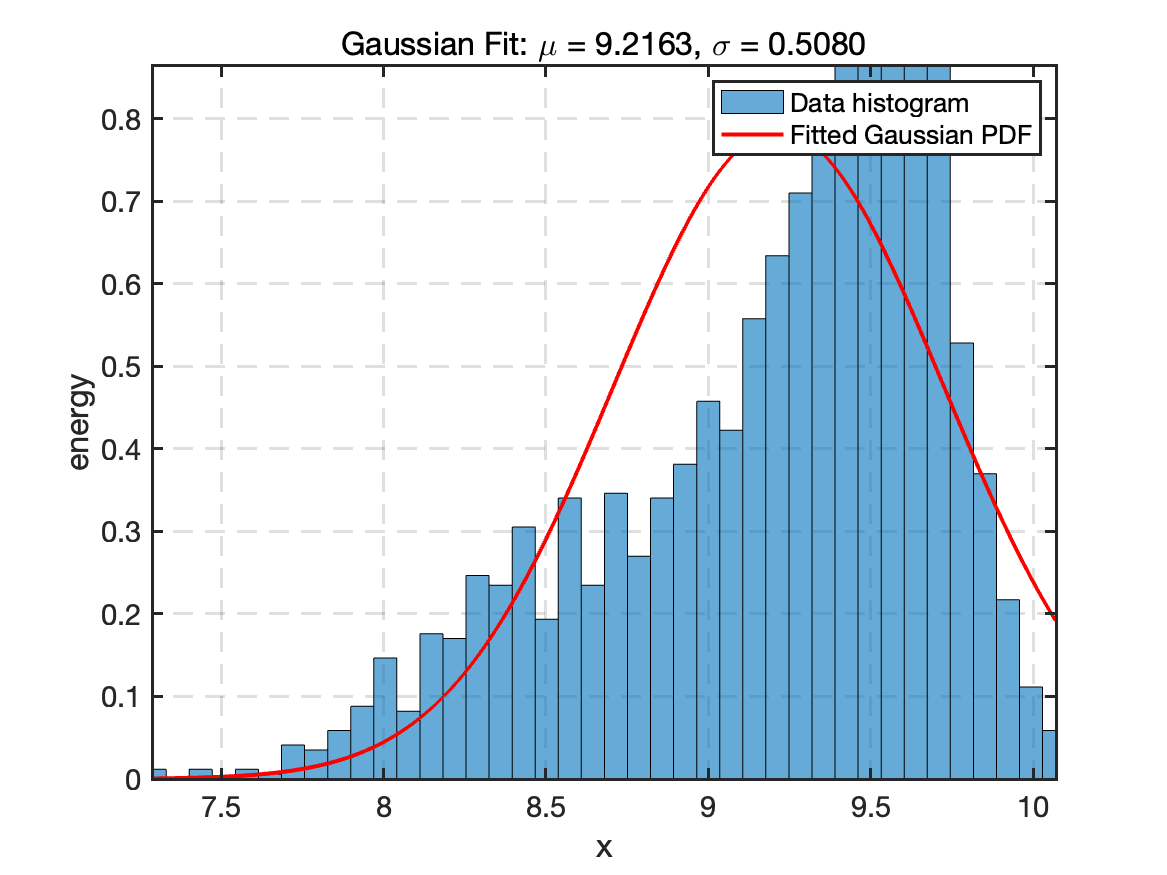}
	\includegraphics[width=0.24\textwidth]{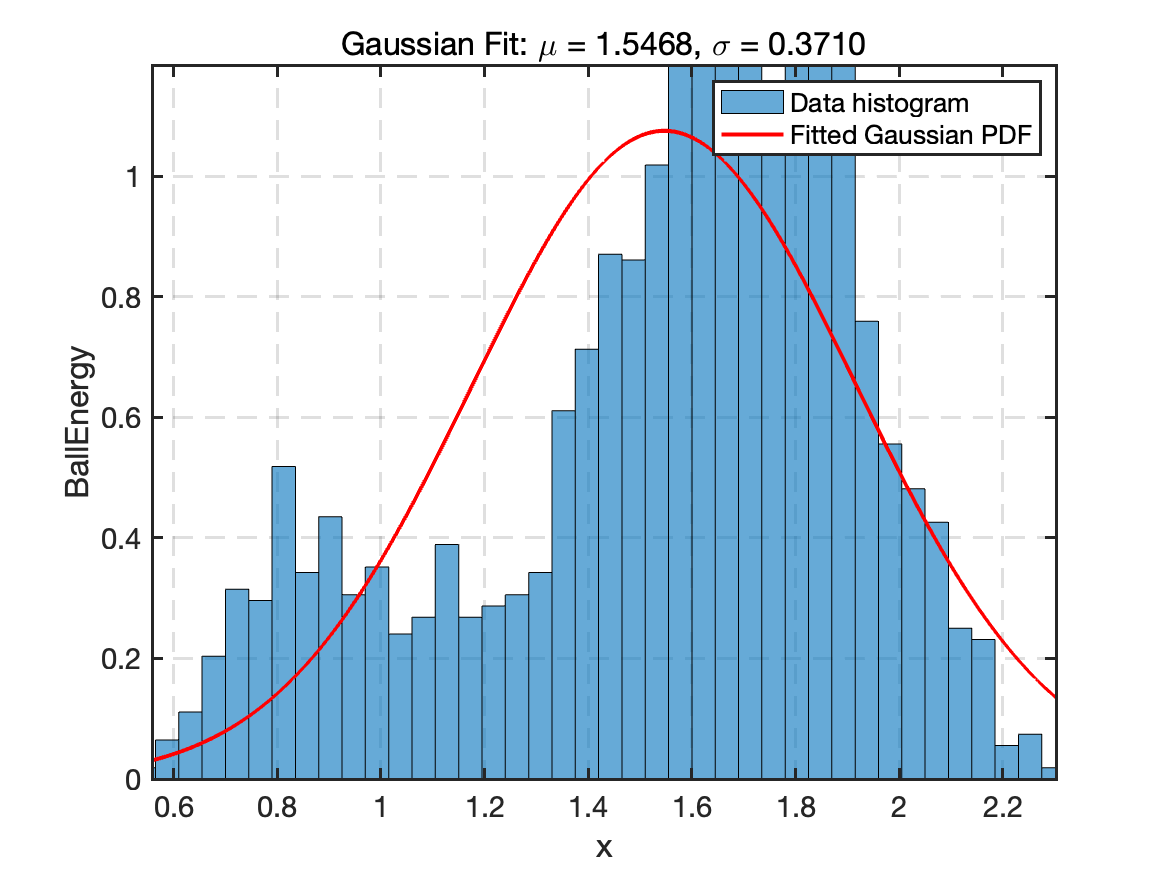} 
	\\
	\caption{ \small{Rayleigh--B\' enard Experiment 2: Measures $\mathcal{M}\left(U_h(t,P_i) \right), i =1,\dots,6$ (from top to bottom) with $U \in \{m_1,m_2,
E, BE\}$ (from left to right).}}\label{fig-Measure-Ex2-3}
\end{figure}

\end{itemize}

\newpage 
We have performed further extensive numerical testings and present in what follows some of them. In particular, we show 1) temporal averages of the $L^1$-norms and temporal averages of spatial averages; 2) time evolution of $L^1$-norms of solutions; 3) errors of solutions, their corresponding temporal average, deviation and Reynolds stress and energy fluctuation; and 4) measures of $L^1$-norms of solutions.  The results will be presented for the following experiments:

\begin{itemize}
\item Experiment 3 with {\bf large initial energy} 
\begin{align*}
\widetilde{P}(x_1) = 0, \quad \widehat{P}(x_2) =  \begin{cases}
100 \cos^2(\pi x_2), & \mbox{ if } x_2 \in [-1/2, \ 1/2],\\
0, & \mbox{ otherwise},
\end{cases}  \quad \vt_L \equiv 1, \quad \vt_H \equiv 15, \quad g\equiv-10.
\end{align*}

\item Experiment 4 with {\bf small initial energy}
\begin{align*}
&\widetilde{P}(x_1) = 0, \quad \vt_L \equiv 1, \quad \vt_H \equiv 15, \quad g\equiv-10,\\
&\widehat{P}(x_2) = -(\vt_M + S_\vt  x_2) + \begin{cases}
\vt_H, & \mbox{ if } x_2 \in [-1, \ -9/10],\\
0.5 + 14.5 \cos^2(5\pi (x_2+9/10)), & \mbox{ if } x_2 \in [-9/10, \ -8/10],\\
0.5 + 0.5 \cos^2(5\pi (x_2-8/10)/16), & \mbox{ if } x_2 \in [-8/10, \ 8/10],\\
\vt_L, & \mbox{ if } x_2 \in [8/10, \ 1].
\end{cases}
\end{align*}

\item Experiment 5 with {\bf boundary perturbations} 
\begin{align*}
\widetilde{P}(x_1) =  P(x_1)/2, \quad \widehat{P}(x_2) = 0,\quad \vt_L \equiv 1, \quad \vt_H \equiv 15, \quad g\equiv-10.
\end{align*}
\end{itemize}

The numerical results for the above problems are summarized in Tables \ref{tabel1}-\ref{tabel2} and Figures~\ref{fig-Evo-fur}-\ref{fig-Measure-fur}.
\begin{table}[htbp]
	\centering
	\caption{Temporal-averages of $L^1$-norms of solutions $\Ov{\norm{U_h}_{L^1(\Omega)}}$ for Experiments 2-5.  } \label{tabel1}
	\begin{tabular}{|c|ccccc|}
		\hline
		\multirow{2}{*}{Experiment} & \multicolumn{5}{c|}{ $\Ov{\norm{U_h}_{L^1(\Omega)}}$  } \\
		\cline{2-6}
		& $E$ & $BE$ & $m_1$ & $m_2$ & $S$ \\
		\hline
		\hline
Ex2	&229.3239		&265.6241		&0.9537		&0.821		&50.2776\\
Ex3	&229.4342		&265.6716		&0.9374		&0.8274		&50.2927\\
Ex4	&230.4742		&265.3276		&0.996		&0.8129		&50.3982\\
Ex5	&229.8286		&265.4391		&0.9831		&0.805		&50.3332\\
		\hline
	\end{tabular}
\end{table}

\begin{table}[htbp]
	\centering
	\caption{Temporal-averages of spatial averages of solutions $\Ov{\int_{\Omega} U_h \dx }$ for Experiments 2-5.  } \label{tabel2}
	\begin{tabular}{|c|cccc|}
		\hline
		\multirow{2}{*}{Experiment} & \multicolumn{4}{c|}{ $\Ov{\int_{\Omega} U_h \dx }$  } \\
		\cline{2-5}
		& $BE$ & $m_1$ & $m_2$ & $S$ \\
		\hline
		\hline
Ex2 & 	-260.0749		&-0.0008		&-0.0019		&50.2591\\
Ex3 & 	-260.0827		&-0.0006		&-0.002		&50.2739\\
Ex4 & 	-259.5645		&0		&-0.0023		&50.3779\\
Ex5 & 	-259.7957		&0.0015		&-0.0021		&50.3038\\
		\hline
	\end{tabular}
\end{table}

\begin{figure}[htbp]
	\setlength{\abovecaptionskip}{0.cm}
	\setlength{\belowcaptionskip}{-0.cm}
	\centering
	\includegraphics[width=\textwidth]{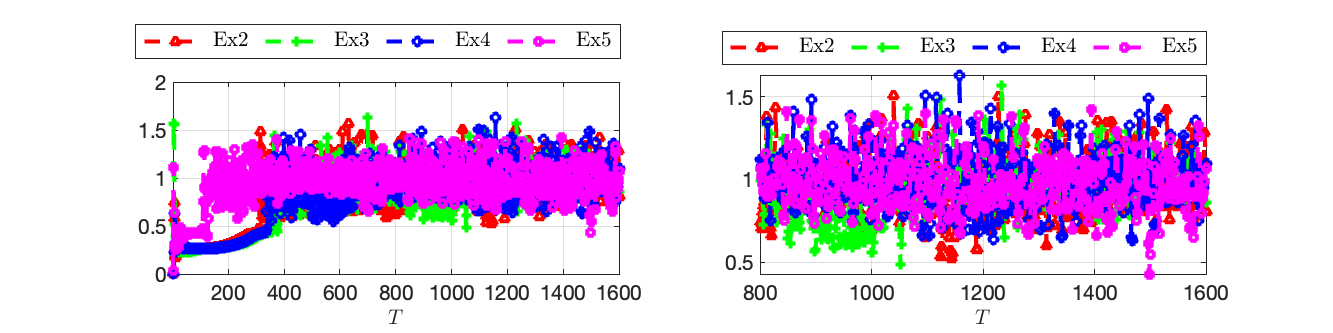}
	\includegraphics[width=\textwidth]{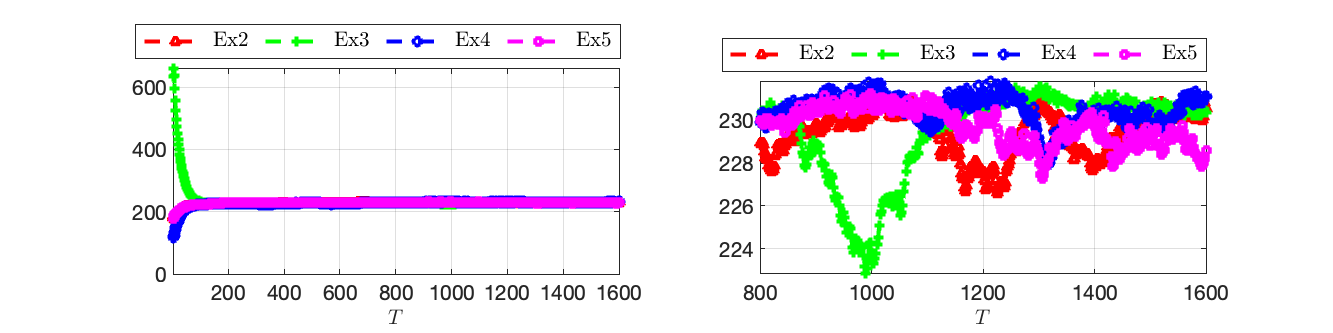}
	\caption{  \small{Rayleigh--B\' enard Experiments: evolutions of $\norm{m_{1,h}(t,\cdot)}_{L^1(\Omega)}$ (top) and $\norm{E_h(t,\cdot)}_{L^1(\Omega)}$ (bottom) for Experiments 2-5 (from left to right).}}\label{fig-Evo-fur}
\end{figure}


\begin{figure}[htbp]
	\setlength{\abovecaptionskip}{0.cm}
	\setlength{\belowcaptionskip}{-0.cm}
	\centering
	\includegraphics[width=0.24\textwidth]{./gif/case0/PRONUM52MX640MY320IS201IE800_Err1.png} 
	\includegraphics[width=0.24\textwidth]{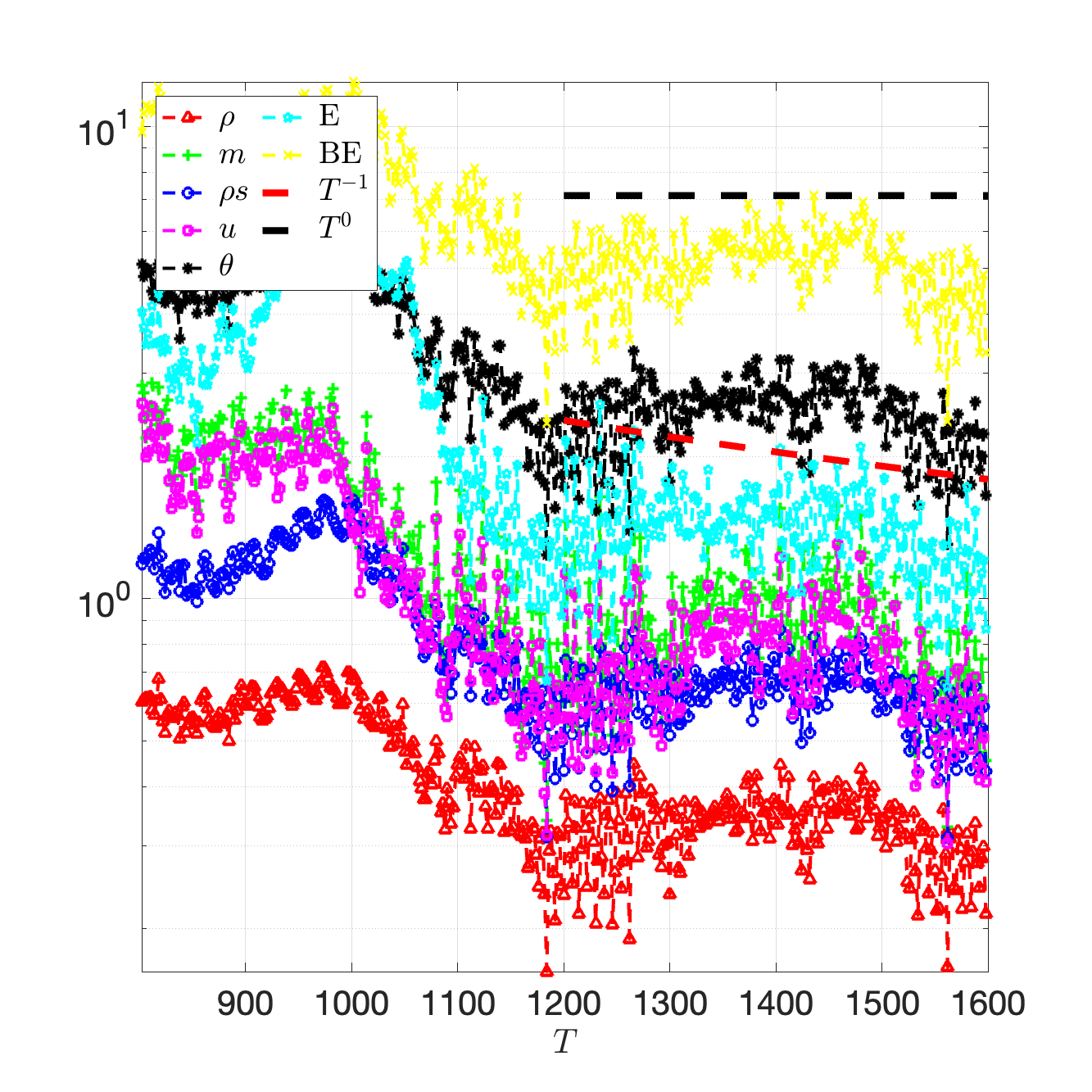}
	\includegraphics[width=0.24\textwidth]{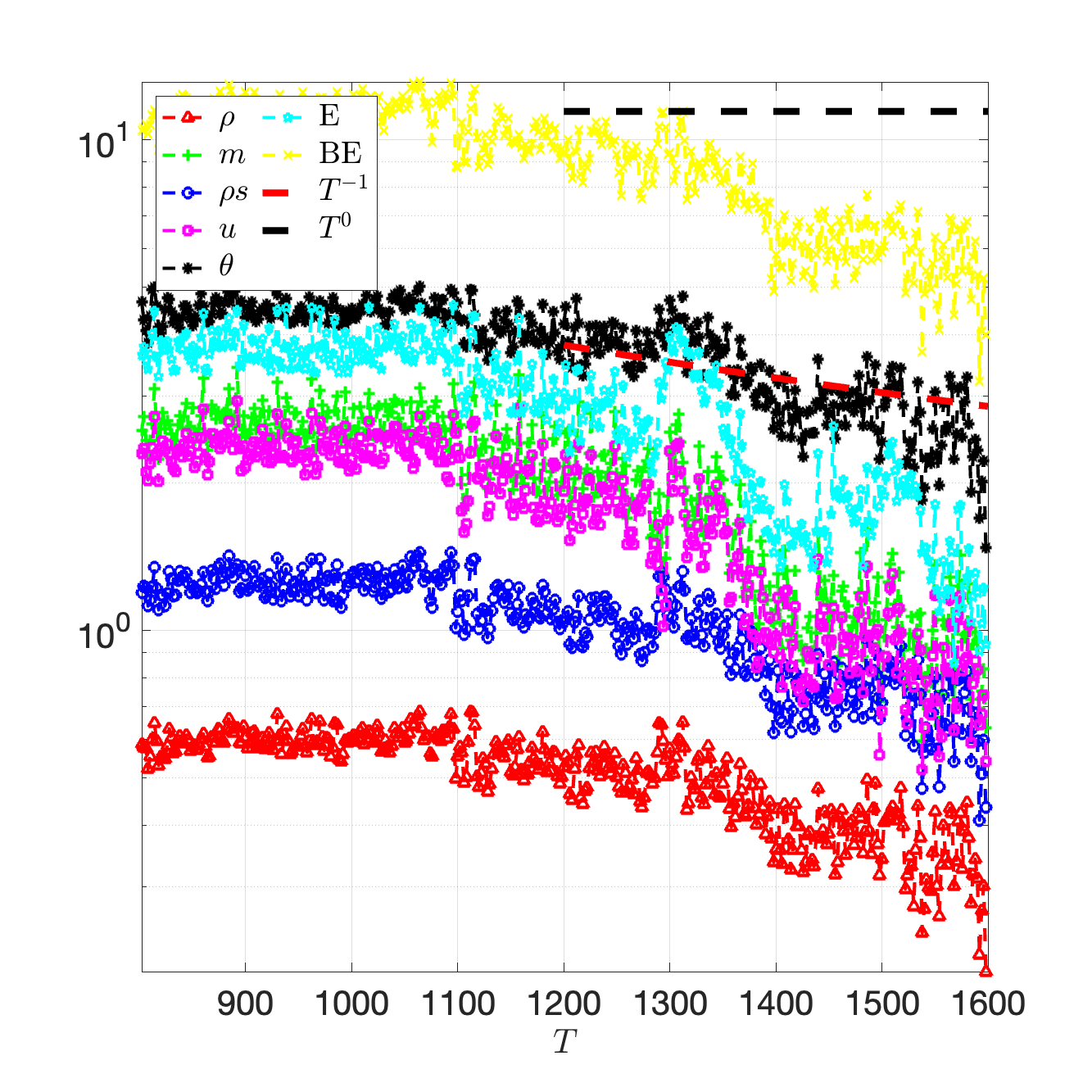} 
	\includegraphics[width=0.24\textwidth]{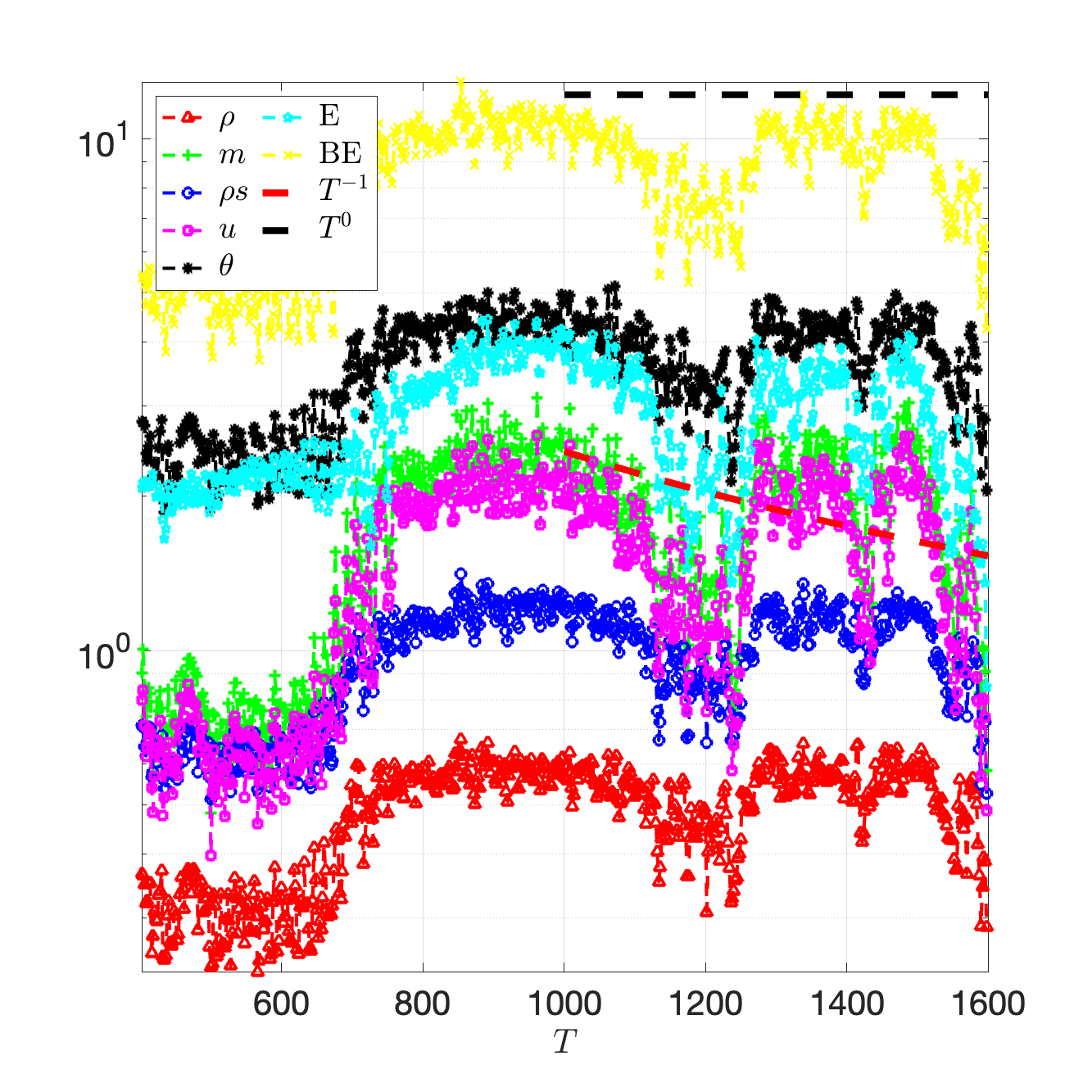}
	\\
	\includegraphics[width=0.24\textwidth]{./gif/case0/PRONUM52MX640MY320IS201IE800_Err2.png} 
	\includegraphics[width=0.24\textwidth]{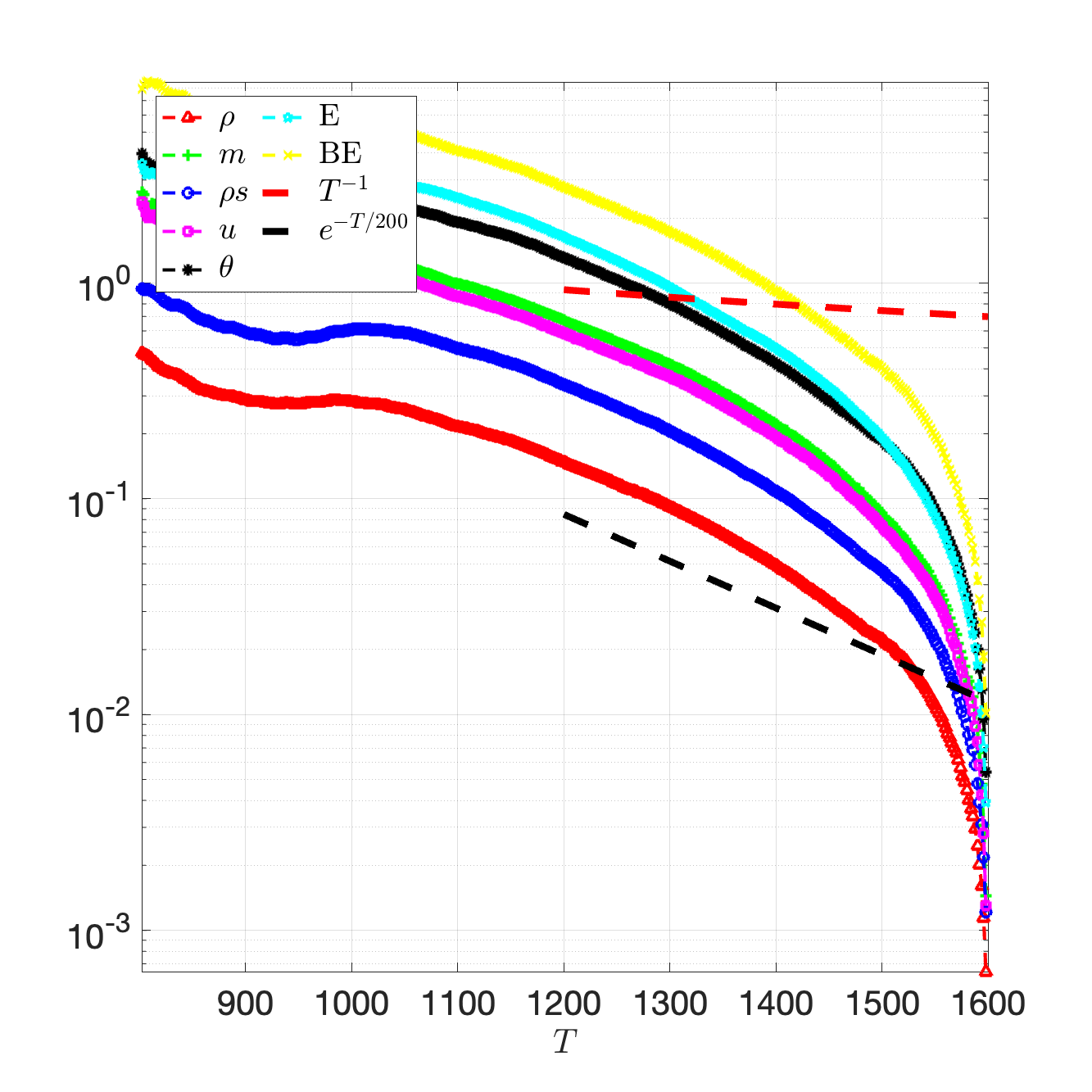}
	\includegraphics[width=0.24\textwidth]{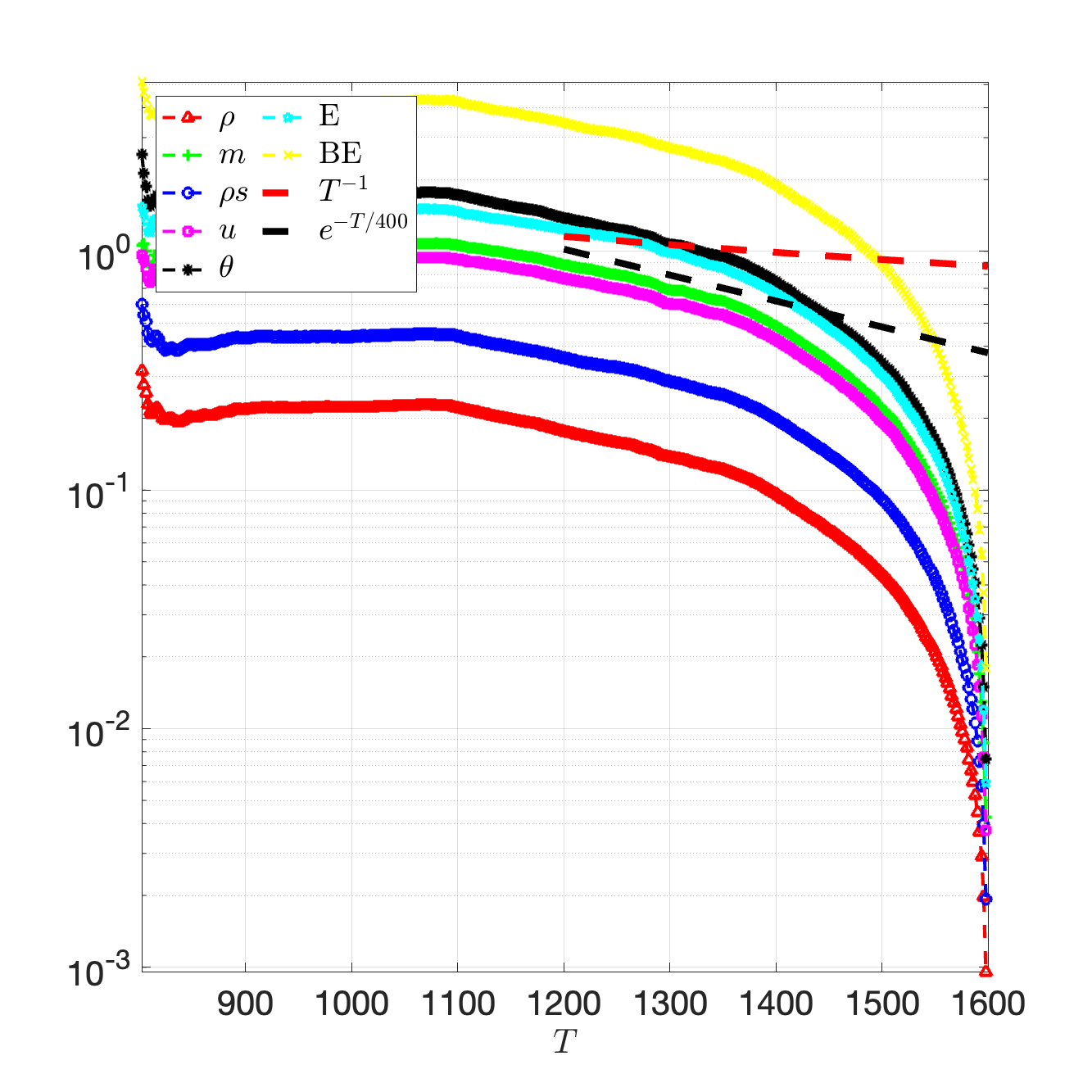} 
	\includegraphics[width=0.24\textwidth]{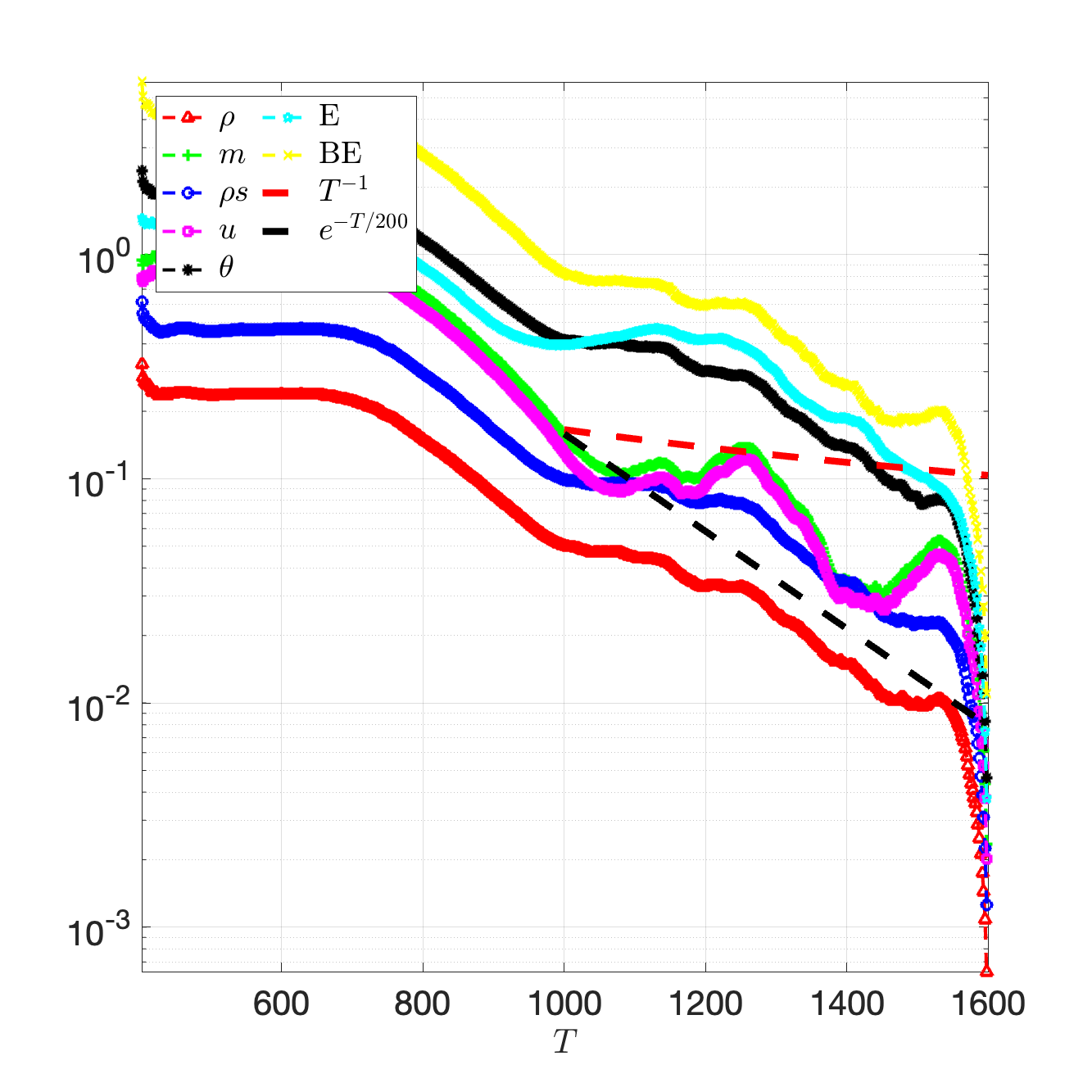}
	\\
	\includegraphics[width=0.24\textwidth]{./gif/case0/PRONUM52MX640MY320IS201IE800_Err3.png} 
	\includegraphics[width=0.24\textwidth]{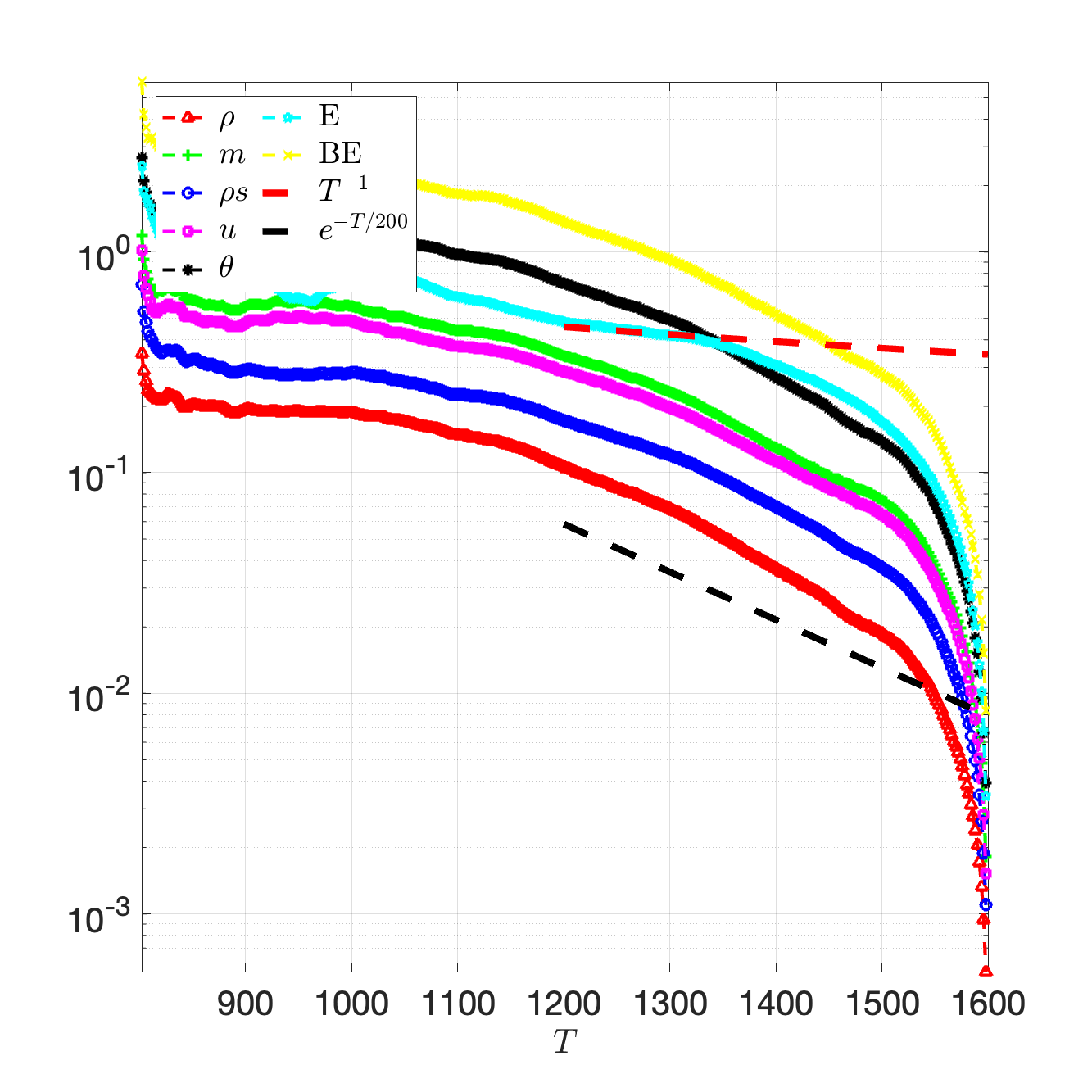}
	\includegraphics[width=0.24\textwidth]{./gif/case13/PRONUM52MX640MY320IS401IE800_Err2.png} 
	\includegraphics[width=0.24\textwidth]{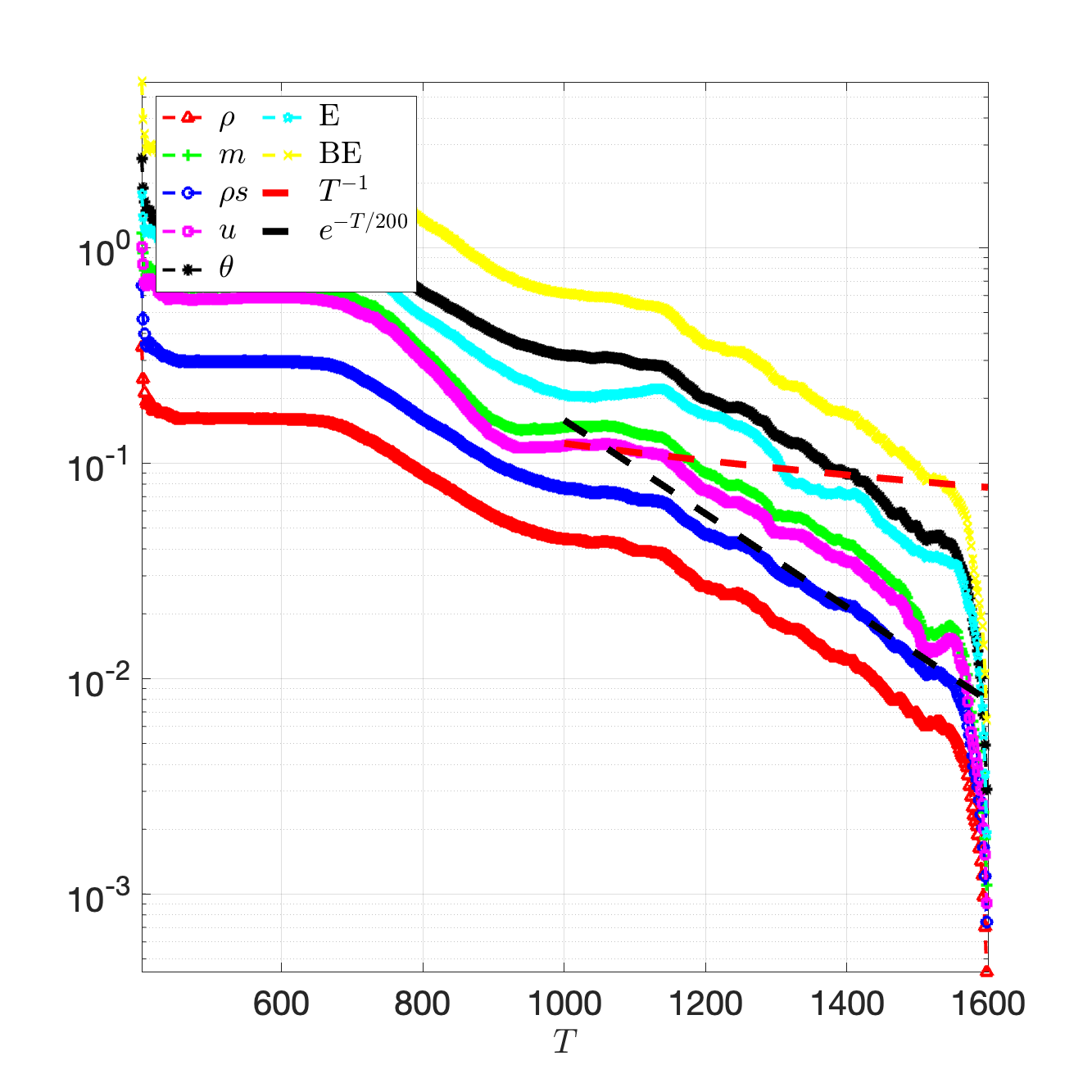}
	\\
	\includegraphics[width=0.24\textwidth]{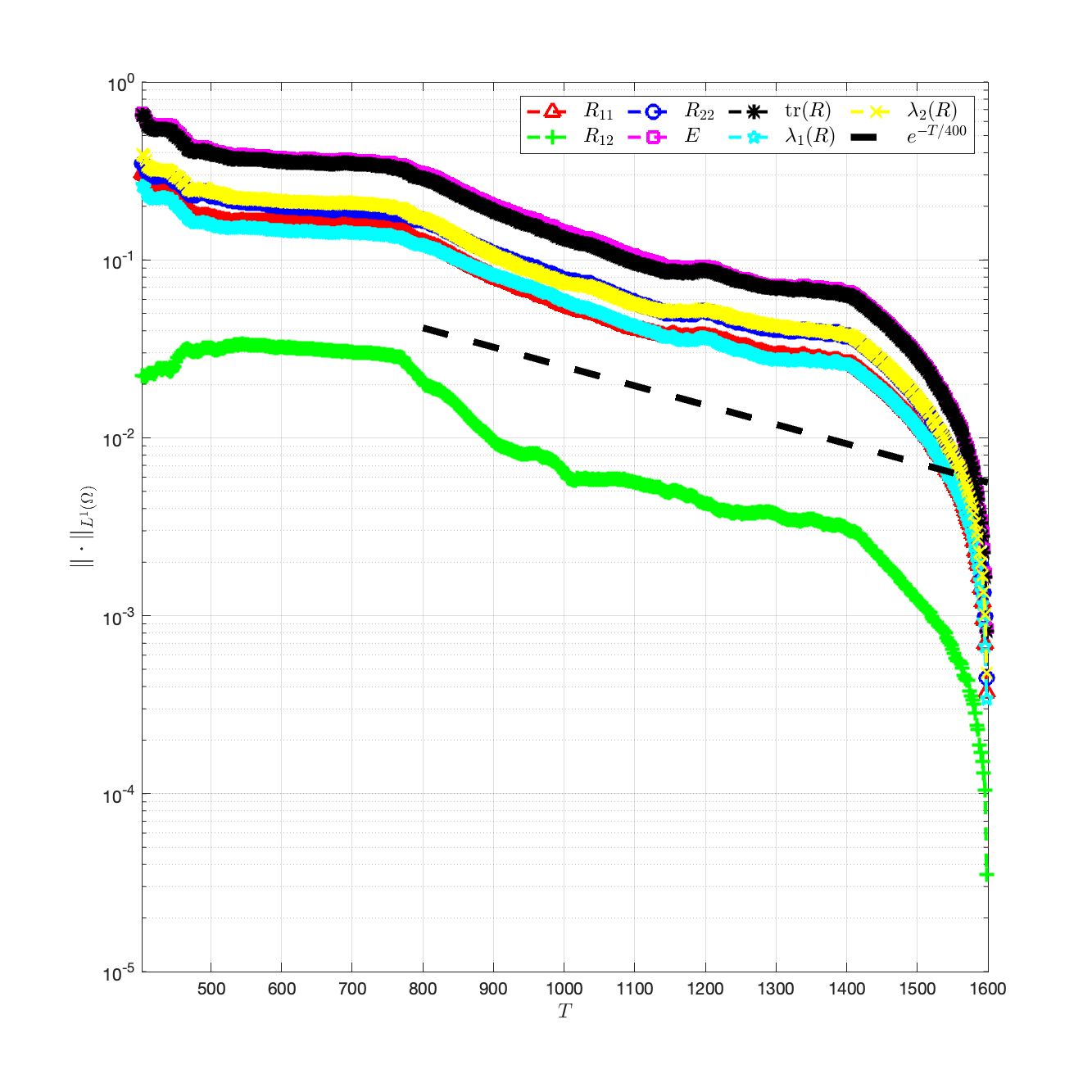} 
	\includegraphics[width=0.24\textwidth]{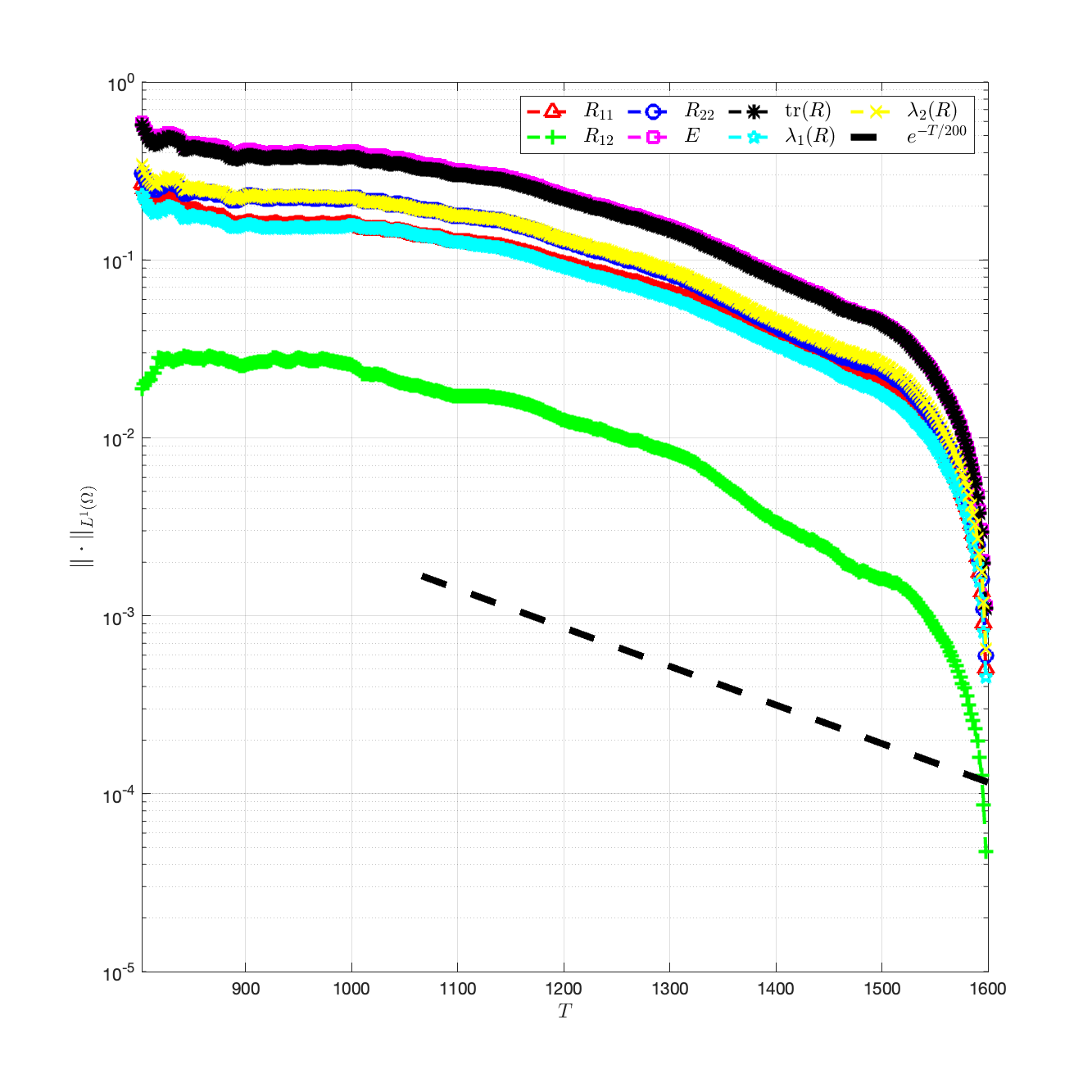}
	\includegraphics[width=0.24\textwidth]{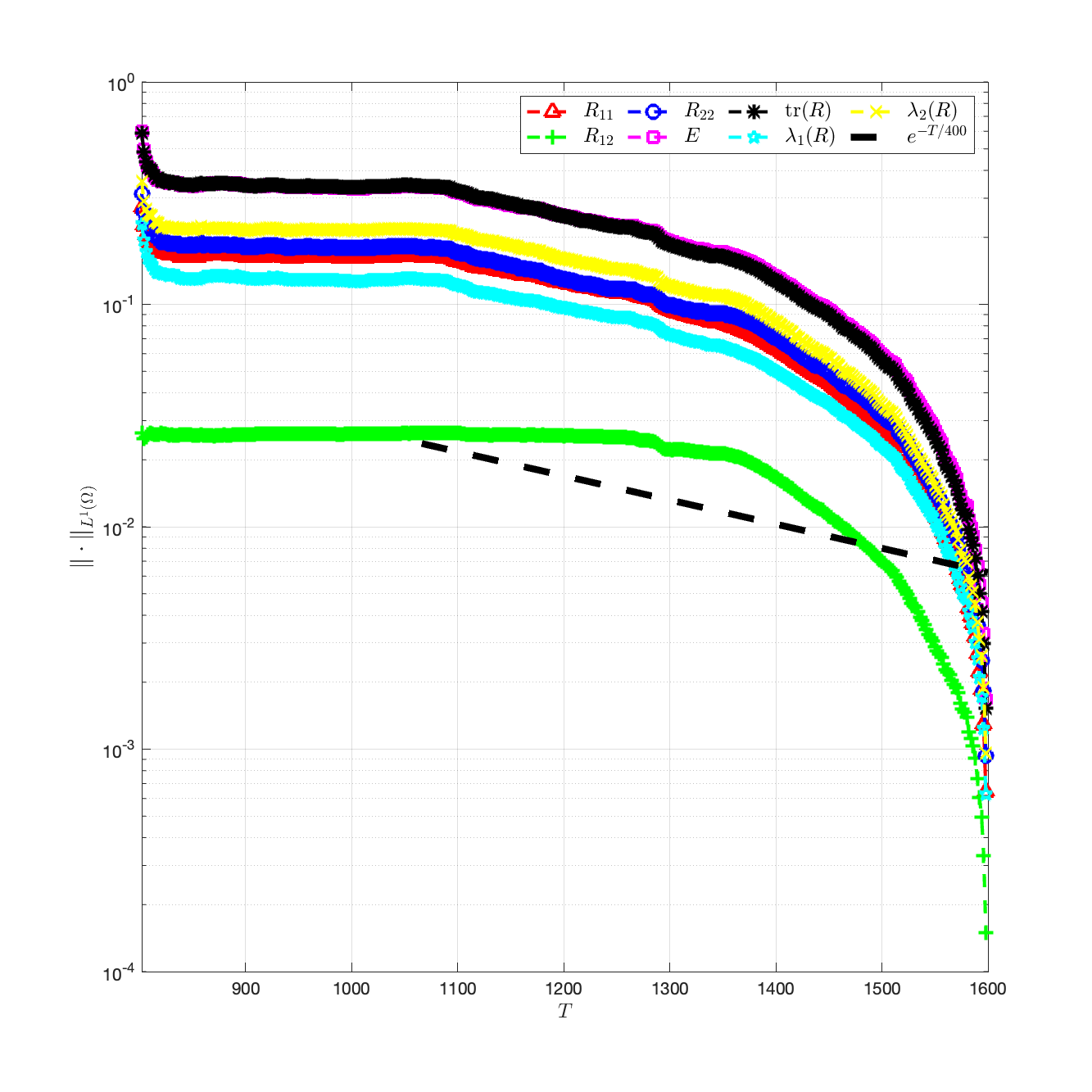} 
	\includegraphics[width=0.24\textwidth]{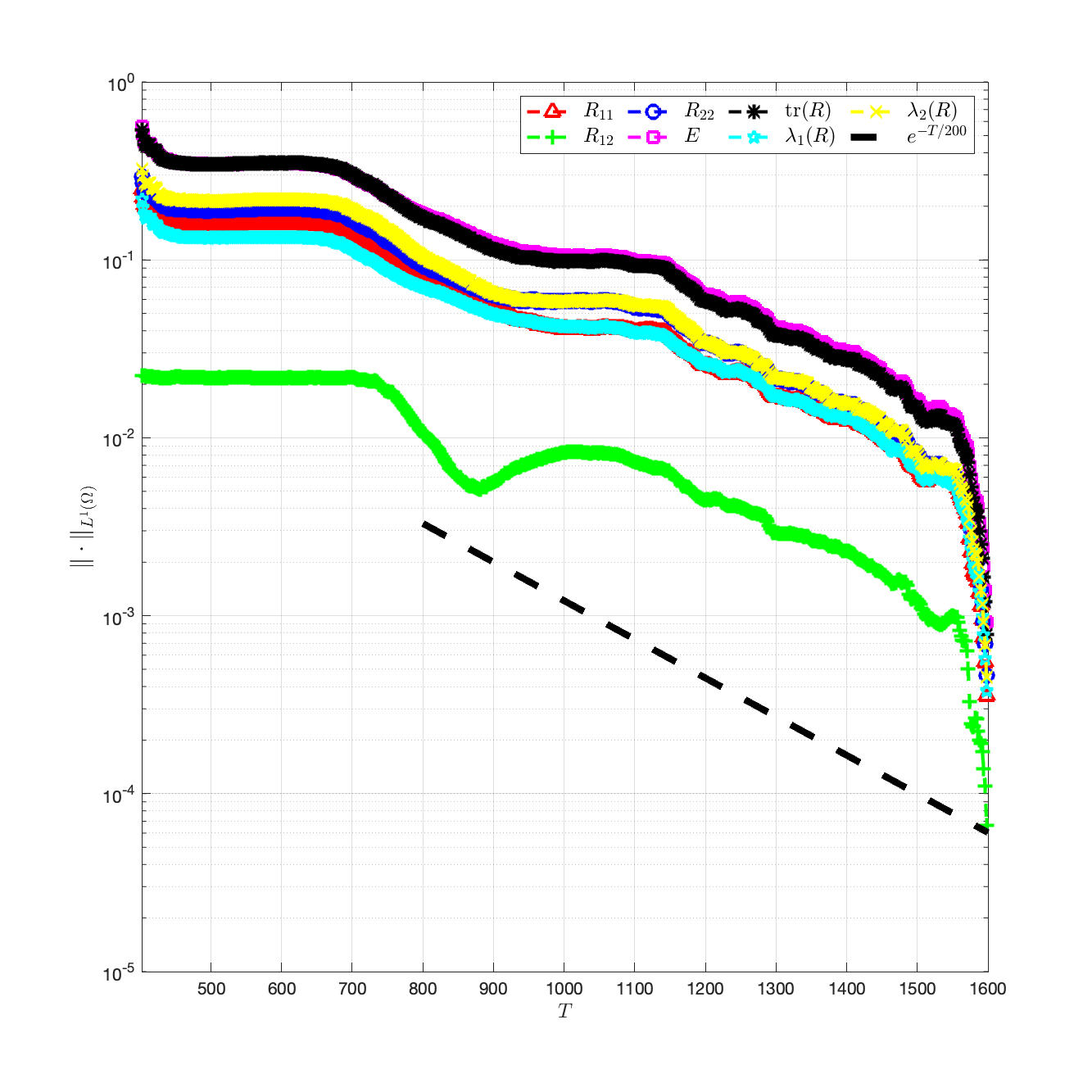}
	\caption{  \small{Rayleigh--B\' enard Experiments: errors $E_i, \, i = 1,2,3,4$ (from top to bottom) for Experiments 2-5 (from left to right).}}\label{fig-Err-fur}
\end{figure}

 \begin{figure}[htbp]
	\setlength{\abovecaptionskip}{0.cm}
	\setlength{\belowcaptionskip}{-0.cm}
	\centering
	\includegraphics[width=0.24\textwidth]{./gif/case0/mxprobabilityPRONUM52MX640MY320IS201IE800_Norm} 
	\includegraphics[width=0.24\textwidth]{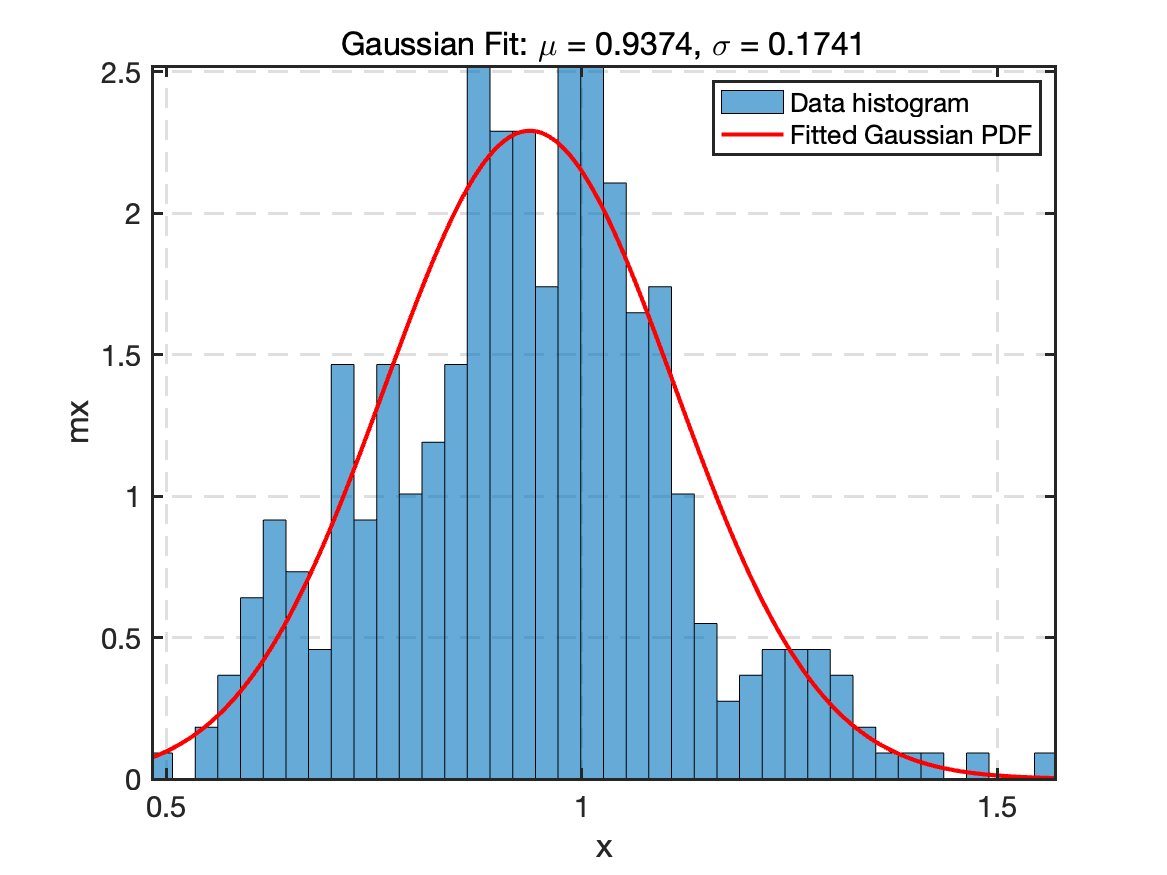}
	\includegraphics[width=0.24\textwidth]{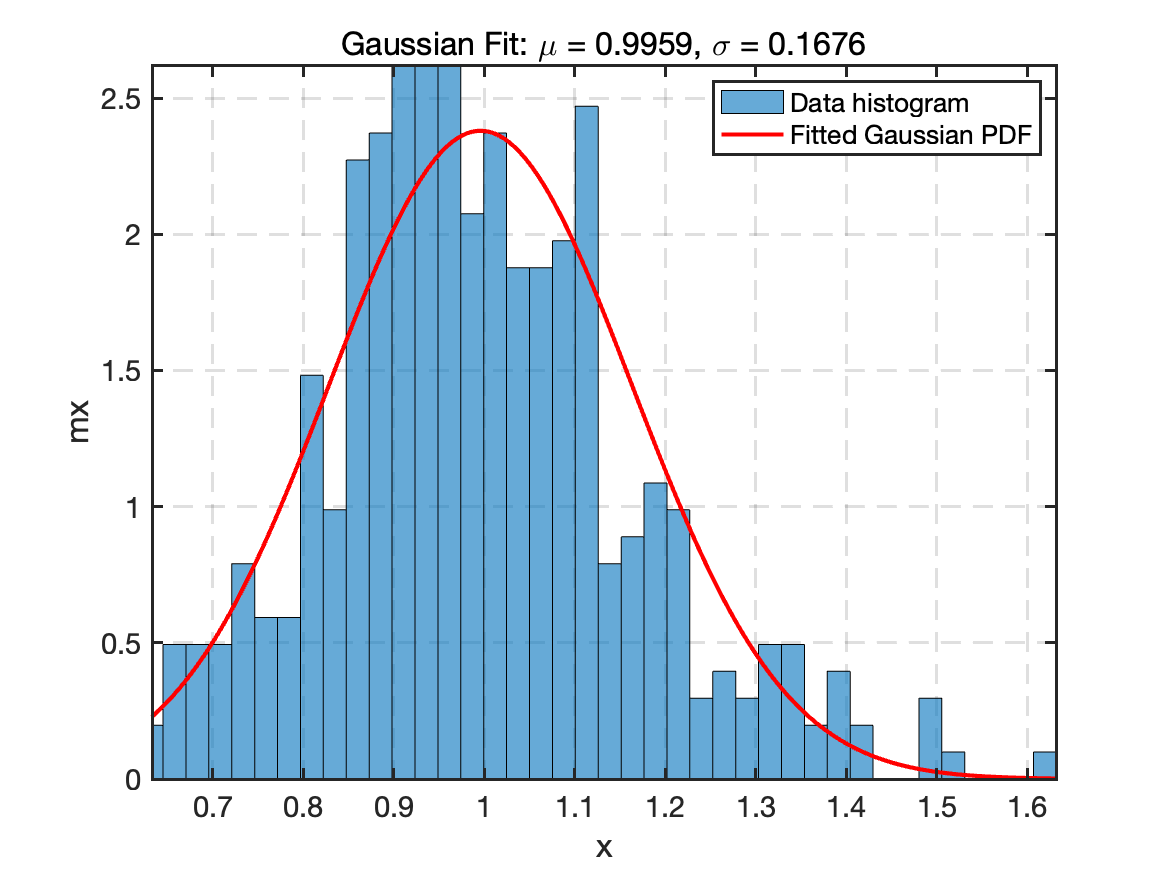} 
	\includegraphics[width=0.24\textwidth]{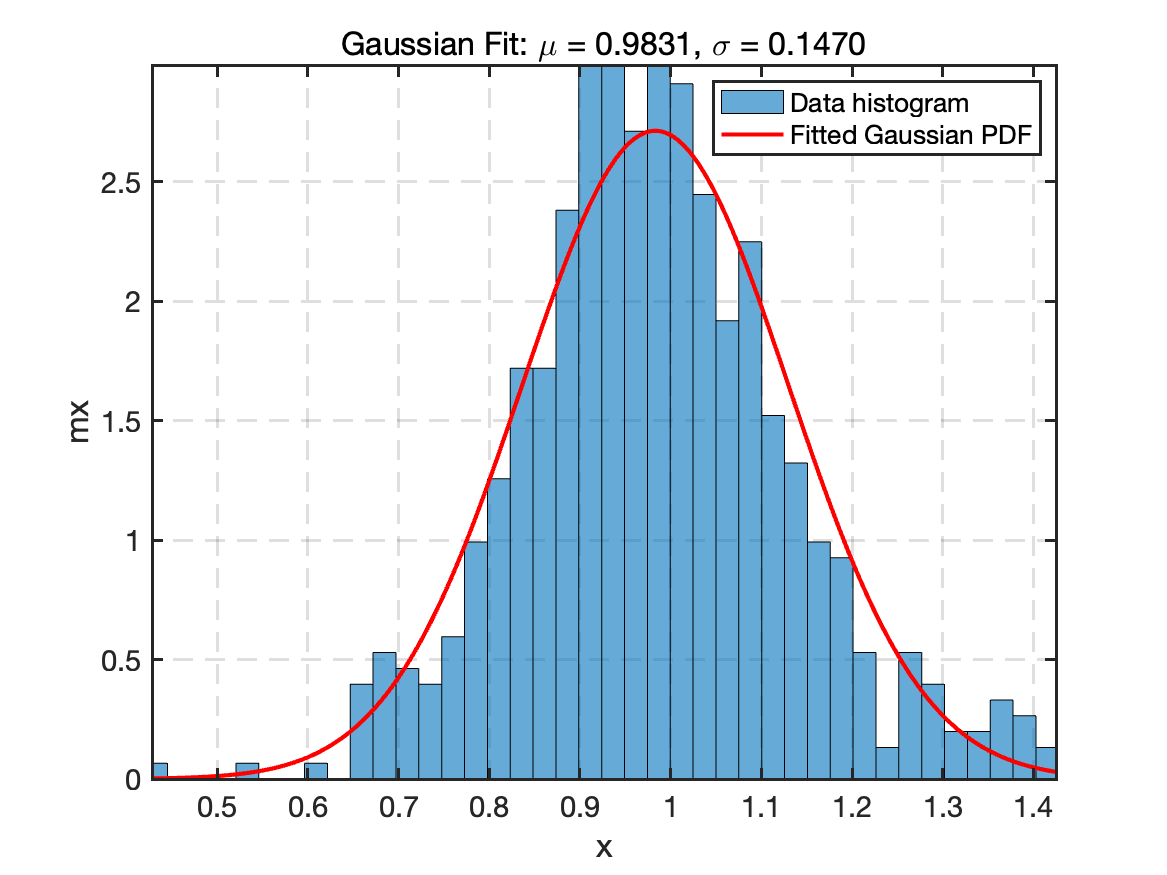}
	\\
	\includegraphics[width=0.24\textwidth]{./gif/case0/myprobabilityPRONUM52MX640MY320IS201IE800_Norm} 
	\includegraphics[width=0.24\textwidth]{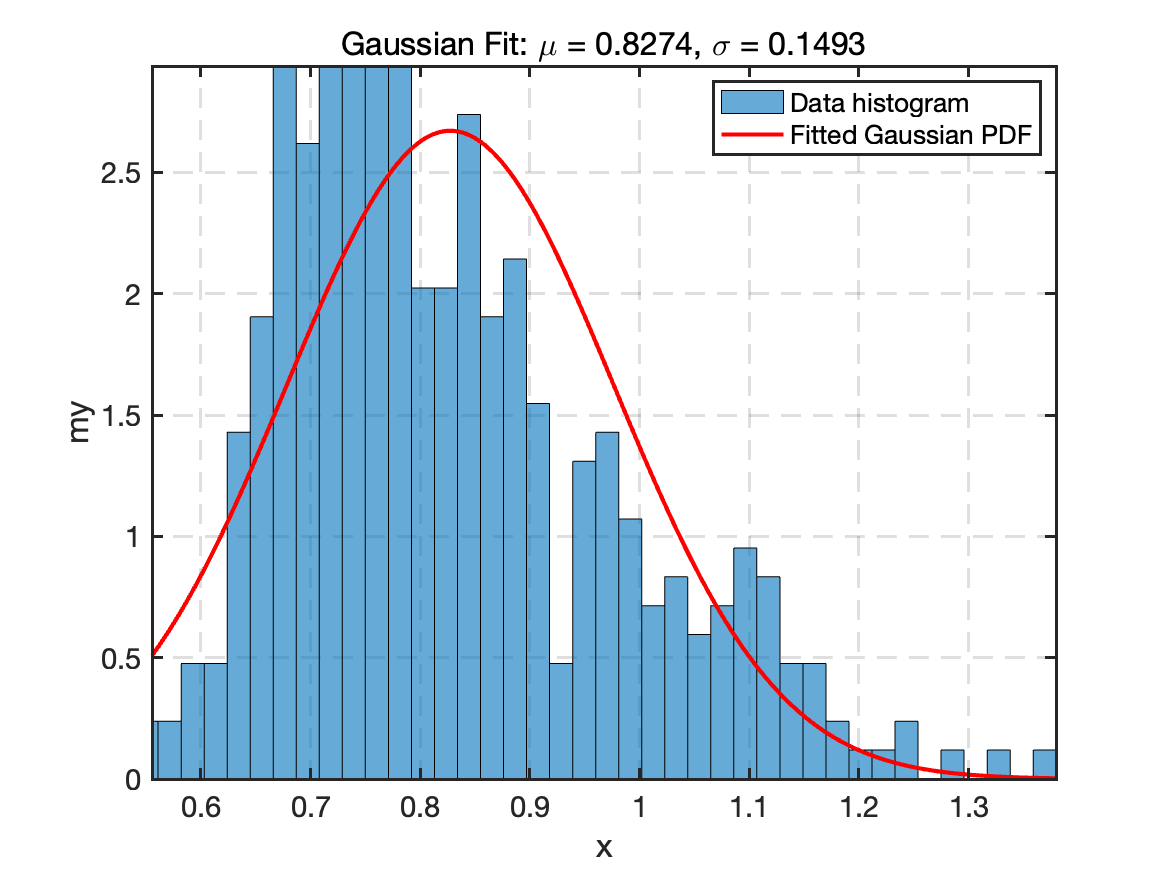}
	\includegraphics[width=0.24\textwidth]{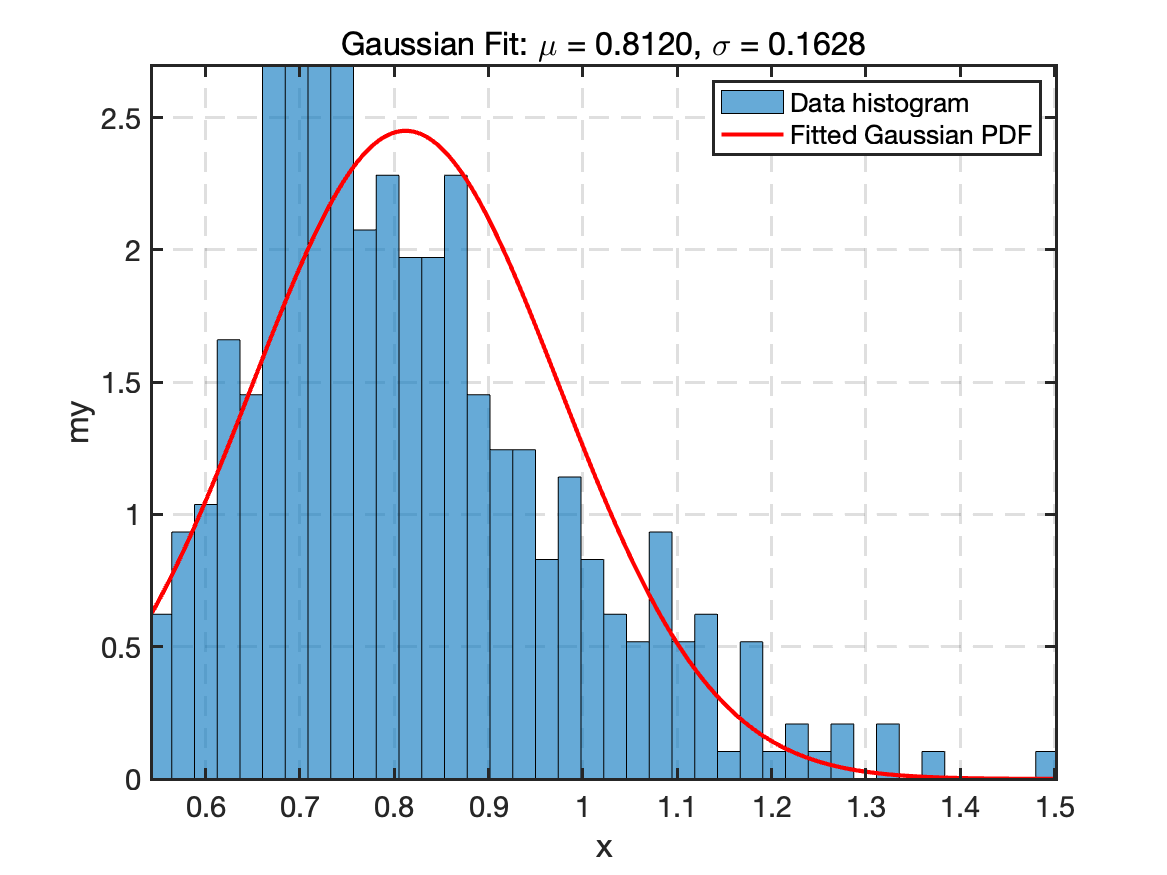} 
	\includegraphics[width=0.24\textwidth]{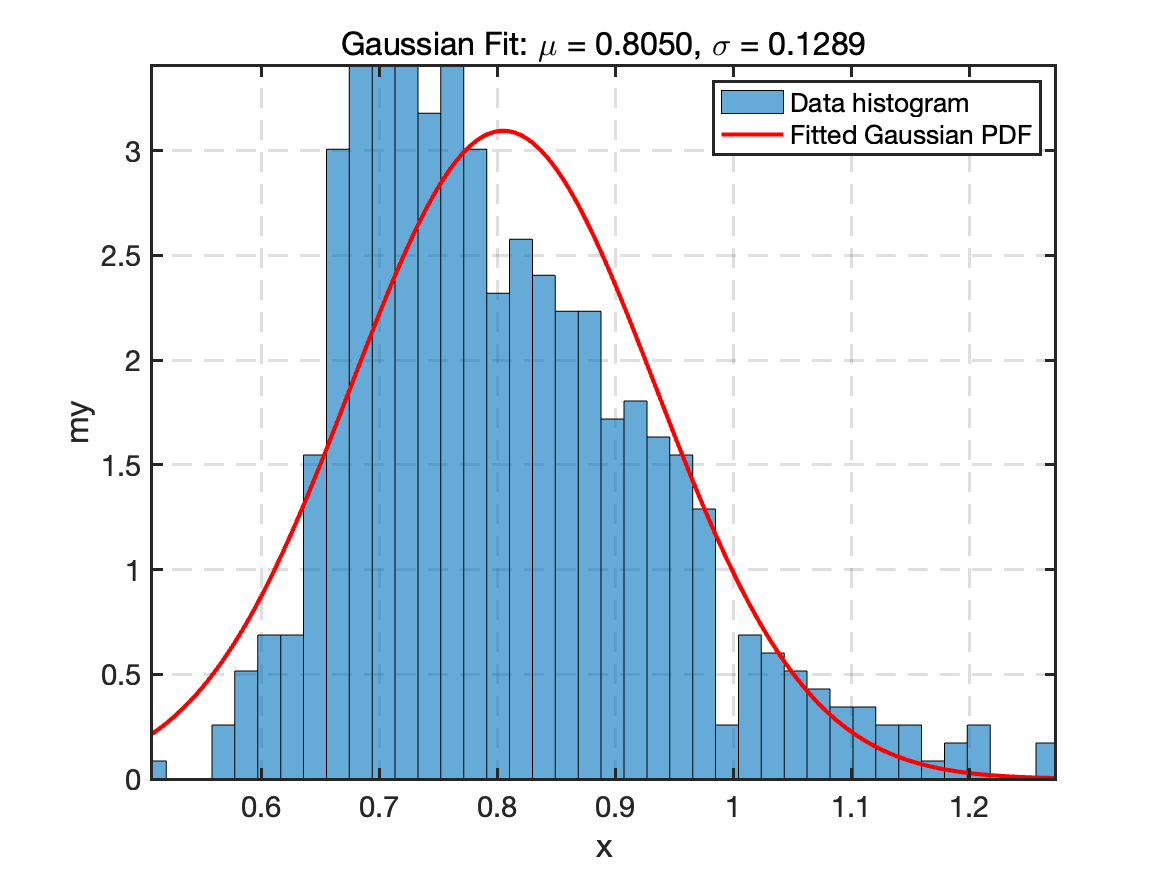}
	\\
	\includegraphics[width=0.24\textwidth]{./gif/case0/energyprobabilityPRONUM52MX640MY320IS201IE800_Norm} 
	\includegraphics[width=0.24\textwidth]{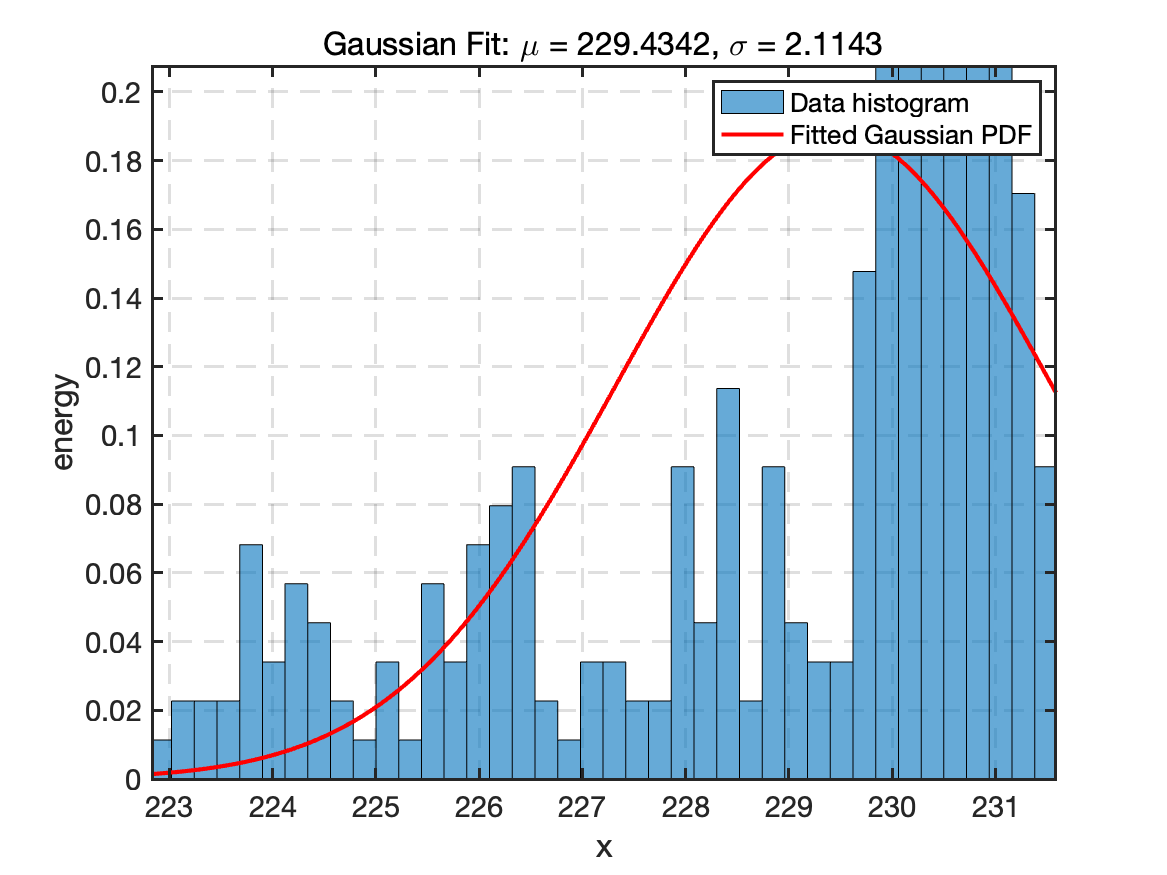}
	\includegraphics[width=0.24\textwidth]{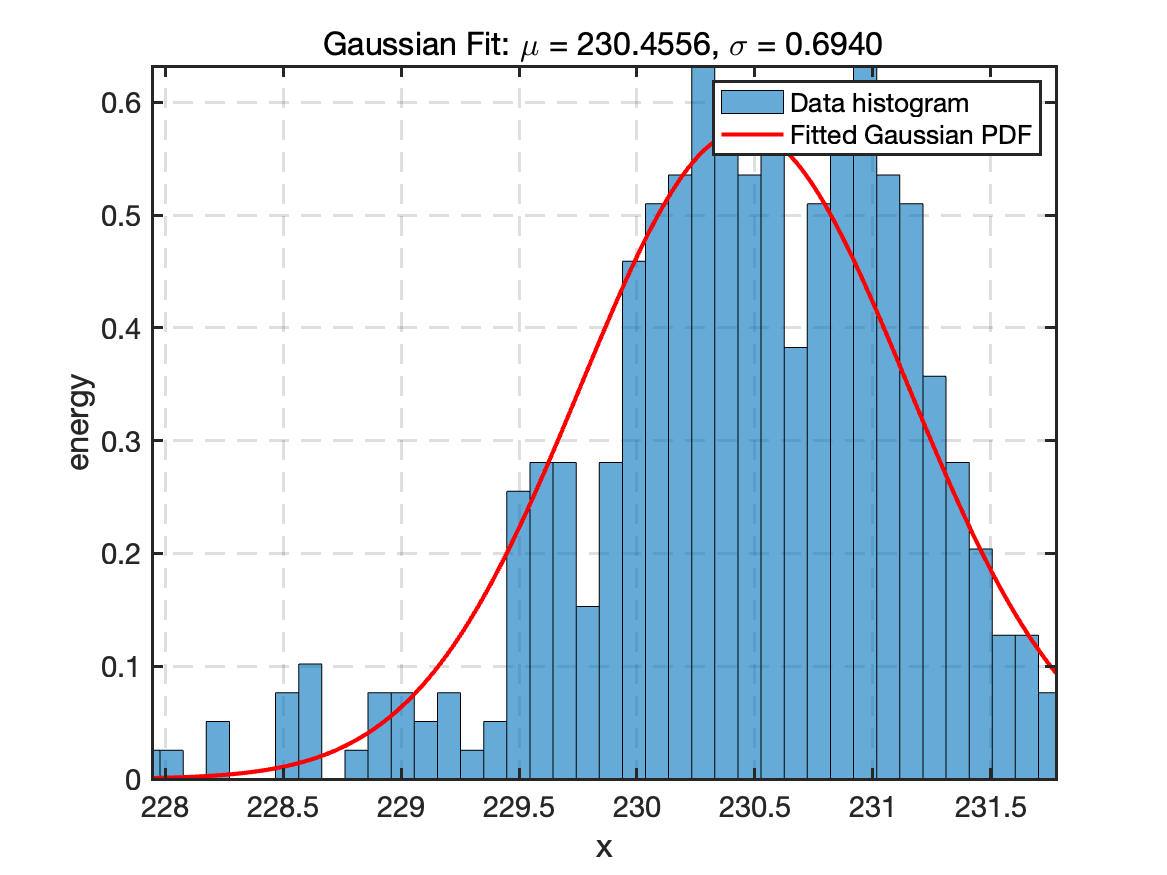} 
	\includegraphics[width=0.24\textwidth]{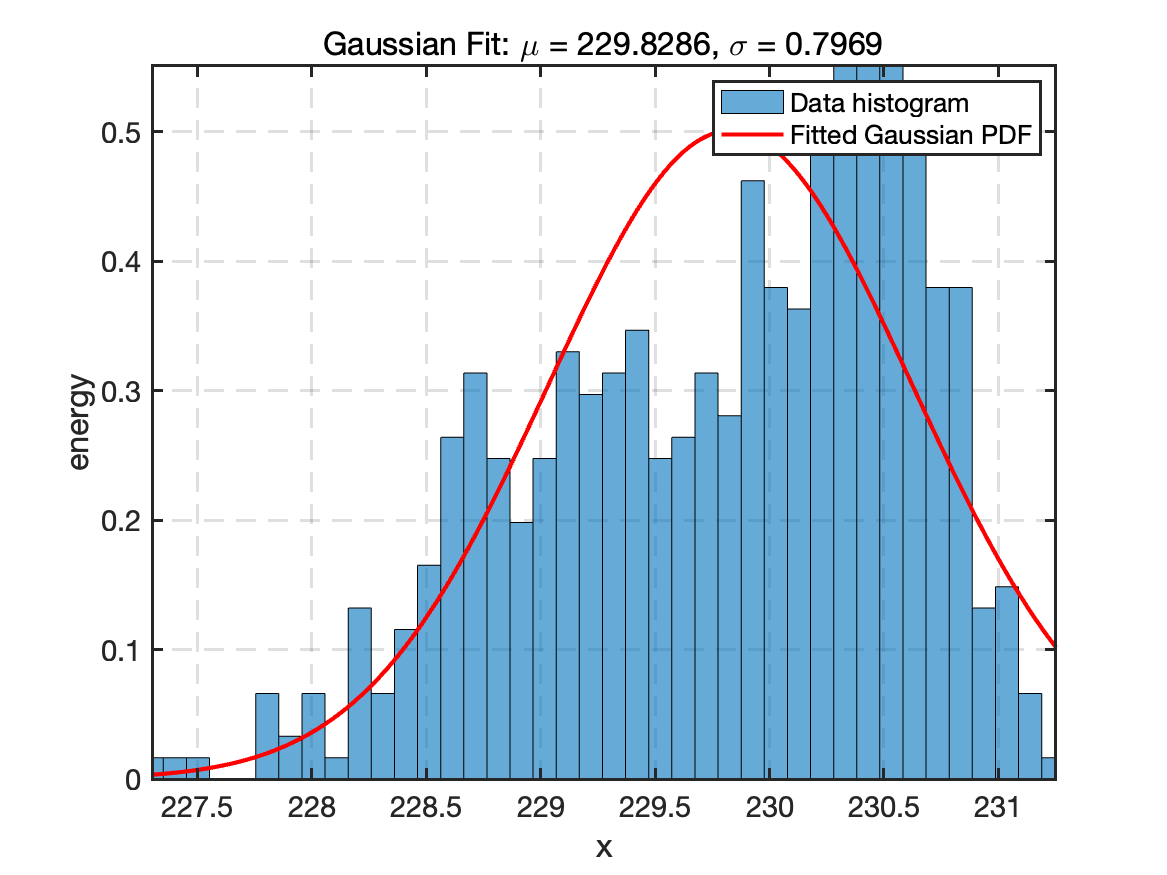}
	\\
	\includegraphics[width=0.24\textwidth]{./gif/case0/BallEnergyprobabilityPRONUM52MX640MY320IS201IE800_Norm} 
	\includegraphics[width=0.24\textwidth]{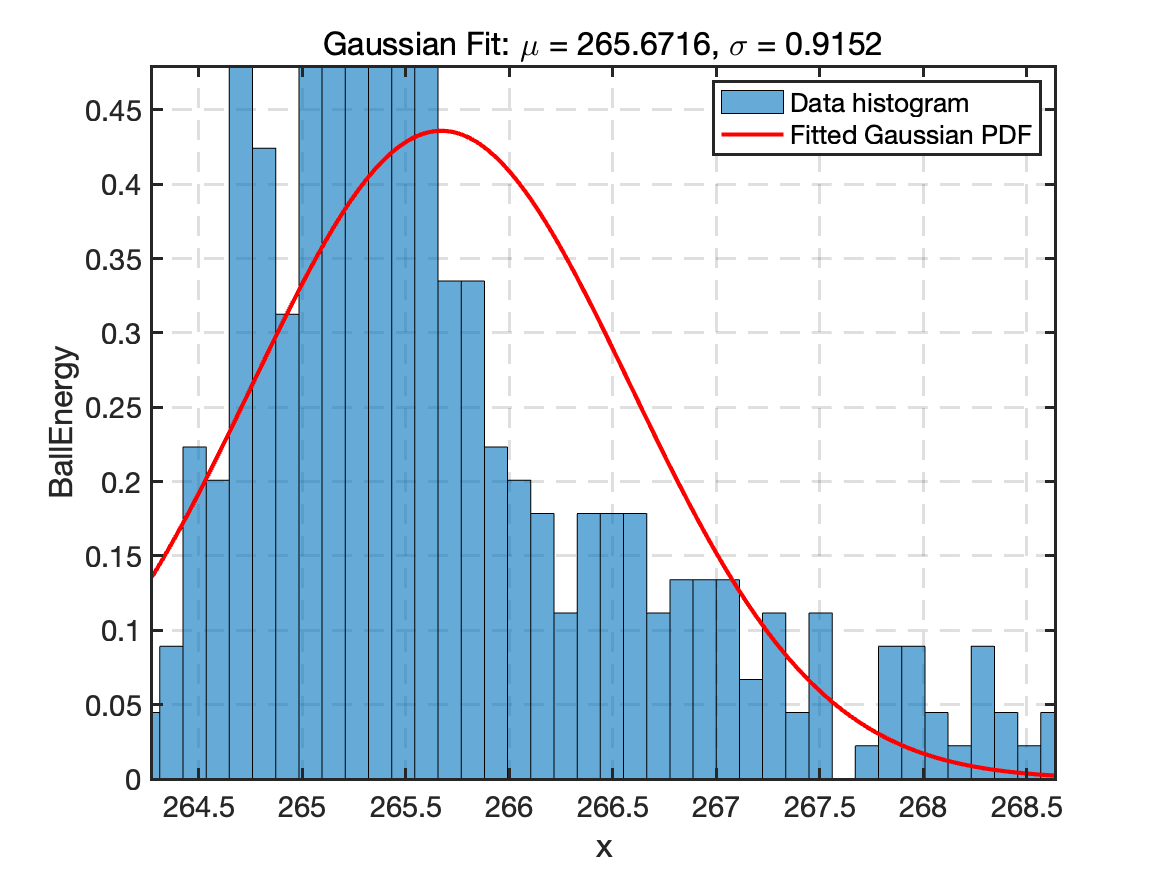}
	\includegraphics[width=0.24\textwidth]{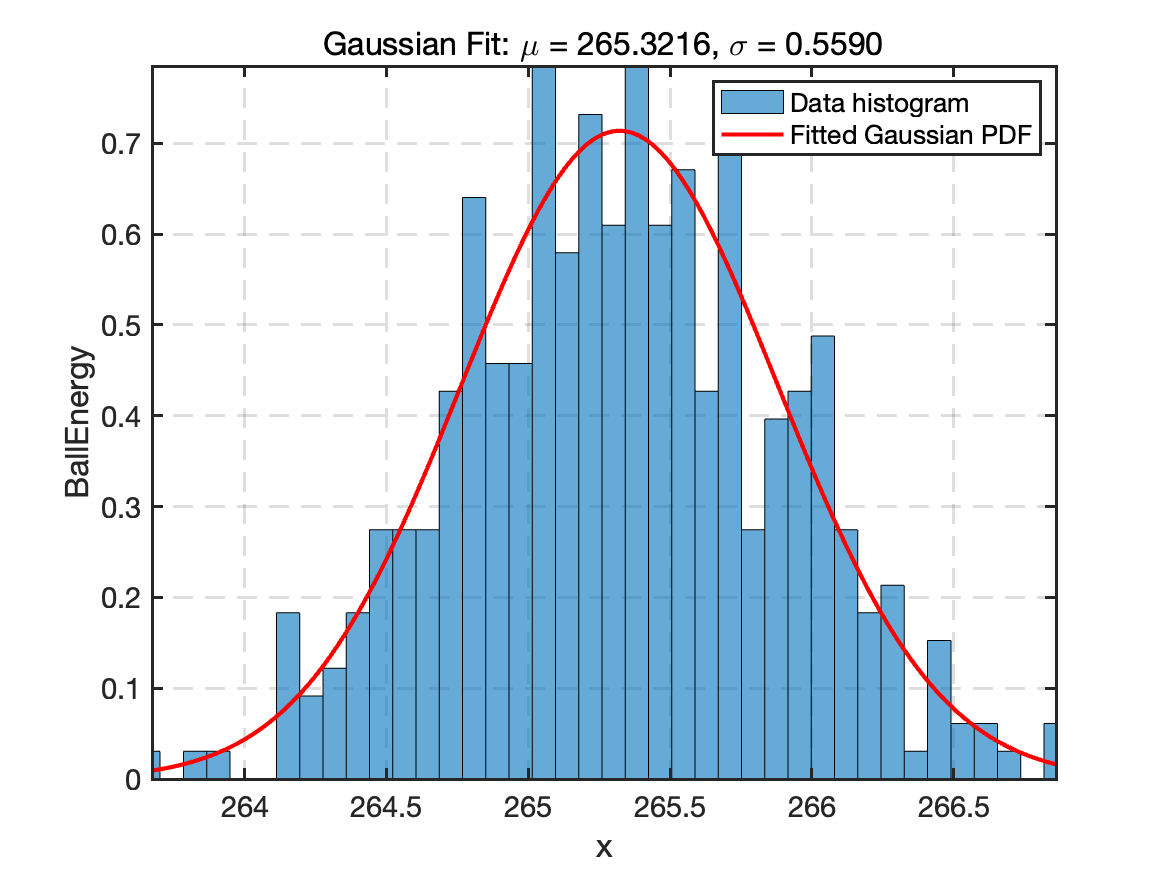} 
	\includegraphics[width=0.24\textwidth]{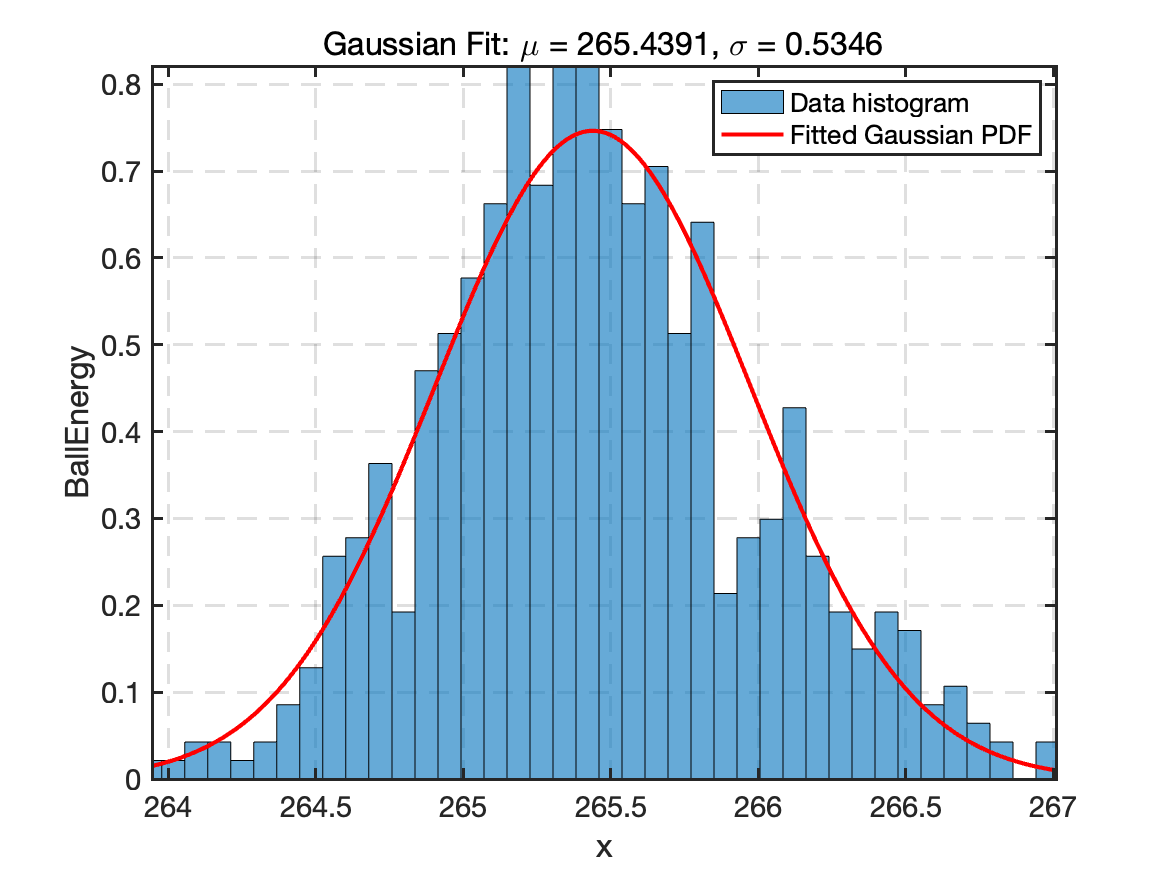}
	\caption{  \small{Rayleigh--B\' enard Experiments:  $\mathcal{M}\left(\norm{U_h(t,\cdot)}_{L^1(\Omega)} \right)$ with $U \in \{m_x,m_y,E,BE\}$ (from top to bottom) for Experiments 2-5 (from left to right).}}\label{fig-Measure-fur}
\end{figure}

\newpage
\subsubsection{Summary of numerical simulations}

We conclude with summarizing the results of numerical experiments and formulate conclusions.  

\begin{enumerate}
\item {\bf Attractor}: Numerical experiments confirm the existence of an attractor, cf.~Figure~\ref{fig-Evo-fur}.

\item {\bf  Ergodic hypothesis}: 
 The numerical simulations in Tables~\ref{tabel1}, \ref{tabel2} and Figures~\ref{fig-Err-ex1}, \ref{fig-Err}, \ref{fig-Err-fur} 
 are in agreement with the ergodic hypothesis \eqref{L1}.


\item {\bf Invariant measure}: We conjecture that any invariant measure -- a stationary statistical solution sitting on the attractor -- is of   Gaussian type, cf.~Figures~\ref{fig-Measure-Ex2-1}, \ref{fig-Measure-Ex2-2}, \ref{fig-Measure-Ex2-3}, \ref{fig-Measure-fur}.

\item {\bf Reynolds stress}: The Reynolds stress tensor and energy fluctuation converge to a constant state for large time, but do not vanish, cf.~Figures~\ref{fig-Defect-1}, \ref{fig-Defect-2}, \ref{fig-Err-fur}.

\end{enumerate}


%


\section*{Acknowledgments}
This work was partially supported by the Mathematical Research Institute Oberwolfach via the Oberwolfach Research Fellows project in 2025. The authors gratefully acknowledge the hospitality of the institute and its stimulating working atmosphere.

\bibliographystyle{plain}

\end{document}